\newtheorem{theorem}{Theorem}[subsection]
\newtheorem*{theorem*}{Theorem}
\newtheorem{lemma}[theorem]{Lemma}
\newtheorem*{lemma*}{Lemma}
\newtheorem{prop}[theorem]{Proposition}
\newtheorem{corollary}[theorem]{Corollary}
\newtheorem{definition}[theorem]{Definition}
\newtheorem{notation}[theorem]{Notation}
\newtheorem{claim}[theorem]{Claim}
\newtheorem*{claim*}{Claim}
\newtheorem{example}[theorem]{Example}
\newtheorem{conjecture}[theorem]{Conjecture}
\newtheorem{remark}[theorem]{Remark}
\newtheorem{assumption}[theorem]{Assumption}
\newtheorem{maintheorem}{Theorem}
\Crefname{maintheorem}{Theorem}{Theorems}
\newtheorem{maincorollary}[maintheorem]{Corollary}
\Crefname{maincorollary}{Corollary}{Corollaries}
\newtheorem{mainexample}[maintheorem]{Example}
\Crefname{mainexample}{Example}{Examples}
\crefname{claim}{claim}{claims}
\crefname{prop}{proposition}{propositions}
\crefname{prop}{proposition}{propositions}
\newcommand{\eps}{\varepsilon}
\newcommand{\RR}{\mathbb R}
\newcommand{\ZZ}{\mathbb Z}
\newcommand{\NN}{\mathbb N}
\newcommand{\CC}{\mathbb C}
\newcommand{\CP}{\mathbb{CP}}
\newcommand{\del}{\nabla}
\newcommand{\into}{\hookrightarrow}
\newcommand{\tensor}{\otimes}
\newcommand{\CF}{CF^\bullet}
\newcommand{\HF}{HF^\bullet}
\newcommand{\CM}{CM^\bullet}
\DeclareMathOperator{\id}{id}
\DeclareMathOperator{\Crit}{Crit}
\newcommand{\syza}{\pi_A}
\newcommand{\syzb}{\pi_B}
\newcommand{\YB}{Y}
\DeclareMathOperator{\val}{val}
\DeclareMathOperator{\Coh}{Coh}
\DeclareMathOperator{\st}{\;:\;}
\DeclareMathOperator{\Fuk}{Fuk}
\DeclareMathOperator{\str}{star}
\DeclareMathOperator{\Supp}{Supp}
\DeclareMathOperator{\grad}{grad}
\DeclareMathOperator{\Spec}{Spec}
\DeclareMathOperator{\tropa}{TropA}
\DeclareMathOperator{\tropb}{TropB}
\DeclareMathOperator{\suppA}{SuppA}
\DeclareMathOperator{\Def}{Def}
\DeclareMathOperator{\res}{res}
\DeclareMathOperator{\Aff}{Aff}
\newcommand{\pants}{{pants}} 
\begin{document}
\listoftodos
\clearpage
\setcounter{page}{1}
\newcommand{\Addresses}{{\bigskip
  \footnotesize

  \noindent J.~Hicks, \textsc{School of Mathematics,  University of Edinburgh}\par\nopagebreak
  \noindent \textit{E-mail address}: \texttt{jeff.hicks@ed.ac.uk}
}}

\title{\normalsize \textbf{Realizability in tropical geometry and unobstructedness of Lagrangian submanifolds}}
\author{\normalsize  Jeff Hicks}
\date{}

\tikzset{every picture/.style=thick}
\maketitle{}

\begin{abstract}
	We say that a tropical subvariety $V\subset \mathbb R^n$ is $B$-realizable if it can be lifted to an analytic subset of $(\Lambda^*)^n$. When $V$ is a smooth curve or hypersurface, there always exists a Lagrangian submanifold lift $L_V\subset (\mathbb C^*)^n$.
We prove that whenever $L_V$ has well-defined Floer cohomology, we can find for each point of $V$ a Lagrangian torus brane whose Lagrangian intersection Floer cohomology with $L_V$ is non-vanishing.
Assuming an appropriate homological mirror symmetry result holds for toric varieties, it follows that whenever $L_V$ is a Lagrangian submanifold that can be made unobstructed by a bounding cochain, the tropical subvariety $V$ is $B$-realizable.

As an application, we show that the Lagrangian lift of a genus zero tropical curve is unobstructed, thereby giving a purely symplectic argument for Nishinou and Siebert's proof that genus-zero tropical curves are $B$-realizable. We also prove that tropical curves inside tropical abelian surfaces are $B$-realizable. \end{abstract}
\setcounter{tocdepth}{1}
\tableofcontents
\section{Introduction}
	Mirror symmetry is a collection of equivalences between symplectic geometry ($A$-model) and algebraic geometry ($B$-model) on a pair of mirror spaces. 
A general proposal for constructing mirror pairs of a symplectic space $X_A$ and algebraic space $X_B$ comes from \citeauthor{strominger1996mirror}, who conjectured that mirror pairs can be presented as dual torus fibrations over an integral affine manifold $Q$. 
One relation between these spaces arises in the form of  \citeauthor{kontsevich1994homological}'s homological mirror symmetry  (HMS) conjecture, which predicts an equivalence between the Fukaya category of $X_A$ and the category of coherent sheaves on a mirror manifold $X_B$.
Roughly, the objects of the Fukaya category of $X_A$ are Lagrangian submanifolds $L\subset X_A$.
A blueprint for mirror symmetry is that Lagrangian submanifolds of $X_A$ relate to sheaves supported on a subvariety of $X_B$ via mutual comparison to tropical subvarieties on the base $Q$.

We consider the relatively well-understood example of $X_A=T^*\RR^n/T^*_\ZZ \RR^n$. $X_B=(\Lambda^*)^n$, and $Q=\RR^n$. On the $A$ side, it will be convenient for us to identify $X_A$ with $(\CC^*)^n$, which has holomorphic coordinates $x_i=e^{q_i+i\theta_i}$ and standard symplectic form $\sum_{i=1}^n dq_i \wedge d\theta_i$. Note that $X_A$ does not naturally come with a complex structure.  
On the $B$-side, we take $\Lambda$ to be the Novikov field
\[
    \Lambda := \left\{\sum_{i=0}^\infty  a_i T^{\lambda_i}  \middle|a_i\in \CC, \lambda \in \RR, \lim_{i\to\infty} \lambda_i = \infty\right\}
\]
whose valuation map $\val: \Lambda\to \RR\cup \{\infty\}$  is the smallest exponent appearing the expansion of $\sum_{i=0}^\infty  a_i T^{\lambda_i}$.
 On the $A$-side, the torus fibration is given by
\begin{align*}
    \syza: X_A \to & Q\\
    ( x_1, \ldots, x_n)\mapsto& (\log|x_1|, \ldots, \log|x_n|)=(q_1, \ldots, q_n).
\end{align*} 
whose fibers are Lagrangian tori. The dual fibration $\tropb: X_B\to Q$ is given by taking coordinate-wise valuation 
\begin{align*}
    \tropb: X_B \to & Q\\
    ( z_1, \ldots  z_n)\mapsto& (\val(z_1), \ldots , \val(z_n)).
\end{align*}
Instead of using tropical geometry as an intuition for HMS, this paper uses HMS and our understanding of the tropical to $A$-correspondence to study the tropical to $B$ correspondence. 
We now review these correspondences before stating our results. 
\subsubsection*{Tropical to $B$ correspondence}
The tropical-to-complex correspondence and its applications to enumerative geometry have been a particularly rich field of study since the pioneering work of \cite{mikhalkin2005enumerative} which related counts of tropical curves in $\RR^2$ to counts of curves in the \emph{complex} algebraic torus (as opposed to the $\Lambda$ analytic torus we study). This relation consists of two parts: \emph{tropicalization}, which associates to a holomorphic curve in $(\CC^*)^2$ a tropical curve in $\RR^2$; and \emph{realization}, which lifts every tropical curve $V\subset \RR^2$ to a holomorphic curve in $(\CC^*)^2$. Both of these constructions have been extended to greater generality; we provide a coarse overview of the constructions here:
\begin{itemize}
    \item \emph{$B$-Tropicalization:}  The tropicalization map associates to a closed analytic subset $\YB\subset X_B$ its tropicalization $\tropb(\YB)\subset Q$. The expectation (which holds for algebraic subvarieties, \cite{groves1984geometry}) is that the tropicalization is a \emph{tropical subvariety} (\cref{def:tropicalsubvariety}).
    \item \emph{$B$-Realization:} Starting with $V\subset Q$ a tropical subvariety we say that $V$ is \emph{$B$-realizable} if there exists closed analytic subset $\YB\subset X_B$ with $\tropb(\YB)=V$.
\end{itemize}
One goal of tropical geometry is to determine which tropical subvarieties $V\subset Q$ are $B$-realizable.
Examples such as  \cite[Example 5.12]{Mikhalkin2004Amoebas} show that there exist tropical curves $V\subset \RR^n, n>2$ that are non-realizable. 
In some cases, there are criteria determining if a tropical subvariety is $B$-realizable. 

For example, if $V\subset Q$ is a tropical hypersurface, then there exists a tropical polynomial (piecewise integral affine convex function) $f: Q\to \RR$ so that $V$ is the locus of points where $f$ is non-affine. The function $f$ is called a tropical polynomial as it can be written using the tropical sum and product operations:
\begin{align*}
    \oplus: (\RR\cup\{\infty\})\times (\RR\cup \{\infty\}) \to (\RR\cup\{\infty\}) && q_1\oplus q_2=\min(q_1, q_2)\\
    \odot: (\RR\cup \{\infty\}) \times (\RR\cup \{\infty\}) \to (\RR\cup \{\infty \} )&& q_1\odot q_2= q_1+q_2
\end{align*}
Let $f=\oplus_{\alpha\in \NN^n} a_\alpha \odot q^{\odot \alpha}$ be a tropical polynomial whose non-affine locus is $V$. Let $\Lambda$ be a complete non-Archimedean valued field, and let $X_B=(\Lambda^*)^n$ be the algebraic torus. For each $a_\alpha$, select a coefficient $c_\alpha\in \Lambda$  whose valuation is $\val(c_\alpha)=a_\alpha$. Then the zero set of the polynomial $\sum_{\alpha\in \NN^n}  c_\alpha z^\alpha$ defines a subvariety of $X_B$ which is the $B$-realization of $V$.
Observe that this construction does not produce a unique lift.

The other examples where we have $B$-realization criteria are tropical curves.
In  \cite{nishinou2006toric}, it was shown that if $V\subset Q$ is a trivalent tropical curve of genus 0, then $V$ is realizable. This was extended to all balanced maps from trees in \cite{ranganathan2017skeletons}.
In higher genus, the space of deformations of a tropical curve may have higher dimension than the expected dimension of a possible $B$-realization. In this case, we say that the tropical curve is superabundant (\cite{mikhalkin2005enumerative}). We expect that a generically chosen superabundant curve is not $B$-realizable. 
It is known that all 3-valent non-superabundant curves are realizable \cite{cheung2016faithful}. 
In the superabundant setting, \cite[Theorem 3.4]{speyer2014parameterizing} established that if $V\subset Q$ is a tropical curve of genus 1 and satisfies a condition called  \emph{well-spacedness}, then $V$ is realizable.
\subsubsection*{Tropical to $A$ correspondence}
The tropical-to-Lagrangian correspondence is a more recent construction, independently arrived at in a series of papers  \cite{matessi2018lagrangian,mikhalkin2018examples,hicks2019tropical,mak2020tropically}.
Each of the papers associates to a (certain type of) tropical subvariety $V\subset Q$ a Lagrangian submanifold $L_V^\eps\subset X_A$ whose projection to the base of the Lagrangian torus fibration $\syza(L_V^\eps)$ is contained within an $\epsilon$-neighborhood of the tropical subvariety $V$. We call this a geometric Lagrangian lift of $V$.
When $V$ is a hypersurface, \cite{hicks2019tropical} proves that under homological mirror symmetry $L_V^\eps\subset (\CC^*)^n$ is identified with a sheaf whose support is a hypersurface $Y\subset (\Lambda^*)^n$.

In contrast to $B$-realization, the constructions in \cite{matessi2018lagrangian,mikhalkin2018examples,hicks2019tropical,mak2020tropically}  can construct a geometric Lagrangian lift $L_V$ of \emph{any} smooth tropical curve $V\subset Q$. This difference occurs because the map $\syza: X_A\to Q$ does not provide a good tropicalization map. For example, for any subset $U\subset Q$ and $\eps>0$ there exists a Lagrangian submanifold $L\subset X_A$ with the property that the Hausdorff distance between $\pi_A(L)$ and $U$ is less than $\eps$. Additionally, it would be desirable to have a tropicalization map that only depends on the Hamiltonian isotopy class of the Lagrangian submanifold --- and $\syza(L)$ can change substantially when we apply a Hamiltonian isotopy to $L$.

To obtain a correspondence from Lagrangian submanifolds to tropical subsets of $Q$, and justify why the Lagrangian $L_V$ is the ``correct'' $A$-model realization of a tropical curve $V$, one needs to employ techniques from Floer theory. Not all Lagrangian submanifolds are amenable to such analysis. We call a Lagrangian submanifold unobstructed if its filtered $A_\infty$ algebra $\CF(L)$ admits a bounding cochain. The pair $(L, b)$ of Lagrangian submanifold equipped with a bounding cochain is called a Lagrangian brane. Examples of unobstructed Lagrangian submanifolds include those which bound no pseudoholomorphic disks for a given choice of almost complex structure. In particular, if $L$ is exact, it is unobstructed. 

If $(L_1, b_1), (L_2, b_2)$ are two Lagrangian branes then there exists a cochain complex $\CF((L_1, b_1), (L_2, b_2))$ which is generated by the intersections of $L_1$ and $L_2$, and whose cohomology groups $\HF((L_1, b_1), (L_2,b_2))$ are invariant under Hamiltonian isotopies of either $L_1$ or $L_2$. We can use this to define $A$-tropicalization and $A$-realization.
\begin{itemize}
    \item \emph{$A$-Tropicalization:} Starting with the fibration $X_A\to Q$ and a Lagrangian brane $(L, b)\subset X_A$,    
    we define the $A$-tropicalization
    \[\tropa(L,b):= \{q\in Q \st \exists (F_q, \del) \text{ with } \HF((L, b), (F_q, \del)) \neq 0\}\]
    where $F_q=\syza^{-1}(q)$ is equipped with a unitary local system $\del$, and $\HF((L, b), (F_q,\del))$ is the Lagrangian intersection Floer cohomology of $(L ,b)$ with $F_q$ deformed by the local system $\del$. 
    An advantage of $\tropa(L,b)$ over $\syza(L)$ is that the former depends only on the Hamiltonian isotopy class of $L$.
    \item \emph{$A$-Realizability:} In light of the definition of $A$-tropicalization, we say that  $V\subset Q$ is $A$-realizable if there exists a Lagrangian brane $(L,b)\subset X_A$ with $\tropa(L)=V$. 
\end{itemize}
The Lagrangian submanifold $L_V^\eps$ associated to $V$ provides a geometric candidate for an $A$-realization of $V$. However, to verify $A$-realizability, one still needs to check that $L_V^\eps$ is unobstructed with bounding cochain $b$ and that $\tropa(L_V,b)=V$. We call this last condition \emph{faithfulness}. 	\subsection{Results}
The three components (geometric realizability, unobstructedness, and faithfulness) of the $A$-realizability problem and its implications for the $B$-realizability problem in $Q=\RR^n$ are summarized in the following diagram.
\[
    \begin{tikzpicture}
		\node[fill=gray!20, rounded corners, align=center] (v2) at (2,-2) {Tropical Subvariety\\ $V\subset Q$};
		\node[fill=gray!20, rounded corners, align=center, text width=3.5cm] (v1) at (-4,2) { Geometric Lagrangian  \\ $L_V\subset X_A$};
		\node[fill=gray!20, rounded corners, align=center, text width=3cm] (v3) at (8,5) {Complex of Sheaves \\ $\mathcal F_{V}$};
		\node[fill=gray!20, rounded corners, align=center, text width=3cm] (v4) at (8,2) {Subvariety\\ $ Y_V\subset X_B$};
	\node[fill=gray!20, rounded corners, align=center, text width=3cm] (v5) at (-4,5) {Lagrangian Brane \\ $(L_V, b)$};
		\draw  (v5) edge[bend left=25,->]node[fill=white]{$A$-tropicalization } (v2);
		\draw  (v2) edge[bend left,->,red]node[fill=white]{\S\ref{sec:geometricrealization} Geometric $A$ realizability} (v1);
		\draw  (v5) edge[->]node[fill=white]{HMS} (v3);
		\draw  (v3) edge[->]node[fill=white]{Cohomological Support} (v4);
		\draw  (v4) edge[bend left,<-,dashed]node[fill=white]{$B$-realizability} (v2);
		\draw  (v2) edge[bend left,<-]node[fill=white]{$B$-tropicalization} (v4);
\draw[dashed]  (v1) edge[->]node[fill=white]{\S\ref{sec:unobstructedness} Unobstructedness} (v5);
\node at (-1,1) {\S\ref{sec:faithfulness} Faithfulness};
\node at (2,4) {\S\ref{sec:realizableHMS} HMS and Support};

\end{tikzpicture} \]
The correspondences given by solid black lines always exist. Geometric $A$-realizability (the solid red arrow) is only known to exist for certain examples of tropical subvarieties of $Q$. For the applications we consider (smooth tropical hypersurfaces and curves) we always have geometric $A$-realizability. We conjecture that every tropical subvariety of $Q$ is geometrically $A$-realizable.
For any given tropical subvariety $V\subset Q$, there is no reason for either of the dashed arrows to hold. However, the following conjecture seems natural.
\begin{conjecture}
    Let $V\subset \RR^n$ be a tropical curve. Then a geometric Lagrangian lift $L_V$ is unobstructed if and only if $V$ is $B$-realizable. \label{conj:unobstructednessisrealizable}
\end{conjecture}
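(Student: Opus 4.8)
The plan is to prove the forward implication --- that unobstructedness of the geometric lift $L_V$ forces $B$-realizability of $V$ --- by following the solid and dashed arrows of the diagram above, and then to isolate why the converse is the real difficulty. Suppose $L_V^\eps$ admits a bounding cochain $b$, so that $(L_V,b)$ is a Lagrangian brane and all the Floer groups $\HF((L_V,b),-)$ below are defined. Homological mirror symmetry for the toric variety sends $(L_V,b)$ to an object $\mathcal F_V$ of the derived category of coherent sheaves on $X_B$, and I propose to take its cohomological support $Y_V := \Supp(\mathcal F_V)$, a closed analytic subset of $X_B$, as the candidate $B$-realization of $V$. Everything then reduces to the single identity $\tropb(Y_V)=V$.

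First I would observe that $\tropb(Y_V)=\tropa(L_V,b)$ is essentially formal: a point $q$ lies in $\tropb(Y_V)$ exactly when there is $p\in X_B$ with $\tropb(p)=q$ and $\Ext^\bullet(\mathcal F_V,\mathcal O_p)\neq 0$, and under HMS the skyscraper $\mathcal O_p$ is mirror to the fiber $F_q=\syza^{-1}(q)$ equipped with the unitary local system $\del$ determined by the unit part of $p$, with $\Ext^\bullet(\mathcal F_V,\mathcal O_p)\cong \HF((L_V,b),(F_q,\del))$; so $q\in\tropb(Y_V)$ iff $q\in\tropa(L_V,b)$. The two inclusions giving $\tropa(L_V,b)=V$ then come from different inputs. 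The inclusion $V\subseteq\tropa(L_V,b)$ is exactly the main theorem of this paper: for every $q\in V$ there is a torus brane $(F_q,\del)$ with $\HF((L_V,b),(F_q,\del))\neq 0$. For the reverse inclusion I would use the geometric half of faithfulness: since $\syza(L_V^\eps)$ lies in the closed $\eps$-neighborhood of $V$, any $q$ at distance greater than $\eps$ from $V$ has $F_q\cap L_V^\eps=\varnothing$, so the Floer complex has no generators and $\HF((L_V,b),(F_q,\del))=0$ for every $\del$; Hamiltonian invariance makes this independent of the representative, so $\tropa(L_V,b)$ is contained in the closed $\eps$-neighborhood of $V$ for every $\eps>0$, hence in $V$. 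Combining, $\tropb(Y_V)=\tropa(L_V,b)=V$, so $V$ is $B$-realizable.

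The main obstacle is the converse, ``$V$ $B$-realizable $\Rightarrow$ $L_V$ unobstructed,'' which the machinery above does not reach. Given a closed analytic $Y\subset X_B$ with $\tropb(Y)=V$ one can build a coherent sheaf $\mathcal F$ supported on $Y$ and, by essential surjectivity of the HMS functor, an $A$-side object $(L',b')$ mirror to it; the problem is to recognise $L'$ as a Hamiltonian isotope of the geometric lift $L_V$. This would require a uniqueness statement --- that any object of the Fukaya category whose $A$-tropicalization is $V$ is quasi-isomorphic to $(L_V,b)$ for some $b$ --- together with control over which sheaves on $Y$ correspond to genuine embedded Lagrangians rather than arbitrary twisted complexes. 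Neither is known in general, which is why the full statement stays a conjecture; accordingly I would restrict attention to the forward implication, and to special families (genus-zero tropical curves, and tropical curves in tropical abelian surfaces) where unobstructedness of $L_V$ can be verified directly and hence $B$-realizability follows from the argument above.
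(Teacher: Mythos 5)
Your proposal matches the paper's treatment of this statement: the paper likewise only establishes the forward implication (under the family-Floer/HMS hypothesis, \cref{ass:hmscn}), by sending $(L_V,b)$ to a mirror sheaf, taking its cohomological support $\YB(L_V,b)$, and identifying $\tropb(\YB(L_V,b))$ with $\tropa(L_V,b)$; the converse is left open there too, with only speculative evidence via obstructing disks. One detail worth flagging: the faithfulness result as actually proven (\cref{thm:support}, \cref{cor:atropicalization}) only yields $V^{(0)}\setminus V^{(1)}\subset \tropa(L_V,b)$, i.e.\ it applies at points of the open top-dimensional strata where $L_V$ locally agrees with the conormal model, not at every $q\in V$ as your sketch asserts; the paper recovers all of $V$ not by a Floer computation at the lower-dimensional strata but by passing to the $B$-side and invoking that the tropicalization of a closed analytic subset is a closed union of rational polyhedra, hence contains $\overline{V^{(0)}}=V$. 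Your upper bound $\tropa(L_V,b)\subset V$ via $\syza(L_V^\eps)$ and Hamiltonian invariance is the same as the paper's \cref{prop:syzcontainstrop} argument, and your diagnosis of why the reverse implication is out of reach agrees with the paper's.
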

The main step in proving the forward direction of the conjecture is to establish the faithfulness of the Lagrangian brane lift, that is showing that $\tropa((L_V, b))=V$. The primary result of this article is to prove faithfulness (for \emph{all} tropical subvarieties admitting unobstructed Lagrangian lifts).
\begin{maintheorem}[Restatement of \Cref{thm:support}]
    Let $V\subset Q$ be a tropical subvariety. Let $(L_V^\eps, b)$ be a Lagrangian brane lift of $V$. Then $\tropa(L_V^\eps,b)=V$.
\end{maintheorem}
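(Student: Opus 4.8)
The equality $\tropa(L_V^\eps,b)=V$ is proved by two inclusions of very unequal difficulty. For $\tropa(L_V^\eps,b)\subseteq V$ I would argue geometrically: a point $q\notin V$ has positive distance to $V$, and since the geometric lift is only canonical up to Hamiltonian isotopy I may replace $L_V^\eps$ by a Hamiltonian-isotopic brane whose projection lies in a strictly smaller collar of $V$ --- a move that is purely local over a ball about $q$ disjoint from the singular locus of $V$, where $L_V^\eps$ is, by its construction in \S\ref{sec:geometricrealization}, a product of linear $\RR$-factors with a lower-dimensional picture that slides off a single torus fibre. After this move $F_q$ is disjoint from the Lagrangian, so $\HF((L_V^\eps,b),(F_q,\del))=0$ for every unitary local system $\del$, and by Hamiltonian invariance of Floer cohomology of branes the same holds for $L_V^\eps$ itself.

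The substance is the reverse inclusion. Fix $q\in V$; I must exhibit $\del$ with $\HF((L_V^\eps,b),(F_q,\del))\ne 0$. Since $V\subseteq\syza(L_V^\eps)$, after a Hamiltonian perturbation $L_V^\eps$ meets $F_q$ cleanly along a submanifold $Z_q\subseteq F_q$, and a neighbourhood of $Z_q$ in $L_V^\eps$ is given by the explicit local model of the lift at $q$: a product of graph-type and cotangent-fibre-type pieces along the facet directions when $q$ is a smooth point (so $Z_q$ is a subtorus), and the pair-of-pants-type Lagrangian when $q$ is a vertex. The key analytic point is a localisation via the energy filtration: the coordinates $q_i=\log|x_i|$ restrict to harmonic functions on $J$-holomorphic curves for an adapted $J$, so the maximum principle forces a strip contributing to the $b$-deformed differential to stray from $\syza^{-1}(q)$ by an amount controlled by its area, while every disk correction and every $b$-insertion (the bounding cochain having positive Novikov valuation) also costs positive area; hence the Novikov-order-zero part of the deformed differential on $\CF((L_V^\eps,b),(F_q,\del))$ is the Morse--Bott differential of $Z_q$ twisted by a flat bundle depending only on $\del$ (and the order-zero part of $b$, which I absorb into $\del$). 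The resulting spectral sequence therefore has $E_1\cong H^\bullet(Z_q;\del|_{Z_q})$, and since $Z_q\ne\varnothing$ I may choose $\del$ with this cohomology nonzero.

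It then remains to propagate a nonzero leading-order class to $\HF$ itself, and this --- together with the local computation at the vertices --- is where I expect the real difficulty. When $q$ is a smooth point of $V$ the neighbourhood of $Z_q$ is a genuine product, exact in the product factor, and I would make the argument exact by pushing the rest of $L_V^\eps$ far from $F_q$ so that no strip can reach it, obtaining $\HF((L_V^\eps,b),(F_q,\del))\cong H^\bullet(T^{n-1};\Lambda)\otimes\HF(\ell,S^1)\ne 0$ directly. At a vertex the local Lagrangian is genuinely obstructed before passing to a bounding cochain, and one must both (i) compute $\HF$ of the pair-of-pants model with $F_q$ against a suitable local system, the new input, by hand in the local $(\CC^*)^n$, and (ii) argue that the higher-Novikov-order corrections from the global Lagrangian and from $b$ cannot cancel this class; for the latter I would exploit the module structure of $\HF((L_V^\eps,b),(F_q,\del))$ over $\HF((F_q,\del),(F_q,\del))\cong H^\bullet(T^n;\Lambda)$, under which the spectral sequence is one of modules so that a free summand supplied by the torus directions of $Z_q$ cannot be destroyed, with the family-Floer mirror of $L_V^\eps$ --- whose support is pinned down by the geometric construction, matching the hypersurface computation of \cite{hicks2019tropical} near each stratum --- as a fallback consistency check.
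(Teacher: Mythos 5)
Your overall skeleton (two inclusions, clean intersection with a subtorus $Z_q$, an energy filtration separating the local model from the global contributions) is the same as the paper's, and your forward inclusion is fine. But the reverse inclusion contains a genuine error at exactly the step you flag as "the real difficulty," and it already fails at \emph{smooth} points of $V$, not just at vertices. You claim that near a smooth point you can "push the rest of $L_V^\eps$ far from $F_q$ so that no strip can reach it" and read off $\HF\cong H^\bullet(T^{n-1})\otimes\HF(\ell,S^1)\neq 0$ directly. This is false: a Hamiltonian isotopy can only increase the area of the strips joining $F_q$ to the distant parts of $L_V^\eps$; it cannot remove them, and over the Novikov field their high-valuation contributions are not negligible. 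The paper's pair-of-pants computation (\cref{exam:pairofpantsupport}) is an explicit counterexample: for $q=(-a,-a)$ in the \emph{interior of an edge} and the trivial local system, the "large strip" $u^{qv}$ contributes $T^{-a}$ to $\langle d(x_\emptyset),x_1\rangle$ and $\HF(L_{V_{pants}},F_q)=0$. Nonvanishing holds only for the codimension-$k$ family of local systems solving $z_1-z_2+1=0$. Relatedly, your propagation argument at a vertex — that the spectral sequence is one of $H^\bullet(T^n)$-modules so "a free summand cannot be destroyed" — proves too much: $\CF(L_{V_{pants}},F_q)$ with trivial local system is such a module with nonzero $E_1$ page and vanishing cohomology. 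A nonzero $E_1$ page never suffices here.

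What is actually needed, and what the paper supplies, is a deformation-theoretic mechanism for \emph{solving for} the local system (or bounding cochain) on $F_q$ order by order in the Novikov filtration. The reverse isoperimetric inequality (\cref{lemma:reverseIso}) — the quantitative version of your maximum-principle remark — produces a gap $\lambda_0\ll\lambda_1$ so that below valuation $\lambda_1$ the module action $m^2(x_\emptyset\otimes -):\CF(F_q)\to \CF(L_V,F_q)$ agrees with the local computation and is surjective onto $CF^1(L_V,F_q,\Lambda_{\geq\lambda_0})/CF^1(L_V,F_q,\Lambda_{\geq\lambda_1})$; \cref{lem:submoduledeformation} then iteratively corrects $\del$ (or $b'$) so that $m^1(x_\emptyset)=0$ exactly, after which the same surjectivity shows the whole differential vanishes. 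Your proposal never sets up this iteration, and without it the higher-order corrections you acknowledge will, for a generic choice of $\del$, kill the class you constructed at $E_1$. Note also that the paper's argument applies uniformly at any point whose local model is $L_{\underline V,m}$ — i.e., on the open strata — and recovers the lower-dimensional strata by a closure argument on the mirror side rather than by a separate vertex computation.
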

When we can apply homological mirror symmetry, we obtain the forward direction of \cref{conj:unobstructednessisrealizable}. Depending on the affine manifold $Q$ and Lagrangian $L_V$, we may require \cref{ass:hmscn}, which states that the family Floer construction of \cite{abouzaid2017homological}  extends to the non-compact and unobstructed setting.

\begin{maintheorem}[Restatement of \cref{cor:realizability}]
   Suppose \cref{ass:hmscn}. Let $V\subset \RR^n$ be a tropical subvariety. Suppose there exists $(L_V,b)\subset (\CC^*)^n$ a Lagrangian brane lift of $V$. Then $V$ is $B$-realizable.
\end{maintheorem}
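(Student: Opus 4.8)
The plan is to transport the Lagrangian brane $(L_V, b)$ across homological mirror symmetry to a complex of coherent sheaves on $X_B$, and to recognise its cohomological support as the analytic lift of $V$ demanded by $B$-realizability; \Cref{thm:support} then supplies the equality of tropicalizations that makes this work. This is precisely the path HMS $\to$ Cohomological Support $\to$ $B$-tropicalization traced along the top-right of the diagram in the introduction.

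First I would invoke \cref{ass:hmscn} to obtain a family Floer functor $\Phi$ from the relevant (unobstructed, non-compact) enlargement of $\Fuk(X_A)$ to $D^b\Coh(X_B)$, fully faithful on the objects in play, and set $\mathcal F_V := \Phi(L_V, b)$. I would then let $Y_V := \Supp(\mathcal F_V)$ be its cohomological support. Because $\mathcal F_V$ is a bounded complex with coherent cohomology, $Y_V$ is a closed analytic subset of $X_B = (\Lambda^*)^n$, being the union of the supports of finitely many coherent sheaves. The remaining task is to identify $\tropb(Y_V)$.

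The crux is the behaviour of $\Phi$ on Lagrangian torus fibers. By the defining property of the family Floer mirror, a fiber $F_q = \syza^{-1}(q)$ equipped with a unitary local system $\del$ goes to the skyscraper $\mathcal O_{y(q,\del)}$ at a point $y(q,\del)\in X_B$ with $\tropb(y(q,\del)) = q$, and every closed point of $X_B$ arises in this way. Full faithfulness identifies $\HF((L_V, b), (F_q, \del))$ with $\Ext^\bullet_{X_B}(\mathcal F_V, \mathcal O_{y(q,\del)})$, which is nonzero exactly when $y(q,\del)\in Y_V$. Hence $q \in \tropa(L_V, b)$ if and only if some point of $Y_V$ lies over $q$, i.e. if and only if $q \in \tropb(Y_V)$; so $\tropb(Y_V) = \tropa(L_V, b)$. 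Invoking \Cref{thm:support} to get $\tropa(L_V, b) = V$, I conclude $\tropb(Y_V) = V$, so $Y_V$ is a $B$-realization of $V$.

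The hard part is making the third paragraph rigorous under \cref{ass:hmscn}: one needs the family Floer correspondence to genuinely send the torus fibers (with their local systems) to skyscrapers at closed points of $(\Lambda^*)^n$ over the correct base point, to have enough full faithfulness to read the Floer cohomology of $(L_V, b)$ against a fiber as the $\Ext$-group detecting support, and to know that neither the non-compactness of $X_A$ nor the presence of the bounding cochain $b$ disturbs this. A secondary issue is verifying that $\mathcal F_V$ has bounded coherent cohomology, so that $Y_V$ is a genuine closed analytic subvariety whose tropicalization can be computed fiberwise as above.
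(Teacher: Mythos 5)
Your overall architecture is the same as the paper's: use \cref{ass:hmscn} to produce a coherent object $\mathcal F(L_V,b)$ on $X_B$, take its support $\YB(L_V,b)$, identify $\tropb(\YB(L_V,b))$ with $\tropa(L_V,b)$ via the skyscraper/fiber correspondence, and then pin down the latter. (Incidentally, you ask for more than the paper's assumption provides: \cref{ass:hmscn} only asserts the existence of a coherent sheaf with $\hom(\mathcal F(L,b),\mathcal O_z)=HF^0((L,b),(F_q,\nabla))$ for each point $z=(F_q,\nabla)$, not a fully faithful functor; but that weaker statement is all your argument actually uses.)

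The genuine gap is in the step ``Invoking \cref{thm:support} to get $\tropa(L_V,b)=V$.'' What \cref{thm:support} actually proves is a local statement at points $q$ over which $L_V$ coincides with the standard conormal model $L_{\underline V,m}$, i.e.\ at points of the open top-dimensional strata $V^{(0)}$; the reverse isoperimetric estimate and the deformation argument of \cref{lem:submoduledeformation} are only carried out there. Combined with $\tropa(L_V,b)\subset\syza(L_V)$ this yields the sandwich $V^{(0)}\subset\tropa(L_V,b)\subset V$ (\cref{cor:atropicalization}), but says nothing about fibers over the codimension $\geq 1$ strata (vertices, edges of higher-dimensional $V$, etc.), where the local model is not a conormal torus and no Floer computation is performed. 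So you cannot conclude $\tropb(Y_V)=V$ directly from faithfulness. The paper closes this gap on the $B$-side: since $\YB(L_V,b)$ is a closed analytic subset, $\tropb(\YB(L_V,b))$ is a finite union of closed rational polyhedra by Gubler's theorem, hence is a closed set containing $V^{(0)}$ and therefore containing $\overline{V^{(0)}}=V$; together with the reverse inclusion this gives $\tropb(\YB(L_V,b))=V$. You already observe that $Y_V$ is a closed analytic subset, so the fix is available to you — you just need to deploy that closedness to sweep up the lower-dimensional strata rather than attributing the full equality to \cref{thm:support}.
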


The second goal of this paper is to show that this can be used to produce realizability criteria. We first recover a theorem of \cite{nishinou2006toric}.
\begin{maincorollary}[Restatement of \cref{thm:genuszerounobstructed}]
    Suppose \cref{ass:hmscn}. Every smooth genus zero tropical curve $V\subset \RR^n$ has a Lagrangian brane lift $(L_V, b)$ and is, therefore, $B$-realizable.
\end{maincorollary}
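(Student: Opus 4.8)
The statement packages two claims: that the geometric Lagrangian lift $L_V$ of a smooth genus zero tropical curve $V\subset\RR^n$ (which exists by the constructions of \S\ref{sec:geometricrealization}) can be equipped with a bounding cochain $b$, and that $V$ is then $B$-realizable. The second claim is immediate from \cref{cor:realizability} together with \cref{ass:hmscn} — the faithfulness theorem \cref{thm:support} additionally identifies $V$ with $\tropa(L_V,b)$, but is not logically needed here. So the plan is entirely devoted to producing the bounding cochain, i.e.\ to proving that $L_V$ is unobstructed.

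A smooth genus zero tropical curve is a (connected) tree all of whose vertices are trivalent, and balancing forces the three primitive edge directions at each vertex to sum to zero; if $V$ has $\ell$ unbounded ends, it has $\ell-2$ vertices and $\ell-3$ bounded edges. The lift $L_V$ is assembled from local models: over a neighborhood of each trivalent vertex it is the product of the Lagrangian ``pair of pants'' in a coordinate $(\CC^*)^2$ with a Lagrangian fiber torus $T^{n-2}$, and over each edge it is a Lagrangian cylinder $\cong T^{n-1}\times\RR$. I would argue by induction on the number of bounded edges. In the base case $\ell=3$ there are no bounded edges, so $L_V$ is, up to Hamiltonian isotopy, a single vertex piece; its unobstructedness is the basic input from \S\ref{sec:unobstructedness}: the $T^{n-2}$ factor bounds no holomorphic disk (any holomorphic disk composed with $\Log$ is harmonic with constant boundary value, hence constant), and the pair-of-pants factor is the simplest instance of the hypersurface case and is unobstructed.

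For the inductive step, take $V$ with $k\ge 1$ bounded edges and fix a bounded edge $e$. Removing an interior point of $e$ disconnects the tree into two smooth genus zero tropical curves $V_1$ and $V_2$, each with strictly fewer than $k$ bounded edges, the two severed halves of $e$ becoming new unbounded ends (balancing at the endpoints of $e$ is unaffected, and the new ends inherit the primitive direction of $e$). By the inductive hypothesis $L_{V_1}$ and $L_{V_2}$ are unobstructed, with bounding cochains $b_1$ and $b_2$. After a Hamiltonian isotopy and truncation of the cylindrical ends of the $L_{V_i}$ lying over $e$, the lift $L_V$ is recovered by gluing $L_{V_1}$ and $L_{V_2}$ along the matching $T^{n-1}$-fibers over the cut point, and one wants to glue $b_1$ and $b_2$ to a bounding cochain $b$ on $L_V$. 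The hard part — and the main obstacle — is the holomorphic disk analysis at the gluing region: a holomorphic disk on $L_V$ whose boundary crosses the neck does not descend to $L_{V_1}$ or $L_{V_2}$, so it is not accounted for by $b_1$ or $b_2$. One makes the neck long (legitimate, since unobstructedness is a Hamiltonian isotopy invariant and $e$ may be taken arbitrarily long), so that disks crossing the neck carry high powers of the Novikov parameter, and runs an SFT-type compactness and gluing argument, order by order in the Novikov filtration, to show that the contributions of disks broken along the neck organize so that a bounding cochain for $L_V$ can be assembled from $b_1$ and $b_2$.

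Granting this gluing mechanism from \S\ref{sec:unobstructedness}, the induction closes and produces a Lagrangian brane lift $(L_V,b)$ for every smooth genus zero tropical curve; \cref{cor:realizability} then yields $B$-realizability. Since the inductive construction mirrors, on the symplectic side, Nishinou and Siebert's toric degeneration argument — each vertex piece playing the role of a rational curve meeting the toric boundary, each gluing the role of smoothing a node — this reproves their theorem \cite{nishinou2006toric} that smooth genus zero tropical curves are $B$-realizable by purely symplectic means. I expect the two delicate points to be the base-case unobstructedness of the vertex model and, above all, the neck-stretching disk count that makes the bounding cochains glue.
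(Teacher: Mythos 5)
Your reduction of the statement to unobstructedness of $L_V$, and the passage from a brane lift to $B$-realizability via \cref{cor:realizability}, are fine --- though note that faithfulness \emph{is} logically needed for that second half: \cref{cor:realizability} invokes \cref{cor:atropicalization} (a consequence of \cref{thm:support}) to get $V^{(0)}\subset \tropa(L_V,b)$, without which you would only know the mirror support is contained in $V$, not that it realizes $V$. The genuine gap is in the inductive step. The entire content of your argument is the assertion that bounding cochains $b_1,b_2$ on the two halves $L_{V_1},L_{V_2}$ can be glued across a long neck, and you do not prove this; you only name it. No such gluing mechanism exists in \S\ref{sec:unobstructedness}, and it is not soft: under neck-stretching, disks whose boundary crosses the neck degenerate into configurations with punctures asymptotic to Reeb chords in the separating hypersurface, not merely into pairs of disks on $L_{V_1}$ and $L_{V_2}$, so one must set up and solve an actual SFT compactness-and-gluing problem to see that these contributions are absorbed into a corrected $b$. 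Moreover, your justification for stretching the neck is wrong: lengthening the bounded edge $e$ is a Lagrangian isotopy with nonzero flux, not a Hamiltonian isotopy --- \cref{example:obstructedline} shows the area of the relevant disk is exactly the affine length of the internal edge --- so unobstructedness at large neck length does not automatically transfer back to the given $V$.

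The paper's proof never cuts the curve. By \cref{lemma:equivadmissibility}, $L_V$ is admissible for a tropicalized potential $W$, and its boundary at infinity $M=\bigsqcup_{e\in V_\infty^{(0)}}T^{n-1}_e$ is a disjoint union of tori bounding no holomorphic disks. \Cref{thm:unobstructed} (via the filtered $A_\infty$ \cref{lemma:unobstructing}) then yields a bounding cochain provided $H^1(M_0)\to H^2(L_V,M_0)$ surjects, where $M_0$ is the union of all but one boundary component; this surjectivity is exactly the topological statement \cref{lem:tropcurvetopology}, proved by a Mayer--Vietoris induction on the tree. The curvature is cancelled order by order by cochains supported near the ends, and no disk gluing is required. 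Your route could conceivably be completed, but as written the key analytic step is missing, and it is the hard part.
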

The results of \cite{nishinou2020realization} give necessary and sufficient conditions for when a tropical curve can be realized by a family of algebraic curves in a degenerating family of complex tori. In contrast to those results, our results show that every 3-valent tropical curve can be realized by a closed analytic subset. The following result \emph{does not} assume \cref{ass:hmscn}.
\begin{maincorollary}[Restatement of \cref{thm:abeliansurfaceunobstructedness}]
    Let $Q=T^2$ be a tropical abelian surface. Let $V\subset Q$ be a 3-valent tropical curve. $V$ has a Lagrangian brane lift $(L_V, 0)$, and is, therefore, $B$-realizable.
\end{maincorollary}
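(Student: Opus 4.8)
The proof splits into constructing an unobstructed Lagrangian brane lift $(L_V^\eps,0)$ and then converting it into a $B$-realization. For the conversion, the key observation is that the Lagrangian torus fibration $\syza\colon X_A\to Q=T^2$ of a tropical abelian surface has no singular fibers — $X_A$ is a flat symplectic $4$-torus — so the family Floer construction of \cite{abouzaid2017homological} applies directly, without the non-compact, unobstructed extension of \cref{ass:hmscn}. Granting $(L_V^\eps,0)$: the faithfulness theorem (\Cref{thm:support}) gives $\tropa(L_V^\eps,0)=V$; family Floer sends $L_V^\eps$ to a complex $\mathcal F_V\in D^b\Coh(X_B)$ with closed analytic cohomological support $Y_V:=\Supp\mathcal F_V$; and the compatibility of the $A$- and $B$-tropicalizations under homological mirror symmetry proved in §\ref{sec:realizableHMS} gives $\tropb(Y_V)=\tropa(L_V^\eps,0)=V$, so $Y_V$ realizes $V$.

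To build $(L_V^\eps,0)$, take the geometric Lagrangian lift of §\ref{sec:geometricrealization} (following \cite{matessi2018lagrangian,mikhalkin2018examples,hicks2019tropical,mak2020tropically}): a Lagrangian surface $L_V^\eps\subset X_A$ glued from a Lagrangian cylinder over each edge of $V$ and a Lagrangian pair of pants over each trivalent vertex. I would first verify that $L_V^\eps$ can be taken \emph{graded}, i.e.\ $\mu_{L_V^\eps}=0\in H^1(L_V^\eps;\ZZ)$ — the local models are graded and the combinatorics at a trivalent vertex makes their gradings agree, which is where $3$-valence enters. Since $X_A$ is Calabi–Yau, $2c_1(X_A)=0$; together with $\mu_{L_V^\eps}=0$ this forces the Maslov-index homomorphism on $H_2(X_A,L_V^\eps;\ZZ)$ to vanish, so $L_V^\eps$ bounds only Maslov-index-$0$ holomorphic disks, and $\pi_2(X_A)=0$ excludes sphere bubbling. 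Hence the curvature of the filtered $A_\infty$-algebra $\CF(L_V^\eps)$ has the form $\mathfrak m_0(1)=W(L_V^\eps)\cdot[\mathrm{pt}]$, with $W(L_V^\eps)\in\Lambda_{>0}$ a signed count of rigid Maslov-$0$ disks, and $(L_V^\eps,0)$ is an honest unobstructed brane exactly when $W(L_V^\eps)=0$.

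The crux is therefore to prove $W(L_V^\eps)=0$. I would degenerate the almost complex structure toward the tropical limit — equivalently, neck-stretch along the boundaries of the neighborhoods of $V$ used in the construction — and use Gromov–SFT compactness (available since $X_A$ and $L_V^\eps$ are compact) to see that any rigid Maslov-$0$ disk bounded by $L_V^\eps$ would break into holomorphic disks each confined to a single local piece: a cylinder over an edge or a pair of pants over a trivalent vertex. One then checks that no such piece carries a rigid Maslov-$0$ disk: the cylinders are product-like and carry none, and the pair of pants over a trivalent vertex has an $S_3$-symmetry permuting its three ends that forces its rigid Maslov-$0$ disks to cancel in the signed count (or to be absent). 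Thus for the degenerate almost complex structure $L_V^\eps$ bounds no rigid Maslov-$0$ disk, so $W(L_V^\eps)=0$, and $(L_V^\eps,0)$ — with its grading, a spin structure, and the trivial local system — is an unobstructed brane; the first paragraph then concludes. The main obstacle is exactly this last step: controlling the degeneration and performing the local disk count; gradedness of $L_V^\eps$ is a secondary point, and everything else follows from the results already established in the paper.
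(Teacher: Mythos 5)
Your first paragraph matches the paper: compactness of $X_A$ is exactly why \cref{ass:hmscn} can be dropped, and the conversion to a $B$-realization via \cref{thm:support} and \cref{cor:realizability} is the intended route. The problem is the second half. You reduce unobstructedness to showing that a signed count $W(L_V^\eps)$ of rigid Maslov-index-$0$ disks vanishes, and you propose to compute it by neck-stretching to the tropical limit and cancelling local contributions by an $S_3$-symmetry of the pair of pants --- and you concede that carrying out this degeneration and the local disk count is the main unresolved obstacle. That is a genuine gap, and it is also an unnecessary one: since $L_V^\eps$ is a \emph{graded surface}, the moduli space of unparametrized Maslov-index-$0$ holomorphic disks with boundary on $L_V^\eps$ has expected dimension $\dim L_V^\eps-3+\mu=2-3+0=-1$, so for a generic compatible almost complex structure there are \emph{no} such disks whatsoever. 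The curvature $m^0$ therefore vanishes identically and $L_V^\eps$ is tautologically unobstructed with $b=0$; no degeneration, no local count, and no symmetry cancellation is needed. This is precisely the paper's argument (the same one used for curves in $\RR^2$ and in \cref{exam:abeliansurface}), and it is the step your proposal leaves unproven.

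A secondary discrepancy: you locate the role of $3$-valence in making the local gradings agree at vertices, whereas in the paper $3$-valence enters through the construction of the geometric lift itself --- each trivalent vertex is assigned an affine dimer model via \cite{holmes2022affine}, and the Lagrangian is built from that dimer via \cite{hicks2021tropical}. Smoothness/$3$-valence is what makes the local model at every vertex a pair of pants, so it is a hypothesis of the geometric realization step, not only of the grading check. If you fix the unobstructedness argument as above, the rest of your outline goes through.
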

In summary, we can recover $B$-realizability results using unobstructedness for the first 5 cases in \cref{tab:realizability}.
\begin{table}\centering
        \begin{tabular}{p{3.5cm}|c|c|c}
        $V$ and $Q$  & \parbox{3cm}{\centering $A$-model\\ (Unobstructedness)} & \parbox{3cm}{\centering $B$-model\\ (Realizability)} & HMS Status\\\hline\hline
        Curves in abelian surfaces & \Cref{thm:abeliansurfaceunobstructedness} & \cite{nishinou2020realization}\footnotemark& $\checkmark$\\\hline
        Curves in $\RR^2$ & \cite{hicks2020tropical} & \cite{mikhalkin2005enumerative}&  (*)\\\hline
        Hypersurfaces of $\RR^n$ & \cite{hicks2020tropical} & Folklore&  (*)+(**)\\\hline
        Hypersurfaces in abelian varieties & \Cref{cor:abelianvarietyrealizable} & --- & (**)\\\hline
        Genus 0 curves in $\RR^n$ & Theorem C & \cite{nishinou2006toric} & (*)+(**)\\\hline
        Compact genus 0 curves in $\dim(Q)=3$ & \cite{mak2020tropically} & \cite{nishinou2006toric}& --- \\\hline
        Well-Spaced Genus 1 curves & Spec. in \cref{sec:speyer} & \cite{speyer2014parameterizing} & (*)+(**)
    \end{tabular}
    \caption{Relating $A$-unobstructedness to $B$-realizability. $(*)$ and $(**)$ refer to the needed extensions of family Floer cohomology (\cref{ass:hmscn}) to the non-compact and non-tautologically unobstructed settings.}
    \label{tab:realizability}
\end{table}
\footnotetext{The realization result of \cite{nishinou2006toric} considers $B$-tropicalizations coming from degenerating families of abelian surfaces so that a tropical curve is realized by a parameterized algebraic curve. The $B$-realization we take is by closed analytic subsets. In the setting of genus zero stable tropical curves in toric varieties these tropicalizations can be related by  \cite{ranganathan2017skeletons}. }
We also provide some insight into the existence of holomorphic curves with boundary on tropical Lagrangian submanifolds.
\begin{mainexample}[Restatement of \cref{example:obstructedline}]
    Let $V_c\subset \RR^3$ be a generic tropical line.
    The Lagrangian $L_{V_c}$ is unobstructed, but not tautologically unobstructed.
\end{mainexample}

\subsection{Outline}
In \cref{sec:background}, we give a toy computation that explores the entire road map above for a simple example, $V_{pants}\subset \RR^2$, the tropical pair of pants. 
In addition to providing context for the remainder of the paper, the computation reviews some background for tropical geometry and symplectic geometry. We also use this section to fix the notation for the paper. It is our hope that this section will be accessible to both tropical and symplectic geometers.

\Cref{sec:geometricrealization} discusses the geometric lifting problem. \Cref{def:geometricLagrangianlift} defines when a family of Lagrangian submanifolds $L^\eps_V$ is a geometric Lagrangian lift of a tropical subvariety $V$. We show that \cref{def:geometricLagrangianlift} distinguishes tropical subvarieties among all polyhedral complexes as the ones which permit geometric Lagrangian lifts.
\Cref{def:geometricLagrangianlift} requires that geometric Lagrangian lifts are monomially admissible, graded, and spin. In \cref{subsec:admissible,subsec:graded,subsec:spin}, we show that known constructions of geometric Lagrangian lifts of tropical subvarieties from \cite{matessi2018lagrangian,mikhalkin2018examples,hicks2019tropical} satisfy these conditions. 
We also prove \cref{lem:tropcurvetopology} which shows that for smooth genus zero tropical curves the map $H^2(L_V)\to H^2(\partial L_V)$ is an injection.

\Cref{sec:obstruction} investigates Lagrangian submanifolds which can be unobstructed by a bounding cochain (Lagrangian branes). We provide a brief overview of the pearly model for Lagrangian submanifolds in \cref{subsec:pearlymodel}. This is followed by examples of unobstructed geometric Lagrangian lifts (Lagrangian brane lifts) of tropical subvarieties in \cref{subsec:knownunobstructed} (summarized in \cref{tab:realizability}).
\Cref{subsec:unobstructedatboundary} gives a new method for checking unobstructedness of Lagrangian submanifolds inside non-compact symplectic spaces which have a potential function $W:X_A\to \CC$  (see \cref{def:bottleneck}).
\begin{maintheorem}[Restatement of \cref{thm:unobstructed}]
    Let $W: X_A\to \CC$ be a symplectic fibration outside of a compact set of $\CC$. 
    Let $L\subset X_A$ be a  $W$-admissible Lagrangian submanifold with boundary $M\subset W^{-1}(t)$, $t\in \RR_{\gg 0}$. Suppose $M$ is a tautologically unobstructed Lagrangian submanifold of $W^{-1}(t)$, and the connecting map $H^1(M)\to H^2(L,M)$ is surjective. Then there exists a bounding cochain $b$ so that $(L, b)$ is a Lagrangian brane.
\end{maintheorem}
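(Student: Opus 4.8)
The plan is to reduce the statement to the vanishing of the Maurer--Cartan obstruction classes of the filtered $A_\infty$ algebra $\CF(L)$, and to deduce that vanishing from the hypothesis on the connecting map together with the tautological unobstructedness of the boundary. First I would set up the pearly model for $\CF(L)$ using a Morse function on $L$ with no critical points in the collar of $\partial L = M$ and with gradient adapted to the bottleneck (\cref{def:bottleneck}). The first point to check is that this is well defined: since $L$ is $W$-admissible and $W$ is a symplectic fibration outside a compact set, a maximum-principle argument applied to $W\circ u$ --- reflecting $W\circ u$ across the ray $W(\partial L)$ along which the boundary of $u$ runs when it meets the collar --- shows that every $J$-holomorphic disk $u$ with boundary on $L$ is confined to a fixed compact neighborhood of the interior of $L$, so that no energy escapes through the end. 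A Gromov limit of such disks is then a nodal configuration of confined disks with boundary on $L$, possibly together with a component sliding into the end; but such a component would be a disk with boundary on a fiberwise copy of $M$ inside some $W^{-1}(s)$, and for a suitable $J$ there are none, since $M$ is tautologically unobstructed. Standard compactness and transversality arguments then make $\CF(L)$ --- and in particular its curvature term $\mathfrak m_0$, in cohomological degree $2$ --- well defined.

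The role of the bottleneck is to compare the deformation theory of $L$ with that of its boundary. Neck-stretching along the bottleneck hypersurface, together with the confinement above, relates the Maurer--Cartan obstructions of $L$ to the obstructions of $M$ with the zero bounding cochain, all of which vanish because $M$ is tautologically unobstructed; concretely, the leading-order obstruction class $[\mathfrak m_0] \in H^2(L;\Lambda)$ maps to $0$ under the topological restriction $i^\ast\colon H^2(L;\Lambda) \to H^2(M;\Lambda)$. Now, because $\Lambda$ is a field, the long exact sequence of the pair $(L,M)$ shows that surjectivity of $H^1(M)\to H^2(L,M)$ is equivalent to injectivity of $i^\ast$: surjectivity forces $H^2(L,M)\to H^2(L)$ to vanish, whence $\ker i^\ast = 0$. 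Therefore $[\mathfrak m_0] = 0$, so the leading obstruction is killed by a degree-one primitive, which we normalize to restrict to $0$ on $M$. Iterating along the Novikov filtration, each successive obstruction again lies in $H^2(L;\Lambda)$ and restricts trivially to $H^2(M;\Lambda)$ by the same comparison with the boundary, hence vanishes; completeness of $\Lambda$ gives convergence of the partial solutions to a bounding cochain $b$, and $(L,b)$ is the desired Lagrangian brane.

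The main obstacle is the comparison between the obstruction theories of $L$ and $M$: this is where the bottleneck is essential, and it requires the neck-stretching and energy-confinement analysis near the bottleneck hypersurface (so that degenerating disk configurations split cleanly into an interior piece and a piece lying in a fiber on $M$), a transversality scheme for disks with boundary on $L$ compatible with this degeneration, and the verification that the primitives produced at each inductive step can be chosen compatibly with the normalization $b|_M = 0$. The remaining ingredients --- the maximum-principle confinement, the equivalence of the two forms of the hypothesis, and the Maurer--Cartan recursion --- are then routine.
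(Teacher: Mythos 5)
Your overall strategy---induct on the Novikov filtration, show each obstruction class in $H^2(L)$ restricts to zero on the boundary, and kill it using the long exact sequence of the pair---is the same as the paper's, which packages the induction as \cref{lemma:unobstructing} applied to the exact sequence $\CF(L,M)\to\CF(L)\to\CF(M)$ furnished by \cref{thm:sympfibrationexactsequence}. Your observation that surjectivity of $H^1(M)\to H^2(L,M)$ is equivalent to injectivity of $H^2(L)\to H^2(M)$ is correct, and the base case of the induction goes through. The gap is in the inductive step.

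First, the normalization ``$b|_M=0$'' is not available. The hypothesis forces the map $H^2(L,M)\to H^2(L)$ to vanish; it does not force $H^2(L,M)$ to vanish. The curvature chain lies in the relative complex (its projection to $\CF(M)$ is the curvature of $M$, which vanishes since $M$ bounds no disks), and its class in $H^2(L,M)$ is in general nonzero: it is the image of some class in $H^1(M)$ under the connecting map, so \emph{every} degree-one primitive in $\CM(L)$ restricts on $M$ to a representative of that class. If the primitive could be taken to vanish on $M$, the relative curvature class would be zero, which is not what the hypothesis gives. You must therefore allow the partial bounding cochain to have a nonzero restriction $c=\pi(b)$, a closed degree-one element of $\CF(M)$. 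But then your claim that the next obstruction ``restricts trivially to $H^2(M)$ by the same comparison with the boundary'' is exactly what needs proof: the restriction of the deformed curvature is $m^0_{(\CF(M),c)}=m^2(c,c)+m^3(c,c,c)+\cdots$, and for a general $A_\infty$ algebra with $m^0=0$ this need not vanish, nor even be exact beyond leading order (the higher terms are Massey-product-type obstructions to $c$ satisfying the Maurer--Cartan equation). This is precisely where the paper uses the additional input that $\CF(M)=\CM(M)$ is quasi-isomorphic to the anti-commutative DGA $\Omega^\bullet(M)$: by \cref{claim:gradedanticommutativechoice} one corrects $c$ to an honest bounding cochain of $\CF(M)$ agreeing with it to leading order, lifts the correction back to $\CF(L)$, and only then does the deformed curvature again lie in the relative ideal so that the induction can continue. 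Your proposal never explains why the restriction of the partial bounding cochain solves (or can be corrected to solve) the Maurer--Cartan equation on $M$, and without that the iteration stalls after the first step.
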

The proof uses a lemma on filtered $A_\infty$ algebras (\cref{lemma:unobstructing}).
Since we have previously proven in  \cref{lem:tropcurvetopology} that the geometric Lagrangian lifts $L_V$ of smooth genus zero tropical curves satisfy the criterion of \cref{thm:unobstructed}, we obtain that such $L_V$ are unobstructed (\cref{thm:genuszerounobstructed}).

In \cref{sec:faithfulness}, we prove faithfulness (\cref{thm:support}) which shows that the $A$-tropicalization (Floer theoretic support) of a Lagrangian brane lift $L_V$ is $V$.
The proof uses that the Lagrangian intersection Floer cohomology between $(L_V,b)$ and $F_q$ is a deformation of the cohomology of a subtorus of $F_q$. An application of \cref{lem:submoduledeformation} shows that this can be ``undeformed'' by a bounding cochain so that $HF^0((L,\nabla_0,b_0), (F_q,\nabla, b))= \Lambda$.

\Cref{sec:realizableHMS} applies the previous constructions to address questions of realizability for tropical subvarieties.  
\cite[Remark 1.1]{abouzaid2017family} states that we expect that the family Floer functor can be adapted to include unobstructed Lagrangians.
We instead use \cref{ass:hmscn} --- the weaker assumption that the family Floer construction of \cite{abouzaid2017homological} can be employed for unobstructed Lagrangian submanifolds in the Lagrangian torus fibration $(\CC^*)^n\to \RR^n$ to construct a sheaf on the mirror space. We give a brief outline of the modifications to \cite{abouzaid2017homological} which would be required to prove \cref{ass:hmscn}. With this assumption, we prove the forward direction of \cref{conj:unobstructednessisrealizable} in \cref{cor:realizability}. We also discuss  the first 5 cases in \cref{tab:realizability}.

Finally, we discuss evidence towards the reverse direction of \cref{conj:unobstructednessisrealizable}. This requires us to understand some of the holomorphic disks which appear on Lagrangian lifts of tropical subvarieties. In \cref{example:obstructedline}, we show that the lift of the tropical line in $\RR^3$ bounds a holomorphic disk whose symplectic area is dictated by the internal edge length on the tropical line.
We also discuss applications of $B$ non-realizability to obstructedness in \cref{sec:speyer}. We consider the superabundant tropical elliptic curve $V\subset \RR^3$ from \cite[Example 5.12]{Mikhalkin2004Amoebas} and provide a sketch for how Speyer's well-spacedness criterion might be recovered from holomorphic disk counts on $L_V$.
\Cref{app:jacobian} looks at how to relate tropical line bundles on tropical curves to Lagrangian isotopies of their geometric Lagrangian lifts. We conjecture a relation between superabundance of a tropical curve $V$, and the relative ranks of $HF(L_V, b)$ and $H(L_V)$ (wide versus non-wide).

We provide some auxiliary results in the appendices. \Cref{app:pearlymodel} discusses how to adapt the pearly model of Floer cohomology in \cite{charest2019floer} to the setting of non-compact spaces equipped with a potential $W: X\to \CC$. In \cref{app:ainfinity}, we prove the results on filtered $A_\infty$ algebras used in this paper.

 \subsection{Acknowledgements}
	I'd particularly like to thank Ailsa Keating, Denis Auroux, Mark Gross, Grigory Mikhalkin, Nick Sheridan, and Ivan Smith for the many mathematical conversations which directly and indirectly contributed to this project. Additionally, I thank Andrew Hanlon and Umut Varolgunes for providing me with some thoughtful feedback and suggestions for this article.
I'm especially indebted to Dhruv Ranganathan, who also answered my many questions about tropical geometry; and Jean-Philippe Chass\'e and  Yoon Jae Nick Nho, who corrected an error in the reverse isoperimetric inequality for Lagrangian intersection Floer cohomology.
This article benefitted from the thoughtful comments and feedback of an anonymous referee.
Finally, the computation of holomorphic strips in \cref{exam:pairofpantsupport} stems from a discussion with Diego Matessi.
A portion of this work is adapted from Section 4.3 of my Ph.D. thesis at UC Berkeley.
This paper was supported by EPSRC Grant N03189X/1 at the University of Cambridge and ERC Grant 850713 (Homological mirror symmetry, Hodge theory, and symplectic topology) at the University of Edinburgh.  \section{A guided calculation to the support of the Lagrangian pair of pants}
	\label{sec:background}
	This section contains an expository computation that is designed to frame the main ideas of the paper, provide background, and fix notation.
The exposition here is not intended to be comprehensive, although we hope that through explicit examples, direct computations, and additional references, we've made this section accessible to both the tropical and symplectic geometry communities.
As a result, the materials outside of \cref{exam:pairofpantsupport,sec:disconnect} are expository.
As we will frequently use notation from \cref{exam:runningexample,exam:localsystems}, we suggest that the readers take a look at these computations of Lagrangian intersection Floer cohomology for conormal bundles in the cotangent bundle of the torus.

\subsection{\texorpdfstring{$A$-model, $B$-model}{A-model, B model}, and Lagrangian torus fibrations}
We provide a high-level overview of the \cite{strominger1996mirror,gross2001topological} viewpoint on mirror symmetry.
Let $Q$ be an integral affine manifold, that is a manifold equipped with a choice of integrable full-rank lattice $T_\ZZ Q \subset TQ$.
This identifies a dual lattice $T^*_\ZZ Q\subset T^*Q$, and also a flat connection on $TQ$.
There are three kinds of geometries that we may associate with $Q$: symplectic geometry, complex geometry, and tropical geometry.
\subsubsection*{$A$-model}
A symplectic manifold is a $2n$-manifold $X_A$ with a choice of two form $\omega\in \Omega^2(X_A)$ which is closed ($d\omega=0$) and nondegenerate ($\omega^n\neq 0$).
The submanifolds of interest for us in $X_A$ are Lagrangian submanifolds $L\subset X_A$, which are $n$-dimensional submanifolds on which the symplectic form vanishes ($\omega|_L=0$).
For any manifold $Q$, the cotangent bundle $T^*Q$ (whose local coordinates are $(q, p)$) carries a canonical symplectic form $\sum_{i=1}^n dq_i\wedge dp_i$.
This descends to a symplectic form on the quotient $X_A:= T^*Q/T^*_\ZZ Q$.

Given an integral affine submanifold $\underline V\subset Q$ so that $T_\ZZ \underline V\subset T_\ZZ Q$, the periodized conormal bundle $L_{\underline V}:=N^*\underline V/N^*_\ZZ \underline V\subset X_A$ is an example of a Lagrangian submanifold.
The simplest example is when we pick a point $q\in Q$ so that \begin{equation}
    L_{\underline q} = N^*q/N^*_\ZZ q= T^*_qQ/T^*_{q,\ZZ}Q
    \label{eq:conormalmodel}
\end{equation}
is a Lagrangian torus of $X_A$. We will call this Lagrangian torus $F_q\subset X_A$.
For this reason, we call the projection $\syza:X_A\to Q$  a Lagrangian torus fibration.
\subsubsection*{$B$-model}
We can also build an almost-complex manifold from the data of $Q$.
An almost complex structure on $X_B^\CC$ is an endomorphism $J: TX_B^\CC\to TX_B^\CC$ which squares to $-\id$.
The submanifolds of interest in the $B$-model are the \emph{almost-complex submanifolds} $\YB^\CC \subset X_B^\CC$ whose tangent spaces are fixed under the almost complex structure so that $J(T_{ y} \YB^\CC)= T_{ y}\YB^\CC$.

As $Q$ is integral affine, there exists a connection on $TQ$ whose flat sections are locally constant sections of $T_\ZZ Q$.
This provides a splitting $T(TQ)=T_qQ\oplus \ker(\pi)$.
We define an almost complex structure on $TQ$ which interchanges the components of this splitting with a sign:
\[J:=\begin{pmatrix} 0 & -\id \\ \id & 0 \end{pmatrix}.\]
The almost complex structure on $TQ$ descends to an almost complex structure on $X_B^\CC:= T^*Q/T^*_\ZZ Q$; the fibers of $\syzb: X_B^\CC\to Q$ are real tori.

Given an integral affine submanifold $\underline V\subset Q$, the periodized tangent bundle
\[ Y_{\underline V}^\CC:=T\underline{ V}/T_\ZZ \underline{V}\subset X_B^\CC\]is an example of an almost-complex submanifold. If we start with $q\subset Q$ a point, we see that $ Y_q\subset X_B^\CC$ is a point of $X_B^\CC$.
\subsubsection*{Mirror symmetry from Lagrangian torus fibrations}
We now describe in more detail the relationship between the Lagrangian tori of $X_A$ and the points of $X_B^\CC$. First, we note that for fixed $q\in Q$, there is a torus worth of points $ z$ in $X_B^\CC$ with the property that $\syzb( z)=q$.

In contrast to the complex lift, there is only one Lagrangian torus $F_q\subset X_A$ with $\syza(F_q)= \{q\}$.
To get a matching family of Lagrangian lifts to our complex lift, we consider  Lagrangian tori equipped with the additional data of a local system. Let $(F_q, \del)$ be a pair consisting of a Lagrangian torus $F_q$ and a choice of $U(1)$ local system on $F_q$. Then there is a bijection between pairs $(F_q, \del)\subset X_A$ and points $ z\in X_B^\CC$.
A similar story holds for the Lagrangian and complex lifts of integral affine subspace $\underline V \subset Q$.

To generalize beyond the submanifolds $\underline V, L_{\underline V}$ and $ Y_{\underline V}$ discussed above, we need to look at tropical geometry.

\begin{notation}
    Unless otherwise stated, we will only consider $Q=\RR^n$, so that  $X_A=(\CC^*)^n$ and $X_B^\CC = (\CC^*)^n$.
\end{notation}
\subsection{A quick introduction to tropical geometry and \texorpdfstring{$B$-tropicalization}{B-tropicalization}}
A convex polyhedral domain is the intersection of finitely many closed half-spaces in $\RR^n$,
\[\underline V= \{q\in Q \st \langle q, \vec v_i \rangle \geq \lambda_i\},\]
where $\vec v_i$ is a collection of vectors in $\RR^n$, and $\lambda_i$ is some set of constants in $\RR$. We say that this is a rational convex domain if each of the $\vec v_i \in \ZZ^n$, equivalently if there is a full lattice $T_\ZZ \underline V\subset T\underline{V}$ which is a sublattice of $T_\ZZ Q$.
A tropical subvariety is built out of these pieces. 
\begin{definition}
    A $k$-dimensional tropical subvariety $V\subset Q$ is a collection of $k$-dimensional rational convex polyhedral domains $\{\underline V_s\subset Q\}$ and weights $\{w_s\in \NN\}$ which are required to satisfy the following conditions:
            \begin{itemize}
        \item \emph{Polyhedral Complex condition:} At each pair of rational convex polyhedral domains, the intersection
              $\underline V_s \cap \underline V_t$
              is either empty, or a boundary facet of both $\underline V_s$ and $\underline V_t$.
        \item \emph{Balancing condition:} At facets $\underline W\subset \underline V_s$ let $\underline V_1, \ldots, \underline V_k$ be the rational polyhedral domains containing $\underline W$.
                      Consider lattices $T_\ZZ \underline W$, each of which is a sublattice of $T_\ZZ \underline V_i$ for each $i\in \{1, \ldots k\}$.
              Select for each $i$ a vector $\vec v_i\in T_\ZZ \underline V_i$ so that $T_\ZZ \underline V_i= T_\ZZ \underline W\oplus \langle \vec v_i\rangle$ as oriented lattices.
              We require that 
              \[\sum_{i}w_{i} \vec v_i \equiv 0 \in T_\ZZ Q/ T_\ZZ \underline W.\] 
    \end{itemize}
    \label{def:tropicalsubvariety}
\end{definition}
\begin{example}
    Consider the polyhedral domains  in $Q= \RR^2$
    \begin{align*}
        \underline V_1 = & \{(-t, 0)\st t\in \RR_{\geq 0}\} \\
        \underline V_2 = & \{(0,-t )\st t\in \RR_{\geq 0}\} \\
        \underline V_3 = & \{(t, t)\st t\in \RR_{\geq 0}\}.
    \end{align*}
    As the directions $\langle -1, 0 \rangle,\langle 0, -1\rangle , \langle 1, 1\rangle$ sum to zero this is balanced and gives us a tropical curve. 
        The collection of these three polyhedral domains is called the \emph{standard tropical pair of pants}. The curve $V_{pants}\subset \RR^2$ is drawn in \cref{fig:troppants}.
    \label{exam:tropicalpants}
\end{example}
We say that a tropical curve $V\subset \RR^n$ is \emph{smooth} if every 0-dimensional stratum is locally modelled after the pair of pants.
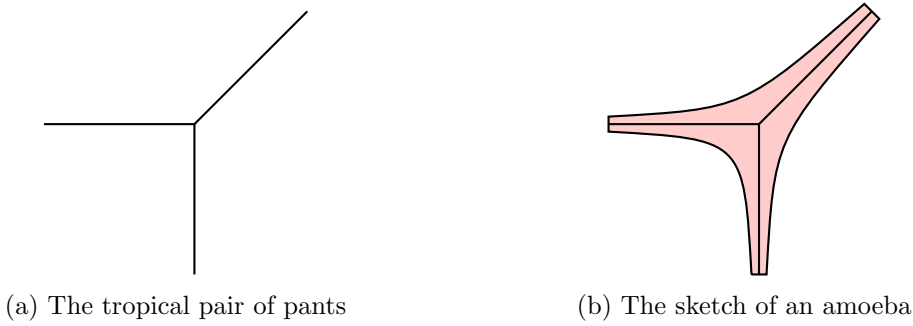
\begin{figure}
    \begin{subfigure}{.45\linewidth}
    \centering
    \begin{tikzpicture}

\draw (0,0) -- (-2,0);
\draw (0,0) -- (0,-2);
\draw (0,0) -- (1.5,1.5);
\end{tikzpicture}     \caption{The tropical pair of pants}
    \label{fig:troppants}
    \end{subfigure}
    \begin{subfigure}{.45\linewidth}
        \centering
        \begin{tikzpicture}

\draw[fill=red!20] (-0.1,-2) .. controls (-0.2,-0.2) and (-0.2,-0.2) .. (-2,-0.1) .. controls (-2,0) and (-2,0) .. (-2,0.1) .. controls (-0.2,0.2) and (-0.2,0.2) .. (1.4,1.6) .. controls (1.5,1.5) and (1.5,1.5) .. (1.6,1.4) .. controls (0.2,-0.2) and (0.2,-0.2) .. (0.1,-2) .. controls (0,-2) and (0,-2) .. (-0.1,-2);

\draw (0,0) -- (-2,0);
\draw (0,0) -- (0,-2);
\draw (0,0) -- (1.5,1.5);\end{tikzpicture}         \caption{The sketch of an amoeba}
        \label{fig:amoeba}
    \end{subfigure} 
    \caption{The tropical pair of pants approximates the Amoeba of a curve.}
\end{figure}
\begin{notation}
    Given $V\subset Q$ a tropical subvariety, we will use $V^{(0)}$ to denote the union of the interiors of the $\underline V_s$; we use $V^{(1)}$ to denote the union of the interiors of the boundaries of the $\underline V_s$; more generally we will use $V^{(i)}$ to denote the codimension $i$ linearity strata of $V$. 
        For any $\underline W\subset V^{(i)}$, let $\str(\underline W)$ be the set of all stratum which contain $\underline W$. If $V$ is a tropical curve, we will usually call the strata vertices and edges, and use $v,w$ for vertices and $e, f$ for edges.
    \end{notation}

\subsection{$B$-tropicalization}
$B$-tropicalization is the process of taking a subvariety of $X_B^\CC$ and obtaining a tropical subvariety of $Q$.
The first approach one considers is the image of $\YB^\CC\subset X_B^\CC$  under the  $B$-torus fibration
\begin{align*}
    \syzb:X_B^\CC\to Q.
\end{align*}
Under good conditions, $\syzb(\YB^\CC)\subset Q$ approximates a tropical subvariety of $Y$; see for instance \cite{Mikhalkin2004Amoebas}. 
The image $\syzb(\YB^\CC)$ is called the \emph{amoeba} of $\YB^\CC$, which computationally can be checked to approach the tropical curve (see \cref{fig:amoeba}).

To obtain a theory where the tropicalization of a subvariety is a tropical subvariety, we look to non-Archimedean geometry. Let $\Lambda$ be the Novikov field.
Given $M$ a rank $n$ lattice, denote by $X_B$ the torus $\Spec \Lambda[M]$.
The points of $X_B$ can be identified with $n$-tuples of invertible elements of $\Lambda$ so we will frequently write
\[X_B = \{(z_1, \ldots, z_n)\st z_i\in \Lambda^*\}.
\]
We build a tropicalization map by taking the valuation coordinate-wise:
\begin{align*}
    \tropb: X_B\to& M\tensor\RR=Q\\
    (z_1, \ldots, z_n)\mapsto& (\val(z_1), \ldots, \val(z_n))
\end{align*}
Given a $\YB\subset X_B$ a closed analytic subset, we call the image $\tropa(\YB)\subset Q$ its tropicalization.
\begin{example}
    Consider $M=\RR^2$, and the closed analytic subset $\YB\subset (\Lambda^*)^2$ given by
    \[\YB=\{(z_1, z_2) \st 1+z_1 + z_2 =0.\}\]
    We compute the valuation of such a point $(z_1, z_2)\in \YB$. Since $\val(1+z_1 + z_2)\geq \min(\val(1), \val(z_1 ),\val( z_2))$, with equality holding whenever the valuations differ, we obtain that for all $(z_1, z_2)\in \YB$  at least one of the following equalities hold:
    \begin{align*}
        \val(z_1)=\val(z_2) &  & \val(z_1)=\val(1) &  & \val(z_2)=\val(1)
    \end{align*}
    This means that the image of $\tropb(\YB)$ agrees with $V_{pants}\subset \RR^2$ from \cref{exam:tropicalpants}. It follows that $V$ is $B$-realizable.
\end{example}
This phenomenon holds much more broadly.
\begin{theorem}[\cite{groves1984geometry,gubler2007tropical}]
    Let $\YB \subset X_B$ be an irreducible $k$-dimensional analytic subset. Then $\tropb(\YB)$ is a $k$-dimensional polyhedral complex.
\end{theorem}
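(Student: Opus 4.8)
The plan is to establish the polyhedral, pure-dimensional structure of $\tropb(\YB)$ in three stages: (i) localize and replace $\YB$ by an algebraic subvariety of the same dimension; (ii) produce the polyhedral decomposition via Gr\"obner theory over the valued field $\Lambda$; (iii) establish purity of dimension from the torus symmetries of initial degenerations. This is the non-archimedean analytic form of the Bieri--Groves theorem, so at each stage I would lean on the structure theory of \cite{groves1984geometry,gubler2007tropical}.

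First I would reduce to the algebraic case. The assertion is local on $Q = M\otimes\RR$, so it suffices to describe $\tropb(\YB)\cap P$ for $P\subset Q$ an arbitrary rational polytope. The preimage $\tropb^{-1}(P)\subset X_B$ is an affinoid subdomain of the torus, with Noetherian coordinate ring, and $\YB\cap\tropb^{-1}(P)$ is a closed analytic subset of it; after shrinking $P$ one can arrange that this analytic germ is cut out by finitely many functions and is contained in --- and has the same local dimension and the same tropicalization over $P$ as --- an irreducible algebraic subvariety $Z\subset X_B$ with $\dim Z = k$. Comparing an analytic set with such a local algebraic model is the technical heart of \cite{gubler2007tropical}, and I expect this step to be the main obstacle; everything downstream concerns the algebraic subvariety $Z$, with prime ideal $I\subset\Lambda[M]$.

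Next I would invoke Gr\"obner theory with respect to the valuation $\val$. For $w\in Q\cong\RR^n$ define the initial ideal $\operatorname{in}_w(I)$ over the residue field $\widetilde\Lambda=\CC$, using $\val$ to break ties among monomials of equal $w$-weight. Because the value group of the Novikov field is all of $\RR$, the map $w\mapsto\operatorname{in}_w(I)$ is constant on the relative interior of each cell of a finite rational polyhedral complex $\Sigma_I$, the Gr\"obner complex of $I$. The fundamental theorem of tropical geometry then identifies $\tropb(Z)$ with the closure of $\{w\in Q:\operatorname{in}_w(I)\text{ contains no monomial}\}$, a union of cells of $\Sigma_I$; hence $\tropb(Z)$ is a rational polyhedral complex.

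Finally I would show $\tropb(Z)$ is pure of dimension $k$, which is the substantive content. For the upper bound: if $\sigma$ is a cell of $\tropb(Z)$ and $w$ lies in its relative interior, then $\operatorname{in}_w(I)$ is invariant under the subtorus $T_\sigma\subset X_B$ associated to the linear span of $\sigma$, so $V(\operatorname{in}_w(I))$ is a union of $T_\sigma$-orbits; since the Gr\"obner degeneration to $\operatorname{in}_w(I)$ is flat, $\dim V(\operatorname{in}_w(I)) = \dim Z = k$, which forces $\dim\sigma\le k$. For the lower bound: since $Z$ is irreducible, Noether normalization produces a finite surjection of $Z$ onto a coordinate subtorus of dimension $k$, and $\tropb$ is compatible with the corresponding monomial projection, so $\tropb(Z)$ surjects onto $\RR^k$ and hence contains a cell of full dimension $k$; a connectedness-in-codimension-one argument for $\tropb(Z)$ --- available because $Z$ is irreducible --- then upgrades this to purity of dimension $k$. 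Reassembling the descriptions obtained over the polytopes $P$ yields the global statement for $\YB$.
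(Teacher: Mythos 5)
This theorem is not proved in the paper at all --- it is imported by citation from Bieri--Groves and Gubler --- so your sketch can only be measured against those sources. Your overall strategy (localize over polytopes, pass to initial degenerations, get the upper bound on cell dimension from torus-invariance and flatness, get existence of a top-dimensional cell from Noether normalization) is the standard one, but two steps as written contain genuine gaps.

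The first is the reduction to the algebraic case. You assert that, after shrinking $P$, the analytic germ $\YB\cap\tropb^{-1}(P)$ is contained in an irreducible \emph{algebraic} subvariety $Z\subset X_B$ of dimension $k$ with the same tropicalization over $P$. That is not how \cite{gubler2007tropical} proceeds, and it is not clear it is true: a closed analytic subset of the affinoid $\tropb^{-1}(P)$ is cut out by strictly convergent series in $\mathcal O_P$, and there is no a priori irreducible algebraic subvariety of the whole torus containing it with the same dimension and the same image under $\tropb$ over $P$. Gubler works directly at the affinoid level, running Noether normalization for affinoid algebras and the initial-degeneration analysis there; the passage from convergent series to finitely many controlling terms (so that Gr\"obner-type arguments apply) is precisely the delicate point, and your sketch assumes it rather than proving it. The second gap is purity. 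Your upper bound $\dim\sigma\le k$ is fine, but the lower bound only produces \emph{one} cell of dimension $k$, and ``connectedness through codimension one'' cannot upgrade this to purity: that connectivity statement is normally proved \emph{after} purity and is formulated for the facets of a pure complex, so invoking it here is circular. The standard argument is local: for each $w\in\tropb(Z)$, the star of $w$ equals the tropicalization (over the trivially valued residue field) of the initial degeneration $\operatorname{in}_w(Z)$, whose components all have dimension $k$ by flatness, and the Bieri--Groves projection argument applied to each component shows that this star has dimension exactly $k$ everywhere. Two smaller imprecisions: with value group all of $\RR$ the Gr\"obner cells are only $\Gamma$-rational (rational slopes, real vertices), not rational; and for analytic subsets the resulting complex is only locally finite, not finite.
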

It is expected that when $\YB$ is an irreducible $k$-dimensional analytic subset, $\tropb(\YB)$ is a $k$-dimensional tropical subvariety. To our knowledge, this result has not appeared in the literature. A discussion on the current status of tropicalization for analytic subsets is included in \cite[Section 5.3]{sheridan2020lagrangian}.
\subsection{Floer cohomology and \texorpdfstring{$A$}{A}-tropicalization}
The definition of the $A$-tropicalization of a Lagrangian submanifold requires a little more exposition because we wish to do some computations of the $A$-tropicalization.
Our goal is to replace the Lagrangian torus fibration map $\syza:X_A\to Q$ with a correspondence of subsets
\[\tropa:\{\text{Lagrangian branes}\}\to \{\text{subsets of $Q$}\}\]
which only depends on the Hamiltonian isotopy class of the Lagrangian brane.
\subsubsection{Lagrangian Intersection Floer Cohomology}

The main computational tool that we will use in this paper is Lagrangian intersection Floer cohomology.
We first equip a symplectic manifold $(X, \omega)$ with an $\omega$-compatible choice of almost complex structure $J$.
\begin{definition}[\cite{floer1988morse}]\label{def:floercomplex}
Given a pair of transversely intersecting Lagrangian submanifolds $L_0, L_1\subset X$ and choice of almost complex structure $J$ satisfying the following conditions:
\begin{enumerate}[label=(\roman*)]
    \item $X, L_1, L_2$ are compact,\label{cond:compact}
    \item The symplectic area of all disks with boundary on $L_i$ vanish  $\omega(\pi_2(X, L_i)=0$,\label{cond:tautunobst}
    \item The Lagrangians $L_i$ are equipped with spin structures\label{cond:spin}
    \item The Lagrangians $L_i$ are graded (in the sense of \cite{seidel2000graded})\label{cond:graded}
    \item The moduli spaces of $J$-holomorphic strips in \cref{eq:strips} are regular. \label{cond:reg}
\end{enumerate} 
the Lagrangian intersection Floer cohomology is a chain complex
\begin{itemize}
    \item Whose generators are the points of intersection between $L_0$ and $L_1$, so that as a vector space
          \[\CF(L_0, L_1):= \bigoplus_{x\in L_0\cap L_1} \Lambda_x,\]
          where $\Lambda$ is the \emph{Novikov field}.
          The grading $\deg(x)$ of an intersection point $x\in L_0\cap L_1$ is determined by the Maslov index.
    \item The differential on this complex is defined by a count of holomorphic strips with boundary on $L_0 \cup L_1$ and ends limiting to the intersection points. Namely, let $x_\pm\in L_0\cap L_1$ be two intersection points, and $\beta\in H^2(X, L_0\cup L_1)$.
          Let
          \begin{equation}\mathcal M_\beta(L_0, L_1, x_+, x_-):=\left\{u:\RR_s\times[0, 1]_t\to X_A \middle|
              \begin{matrix}
                  u(s, 0)\in L_0, u(s, 1)\in L_1 \\ \lim_{s\to \pm\infty}u(s, t)=x_\pm\\ 
                  \bar \partial_J u =0           \\ [u]=\beta\in H_2(X_A, L_0\cup L_1)
              \end{matrix}
              \right\}/(s\mapsto s+c)\label{eq:strips}
            \end{equation}
          denote the moduli space of holomorphic strips with ends limiting to $x^\pm$ in the relative homology class $\beta$, up to reparameterization of the strip along the $s$-coordinate. Using the grading data on $L_0, L_1$, one can compute that
          \[\dim(\mathcal M_\beta(L_0, L_1, x_+, x_-))=\deg(x_-)-\deg(x_+)-1.\]
          The spin structures on $L_0, L_1$ provide orientations for the spaces $\mathcal M_\beta(L_0, L_1, x_+, x_-)$; in particular when $\deg(x_+)+1=\deg(x_-)$, then $\dim(\mathcal M_\beta(L_0, L_1, x_+, x_-))=0$ and we can count the points in this moduli space with signs.
          The structure coefficients of the differential  $d:\CF(L_0, L_1)\to \CF(L_0, L_1)$ are obtained by counting the elements in $\mathcal M_\beta(L_0, L_1, x_+, x_-)$,
          \[
              \langle d(x_+), x_-\rangle = \sum_{\beta\in H^2(X, L_0\cup L_1)} T^{\omega(\beta)} \#\mathcal M_\beta(L_0, L_1, x_+, x_-)
          \]
          where $\#$ is the signed count of points with orientation and  $T^{\omega(\beta)}$ records the symplectic area of the strip $u$ whose homology class is $\beta$.
        \end{itemize}
            \end{definition}
          The proof that this is a chain complex proceeds in a similar method to Morse theory.
          Because of \cref{cond:compact}, one can use Gromov compactness to prove that the 1-dimensional moduli spaces of strips have compactifications whose boundaries are given by products of the 0-dimensional moduli spaces of strips
          \[\partial\mathcal M_\beta(L_0, L_1, x_+, x_-)) = \bigsqcup_{x_0\in L_0\cap L_1 }\mathcal M_\beta(L_0, L_1, x_+, x_0))\times \mathcal M_\beta(L_0, L_1, x_0, x_-)).\]
          To ensure that the only broken configurations which show up in the compactification are given by strips breaking (as opposed to disk bubbling), we use \cref{cond:tautunobst} which states that there are no holomorphic disks with boundary on either $L_0$ or $L_1$.
          The compactification is compatible with the orientations given to the moduli spaces of holomorphic strips.
          Since the signed count of boundary components of a 1-dimensional manifold is zero, $\langle d^2(x_+), x_-\rangle=0$.
Unless otherwise stated, all Lagrangians we consider will be $\ZZ$-graded and spin.
A major feature of Lagrangian intersection Floer cohomology is its invariance under Hamiltonian isotopy.
\begin{theorem}[\cite{floer1988morse}]
    Let $L_0, L_1$ be Lagrangian submanifolds of $(X, \omega)$ satisfying conditions \cref{cond:tautunobst,cond:reg,cond:compact,cond:graded,cond:spin}. Let $\phi: X\to X$ be a Hamiltonian isotopy. Suppose that $L_0, L_1$ intersect transversely and we've picked $\phi$ in such a way that  $\phi(L_0), L_1$ intersect transversely. Then $\HF(L_0, L_1)=\HF(\phi(L_0), L_1)$.
    \label{thm:floerintersection}
\end{theorem}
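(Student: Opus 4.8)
The plan is to run the standard continuation-map argument going back to Floer, reducing the comparison of $\HF(L_0,L_1)$ and $\HF(\phi(L_0),L_1)$ to the statement that the chain homotopy type of the Floer complex is independent of the auxiliary data — the $\omega$-compatible almost complex structure together with a Hamiltonian perturbation term — used to define it. The first step is to trade $\phi(L_0)$ for $L_0$ at the cost of such a perturbation. Writing $\phi=\phi^H_1$ for a time-dependent Hamiltonian $H:[0,1]\times X\to\RR$, the substitution $\tilde u(s,t)=(\phi^H_t)^{-1}(u(s,t))$ sends a $J$-holomorphic strip $u$ with boundary on $\phi(L_0)\cup L_1$ to a solution of the Hamiltonian-perturbed Cauchy--Riemann equation $\bar\partial_{J_t}\tilde u=X_{H_t}(\tilde u)\,dt$ with boundary on $L_0\cup L_1$, where $J_t=(\phi^H_t)^*J$, and sends the points of $\phi(L_0)\cap L_1$ to the Hamiltonian chords of $H$ from $L_0$ to $L_1$. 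This identifies $\CF(\phi(L_0),L_1;J)$ with the perturbed Floer complex $\CF(L_0,L_1;J_t,H_t)$, compatibly with gradings and differentials; the symplectic areas differ only by a constant depending on $H$ and the topological type of the strip class, which is absorbed consistently into the Novikov exponent.

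It now suffices to show that $\CF(L_0,L_1;J^s,H^s)$ is, up to chain homotopy equivalence, independent of the choice of regular pair $(J^s,H^s)$, since $H^s\equiv 0$ recovers the unperturbed complex $\CF(L_0,L_1)$. Given two regular choices $\mathfrak d^{\pm}=(J^{\pm},H^{\pm})$, choose a smooth, $s$-dependent homotopy $\mathfrak d^{\tau}=(J^{\tau},H^{\tau})$, $\tau\in\RR$, with $\mathfrak d^{\tau}=\mathfrak d^{-}$ for $\tau\ll 0$ and $\mathfrak d^{\tau}=\mathfrak d^{+}$ for $\tau\gg 0$, and consider the moduli spaces $\mathcal M^{\mathrm{cont}}_{\beta}(x_+,x_-)$ of finite-energy solutions of the $\tau$-dependent equation with asymptotics $x_{\pm}$, where now no quotient by $s$-translation is taken. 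For generic $\mathfrak d^{\tau}$ these spaces are regular manifolds of dimension $\deg(x_-)-\deg(x_+)$; counting the zero-dimensional ones with the signs furnished by the spin structures and weighting by $T^{\omega(\beta)}$ defines a degree-preserving map $\Phi:\CF(L_0,L_1;\mathfrak d^{-})\to\CF(L_0,L_1;\mathfrak d^{+})$. The energy of a continuation solution in class $\beta$ is bounded by $\omega(\beta)$ plus a fixed constant coming from the $\tau$-derivative of $H^{\tau}$, so by Gromov compactness only finitely many classes contribute below any given energy bound, and $\Phi$ is well defined over $\Lambda$.

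Examining the boundary of the one-dimensional continuation moduli spaces — using \cref{cond:compact} for Gromov compactness and \cref{cond:tautunobst} to exclude disk bubbling — shows $\partial\mathcal M^{\mathrm{cont}}$ consists of once-broken configurations in which an honest ($\tau$-independent) Floer strip for $\mathfrak d^{-}$ or for $\mathfrak d^{+}$ breaks off at one end; this is exactly the identity $d^{+}\Phi-\Phi d^{-}=0$, so $\Phi$ is a chain map. Running the homotopy in reverse gives a chain map $\Psi$ the other way. Finally, concatenating $\mathfrak d^{\tau}$ with its reverse and interpolating to the constant homotopy produces a one-parameter family of $\tau$-dependent data whose associated parametrized moduli spaces (regular for generic choices, compact by \cref{cond:compact,cond:tautunobst}) have boundary accounting for $\Psi\circ\Phi$ on one side and, through the degenerate constant homotopy, the identity on the other; this yields a chain homotopy $\Psi\circ\Phi\simeq\id$, and symmetrically $\Phi\circ\Psi\simeq\id$. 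Hence $\Phi$ is a chain homotopy equivalence, and passing to cohomology and combining with the first step gives $\HF(\phi(L_0),L_1)\cong\HF(L_0,L_1)$.

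The analytic heart — and the place where \cref{cond:tautunobst,cond:reg,cond:compact} are genuinely used — is to arrange transversality for the continuation moduli spaces and the homotopy-of-homotopies moduli spaces simultaneously with compactness: one picks the $\tau$- and $s$-dependent data generically to cut these parametrized spaces out transversally while retaining enough geometric control to invoke Gromov compactness, and one crucially needs \cref{cond:tautunobst} so that the only codimension-one degenerations are strip breakings and not disk or sphere bubbles. Everything else — the grading bookkeeping, the sign and orientation compatibilities coming from the spin structures, and the well-definedness of all the counts as elements of the Novikov field — is routine once this has been set up.
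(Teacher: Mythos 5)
The paper does not actually prove this statement: it is quoted as background and attributed to \cite{floer1988morse}, with the surrounding text only remarking on its consequences. Your proposal is the standard modern proof — naturality to absorb $\phi$ into a Hamiltonian perturbation of the Floer equation, continuation maps for a homotopy of regular data, and a homotopy-of-homotopies to show the continuation maps are mutually inverse up to chain homotopy — and it is correct; conditions \ref{cond:compact} and \ref{cond:tautunobst} are used exactly where you say they are (Gromov compactness and exclusion of disk/sphere bubbles, the latter because $\omega(\pi_2(X,L_i))=0$ forces any bubble to be constant). Two minor bookkeeping points, neither a gap: with the substitution $\tilde u(s,t)=(\phi^H_t)^{-1}(u(s,t))$ the isotopy acts on the $t=1$ boundary (since $\phi^H_0=\id$), so the labels of which Lagrangian sits at $t=0$ versus $t=1$ need to be fixed consistently with the convention in \cref{def:floercomplex}; and the "constant absorbed into the Novikov exponent" in the first step is implemented by rescaling each generator by $T^{c(x)}$ for an action-type constant $c(x)$, which is where one uses that the complex is defined over the Novikov \emph{field} rather than the ring.
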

For this reason, whenever $L_0, L_1$ do not intersect transversely, we can compute their Floer cohomology by taking a Hamiltonian perturbation which makes their intersection transverse; the resulting cohomology groups are independent of the choice of perturbation taken.
One can similarly show that it does not depend on the choice of an almost complex structure.

The conditions of \cref{cond:tautunobst,cond:compact} can be weakened. For example, \cref{cond:compact} --- which is required to prove that the moduli space of strips admit compactifications --- can be replaced with the weaker condition of monomial admissibility (\cref{def:admissiblitycondition}) or $W$-admissibility (\cref{def:bottleneck}). Later we will look at weakening the condition \cref{cond:tautunobst} to \emph{unobstructedness} (\cref{sec:obstruction}). We now drop \cref{cond:compact} and compute the Lagrangian intersection Floer cohomology between two Lagrangians in a cotangent bundle.  The computation we give is a direct generalization of \cite[Example 3.1]{smith2015symplectic}.
\begin{example}[Running Example]
    \label{exam:runningexample}
    Let $F_0=T^n$ be the $n$-dimensional torus. Let $T^{n-k}\subset F_0$  be the subtorus spanning the first $n-k$ coordinates on $T^n$.
    Then $T^*F_0$ is an example of an exact symplectic manifold.
    The zero section $F_0$ and the conormal bundle $N^*T^{n-k}$ are examples of exact Lagrangian submanifolds.
    Lagrangian intersection Floer cohomology requires that our Lagrangians intersect transversely, so we will apply a Hamiltonian perturbation to one of the Lagrangians to achieve transverse intersections.
    Pick $\lambda_0\in \RR_{>0}.$
    Consider the Hamiltonian function 
    \begin{equation}
        H=\sum_{i=1}^{n-k} \lambda_0\cos(\theta_i)
        \label{eq:LagrangianPrimitive}
    \end{equation}
     on $T^*F_0$.
    Let $\phi: T^*F_0\to T^*F_0$ be the time-one Hamiltonian flow of $H$.
    The resulting intersections of $\phi(N^*T^{n-k})$ with $F_0$ are the points
    \[\phi(N^*T^{n-k})\cap F_0 = \{(a_1\pi, \ldots, a_{n-k}\pi, 0, \ldots ,0)\st a_i\in \{0, 1\}\}\]
    and the index of each intersection point $x$ is given by $\deg(x)=\sum_{i=1}^{n-k} a_i$.
    We will call the corresponding generators of Floer cohomology $x_I$, where $a_i=1$ whenever $i\in I\subset \{1, \ldots, n-k\}$. Write $I\lessdot J$ if $I= J\cup\{x_i\}$ for some $i$. 
    As a vector space $\CF(\phi(N^*T^{n-k}), F_0)$ matches $\CM(T^{n-k})$ for the Morse function $H$.

    The differential on $\CF(\phi(N^*T^{n-k}), F_q)$ is related to the Morse differential.
    Let $|I|+1=|J|$, so that $\deg(x_I)$ and $\deg(x_J)$ differ by one. If $I$ and $J$ differ at more than two elements, then $\mathcal M_\beta(\phi(N^*T^{n-k}), F_0, x_I, x_J)$ has non-zero dimension.
    If $I\lessdot J$ differ at a single element $j$, then there are exactly two holomorphic strips travelling between $x_I$ and $x_J$,
    \[  \mathcal M(\phi(N^*T^{n-k}), F_0,x_I, x_J )=\{u_{I\lessdot J}^+, u_{I\lessdot J}^-\}\]
    which as points receive opposite orientations.
    By our choice of perturbation, the symplectic areas of the strips $u_{I\lessdot J}^-$ and $u_{I\lessdot J}^-$ agree (and is exactly $\lambda_0$).
     Therefore,
    \[\langle d (x_I), x_J\rangle = \left \{ \begin{array}{cc}
        T^{\omega(u_{I\lessdot J}^+)}- T^{\omega(u_{I\lessdot J}^-)}=0& \text{if $I\lessdot J$}\\ 0 & \text{otherwise} \end{array}\right.\]
    and we conclude that
    \[\HF(\phi(N^*T^{n-k}), F_q)= \Lambda\langle x_I\rangle = \bigwedge_{i\in \{1, \ldots, n-k\} }\Lambda \langle x_i\rangle.\]
\end{example}
    The example relates to the discussion of tropicalization as $T^*F_0$ can be identified with $X_A=(\CC^*)^n=T^*Q/T^*_\ZZ Q$.
    If $T^{n-k}$ is a linear subtorus of $F_q$, it corresponds to a $n-k$-dimensional subspace of $\tilde T^{n-k}\subset T^*_0Q$; let $\underline V\subset T_0Q$ correspond to the set of vectors which are annihilated by $\tilde T^{n-k}$. By abuse of notation, we use $\underline V$ to denote the integral affine subspace of $Q$ with prescribed tangent space at $0$.
    Under this identification $N^*T^{n-k}\subset T^*F_0$ is $L_{\underline V}\subset X_A$. Using that Lagrangian intersection Floer cohomology is invariant under symplectomorphisms, and noting that if $q\not\in \underline V$ then $F_q\cap L_{\underline V}=\emptyset$, we have computed 
        \[HF^\bullet(L_{\underline V}, F_q)=\left\{\begin{array}{cc} \bigwedge_{i\in \{1, \ldots, n-k\}} \Lambda\langle x_i\rangle & \text{if $q\in \underline V$}\\
        0 & \text{ if $q\not\in \underline V$}
    \end{array}\right.\]
\subsubsection{Local Systems}
Recall that the points of $X_B$ are in bijection with pairs $(F_q, \nabla)$ of Lagrangian torus fibers equipped with local systems. We now discuss how to incorporate this data into Lagrangian intersection Floer cohomology. The \emph{unitary Novikov elements} 
\[U_\Lambda:=\left\{a_0+\sum_{i=1}^\infty a_i T^{\lambda_i}\st \lim_{i\to\infty} \lambda_i=\infty, \lambda_i > 0 , a_0\in \CC^*, a_i\in \CC \right\}\]
are those elements whose non-zero lowest-order term is a constant. We now consider $(L_i, \nabla_i)$, which are Lagrangian submanifolds with the additional choice of a trivial $\Lambda$-line bundles $E_i$ and a $U_\Lambda$ local system $\del_i$.
Given $L_0, L_1$ which intersect transversely satisfying \cref{cond:tautunobst,cond:reg,cond:compact,cond:graded,cond:spin} we define $\CF((L_0, \del_0), (L_1, \del_1))$ to be the chain complex
\begin{itemize}
    \item whose underlying vector space is  $\bigoplus_{x\in L_0\cap L_1}\hom((E_0)_x, (E_1)_x))$ and;
    \item whose differential is given by taking a $\del_i$-weighted count of the holomorphic strips with boundary in $L_0\cup L_1$. More precisely: denote by $\partial_i u$ be the boundary of $u$ contained in $L_i$, and let $P^{\nabla_i}_\gamma: (E_i)_{\gamma(0)}\to (E_i)_{\gamma(1)}$ be the parallel transport induced by the local system along a path $\gamma: [0, 1]\to L_i$.

          As in the definition of Lagrangian intersection Floer cohomology without local systems, let $x_+, x_-\in L_0\cap L_1$ be intersection points with $\deg(x_+)+1=\deg(x_-)$. Given $\phi_x\in \hom((E_0)_{x_+}, (E_1)_{x_+}))$ and a holomorphic strip $u\in \mathcal M_\beta(L_0, L_1, {x_+}, {x_-})$ we obtain a map between the fibers above $x_0$,
          \[ P_{(\partial^1 u)}^{\del_1}\circ \phi_{x_+}\circ  P_{(\partial^0 u)^{-1}}^{\del_0}\in\hom( (E_0)_{x_-}, (E_1)_{x_-} ).\]
          The differential on $\CF(L_0, L_1)$ is defined by taking the contributions $u\cdot \phi_{x_+}$ over all holomorphic strips between $x_+$ and $x_-$, weighted by the symplectic area.
          \[ d_{\del_0, \del_1}(\phi_{x_+}):= \sum_{{x_-}\st \deg({x_-})=\deg({x_+})+1} \sum_{u\in M_\beta(L_0, L_1, {x_+}, {x_-})}\pm T^{\omega(\beta)}  P_{(\partial^1 u)}^{\del_1}\circ \phi_{x_+}\circ  P_{(\partial^0 u)^{-1}}^{\del_0}.\]
          where the sign is determined by the orientation of the moduli space.
\end{itemize}
When $\del_i$ are the trivial local systems, this recovers $\CF(L_0, L_1)$.
\begin{example}[Running Example, continued]
    We now return to \cref{exam:runningexample}. Fix coordinates on $F_q$, and let $\{c_1, \ldots, c_n\}$ be generators of $H^1(F_q, \ZZ)$ associated to the coordinate directions.  A local system on $F_q$ is determined completely by its monodromy on the $c_i$. Given a $\Lambda$-unitary local system $\nabla_1$ on $F_q$, we write $z_i= P_{c_i}^{\nabla_1}.$
    Let $\del_0$ be the trivial local system on $L_{\underline V}$.
    We now compute the differential on $\CF((L_{\underline V}, \del_0), (F_q, \del_1))$. Given $\phi_{x_I}\in \hom( (E_0)_{x_I}, (E_1)_{x_I} )$, and $I\lessdot J$ an index which differs at one spot $j$, we have
    \begin{align*}
        d_{\del_0, \del_1}(\phi_I)= & \left(T^{\omega(u_{I\lessdot J}^+)}P_{(\partial^1 u_{I\lessdot J}^+)}^{\del_1}\circ\phi_I\circ  P_{(\partial^0 u_{I\lessdot J}^+)^{-1}}^{\del_0}\right) \\&-\left ( T^{\omega(u_{I\lessdot J}^-)} P_{(\partial^1 u_{I\lessdot J}^-)}^{\del_1}\circ\phi_I\circ  P_{(\partial^0 u_{I\lessdot J}^-)^{-1}}^{\del_0}\right) \\
        \intertext{Recall that all of the holomorphic strips between intersection points differing in index by 1 have the same area   $\lambda_0=\omega(u_{I\lessdot J}^+)=\omega(u_{I\lessdot J}^-)$. Using that $\del_0$ is trivial local system }
        =                           & T^{\lambda_0} P_{(\partial^1 u_{I\lessdot J}^+)}^{\del_1}(\id - P_{c_j}^{\del_1})\circ\phi_I\circ P^{id}_{\partial^0{u_{I\lessdot J}^+}}
    \end{align*}
    This vanishes if and only if $P^{\del_1}_{c_j}=z_j=1$ for all $1\leq j \leq n-k$.
    We conclude that
    \[\HF((L_{\underline V}, \id),  (F_q, \del_1))= \left\{ 
        \begin{array}{cc} H^\bullet (T^{n-k}) & z_j=1 \text{ for all } 1\leq j \leq n-k \\
             0                        & \text{otherwise}\end{array}\right.\]
    \label{exam:localsystems}
\end{example}

\begin{notation}
    Given two Lagrangians $L_0, L_1$ which intersect transversely, we will pick at each intersection point $x\in L_0\cap L_1$ an isomorphism in $\hom((E_0)_x, (E_1)_x))$; by abuse of notation, we will denote this isomorphism also by $x \in \hom((E_0)_x, (E_1)_x))$. We can in this way write
    \[\CF(L_0,L_1)=\Lambda \langle x\rangle\]
    and the differential on this complex will be given by the structure coefficients
    \[\langle d(x),y\rangle = \sum_{u\in \mathcal M_\beta(L_0, L_1, x, y)}T^{\omega(\beta)} P_{\partial u}^{\del_1,\del_2} \]
    where $P_{\partial u}^{\del_1,\del_2}\in U_\Lambda$ is a unitary element determined by $P_{\partial u}^{\del_1,\del_2}\cdot y= P_{(\partial^1 u)}^{\del_1}\circ x\circ  P_{(\partial^0 u)^{-1}}^{\del_0}$. This allows us to use the simpler (and more commonly employed) notation from \cref{def:floercomplex}.
    \label{not:chains}
\end{notation}
\subsubsection{\texorpdfstring{$A$}{A}-tropicalization}
We are now ready to define the $A$-tropicalization.
When considering a complex space $X^\CC_B$ on the $B$ side, we used the projection $\syzb: X^\CC_B\to Q$ to obtain from each subvariety of $X^\CC_B$ an amoeba which approximated the tropical subvariety. 
Just as with the $B$-tropicalization, given a Lagrangian submanifold $L\subset Q$ we could consider the Lagrangian torus fibration image of a Lagrangian submanifold $\syza(L)\subset Q$.
However, since even Hamiltonian isotopic Lagrangian submanifolds can have different projections to the base of the Lagrangian torus fibration, this does not provide a very good definition of $A$-tropicalization. Instead, we use Lagrangian intersection Floer theory to define the $A$-tropicalization.
\begin{definition}[Preliminary]
    Let $L\subset X_A$ be a Lagrangian submanifold satisfying the conditions of \cref{def:floercomplex}. We define the \emph{$A$-tropicalization} or Floer theoretic support  of $L$ to be the set
    \[\tropa(L):= \left\{q\in Q \middle| \parbox{8cm}{There exists a Lagrangian brane $(F_q, \del)$ with $\HF(L,( F_q, \del)) \neq 0$}\right\}.\]
    \label{def:preliminaryAtropicalization}
\end{definition}
The $A$-tropicalization is a decategorification of a much more powerful invariant captured by family Floer theory due to \cite{fukaya2002floer,abouzaid2017family}. From this viewpoint, the chain complexes $\CF(L, (F_q, \del))$ should be considered as the stalks of a sheaf which appropriately bundled together into a sheaf on $X_B$.
This viewpoint on tropicalization is also employed in \cite{sheridan2020lagrangian}.
The $A$-tropicalization is a refinement of projection to the base of the Lagrangian torus fibration in the following sense:
\begin{prop}
    Let $L\subset X_A$ be a Lagrangian brane. Then $\tropa(L)\subset \syza(L)$.
    \label{prop:syzcontainstrop}
\end{prop}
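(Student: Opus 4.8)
The plan is to prove the contrapositive: if $q \notin \syza(L)$, then $q \notin \tropa(L)$. So suppose $q$ is a point of $Q$ that does not lie in the image of $L$ under the Lagrangian torus fibration $\syza\colon X_A \to Q$. The torus fiber $F_q = \syza^{-1}(q)$ is then disjoint from $L$. The key point is that for any choice of unitary local system $\nabla$ on $F_q$, the Floer complex $\CF(L, (F_q,\nabla))$ is generated by the intersection points $L \cap F_q$, and this set is empty. Hence the complex is the zero complex, its cohomology $\HF(L,(F_q,\nabla))$ vanishes, and therefore no $(F_q,\nabla)$ can witness $q \in \tropa(L)$; that is, $q \notin \tropa(L)$.

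There is one subtlety to address, which I expect to be the only real point requiring care: Lagrangian intersection Floer cohomology is only defined after passing to a transverse configuration, and $L$ and $F_q$ are tautologically transverse here (empty intersection), so the complex is genuinely zero without any perturbation. However, since $\HF$ is the Hamiltonian-isotopy invariant quantity, I should make sure the invariance statement is being applied correctly: by \Cref{thm:floerintersection} (and its local-system version), $\HF(L,(F_q,\nabla))$ is unchanged under Hamiltonian isotopy of either Lagrangian, so even if one were to compute it after perturbing, the answer would still be $0$. Since $L$ is assumed to be a Lagrangian brane satisfying the hypotheses allowing $\HF$ to be defined (monomial admissibility or $W$-admissibility in place of compactness, as discussed in the excerpt), the group $\HF(L,(F_q,\nabla))$ is well-defined, and it equals the cohomology of the zero complex.

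Assembling these observations: for every $q \notin \syza(L)$ and every unitary local system $\nabla$ on $F_q$ we have $\CF(L,(F_q,\nabla)) = 0$, hence $\HF(L,(F_q,\nabla)) = 0$, hence there is no Lagrangian brane $(F_q,\nabla)$ with non-vanishing Floer cohomology against $L$, so $q \notin \tropa(L)$. Taking the contrapositive gives $\tropa(L) \subseteq \syza(L)$, which is the claim. The main (and essentially only) obstacle is bookkeeping around the definition of $\HF$ in the non-compact admissible setting — making sure that the chosen admissibility framework still yields that an empty intersection produces the zero complex — but this is immediate from the construction of $\CF$ as a sum over intersection points.
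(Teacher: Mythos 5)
Your proof is correct and is essentially identical to the paper's: the paper also argues that $q\notin\syza(L)$ forces $F_q\cap L=\emptyset$, so the Floer complex, being generated by intersection points, is zero. The extra remarks on Hamiltonian-isotopy invariance and admissibility are fine but not needed beyond what the paper records.
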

\begin{proof}
    Suppose that $q\notin \syza(L)$. Then $F_q= \syza^{-1}(q)$ is disjoint from $L$. As the Floer intersection is complex is generated on the intersection points, $\CF(L,( F_q, \del))=0$.
\end{proof}
While $\tropa(L)\subset \syza(L)$ always holds, it will rarely be the case that $\syza(L)\subset \tropa(L)$.
By invariance of $\tropa(L)$ under Hamiltonian isotopies, we obtain that 
\[\tropa(L)\subset \bigcap_{\phi\in \text{Ham}(X_A) }(\syza(\phi(L)))\] where $\text{Ham}(X_A)$ is the set of Hamiltonian isotopies of $X_A$.
However, there is no reason to expect even this to be equality. \Cref{sec:faithfulness} proves that when $L$ is a Lagrangian constructed from the data of a tropical subvariety of $Q$ the above inclusion becomes equality. We see a toy version of this statement below. \begin{example}[Continuation of Running Example]
    We now are able to compute the $A$-tropicalization of a Lagrangian submanifold.
    Let $\underline V\subset Q$ be an integral affine $k$-subspace, so that $L_{\underline V}\subset X_A$ is a $T^{n-k}\times \RR^k$ Lagrangian submanifold. We now compute the $A$-tropicalization of $q$.
    Since $\syza (L_{\underline V})= \underline V$, by \cref{prop:syzcontainstrop} we have that $\tropa(L_{\underline V})\subset \underline V$.
    By \cref{exam:localsystems}, whenever $q\in \underline V$ there exists a local system so that $\HF(L_{\underline V}, (F_q, \del_1))\neq 0.$
    Therefore $\tropa(L_{\underline V})=\underline V$.
\end{example}
In this example, we see there are three steps of the $A$-realizability problem.
\begin{enumerate}
    \item  First, we constructed a \emph{geometric} lift $L_{\underline V}$ of $\underline V$.
    \item The second step is to show that we have well-defined Floer cohomology groups. In the example above, this follows  from  $\pi_2(X_A, L_{\underline V})=0$, but more generally amounts to showing that the Lagrangian $L_{\underline{V}}$ is \emph{unobstructed}.
    \item Finally, the computation of support from \cref{exam:localsystems} proves that this is a \emph{faithful} lift of $\underline V$.
\end{enumerate}

In the example of the lift of $\underline V$, we can do slightly more than compute the tropicalization of $L_{\underline V}$. We compute the \emph{A-support}, which is the set of pairs $(F_q, \del)$ which have non-trivial pairing with $L_{\underline V}$:
\begin{equation}
    \suppA(L_{\underline V}) = \{(F_q, \del)\st q\in \underline V, P^\del_{c}=0 \text{ for $c\cdot \underline V=0$}\}
    \label{eq:asupport}.
\end{equation}
Here, we identify $H_1(F_q,\ZZ)$ with $T^*_\ZZ(Q)$. At each point $q\in \underline V$, there is a $(U_\Lambda)^k$ choice of local systems satisfying the above criteria. The support can be identified with the set $\suppA(L_{\underline V})=\underline V\times (U_\Lambda)^k= (\Lambda^*)^k\subset X_B$.

\subsection{\texorpdfstring{$A$}{A}-tropicalization for the pair of pants}
\label{exam:pairofpantsupport}
In this subsection, we carry out the entire $A$-realizability process with the tropical curve $V_{\pants}$ from \cref{exam:tropicalpants}.
This computation first appeared in unpublished work from \cite[Section 4.3]{hicks2019tropical}, and stems from a discussion with Diego Matessi. We use this example computation to outline the remainder of the paper.

\subsubsection*{Geometric Realizability: \cref{sec:geometricrealization}} We first need to discuss the process of building a Lagrangian submanifold which geometrically is a lift of $V$ in the sense that $\syza(L_{V_\pants})$ approximates $V_{\pants}$. In dimension 2, one can obtain Lagrangian submanifolds in $(\CC^*)^2$ by hyperK\"{a}hler rotation of complex curves\footnote{We only use this construction for the ease by which it builds a Lagrangian pair of pants in dimension 2; we emphasize at this juncture that hyperK\"{a}hler rotation is \emph{not} mirror symmetry.}.
We therefore can build a Lagrangian lift of $V_{pants}$ by starting with the holomorphic lift $\{(z_1, z_2) \st 1+z_1+z_2=0\}\subset (\CC^*)^2$ and applying hyperK\"{a}hler rotation. For every $\epsilon>0$, we can find a Lagrangian submanifold $L^{\eps}_{V_{pants}}\subset X_A$ Hamiltonian isotopic to our hyperK\"{a}hler rotation with the following properties:
\begin{itemize}
    \item When restricted to the complement of a neighborhood of $0\in Q$, we have
          \[L^{\eps}_{V_{pants}}\setminus \syza^{-1}(B_\eps(0))= L_{\underline V_1}\cup L_{\underline V_{2}}\cup L_{\underline V_{3}} \setminus \syza^{-1}(B_\eps(0)).\]
          This is one of the properties which characterizes a Lagrangian lift of a tropical curve.
    \item Furthermore, we can construct this Lagrangian so that it is symmetric under the permutation of coordinates $(z_1, z_2)$ on $X_A$.
\end{itemize}

\subsubsection*{Unobstructedness: \cref{sec:unobstructedness}} The next step to the $A$-realization process is to show that the Lagrangian submanifold one builds can be analyzed with Floer theory. In this example, $L_{V_\pants}$ is exact and so $\omega(\pi_2(X_A, L_{V_\pants}))$ vanishes. It follows that $\HF(L_{V_\pants}, F_q)$ will be well defined.

\subsubsection*{Faithfulness: \cref{sec:faithfulness}} We now compute $\tropa(L_{V_\pants})$. Consider the Lagrangian pair of pants $L_{V_{pants}}$, the Lagrangian fiber $F_q$, and the holomorphic cylinder $z_1=z_2$ as drawn in \cref{fig:largestrip}.
We take Hamiltonian perturbations so that the Lagrangian submanifolds intersect transversely.
Nearby the point $q$, the Lagrangian $L_{V_{pants}}$ agrees with $L_{\underline V_3}$; therefore $F_q\cap L_{V_{pants}}= F_q \cap L_{\underline V_3}$. Following the notation from \cref{exam:localsystems}, we call the degree 0 intersection point $x_\emptyset$, and the degree 1 intersection point $x_1$. In addition to the agreement of intersection points, there are two ``small strips'' contributing to the differential on $\CF(L_{V_{pants}}, F_q)$ which match the strips in the differential of $\CF(L_{\underline V_3}, F_q)$. We call these holomorphic strips $u^+_{x_\emptyset<x_1}, u^-_{x_\emptyset <x_1}$.

From the symmetry of our setup, the Lagrangian  $L_{V_{pants}}$ intersects the complex plane $z_1=z_2$ cleanly along a curve.
Furthermore, the holomorphic cylinder $z_1=z_2$ intersects $F_q$ along a circle; therefore the portion of $z_1=z_2$ bounded by $L_{V_{pants}}$ and $F_q$ gives an example of a holomorphic strip with boundary on $L_{V_{pants}}$ and $F_q$. The ends of this holomorphic strip limit toward $x_\emptyset$ and $x_1$.
The valuation projection of this strip is a line segment connecting the point $q$ with the vertex of the tropical pair of pants.
For this reason, we will call this holomorphic strip $u^{qv}$.
The area of this strip is the length of the line segment corresponding to $\syza( u^{qv})$. The three holomorphic strips are more readily seen by considering the argument projection of $L_{V_{pants}}$ to $F_q$ as in \cref{fig:argumentprojection}

\begin{figure}
    \begin{subfigure}[t]{.4\linewidth}
    \centering
    \begin{tikzpicture}

    \draw[fill=gray!20] (-2.5,0.5) .. controls (-2,1) and (-1.5,1.5) .. (-1,2) .. controls (-0.75,2.25) and (-0.25,2.5) .. (0,2.5) .. controls (0.5,2.5) and (1,2.5) .. (1.5,2.5) .. controls (1,2.5) and (0.5,2.5) .. (0,2.5) .. controls (-0.25,2.5) and (-0.5,2.75) .. (-0.5,3) .. controls (-0.5,3.5) and (-0.5,4) .. (-0.5,4.5) .. controls (-0.5,4) and (-0.5,3.5) .. (-0.5,3) .. controls (-0.5,2.75) and (-0.75,2.25) .. (-1,2);

        \node at (-2,4) {$Q$};\begin{scope}[scale=-1, shift={(1,-5)}]
    
        \draw[dashed] (-0.5,2.5) -- (-2.5,2.5);
        \draw[dashed] (-0.5,2.5) -- (-0.5,0.5);
        \draw[dashed] (-0.5,2.5) -- (1.5,4.5);
    \end{scope}
    \draw[blue, thick] (-2.5,0.5) -- (1.5,4.5);
    \node at (2,5) {$z_1=z_2$};
    \node[right] at (1.5,2.5) {$L_{V_\pants}^\epsilon$};
    \node at (-1.5,1) {$F_q$};
    \draw[dotted]  (-0.5,2.5) ellipse (0.5 and 0.5);
\node at (-0.5,2) {$\epsilon$};
\node[above] at (-1.5,1.5) {$e_3$};
\node[above] at (1,2.5) {$e_1$};
\node[left] at (-0.5,4) {$e_2$};
\node[green, fill, circle, scale=.4] at (-2,1) {};
\clip (-1.5, 1.5)--(-2.5, .5) --(-2.5, 1.5);
\node[red, fill, circle, scale=.4] at (-2,1) {};
\end{tikzpicture}     \caption{The intersection of the blue holomorphic cylinder and the tropical Lagrangian pair of pants is clean, and gives a holomorphic strip with boundary on $L_{V_{\pants}}$ and $F_q$. }
    \label{fig:largestrip}
    \end{subfigure}\hspace{.1\linewidth}
    \begin{subfigure}[t]{.4\linewidth}
        \centering
        \begin{tikzpicture}

\draw  (-2,2) rectangle (1,-1);
\draw[fill=gray!20] (-2,0.5) -- (-0.5,0.5) -- (-0.5,-1) -- cycle;
\draw[fill=gray!20] (-0.5,2) -- (-0.5,0.5) -- (1,0.5) -- cycle;
\begin{scope}
\clip  (-2,2) rectangle (1,-1);
\end{scope}
\node[left] at (-1.25,-0.25) {$x_\emptyset$};
\node[above] at (0.25,1.25) {$x_1$};
\node at (-1.25,-0.25) {};
\draw[red, thick, ->] (-1.25,-0.25) -- (-2,0.5) (1,0.5) -- (0.25,1.25);
\draw[green, thick, ->] (-1.25,-0.25) -- (-0.5,-1) (-0.5,2) -- (0.25,1.25);
\draw[blue, thick, ->] (-1.25,-0.25) -- (0.25,1.25);
\node[fill=black, circle, scale=.2] at (0.25,1.25) {};
\node[fill=black, circle, scale=.2] at (-1.25,-0.25) {};
\node at (-1.25,0.25) {$L_V$};
\end{tikzpicture}         \caption{The argument projection of $L_{V_\pants}$ to $F_q$. The intersection points are labelled. The three holomorphic strips are denoted by the arrows, with $u^{qv}$ drawn in blue.}
        \label{fig:argumentprojection}
    \end{subfigure}
    \caption{}
\end{figure}
We will think of $u^{qv}$ as being a ``big strip'' as we can choose $\lambda_0$ small enough so that  $\lambda_0=\omega(u^+_{x_\emptyset<x_1})=\omega(u^-_{x_\emptyset<x_1})\ll \omega(u^{qv})$.
If no local systems are used, the differential on $\CF(L_{V_{pants}},F_q)$ is
\[
    d(x_\emptyset)=\left(T^{\omega(u^+_{x_\emptyset<x_1})}-T^{\omega(u^-_{x_\emptyset<x_1})}+T^{\omega(u_{qv})}\right)\cdot x.
\]
This does not vanish, so $\HF(L_{V_{pants}}, F_q)=0$.

However, to compute the $A$-support we must compute Lagrangian intersection Floer cohomology where we equip $F_q$ with a local system. We characterize the local system $\del$ on $F_q$ in terms of its holonomy along the $\arg(z_1)$ and $\arg(z_2)$ loops of $F_q$, giving us quantities $(\exp(b_1), \exp(b_2))\in (U_\Lambda)^2$.
We'll denote this non-unitary local system by $\del_{b_1,b_2}.$
Given a point $q=(-a, -a)\in \underline V_3$, we compute the quantities
\begin{align*}
    \omega(u^+_{x_\emptyset<x_1})=\lambda_0                                                             &  & \omega(u^-_{x_\emptyset<x_1})=\lambda_0                                                            &  & \omega(u^{qv}) =-a+\lambda_0                                     \\
    P^{\del_{b_1,b_2}}_{\partial u^+_{x_\emptyset<x_1}}= \exp\left(\frac{1}{2} (b_1-b_2)\right) &  & P^{\del_{ b_1,b_2}}_{\partial u^-_{x_\emptyset<x_1}}= \exp\left(\frac{1}{2} (b_2-b_1)\right) &  & P^{\del}_{\partial u^{qv}}= \exp\left(\frac{1}{2} (b_1+b_2)\right).
\end{align*}
The weights given by the local system are determined by the paths drawn in \cref{fig:argumentprojection}.
from which we obtain the differential on the $\CF(L_{V_{pants}},(F_q, \del_{b_1,b_2}))$:
\begin{align*}
    \langle d_{\del_{b_1, b_2}}(x_{\emptyset}), x_1 \rangle = & \overbrace{\left(P^{\del_{b_1,b_2}}_{\partial u^+_{x_\emptyset<x_1}}\cdot T^{\omega(u^+_{x_\emptyset<x_1})}-P^{\del_{b_1,b_2}}_{\partial u^-_{x_\emptyset<x_1}}\cdot T^{\omega(\partial u^-_{x_\emptyset<x_1})} \right)}^{\text{Small Strips near $q$}}
    +\overbrace{P^{\del}_{\partial u^{qv}}\cdot T^{\omega(u^{qv})}}^{\text{Large Strips}}                                                                                                                                                                       \\
    =                                        & T^{\lambda}_0\left( \exp\left(\frac{1}{2} (b_1-b_2)\right)  - \exp\left(\frac{1}{2} (b_2-b_1)\right)  +   \exp\left(\frac{1}{2} (b_1+b_2)\right) \cdot T^{-a}   \right)                                                       \\
    =                                        & T^{-a+{\lambda_0}}\exp\left(-\frac{1}{2}(-b_1-b_2)\right)\left( T^{a} \exp(b_1)-  T^{a} \exp(b_2) + 1 \right)
\end{align*}
This always has a $U_\Lambda$-worth of solutions obtained by setting $b_1= \log(T^{-a}(T^a \exp(b_2)-1))$.
Therefore $(-a, -a)\in \tropa(L_{V_{pants}})$. From this we conclude that $\tropa(L_{V_{pants}})=V_{pants}$.

This is one of the rare situations where we can compute the Floer theoretic support explicitly: under the substitution $z_1= T^{a_1} \exp(b_1), z_2 =  T^{a_2} \exp(b_2) $, the Lagrangian tori $(F_{a_1, a_2}, \del_{b_1, b_2})$belong to the support of $L_{V_{pants}}$ if and only if  $z_1-z_2 +1 =0$. This should be compared with the computation of the $B$-realization of $V_{pants}$ from \cref{exam:tropicalpants}.

\subsection*{$B$-realizability: \cref{sec:realizableHMS}}
\label{sec:disconnect}
The matching of the supports of the $A$- and $B$- realizations of $V_{pants}$ can be captured in the language of homological mirror symmetry. This requires a description of the \emph{Fukaya category} of a symplectic manifold. The \emph{Fukaya pre-category} of a compact symplectic manifold $(X,\omega)$ is given by:
\begin{itemize}
    \item Objects are given by mutually transverse Lagrangian submanifolds $L\subset X$ which are graded, spin, and tautologically unobstructed (\cref{sec:unobstructedness}).
    \item For $L_0\neq L_1$, the morphisms $\hom(L_0, L_1)$ are given by Lagrangian intersection Floer cochains $\CF(L_0, L_1)$.
    \item $k$-compositions of morphisms
          \[m^k: \bigotimes_{i=0}^{k-1} \hom^{g_i}(L_i, L_{i+1}) = \hom^{2-k+\sum g_i}(L_0, L_k),\]
          are given by counts of holomorphic polygons with boundary on the $L_k$.
\end{itemize}
This is an \emph{$A_\infty$ pre-category}, meaning that for every collection of objects $L_0, \ldots, L_k$, the  filtered $A_\infty$ relations hold:
\[\sum_{j_1+j+j_2=k} (-1)^\clubsuit m^{j_1+1+j_2}(\id^{\tensor j_1} \tensor m^{j} \tensor \id^{\tensor j_2})=0.\]
Here $\clubsuit=j_1+\sum_{1}^{j_1} g_i$, and $k\geq 1$.

The pre-category can be appropriately completed to give a triangulated $A_\infty$ category, the Fukaya category $\Fuk(X_A)$.
Some of the hypotheses of the construction can be dropped or modified: for example, if $X_A$ is a cotangent bundle (and not compact) there is a version of the Fukaya category (the wrapped Fukaya category, $\mathcal W(X_A)$) which can be defined with appropriate Lagrangian submanifolds. $X_A=(\CC^*)^n=T^*F_0$ is one of these cases.

The homological mirror symmetry conjecture predicts that on mirror spaces the Fukaya category and derived category of coherent sheaves are derived equivalent.
\begin{theorem*}
    Let $X_A=(\CC^*)^n$, and $X_B^\CC=(\CC^*)^n$.
    There is an equivalence of derived categories:
    \[ \mathcal F: \mathcal W(X_A)\to D^b_{dg}\Coh(X_B^\CC).\]
    between the wrapped Fukaya category of exact admissible Lagrangian submanifolds of $X_A$ and the bounded derived category of coherent sheaves on $X_B^\CC$. 
\end{theorem*}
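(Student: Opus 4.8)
The statement is essentially Abouzaid's homological mirror symmetry theorem for the algebraic torus, and the plan is to prove it by exhibiting matching compact generators on the two sides and identifying their endomorphism algebras. On the $B$-side, write $X_B^\CC = (\CC^*)^n = \Spec R$ with $R = \CC[x_1^{\pm 1},\dots,x_n^{\pm 1}]$. Since $\Spec R$ is smooth and affine, $D^b_{dg}\Coh(X_B^\CC)$ coincides with the category of perfect complexes $\mathrm{Perf}(R)$, the structure sheaf $\mathcal O$ is a compact generator, and its derived endomorphism dg-algebra is $R$ itself, concentrated in cohomological degree $0$.

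On the $A$-side, first I would identify $X_A = (\CC^*)^n$ with the cotangent bundle $T^*T^n$ of the torus $T^n = \RR^n/\ZZ^n$ so that $\syza$ becomes the bundle projection and the Lagrangian fiber $F_q = \syza^{-1}(q)$ becomes a cotangent fiber $T^*_qT^n$; this is an exact admissible Lagrangian and it is the test object. Two geometric inputs then do the work. \emph{(i) Generation}: by Abouzaid's geometric criterion for generating wrapped Fukaya categories of cotangent bundles, the cotangent fiber split-generates $\mathcal W(T^*T^n)$; one must check that the admissibility conventions used in the body of the paper yield the same category as the one in that generation theorem, which is a statement only about behaviour near infinity. \emph{(ii) Endomorphisms}: by the Abbondandolo--Schwarz / Abouzaid(--Kragh) identification of wrapped Floer cohomology of a cotangent fiber with the homology of the based loop space, $HW^\bullet(T^*_qT^n, T^*_qT^n) \cong H_{-\bullet}(\Omega_q T^n;\CC)$ as rings, the right-hand product being the Pontryagin product. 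Since $\Omega_q T^n$ is homotopy equivalent to the discrete group $\ZZ^n$, this ring is $\CC[\ZZ^n] = R$, supported purely in degree $0$. Being concentrated in a single degree, the $A_\infty$-algebra $CW^\bullet(F_q,F_q)$ is automatically formal (every $m^k$ with $k\ge 3$ has the wrong degree), hence quasi-isomorphic to $R$.

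To finish, the functor $X \mapsto CW^\bullet(F_q, X)$ extends to a functor from $\mathcal W(X_A)$ to $R$-modules which lands in $\mathrm{Perf}(R)$, is fully faithful on $F_q$ by input (ii), and is essentially surjective because $F_q$ split-generates; by the standard $A_\infty$ Morita / Keller recognition theorem this gives $\mathcal W(X_A) \simeq \mathrm{Perf}(R) = D^b_{dg}\Coh((\CC^*)^n)$. Unwinding the identifications sends $F_q \mapsto \mathcal O$, and more generally sends the periodized conormal $L_{\underline V}$ of a rational affine subspace to (a local-system twist of) the structure sheaf of the corresponding subtorus $Y_{\underline V}$, consistent with the matching of $A$- and $B$-supports computed in the pair-of-pants example. (An alternative route packages the same content through the Nadler--Zaslow equivalence and the coherent--constructible correspondence: constructible sheaves on $T^n$ with no singular-support restriction are exactly local systems, i.e. $R$-modules.)

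The main obstacle is that the two geometric inputs---especially the loop-space description of $CW^\bullet(F_q,F_q)$ \emph{as a ring} and the split-generation statement---rest on substantial machinery, so the real work here is bookkeeping: verifying that the admissible (monomially-admissible) wrapped category used in the rest of the paper agrees with the cotangent-bundle wrapped category to which those theorems apply, and that gradings, spin structures, and sign conventions are aligned so that $CW^\bullet(F_q,F_q)\cong R$ holds on the nose as $\ZZ$-graded algebras rather than merely up to a regrading.
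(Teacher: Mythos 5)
Your overall strategy is the one the paper sketches --- exhibit a single split-generator on the $A$-side, identify its wrapped endomorphism algebra with $\CC[\ZZ^n]=\Hom(\mathcal O,\mathcal O)$, note formality for degree reasons, and conclude by Morita/Yoneda --- but you have misidentified the generator, and as written several of your key claims are false. Under the identification $X_A = T^*Q/T^*_\ZZ Q \cong T^*T^n$ (with $T^n$ the argument torus), the SYZ fiber $F_q=\syza^{-1}(q)$ is the \emph{compact} torus $\{q\}\times T^n$, i.e.\ the graph of a constant one-form on $T^n$ (the zero section when $q=0$); it is \emph{not} a cotangent fiber of $T^*T^n$, and $\syza$ is not the cotangent bundle projection (its base is $\RR^n$, not $T^n$). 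The object that becomes a cotangent fiber $T^*_\theta T^n$ is a \emph{section} of $\syza$ --- the Lagrangian the paper calls $L(0)$, e.g.\ $(\RR_{>0})^n\subset(\CC^*)^n$. For the compact torus $F_q$ your two geometric inputs fail: it is not an object to which Abouzaid's cotangent-fiber generation criterion applies, its (wrapped = ordinary) self-Floer cohomology is the exterior algebra $H^\bullet(T^n)$ rather than $\CC[\ZZ^n]$, and its mirror is a skyscraper sheaf $\mathcal O_z$ (as the paper states), which does not split-generate $\mathrm{Perf}((\CC^*)^n)$ since everything it builds is supported at $z$.

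Once you replace $F_q$ by the section $L(0)=T^*_\theta T^n$ throughout, your argument coincides with the paper's proof: $L(0)$ generates $\mathcal W(T^*T^n)$ by the cotangent-fiber generation theorem, $HW^\bullet(L(0),L(0))\cong H_{-\bullet}(\Omega T^n)\cong\CC[\ZZ^n]$ with the Pontryagin product, concentration in degree zero gives formality, and the Yoneda functor lands in $\mathrm{Perf}(R)$ with $L(0)\mapsto\mathcal O$. Your closing remark should likewise be corrected: it is $(F_q,\nabla)\mapsto\mathcal O_z$ (skyscrapers) and $L(0)\mapsto\mathcal O$, which is exactly the dictionary used in the pair-of-pants computation. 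The bookkeeping issue you flag --- matching the monomially admissible wrapped category with the cotangent-bundle wrapped category --- is a genuine point the paper also elides, but it is secondary to fixing the fiber/section confusion.
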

The proof of the theorem first shows that the zero section $L(0)$ of $\syza: X_A\to Q$ is a Lagrangian submanifold that generates $\Fuk(X_A)$. Then, $\HF(L(0), L(0))$ is shown to be the algebra $\CC[(\ZZ)^n]=\hom(\mathcal O_{(\CC^*)^n}, \mathcal O_{(\CC^*)^n}).$ Since this generates $D^b_{dg}\Coh(X_B^\CC)$, we know that these two categories are equivalent.
However, this proof is non-constructive: given an arbitrary exact Lagrangian submanifold $L\subset X_A$, there is no immediate way of determining the corresponding mirror sheaf in $D^b_{dg}\Coh(X_B^\CC)$.
There are a few objects which we can match up under this functor. Let $(F_q, \del)$ be an exact fiber of the Lagrangian torus fibration.
Then $\mathcal F(F_q, \del) \simeq \mathcal O_{z}$ for some $z \in X_B^\CC$.
From here, we obtain the following toy result, whose extension to the general $V$ is the objective of the remainder of this paper.
\begin{theorem}
    $V_{pants}\subset \RR^2$ is $B$-realizable.
\end{theorem}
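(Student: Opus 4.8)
The plan is to push the Lagrangian pair of pants through homological mirror symmetry and read off the cohomological support of the resulting sheaf, then transfer the answer to the $\Lambda$-analytic setting. First I would record that $L^\eps_{V_\pants}$ is exact — it is Hamiltonian isotopic to a hyperK\"ahler rotation of the affine line $\{1+z_1+z_2=0\}\subset(\CC^*)^2$ — and, by its construction in \Cref{sec:geometricrealization}, graded, spin and admissible, hence an object of $\mathcal W(X_A)$. Applying the equivalence $\mathcal F\colon\mathcal W(X_A)\to D^b_{dg}\Coh(X_B^\CC)$ produces a bounded complex of coherent sheaves $\mathcal F_{V_\pants}:=\mathcal F(L^\eps_{V_\pants})$ on $X_B^\CC=(\CC^*)^2$.

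Next I would compute $\Supp(\mathcal F_{V_\pants})$. Since $\mathcal F$ sends each exact fiber brane $(F_q,\del)$ to the skyscraper $\mathcal O_z$ at the corresponding point $z\in X_B^\CC$ and is an equivalence, $\Hom^\bullet(\mathcal F_{V_\pants},\mathcal O_z)$ in $D^b_{dg}\Coh(X_B^\CC)$ is isomorphic to $\HF(L^\eps_{V_\pants},(F_q,\del))$; and for a bounded coherent complex the support is exactly the locus of $z$ on which $\Hom^\bullet$ against the skyscraper at $z$ fails to vanish, so $\Supp(\mathcal F_{V_\pants})$ coincides with $\suppA(L^\eps_{V_\pants})$ under the identification of fiber branes with points. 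The faithfulness computation of \Cref{exam:pairofpantsupport} evaluates this set: after the substitution $z_i=T^{a_i}\exp(b_i)$ it is $\{(z_1,z_2):z_1-z_2+1=0\}$, a curve $Y_{V_\pants}\subset X_B^\CC$ cut out by one polynomial with unit coefficients.

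Finally, I would observe that the identical polynomial $1+z_1-z_2$ defines a closed algebraic — hence analytic — subset $\YB\subset(\Lambda^*)^2$, and that applying the valuation-preserving coordinate change $z_2\mapsto-z_2$ to the amoeba computation following \Cref{exam:tropicalpants} gives $\tropb(\YB)=V_\pants$. Hence $V_\pants$ is $B$-realizable. Equivalently, since the support computation in \Cref{exam:pairofpantsupport} is already performed over the Novikov field, it directly exhibits $\YB$ as the set of fiber branes pairing nontrivially with $L^\eps_{V_\pants}$, bypassing HMS altogether.

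The only real content, and the reason this argument does not generalize verbatim, is that the Floer-theoretic support is a priori merely a \emph{set} of Lagrangian torus branes, whereas $B$-realizability asks for an actual closed analytic subvariety of $X_B$. For $V_\pants$ this gap closes for free because the set in question is literally the vanishing locus of an honest polynomial; for a general tropical subvariety $V$ it is precisely the step where one must assemble the complexes $\CF(L_V,(F_q,\del))$ into a genuine sheaf via an extension of family Floer cohomology, i.e.\ invoke \Cref{ass:hmscn} together with faithfulness (\Cref{thm:support}), as carried out in \Cref{cor:realizability}. Reconciling the sign and normalization conventions between the two presentations of the defining equation is routine.
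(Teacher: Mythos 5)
Your argument is correct and follows the paper's proof exactly: the paper likewise invokes the HMS equivalence for $(\CC^*)^2$, identifies the support of $\mathcal F(L_{V_{\pants}})$ with the $A$-support computed in \cref{exam:pairofpantsupport}, and concludes realizability from the fact that this locus is cut out by the polynomial $1+z_1-z_2$. Your closing observations — that the sign matches the direct computation after $z_2\mapsto -z_2$, and that the passage from a set of fiber branes to an honest analytic subvariety is the step requiring \cref{ass:hmscn} in general — are accurate elaborations of what the paper leaves implicit.
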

\begin{proof}
    From \cref{exam:pairofpantsupport}, we proved that $V_{pants}$ is $A$-realizable by a Lagrangian $L_{V_{pants}}$. The support of the mirror sheaf $\mathcal F(L_{V_{pants}})$ is a $B$-realization of $V_{pants}$.
\end{proof}
\begin{remark}
    There are other approaches to homological mirror symmetry which would yield the same theorem. A stronger result than what is given here would be to show that $V_{pants}$ is realizable, and its realization compactifies to a subvariety (a line) in the projective plane. To prove this result, one would first show that $L_V$ belongs to an appropriate ``partially-wrapped Fukaya category'', and apply a homological mirror symmetry theorems for toric varieties for the appropriate partially-wrapped Fukaya category (\cite{kuwagaki2020coherent,ganatra2018sectorial} or \cite{abouzaid2006homogeneous,hanlon2019monodromy,hanlon2020functoriality}). Then one would need a mirror symmetry statement for the exact Lagrangian torus fiber equipped with non-unitary local systems, and replicate the argument of \cref{exam:pairofpantsupport}.
\end{remark} \section{Geometric realization}
	\label{sec:geometricrealization}
	
The flexibility of Lagrangian submanifolds both complicates and simplifies the construction of a Lagrangian lift of a tropical subvariety. 
The additional flexibility means that we have a lot of wiggle room to construct a potential lift; however, identifying a Lagrangian submanifold as ``the'' lift of a tropical subvariety becomes impossible. 
For example, given any candidate lift $L_V$ of a tropical subvariety $V$, one could apply a Hamiltonian isotopy to $V$ to obtain a new Lagrangian submanifold.
More generally, each potential Lagrangian lift $L_V$ of $V$ is supposed to represent the data of a sheaf on $X_B$ whose support has tropicalization $V$; there are many such sheaves!

Despite all of this flexibility, we already have a good idea of what the Lagrangian lift $L_V$ of $V$ should look like from \cref{eq:conormalmodel}. Recall that $V^{(0)}$ is the union of the interiors of the top dimensional polyhedral domains $\underline V$ defining $V$. At each component we can take the conormal torus construction to obtain a Lagrangian chain: 
\[L_{V^{(0)}}:=\bigcup_{\underline V\subset V^{(0)}} L_{\underline V}.\]
Intuitively, a geometric Lagrangian lift of $V$ should approximate the chain $L_{V^{(0)}}$.
\begin{remark}
    Fix an orientation on $F_q$, a fiber of the SYZ fibration. Then $L_{\underline V}$ inherits an orientation (which in local coordinates comes from $dq_1\wedge \cdots \wedge dq_k\wedge dp_{k+1}\wedge \cdots \wedge dp_n$). We will assume that we have fixed an orientation on $F_q$ in advance so that $L_{\underline V}$ are equipped with a standard orientation. 
\end{remark}
We propose the following definition for a geometric Lagrangian lift of a tropical subvariety (which is similar to that proposed in \cite[Definition 2.1]{mikhalkin2018examples}).
\begin{definition}
    A family of oriented Lagrangian submanifolds $L_V^\eps$ for $\eps>0$ is a \emph{geometric} Lagrangian lift of an weight-1 polyhedral complex $V\subset Q$ if the following conditions hold:
    \begin{enumerate}[label=(\roman*)]
        \item The Lagrangians $L_V^\eps$ are all Hamiltonian isotopic,\label{item:hamiltonianiso}
        \item \label{item:conormalcondition} Let  $V^{(i)}$ be the collection of codimension $i$ strata of $V$. We require that away from the codimension 1 strata,
        \begin{equation}L^\eps_V \setminus \syza^{-1}(B_\eps(V^{(1)}))= L_{\underline V^{(0)}}\setminus \syza^{-1}(B_\eps (V^{(1)}))\label{eq:conormal}\end{equation}
        as oriented submanifolds.
        \item The Lagrangians $L_V^\eps$ are embedded, graded, spin, and admissible (in the sense of \cref{def:admissiblitycondition}).
    \end{enumerate}
    \label{def:geometricLagrangianlift}
\end{definition}
\begin{remark}
\Cref{def:geometricLagrangianlift} has two simplifying requirements; one is included due to current technical limitations in the definition of Floer cohomology, and the second is for convenience.

The requirement that $L_V^\epsilon$ is embedded is a technically needed assumption; we believe that this condition can be dropped without modifying the main results of this paper. Our reason for restricting ourselves to the embedded setting is that the Charest-Woodward pearly model as written does not include a description of Floer cohomology for immersed Lagrangian submanifolds.

While \cref{def:geometricLagrangianlift} looks only at weight-1 polyhedral complexes, one can extend the story to weighted polyhedral complexes by asking that at each top dimensional stratum $\underline V\subset \underline V^{(0)}$ with weight $m$, the realization $L_{\underline V}$ is $m$-disjoint copies of $N^*\underline V/N^*_\ZZ\underline V$. All results in this paper can be extended to the weighted setting.
\end{remark}

The constructions from \cite{matessi2018lagrangian,mikhalkin2018examples,mak2020tropically,hicks2019tropical} all satisfy (\cref{def:geometricLagrangianlift}:\cref{item:hamiltonianiso,item:conormalcondition} ).  To prove that the previous definitions give examples of geometric Lagrangian lifts, we need to additionally show that they are admissible, graded, and spin. 
We prove these properties for certain examples of Lagrangian lifts in \cref{subsec:admissible,subsec:graded,subsec:spin}.

While \cref{def:geometricLagrangianlift} only asks that we take the lift of a weight-1 polyhedral complex, the only polyhedral complexes which admit such lifts are tropical ones.
\begin{prop}
    Let $V$ be a weight-1 rational polyhedral complex, and suppose that it has a Lagrangian lift $L_V^\eps$ satisfying (\cref{def:geometricLagrangianlift}:\cref{item:hamiltonianiso,item:conormalcondition}). Then $V$ is a tropical subvariety. 
    \label{prop:lagimpliesbalanced}
\end{prop}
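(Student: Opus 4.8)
The plan is to show that the balancing condition at every codimension-1 stratum $\underline W\subset V^{(1)}$ is forced by the existence of an \emph{embedded} Lagrangian $L_V^\eps$ satisfying \cref{eq:conormal}. The key observation is that near a point of $\underline W$ the Lagrangian $L_V^\eps$ agrees with $L_{\underline V^{(0)}}=\bigcup_i L_{\underline V_i}$ outside an $\eps$-tube, so it provides a compact Lagrangian cobordism (inside a slice of $X_A$ transverse to $\underline W$) between the conormal tori $L_{\underline V_i}$ attached along the facets $\underline V_i\supset \underline W$. First I would reduce to a local model: choose a small transverse slice $S\cong T^*(\RR^{c}/\ZZ^{c})\times \RR^{2(n-1-k)}$-neighbourhood of a generic point of $\underline W$ (with $c$ the codimension, $k=\dim V$), so that $L_V^\eps\cap S$ is a compact oriented Lagrangian whose boundary, read off from \cref{eq:conormal}, is the union of the linear conormal pieces $L_{\underline V_i}\cap \partial S$, each a copy of a subtorus in the corresponding direction $\vec v_i$ of $\str(\underline W)$.

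Second, I would extract a homological constraint. The class $[L_V^\eps\cap S]\in H_{n}(S,\partial S)$ maps, under the boundary map of the long exact sequence of the pair, to $\sum_i w_i [L_{\underline V_i}\cap\partial S]\in H_{n-1}(\partial S)$, and this sum must vanish because it is a boundary. Identifying $H_{n-1}(\partial S)$ with $H_1$ of the relevant torus fiber (as in the running example, where $H_1(F_q,\ZZ)\cong T^*_\ZZ Q$), the class of the conormal piece $L_{\underline V_i}$ in the transverse directions is exactly the primitive vector $\vec v_i\in T_\ZZ Q/T_\ZZ\underline W$. Hence $\sum_i w_i\vec v_i\equiv 0$ in $T_\ZZ Q/T_\ZZ\underline W$, which is the balancing condition of \cref{def:tropicalsubvariety}. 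Combined with the polyhedral complex condition (already part of the hypothesis that $V$ is a rational polyhedral complex) this shows $V$ is a tropical subvariety.

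The main obstacle, and the step I would be most careful about, is making the "compact Lagrangian cobordism $\Rightarrow$ vanishing boundary class" argument rigorous in the noncompact, quotient-torus setting: one must choose the transverse slice $S$ so that $L_V^\eps\cap S$ is genuinely compact (or compact-with-controlled-ends) and so that the portion of $L_V^\eps$ over $B_\eps(V^{(1)})$ has no homological contribution to the directions transverse to $\underline W$ — this is where \cref{def:geometricLagrangianlift}\cref{item:hamiltonianiso} (all $L_V^\eps$ Hamiltonian isotopic, hence the class is $\eps$-independent) and the embeddedness are used. A secondary point is bookkeeping the orientations: \cref{eq:conormal} is required to hold \emph{as oriented submanifolds}, and the standard orientation on each $L_{\underline V_i}$ (fixed via the orientation of $F_q$, per the remark preceding the proposition) is precisely what pins down the sign of $\vec v_i$ so that the vanishing of the boundary class becomes the signed balancing relation rather than an unsigned one. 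Once the local computation is set up correctly, the argument is essentially the observation that the first homology of the link of $\underline W$ inside $X_A$ records exactly the tropical directions, and a null-homologous cycle there is the balancing condition.
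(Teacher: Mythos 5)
Your proposal is correct and follows essentially the same route as the paper: reduce to a transverse slice at a generic point of a codimension-1 stratum (the paper makes this precise by restricting to $Y_A=T^*R/T^*_\ZZ R$ via a Lagrangian correspondence), observe that the boundary of the resulting compact Lagrangian piece is null-homologous, and identify the oriented conormal-torus classes with the primitive edge directions to extract the balancing condition. The only differences are cosmetic — you use the long exact sequence of the pair $(S,\partial S)$ where the paper uses the argument projection to the fiber torus $F_r$, and your identification via $H_1$ versus the paper's via $H_{\dim F_r-1}$ is just Poincaré duality on the torus.
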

\begin{figure}
    \centering
    \begin{tikzpicture}
    \draw[fill=gray!20] (-0.5,2.5) -- (0.5,3.5) -- (0.5,-0.5) -- (-0.5,-1.5) -- cycle;

    \draw[fill=red!20] (-0.5,2.5) -- (-0.5,-1.5) -- (-2.5,-2) -- (-2.5,2) -- cycle;
    \draw[fill=blue!20] (-0.5,2.5) -- (2,1.5) -- (2,-2.5) -- (-0.5,-1.5) -- cycle;
    \draw[dotted] (-0.5,2.5) -- (0.5,3.5) -- (0.5,-0.5) -- (-0.5,-1.5) -- cycle;
    \draw[fill=green!20] (-2.5,0) -- (-3.5,-1) -- (2,-1) -- (-0.5,0) -- (-2.5,0);
    \draw[dotted] (-2.5,0) -- (-1.5,1) -- (0,1);
    \draw[fill=green!20] (2,1) -- (4,1) -- (2,-1);
    \draw[dotted] (0,1) -- (2,1);
    \draw[thick, ->] (-0.5,0) -- (-3,0);
    \draw[thick, ->] (-0.5,0) -- (3.25,-1.5);
    \draw[thick, dotted] (-0.5,0) -- (1.25,1.75);
    \draw[thick, ->] (1.25,1.75) -- (1.5,2);
    \node[below] at (-0.5,0) {$r$};
    \node[left] at (-0.5,1.5) {$\underline W$};
    
    \node at (-2.25,-0.5) {$\underline R$};
    \node at (-3.25,0) {$v_1$};
    \node at (3.75,-1.75) {$v_2$};
    \node at (2,2.5) {$v_k$};
    \end{tikzpicture}     \caption{Polyhedral complexes discussed in \cref{prop:lagimpliesbalanced}}.
    \label{fig:lagimpliesbalanced}
\end{figure}
\begin{proof}
        Select an interior point $r\in \underline W\subset V^{(1)}$ of the codimension 1 stratum of $V$.
    Pick $U_r\subset T_r Q$ a rational subspace so that $ U_r\oplus T_rW = T_rQ$.
    Let $R\subset Q$ be a small polyhedral domain passing through $r$ with tangent space $U_r$.
    Then $V|_{R}$ is a weight-1 rational polyhedral curve. By taking $R$ small enough, $V|_{R}$ has a single vertex and edges pointing in directions $v_1, \ldots v_k$ corresponding to facets $F_1, \ldots, F_k$ containing $W$.
    We need to prove that $\sum_{i=1}^k v_i=0$. See \cref{fig:lagimpliesbalanced}.

    Consider the symplectic manifold $Y_A:= T^*R/T^*_\ZZ R\subset T^*Q/T^*_\ZZ Q$, with the Lagrangian torus fibration $\pi_{Y_A}: Y_A\to R$. Let $i: R\to Q$ be the inclusion.
    Select $\eps$ small enough so that $B^\eps(W)\cap R$ is an interior set of $R$.
    Given a Lagrangian submanifold $L\subset T^*Q/T^*_\ZZ Q$, we can take a Hamiltonian perturbation of $L$ so that  
    \[L_{i^*}\circ L:=\{(r, i^*(p))\st (r, p)\in L, r\in R\}\]
    is a Lagrangian submanifold of $Y_A$.
        See \cite[Section 5.2]{hanlon2020functoriality} for a more general discussion of this construction from the perspective of Lagrangian correspondences.
    By definition $\pi_{Y_A}(L_{i^*}\circ L) = \pi_{Y_A}(L)\cap R$, so $L_{V|_R}^\eps:=L_{i^*}\circ L_V^\eps$ is a geometric realization of $V|_R\subset R$. 
    We, therefore, have reduced to the setting which is the lift of a tropical curve with a single vertex.
    
    Given a tropical curve $V|_R\subset R$ with a single vertex $v$, the Lagrangian $L_{V|_R}^\eps$ is a manifold with boundary. Consider the projection $\arg_R: Y_A\to F_r= T^*_rR/T_{\ZZ,r}^* R$. Considering $\arg_R(L_{V|_R}^\eps)$ as a $(\dim(F_r)-1)$ chain, we obtain the relation in homology
    \[0=[\arg_R(\partial(L_{V|_R}^\eps))]\in H_{\dim_{F_r}-1}(F_r).\] 
    There is an identification (as vector spaces) that sends an integral basis $e_1, \ldots, e_n$ to the class of the perpendicular subtorus
    \begin{align*}
        T_rR\to& H_{\dim_{F_r}-1}(F_r)\\
        e_i\mapsto& [\{\eta\in T^*_rR\st \eta(e_i)=0\}]
    \end{align*}
        The boundary of $L_{V|_R}^\eps$ lies in the region where \cref{eq:conormal} holds and we have an agreement of oriented submanifolds, we can therefore compute:
    \begin{align*}
        [\arg_R(\partial (L_{V|_R}^\eps))]=\sum_{i=1}^k [\{\eta\in T^*_rR\st \eta(e_i)=0\}]
    \end{align*}
        proving that $\sum e_i=0$. 
\end{proof}
\begin{notation}
    From here on, we will drop the $\eps$ in $L_V^\eps$ and simply write $L_V$ for a Lagrangian which belongs to such a family.
\end{notation}
\subsection{Geometric Lagrangian lift: admissibility}
\label{subsec:admissible}
When Lagrangian submanifolds are non-compact, we need to place taming conditions on them so that they are Floer-theoretically well-behaved. 
\begin{definition}[\cite{hanlon2019monodromy}]
    Let $W_\Sigma:X_A\to \CC$ be a Laurent polynomial whose monomials are indexed $A$, the set of rays of a fan $\Sigma$. A \emph{monomial division} $\Delta_\Sigma$ for $W_\Sigma=\sum_{\alpha\in A} c_\alpha z^\alpha$ is an assignment of a closed set $U_\alpha \subset Q$ to each monomial $\alpha\in A$ so that the following conditions hold:
    \begin{itemize}
        \item
              The sets $U_\alpha$ cover the complement of a compact subset of $Q=\RR^n$;
        \item
              There exist constants $k_\alpha \in \RR_{>0}$ so that for all $z$ with $\val(z)\in U_\alpha$ the expression 
              \[
                  \max_{\alpha\in A} (|c_\alpha z^\alpha|^{k_\alpha})
              \]
              is always achieved by $|c_\alpha z^\alpha|^{k_\alpha}$; and
        \item
              $U_\alpha$ is a subset of the open star of the ray $\alpha$ in the fan $\Sigma$.
    \end{itemize}
    A Lagrangian $L\subset X_A$ is \emph{$\Delta_\Sigma$-monomially admissible} if over $\syza^{-1}(U_\alpha)$ the argument of $c_\alpha z^\alpha$ restricted to $L$ is zero outside of a compact set. \label{def:admissiblitycondition}
\end{definition}
We will always assume that the $\arg(c_\alpha)=0$.
An advantage of using the monomial admissibility condition for Lagrangian submanifolds is that it is a relatively simple check to see if a Lagrangian submanifold satisfies the condition.
\begin{theorem*}[Theorem 3.1.7 of \cite{hicks2020tropical} ]
    Suppose that $V$ is the tropicalization of a hypersurface whose Newton polytope has dual fan $\Sigma$. Then the construction of  $L_V$ from \cite{hicks2019tropical} is $\Delta_\Sigma$-monomially admissible.
\end{theorem*}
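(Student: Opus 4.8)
The plan is to use that $\Delta_\Sigma$-monomial admissibility only constrains $L_V$ over the complement of a compact set, and that there $V$ is combinatorially governed by the fan $\Sigma$. First I would fix $R\gg 0$ so that $V$ agrees outside $B_R(0)$ with its recession fan; since $\Sigma$ is the normal (dual) fan of the Newton polytope $\Delta$, this recession fan is precisely the union of the cones of $\Sigma$ of dimension $\le n-1$, and each top-dimensional cell of $V$ outside $B_R(0)$ is a translate of a codimension-one cone $\tau$ of $\Sigma$, dual to an edge $E_\tau$ of $\Delta$ whose primitive direction is, up to sign, the primitive integral conormal $\mu_\tau$ of that cell. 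I would then take the monomial division $\Delta_\Sigma$ whose sets $U_\alpha$ are closed neighbourhoods of $\operatorname{relint}(\rho_\alpha)$ contained in the open star $\str(\rho_\alpha)$ and covering $Q\setminus B_R(0)$; such a $\Delta_\Sigma$ exists and is a valid monomial division for $W_\Sigma=\sum_{\alpha} c_\alpha z^\alpha$ because on $\str(\rho_\alpha)$ it is the monomial $z^\alpha$ whose norm dominates, one needing only to perturb the $U_\alpha$ slightly to control their boundaries.

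The combinatorial input is then pure normal-fan duality. Because $U_\alpha\subseteq\str(\rho_\alpha)$, the only cones of $\Sigma$, hence the only cells of $V$ at any scale, lying over $U_\alpha$ are those $\sigma$ with $\rho_\alpha\preceq\sigma$; dually the face $G_\sigma\subseteq\Delta$ with $N(G_\sigma)=\sigma$ then satisfies $G_\sigma\subseteq F_\alpha$, where $F_\alpha$ is the facet of $\Delta$ with $N(F_\alpha)=\rho_\alpha$ and $\alpha$ is a lift of its primitive inner normal. Every cell of $V$ occurring over $U_\alpha$ is dual to an edge of the induced subdivision of some such $G_\sigma$, so its conormal direction lies in $\operatorname{dir}(G_\sigma)\subseteq\operatorname{dir}(F_\alpha)=\ker\alpha$. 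In other words, over $U_\alpha$ every weight/conormal direction appearing in $V$ is annihilated by $\alpha$.

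Next I would feed this into the structure of $L_V$. The construction of \cite{hicks2019tropical} builds $L_V$ recursively over the strata of $V$: over a neighbourhood of a stratum $\underline W$ (a cone $\sigma$ of $\Sigma$ near infinity) it splits $X_A$ into the periodized cotangent bundle along $\underline W$ and the transverse $(\CC^*)^{\operatorname{codim}\underline W}$, and $L_V$ there is the product of the zero section in the $\underline W$-directions with the lower-dimensional tropical Lagrangian of the transverse link. Unwinding the recursion, the fibre $L_V\cap F_q$ is assembled from the conormal circles of the cells of $V$ near $q$ together with interpolations that remain inside the linear span of the corresponding conormal directions; over $\syza^{-1}(U_\alpha)\setminus\syza^{-1}(B_R(0))$ all of these directions lie in $\ker\alpha$ by the previous paragraph. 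Hence $\arg(c_\alpha z^\alpha)=\langle\alpha,\cdot\rangle$ is constant equal to $\arg c_\alpha=0$ on $L_V\cap\syza^{-1}(U_\alpha)$ outside the compact set $\syza^{-1}(B_R(0))$, which is exactly the definition of $\Delta_\Sigma$-monomial admissibility (\cref{def:admissiblitycondition}).

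The main obstacle is not the combinatorics but making the third step rigorous: one has to verify from the construction of \cite{hicks2019tropical} that the interpolations gluing the conormal pieces of $L_V$ across the codimension-$\ge 2$ skeleton of $V$ genuinely remain inside the subtorus bundle $\{\arg z^\alpha=0\}$ over $U_\alpha$, simultaneously for every $\alpha$ with $\rho_\alpha\preceq\sigma$, and that this is consistent on the overlaps $U_\alpha\cap U_{\alpha'}$ — ultimately a consequence of the fan structure of $\Sigma$, but requiring one to track the recursive local models carefully. A convenient simplification is that the construction of \cite{hicks2019tropical} depends on a scale $\eps$ and on auxiliary smoothing choices, so it produces a whole family of candidate Lagrangians; it is enough to verify monomial admissibility for one well-chosen member, which gives the room needed to arrange the local models near infinity to lie visibly in the required subtori.
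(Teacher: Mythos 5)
The paper never proves this statement itself: it is imported verbatim from \cite{hicks2020tropical}, so there is no in-paper proof to compare against; the closest analogue is the Claim immediately following it, which treats tropical curves. Your strategy is the natural hypersurface version of that argument, and its combinatorial core is correct: over $U_\alpha\subset\str(\rho_\alpha)$ and outside a compact set, the only strata of $V$ present are those dual to faces of the induced subdivision of $\Delta$ contained in the facet $F_\alpha$, so every conormal direction occurring there pairs to zero with $\alpha$, and $\arg(c_\alpha z^\alpha)=\langle\alpha,\theta\rangle$ vanishes on the corresponding conormal pieces. Two small slips: closed neighbourhoods of $\operatorname{relint}(\rho_\alpha)$ do not cover the complement of a compact set (points deep inside a top-dimensional cone are far from every ray), so the $U_\alpha$ must be comparable to the full open stars, as in the combinatorial divisions of \cite{hanlon2019monodromy}; and the unbounded facets of $V$ are of the form (bounded polytope) plus (cone of $\Sigma$), not translates of cones of $\Sigma$ --- only their recession cones, hence their conormal directions, are controlled by $\Sigma$, which is all you actually use.

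The genuine gap is the one you flag yourself, and it is not a formality. For a hypersurface, unlike a curve, the codimension-$\geq 2$ strata of $V$ also escape to infinity inside $\str(\rho_\alpha)$, so the surgery/interpolation regions of the construction of \cite{hicks2019tropical} persist over $U_\alpha$ arbitrarily far from the origin. The assertion in your third paragraph --- that the fibre of $L_V$ over such points stays inside the subtorus generated by the local conormal directions, hence inside $\{\arg z^\alpha=0\}$ simultaneously for every $\alpha$ with $\rho_\alpha\preceq\sigma$ --- is precisely the content of the cited theorem and cannot be asserted from general principles; it requires unwinding the explicit local models (the graph-of-differential/surgery description) of the construction. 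For curves this issue is absent, because outside a compact set $L_V$ is literally a disjoint union of conormals of the unbounded edges, which is why the paper's own Claim has a three-line proof. In short: right reduction, correct duality argument, but the decisive verification about the local models at infinity is deferred rather than carried out.
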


    Let $V\subset Q$ be a tropical curve. We say that $V$ is adapted to $\Sigma$ if each semi-infinite edge of $V$ points in the direction of a ray of $\Sigma$.
\begin{claim}
    Suppose that $V\subset \RR^n$ is a weight-1  tropical curve adapted to $\Sigma$.  Any  Lagrangian lift $L_V$ is $\Delta_\Sigma$-monomially admissible. 
\end{claim}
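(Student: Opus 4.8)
The plan is to reduce the global admissibility check to a local computation near each semi-infinite edge of $V$, exactly as the monomial division $\Delta_\Sigma$ is set up to allow. First I would recall that by \Cref{def:admissiblitycondition}, monomial admissibility is a condition imposed over each $\syza^{-1}(U_\alpha)$ for $U_\alpha$ in the open star of the ray $\alpha$; since the $U_\alpha$ cover the complement of a compact subset of $Q$, it suffices to verify that for each ray $\alpha$ of $\Sigma$, the argument of $c_\alpha z^\alpha$ restricted to $L_V$ vanishes outside a compact set over $\syza^{-1}(U_\alpha)$. Because $V$ is adapted to $\Sigma$, each semi-infinite edge of $V$ points in the direction of a ray of $\Sigma$; moreover, away from the (compact) bounded part of $V$, the $\eps$-neighborhood $B_\eps(V^{(1)})$ of the vertex set is bounded, so by \cref{def:geometricLagrangianlift}:\cref{item:conormalcondition} we have $L_V$ agreeing with the union of conormals $L_{\underline V^{(0)}}$ outside a compact set. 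Thus over the non-compact part of $\syza^{-1}(U_\alpha)$, $L_V$ coincides with $L_{\underline E_\alpha} = N^*\underline E_\alpha / N^*_\ZZ \underline E_\alpha$ for the semi-infinite edge(s) $\underline E_\alpha$ asymptotic to the ray $\alpha$ (after possibly further shrinking $U_\alpha$ so it only meets the stars of such edges).

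The key computation is then: on the conormal Lagrangian $L_{\underline E}$ of a ray $\underline E = \RR_{\geq 0}\cdot \vec v_\alpha$ with primitive direction $\vec v_\alpha$, the argument coordinates are constrained. Writing $z^\alpha = e^{\langle \alpha, q\rangle + i\langle \alpha, \theta\rangle}$ in the $x_i = e^{q_i + i\theta_i}$ coordinates, and noting that $L_{\underline E}$ over a point of $\underline E^{(0)}$ is the fiber torus $\theta$ constrained to the subtorus annihilating $T_\ZZ\underline E = \ZZ\vec v_\alpha$ — more precisely, the periodized conormal fixes the $\theta$-directions complementary to $\vec v_\alpha$ to be constant and lets the $\vec v_\alpha$-direction be free in $q$ — I would check that $\arg(c_\alpha z^\alpha) = \langle \alpha, \theta\rangle$ is constant on $L_{\underline E}$ precisely because $\alpha$ (the exponent vector indexing the ray) is, up to the identification of rays of $\Sigma$ with their primitive generators, parallel to $\vec v_\alpha$, hence $\langle \alpha, \theta\rangle$ only depends on the (fixed) component of $\theta$ along $\vec v_\alpha$. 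With the standing convention $\arg(c_\alpha) = 0$, this argument is then identically zero (or a fixed constant that we may normalize away by a choice of lift) on $L_V \cap \syza^{-1}(U_\alpha)$ outside the compact set where $L_V$ may differ from $L_{\underline E}$. This gives $\Delta_\Sigma$-monomial admissibility.

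I would organize the steps as: (1) reduce to checking the condition star-by-star using the covering property of $\Delta_\Sigma$; (2) use \cref{def:geometricLagrangianlift}:\cref{item:conormalcondition} to replace $L_V$ with the conormal $L_{\underline E_\alpha}$ of the relevant semi-infinite edge(s) over the non-compact part of $\syza^{-1}(U_\alpha)$, shrinking $U_\alpha$ if necessary so that only edges asymptotic to the ray $\alpha$ contribute; (3) compute $\arg(c_\alpha z^\alpha)|_{L_{\underline E_\alpha}}$ directly in coordinates and observe it is locally constant, hence (after the normalization $\arg(c_\alpha)=0$) vanishes outside a compact set; (4) conclude. A small bookkeeping point to handle in step (2) is that $\Sigma$ may have several rays and $V$ several semi-infinite edges in the same ray direction, but since distinct parallel edges have disjoint $\eps$-neighborhoods outside a compact set, the $U_\alpha$ can be chosen to separate them, or one simply notes the argument computation is identical on each parallel conormal sheet.

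The main obstacle I anticipate is step (2): making precise that $U_\alpha$ can be shrunk (while retaining the covering property of the monomial division outside a compact set) so that $\syza^{-1}(U_\alpha)$, intersected with the non-compact locus, meets $L_V$ only in the conormal of edges genuinely asymptotic to the ray $\alpha$ — this uses both that $V$ is adapted to $\Sigma$ (so semi-infinite edges align with rays) and that $U_\alpha$ lies in the open star of $\alpha$, but one must check these are compatible, i.e. that an edge of $V$ in direction $\alpha$ has its conormal's base $\underline E_\alpha$ eventually inside $U_\alpha$. This is essentially the content of the cited hypersurface result (Theorem 3.1.7 of \cite{hicks2020tropical}) specialized and simplified to the curve case, so I expect the argument to go through with the conormal structure making the argument-vanishing even more transparent than in the hypersurface setting, where the Lagrangian near infinity is a more complicated piece rather than a flat conormal torus.
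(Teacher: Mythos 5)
Your proposal is correct and follows essentially the same route as the paper: outside a compact set $L_V$ coincides with the disjoint union of conormals $N^*\underline e/N^*_\ZZ\underline e$ of the semi-infinite edges, adaptedness plus the structure of the monomial division matches each such edge to the region $U_\alpha$ of its direction, and $\arg(z^\alpha)$ vanishes on the conormal of an edge parallel to $\alpha$ since $\langle\alpha,\theta\rangle=0$ there. (One small wording slip: on the conormal of an edge in direction $\vec v_\alpha$ it is the pairing of $\theta$ with $\vec v_\alpha$ that is fixed at zero while the complementary $\theta$-directions are free, not the other way around -- but your subsequent computation uses the correct fact.)
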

\begin{proof}
    Let $V^{(0)}_\infty=\{e_i\}_{i=1}^k$ denote the semi-infinite edges of $V$. We note that there exists a compact set $K\subset Q$ so that $L_C\setminus \syza^{-1}(K)=\bigsqcup_{e\in V^{(0)}_\infty} L_{\underline e}\setminus \syza^{-1}(K)$. Furthermore, $K$ can be chosen so that $e\setminus K\subset U_\alpha$ if and only if $e$ points in the direction $\alpha\in \Sigma$.
    Over this region, we observe that $\arg(z^\alpha)|_{N^*e/N^*_\ZZ e}=0$. 
\end{proof}
\begin{remark}
    If some of the semi-infinite edges of $V$ are weighted, we must replace the last condition in monomially admissible with ``there exists a discrete set of values $\{\theta_i\}$ such that the argument of $c_\alpha z^\alpha|_{L\cap C)\alpha}\subset \{\theta_i\}$''. 
        The Floer theoretic arguments in \cite{hanlon2019monodromy} can be applied to this setting as well (simply by letting $\theta_i$ be $k$-roots of unity, and replacing $\alpha$ with $k\alpha$). 
\end{remark}
\subsection{Geometric Lagrangian lifts: homologically minimal and graded}
\label{subsec:graded}
The additional amount of flexibility that symplectic geometry affords us means that there are many geometric Lagrangian lifts of a single tropical subvariety. Some of these lifts differ for unimportant reasons: for instance, we could have included some extra topology in our Lagrangian by attaching a Lagrangian with vanishing Floer cohomology to a previously constructed lift. The following condition is imposed to weed out some of these worst offenders.
\begin{definition}
    Let $j: L_{V^{(0)}}\setminus \syza^{-1}(B_\epsilon(V^{(1)}))\into L_V$ be the inclusion that is induced from the inclusion of the codimension zero strata of $V$ into $V$.
             We say that a lifting is \emph{ homologically minimal} if there exists a section $i: V\to L_V\subset X_A$ so that  $H_1(L_V)$ is generated by the images of 
    \begin{align*}
        (i)_*:&H_1(V)\to H_1(L_V)\\
        (j)_*:&H_1(L_{V^{(0)}}\setminus \syza^{-1} B_\epsilon(V^{(1)}))\to H_1(L_V)
    \end{align*}
        Let $i_{L_V}: L_V\to X_A$ be the inclusion of our Lagrangian submanifold. We say that $L_V$ is an untwisted realization of $V$ if the composition 
    \[V \xrightarrow{(i_{L_V}\circ i)} X_A \xrightarrow{\arg} F_q 
    \]
        is null-homologous (for any choice of $q\in Q$).
    \end{definition}

\begin{remark}
For a fixed tropical subvariety $V$, there can be several geometric Lagrangian lifts of $V$ which are meaningfully different. We expand on how these different choices of lifts correspond to tropical line bundles of $V$ in \cref{app:jacobian}.
\end{remark}
The homologically minimal condition places some constraints on our Lagrangian submanifolds.
\begin{lemma}
    If $L_V$ is homologically minimal and untwisted, then $L_V$ is graded.
\end{lemma}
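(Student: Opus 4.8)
The plan is to show that a Lagrangian submanifold $L$ admits a grading (in the sense of \cite{seidel2000graded}) precisely when its Maslov class $\mu_L \in H^1(L;\ZZ)$ vanishes, and then to verify that the two generating conditions built into ``homologically minimal'' and ``untwisted'' force exactly this vanishing. Since $X_A = (\CC^*)^n \simeq T^*F_0/\dots$ carries a canonical choice of holomorphic volume form $\Omega = \bigwedge dx_i/x_i = \bigwedge(dq_i + i\,d\theta_i)$, there is a globally defined phase function; a Lagrangian $L$ is gradable iff the associated phase map $L \to S^1$ (the argument of $\Omega|_L$ against a chosen volume element of $L$) lifts to $\RR$, i.e.\ iff its homotopy class in $[L, S^1] = H^1(L;\ZZ)$ is zero. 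So the lemma reduces to: the Maslov class of $L_V$ vanishes.

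First I would observe that $H^1(L_V;\ZZ)$ is generated (dually to the homologically minimal hypothesis) by two types of classes: those coming from the section $i_*: H_1(V) \to H_1(L_V)$, and those coming from $j_*: H_1(L_{V^{(0)}} \setminus \syza^{-1}(B_\eps(V^{(1)}))) \to H_1(L_V)$, i.e.\ loops sitting inside the conormal pieces $L_{\underline V}$. It therefore suffices to evaluate the Maslov class on cycles of each type. On the conormal pieces this is a local computation: each $L_{\underline V} = N^*\underline V/N^*_\ZZ\underline V$ is (an open subset of) a linear Lagrangian of the form $T^{n-k}\times\RR^k$ sitting inside $T^*F_0$ as a periodized conormal bundle, and for such linear/conormal Lagrangians in a cotangent bundle the phase function with respect to the standard $\Omega$ is locally constant --- the conormal directions contribute a fixed phase and the base directions contribute nothing. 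Hence the Maslov class restricted to $L_{\underline V}$ is zero, so it annihilates every class in the image of $j_*$.

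Second, for the classes in the image of $i_*: H_1(V) \to H_1(L_V)$, I would use the untwisted hypothesis. A loop $\gamma$ in $V$ pushed up to $L_V$ via the section $i$ has the property that $\arg \circ i_{L_V} \circ i$ is null-homologous as a map $V \to F_q$; concretely this says that the ``fiber-argument winding'' of the section around any cycle of $V$ is trivial. The Maslov index of such a loop splits into a contribution from the base affine structure of $V$ (which is trivial: $V$ is built from rational affine pieces, and the graded lift over a single affine stratum is unobstructed because $L_{\underline V}$ is already shown to be phase-constant) and a contribution from how the fiber-torus framing rotates along $\gamma$, which is exactly measured by the class of $\arg \circ i_{L_V}\circ i$ --- and this is null-homologous by assumption. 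Combining, the Maslov class $\mu_{L_V}$ vanishes on a generating set of $H_1(L_V)$, hence $\mu_{L_V} = 0 \in H^1(L_V;\ZZ)$, so $L_V$ lifts to a graded Lagrangian.

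The main obstacle I anticipate is making the decomposition of the Maslov index along an $i_*$-cycle rigorous: one must carefully relate the abstract Maslov/phase class of $L_V$ to the concrete ``argument projection'' map $V \xrightarrow{\arg \circ i_{L_V}\circ i} F_q$, which requires a Gauss-map / linearized-Lagrangian computation along the section and a compatible trivialization of $TX_A$ near $i(V)$, together with the observation that over the codimension-0 strata the phase is genuinely constant (the conormal computation of the previous paragraph) so that all the ``interesting'' holonomy of the phase is concentrated in the fiber-torus direction and is therefore governed precisely by the untwisted condition. Once the two hypotheses are translated into statements about $H^1(L_V;\ZZ)$ in this way, the conclusion is immediate; the work is entirely in the translation.
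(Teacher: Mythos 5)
Your proposal is correct and takes essentially the same route as the paper: reduce gradedness to the vanishing of the squared-phase (Maslov) class in $H^1(L_V;\ZZ)$, use homological minimality to test it only on generators in the images of $i_*$ and $j_*$, dispose of the $j_*$-classes by the local constancy of the phase on the conormal pieces $L_{\underline V}$, and dispose of the $i_*$-classes via the untwisted hypothesis. You are in fact somewhat more explicit than the paper's brief argument about exactly how the untwisted condition (null-homology of $\arg\circ i_{L_V}\circ i$) controls the phase holonomy along cycles coming from $V$.
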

\begin{proof}
    We recall the definition of graded from \cite[Example 2.9]{seidel2000graded}.
     Since $c_1(X_A)=0$, we can take a section $\bigwedge_{i=1}^n \left(dq_i+id\theta_i\right)^{\tensor 2}$ of $\Lambda^n(TX_A, J)^{\tensor 2}$.
    This determines a map 
    \begin{align*}
        {\det}^2\circ s_L: L\to& S^1\\
        x\mapsto& \left(\bigwedge \left(dq_i+id\theta_i\right)(T_xL)\right)^2
    \end{align*}
    A Lagrangian is $\ZZ$ graded if this map can be lifted to $\RR$. 

    Consider a homologically minimal Lagrangian and untwisted Lagrangian $L_V$. There exist generators $\{[\alpha_k],[\beta_l]\}$ for $H_1(L_V)$ so that $\alpha_k$ is in the image of $i$ and $\beta_l$ are in the image of $j$. Since the compositions
    \begin{align*}
        {\det}^2\circ s_{L_V}\circ i: V\to& S^1\\
        {\det}^2\circ s_{L_V}\circ j: (L_{V^{(0)}})\to & S^1
    \end{align*}
    are constantly $0$, it follows that there is no obstruction to lifting ${\det}^2\circ s_{L_V}: L_V\to S^1$ to $\RR$.
\end{proof}

\begin{prop}
    Suppose that $V\subset \RR^n$ is either a smooth tropical curve or a smooth tropical hypersurface. Then the construction of $L_V$ given by \cite{matessi2018lagrangian,mikhalkin2018examples,hicks2019tropical} produces a homologically minimal Lagrangian lift $L_V$. The lifts are therefore graded.
    \end{prop}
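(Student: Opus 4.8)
The plan is to verify the two defining conditions of \emph{homologically minimal} and \emph{untwisted} for the explicit lifts $L_V$ of \cite{matessi2018lagrangian,mikhalkin2018examples,hicks2019tropical}, and then invoke the preceding lemma to conclude that $L_V$ is graded. First I would recall the structure of these constructions: away from an $\eps$-neighborhood of the codimension-one strata $V^{(1)}$, the Lagrangian $L_V$ agrees with the conormal chain $L_{V^{(0)}} = \bigcup_{\underline V \subset V^{(0)}} L_{\underline V}$, and near each vertex (respectively near each codimension-one stratum for hypersurfaces) it is modelled on a standard local ``pair of pants'' type Lagrangian handle glued in to balance the conormal pieces. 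The key homological input is that these local gluing pieces contribute no extra one-cycles beyond those already visible in the conormal part and a section over $V$ itself; concretely, the local model near a trivalent vertex deformation-retracts onto a wedge of circles which are all either fiber-direction circles (images of $j_*$) or base-direction arcs assembling into the section of $V$ (images of $i_*$).

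The main step is a Mayer--Vietoris computation. Cover $L_V$ by the open set $U_0$ which is the part agreeing with $L_{V^{(0)}}\setminus \syza^{-1}(B_\eps(V^{(1)}))$ (thickened slightly) and the open sets $U_v$, one for each vertex $v\in V^{(0)}$ (or each codim-one stratum in the hypersurface case), each of which is the local model handle. For the smooth curve case each $U_v$ is a thickened trivalent vertex model whose first homology is generated by the $n-1$ fiber circles of the conormal; for the smooth hypersurface case the relevant local model is the Lagrangian pair-of-pants (or its iterated version), whose first homology was analyzed in \cite{hicks2019tropical} and is again generated by fiber-direction classes together with classes pulled back from the section of $V$. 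Running Mayer--Vietoris, any class in $H_1(L_V)$ is built from classes in $H_1(U_0)$ and $H_1(U_v)$; the former are pushed forward from $H_1(L_{V^{(0)}}\setminus\syza^{-1}B_\eps(V^{(1)}))$ hence in the image of $j_*$, and the latter, after tracking through the gluing, are either in the image of $j_*$ or lie in the image of a section $i: V\to L_V$ (which exists because one may choose the zero-argument section on the conormal pieces and extend it over each local handle; this uses that the handles are simply connected modulo the fiber circles). This establishes homological minimality.

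For the untwisted condition, I would show that the composition $V \xrightarrow{i_{L_V}\circ i} X_A \xrightarrow{\arg} F_q$ is null-homologous. The section $i$ can be chosen so that over each codimension-zero stratum $\underline V$ it lands in $L_{\underline V} = N^*\underline V/N^*_\ZZ\underline V$ along the zero-conormal, on which $\arg$ is \emph{constant}; hence the composite restricted to $V^{(0)}$ is constant, and the only possible contribution to the homology class comes from how the section traverses the local vertex handles. Since each handle is contractible after collapsing fiber circles and the section avoids those circles by construction, the $\arg$-image of $i(V)$ is a contractible loop-collection in $F_q$, hence null-homologous. (For smooth genus-zero curves this is immediate since $V$ is a tree; in general one checks that the section can be chosen with trivial argument-winding around each independent cycle of $V$, which is where the homological-minimality bookkeeping and the choice of gluing parameters in \cite{matessi2018lagrangian,mikhalkin2018examples,hicks2019tropical} are used.) Once homological minimality and untwistedness are in hand, the preceding lemma gives gradedness immediately.

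The hard part will be the bookkeeping at the local vertex/stratum models: one must verify with care that the gluing of the balancing handle does not introduce a one-cycle on which $\det^2\circ s_{L_V}$ (equivalently $\arg\circ i$) winds nontrivially, i.e. that one can simultaneously choose the section $i$ and the identifications in the gluing so that no ``twist'' is created. For the curve case this is manageable because the local model is essentially the cotangent bundle of a trivalent graph; for the hypersurface case it relies on the explicit description of $L_V$ as an iterated Lagrangian pair of pants from \cite{hicks2019tropical} and the computation of its first homology there, so the argument reduces to checking compatibility with that reference rather than a fresh computation.
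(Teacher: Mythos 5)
Your proposal is correct and follows essentially the same route as the paper: a Mayer--Vietoris computation of $H_1(L_V)$ over the cover by local star/vertex models for curves, and deferral to the explicit first-homology computation of \cite{hicks2020tropical} for hypersurfaces. You are in fact somewhat more careful than the paper's own (very terse) proof, which does not explicitly verify the untwisted condition needed to invoke the gradedness lemma.
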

\begin{proof}
    In the cases of tropical curves, this follows from computing the homology of $L_V$ from a cover given by $L_{\str(v)}$. For hypersurfaces, this is proven in \cite[Proposition 3.18]{hicks2020tropical}.
\end{proof}
Unless otherwise specified, the lift of a smooth tropical curve or hypersurface will always be the one given by \cite{matessi2018lagrangian,mikhalkin2018examples,hicks2019tropical} .

\subsection{Geometric Lagrangian lifts: spin}
\label{subsec:spin}
We start with a lemma on the topology of lifts of smooth genus zero tropical curves.
\begin{lemma}
    Let $V\subset \RR^n$ be a smooth genus 0 tropical curve. 
    \begin{enumerate}[label=(\roman*)]
        \item For any semi-infinite edge $f\in V_\infty^{(0)}$, the restriction map  $\res^V_f:H^1(L_V)\to H^1(L_f)$ is a surjection.\label{item:surjection}
        \item For any semi-infinite edge $f$, the restriction map $\res^V_{V_\infty\setminus f}:H^2(L_V)\to \bigoplus_{g\neq f} H^2(L_g)$ is an injection. \label{item:injection}
    \end{enumerate}
    \label{lem:tropcurvetopology}
\end{lemma}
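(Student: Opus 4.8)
The plan is to compute the cohomology of $L_V$ from a good open cover and track the two restriction maps through the associated Mayer--Vietoris type spectral sequence. Since $V$ is a smooth genus zero tropical curve, it is combinatorially a trivalent tree, and I would cover $L_V$ by the pieces $L_{\str(v)}$ as $v$ ranges over the vertices of $V$, together with the ``cylindrical'' edge pieces $L_{\underline e}$ for the bounded and semi-infinite edges $e$. Each $L_{\str(v)}$ is (diffeomorphic to) the Lagrangian lift of a single tropical pair of pants in some $\RR^3 \subset \RR^n$; its topology is explicitly known --- it is homotopy equivalent to a pair of pants $\times$ (a torus), so in particular $H^1$ and $H^2$ of each local piece are computable, and the restriction maps $H^\bullet(L_{\str(v)}) \to H^\bullet(L_{\underline e})$ to the three boundary cylinders $L_{\underline e} \simeq T^{n-1} \times \RR$ are understood. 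The first step is therefore to record, for the single-vertex case, that \ref{item:surjection} and \ref{item:injection} hold; this is the base case of an induction on the number of vertices.

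Next I would run the induction by peeling off one vertex at a time. Because $V$ is a tree, I can always write $V = V' \cup_{\underline e} \str(v)$ where $v$ is a leaf-adjacent trivalent vertex, $\str(v)$ is a pair of pants, and $V'$ is a smaller genus zero tropical curve glued to $\str(v)$ along a single bounded edge $\underline e$. Correspondingly $L_V = L_{V'} \cup L_{\str(v)}$ with $L_{V'} \cap L_{\str(v)}$ deformation retracting onto $L_{\underline e} \simeq T^{n-1}\times \RR$. The Mayer--Vietoris sequence
\[
\cdots \to H^1(L_V) \to H^1(L_{V'})\oplus H^1(L_{\str(v)}) \to H^1(L_{\underline e}) \to H^2(L_V) \to H^2(L_{V'})\oplus H^2(L_{\str(v)}) \to H^2(L_{\underline e}) \to \cdots
\]
then lets me propagate both statements. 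For \ref{item:surjection}: given a semi-infinite edge $f$, it lies in exactly one of $V'$ or $\str(v)$; by induction (or the base case) the class on $L_f$ extends over that piece, and I must check the extension can be chosen to agree on the overlap $L_{\underline e}$ with an extension over the other piece --- this is where I use that the restriction $H^1(L_{\str(v)}) \to H^1(L_{\underline e})$ (resp. $H^1(L_{V'}) \to H^1(L_{\underline e})$) is itself surjective, so any discrepancy over the overlap can be corrected. For \ref{item:injection}: a class in $H^2(L_V)$ restricting to zero on all $L_g$, $g \neq f$, in particular restricts to zero on all semi-infinite edges of $V'$ other than (possibly) $f$ and on those of $\str(v)$ other than (possibly) $f$; applying the inductive injectivity to $V'$ and the base-case injectivity to $\str(v)$ forces the class to vanish on $L_{V'}$ and on $L_{\str(v)}$, and then exactness in Mayer--Vietoris (using that $H^1(L_{\underline e}) \to H^2(L_V)$ has image controlled by the cokernel of $H^1(L_{V'})\oplus H^1(L_{\str(v)}) \to H^1(L_{\underline e})$, which is zero by the $H^1$-surjectivity just used) forces the class itself to vanish.

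The main obstacle I anticipate is the bookkeeping at the gluing overlap: one has to choose compatible sections/identifications of the cohomology of the torus factors so that the restriction maps $H^\bullet(L_{\str(v)}) \to H^\bullet(L_{\underline e})$ are genuinely as surjective/injective as claimed, rather than merely surjective onto a sublattice --- this is exactly the ``homologically minimal'' and ``untwisted'' hypotheses at work (established for these lifts in the previous subsection), which pin down $H_1(L_V)$ in terms of $H_1(V)$ and the conormal-torus directions, and for genus zero $H_1(V) = 0$ simplifies matters considerably. I would also need to handle the semi-infinite edge $f$ carefully if it happens to be one of the edges of the distinguished vertex $v$: in that case \ref{item:surjection} for $f$ reduces directly to the base case, and for \ref{item:injection} the excluded edge $f$ sits in $\str(v)$, so I apply inductive injectivity on the full set of semi-infinite edges of $V'$ (none excluded) and base-case injectivity on $\str(v)$ with $f$ excluded. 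Once the single-vertex computation is pinned down precisely, the induction should go through cleanly.
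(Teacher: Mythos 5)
Your proposal follows essentially the same route as the paper: the same cover of $L_V$ by $L_{\str(v)}$ and the complementary subtree glued along $L_{\underline e}\simeq T^{n-1}\times e$, the same base case ($L_{\pants}\times T^{n-2}$ plus K\"unneth), and the same Mayer--Vietoris diagram chases, with \ref{item:surjection} feeding into \ref{item:injection} by killing the connecting map. The only place you gloss is the claim that base-case injectivity forces the class to vanish on $L_{\str(v)}$ --- there two semi-infinite edges of $\str(v)$ (namely $e$ and $f$) are a priori unaccounted for, and the paper closes this with a short case analysis using exactness of the Mayer--Vietoris sequence to force $\res^{\str(v)}_e$ of the class to agree with the (vanishing) restriction from the other piece.
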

\begin{proof}
    We prove \cref{item:surjection,item:injection} by induction on the number of vertices in $V$. 
    
    \emph{Base Case:} Suppose that $V$ has one vertex. Then $V$ is planar, and there exists a splitting of $(\CC^*)^n= (\CC^*)^2\times (\CC^*)^{n-2}$ so that $L_V= L_{\pants}\times T^{n-2}$, where $L_{\pants}\subset (\CC^*)^2$ is the standard pair of pants. The boundary of the pair of pants is $S^1_{e_1}\cup S^1_{e_2}\cup S^1_{e_3}$, where $e_1, e_2, e_3$ label the three edges of the pair of pants. A direct computation shows that 
    \begin{align*}
        H^0(L_{\pants})\to H^0(S^1_{e_1}) && H^1(L_{\pants})\to H^1(S^1_{e_1})
    \end{align*}
    surjects, and that 
    \begin{align*}
        H^0(L_{\pants})\to H^0(S^1_{e_1}\cup S^1_{e_2}) && H^1(L_{\pants})\to H^1(S^1_{e_1}\cup S^1_{e_2})
    \end{align*}
    inject. An application of K\"unneth formula gives \cref{item:surjection,item:injection} for $L_V$.

    \emph{Inductive Step:}
    Let $f\in V_\infty^{(0)}$ be any semi-infinite edge and let $v$ be the vertex of $V$ belonging to that edge. Let $W$ be the tropical curve given by vertices not equal to $v$, so that $L_{\str(v)}, L_W$ cover  $L_V$ with intersection $L_{\str(v)}\cap L_W= L_{e}=T^{n-1}\times e$, as in \cref{fig:covering}. This can be done because $V$ is a tree.
    \begin{figure}
        \centering
        \begin{tikzpicture}[rotate=90]

    \draw[line width = 2pt, blue] (-2,1) -- (-1,1) -- (-1,-0.5) (0,2) -- (-1,1);
    \draw[thick, red] (-1,0.5) -- (-1,-0.5) -- (-1,-1) -- (-2,-1) (-1,-1) -- (0,-2) -- (0,-2.5) (0,-2) -- (1,-2);
    \node[above] at (-0.5,1.5) {$f$};
    \node[above] at (-1,0) {$e$};
    \node[above left] at (-0.5,-1.5) {$W$};
    \node[below left] at (-1,1) {$star(v)$};
    \end{tikzpicture}         \caption{Covering our tropical curve $V$ with two charts: $W$ and a pair of pants $\str(v)$ centered at $v$.}
        \label{fig:covering}
    \end{figure}
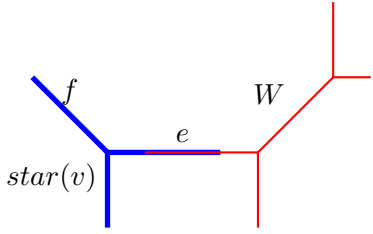

    We first prove \cref{item:surjection}. We use $L_{\str(v)}, L_W$ to compute the first cohomology of $L_V$ using Mayer-Vietoris, and show that the red arrow in the diagram below is a surjection. 
    \[
        \begin{tikzpicture}
            \node (v4) at (2,1.5) {$H^1(L_{\str(v)})\oplus H^1(L_W)$};
            \node (v3) at (-4.5,1.5) {$H^1(L_V)$};
            \node (v7) at (2,-1) {$ H^1(L_f) \oplus  0$};

            \node (v2) at (-4.5,-1) {$H^1(L_f) $};
            \draw  (v3) edge[->] node[above]{$\res^V_{\str(v)}\oplus \res^V_W$} (v4);
            \draw  (v2) edge[double distance=2pt] (v7);
            \draw  (v4) edge[->>] node[fill=white]{$\res^{\str(v)}_f\oplus 0$} (v7);
            \draw  (v3) edge[red, ->] (v2);

            \node (v1) at (7.5,1.5) {$H^1(L_e)$};
            
            \draw  (v4) edge[->] node[above]{$\text{res}^{\str(v)}_e - \res^W_e$} (v1);
         \end{tikzpicture}
     \]
    From the base case: given $\alpha\in H^1(L_f)$, there exists $\alpha'\in H^1(L_{\str(v))})$ with $\res^{\str(v)}_f(\alpha')=\alpha$. From the induction hypothesis, there exists $\beta'\in H^1(L_W)$ with $\res^W_e(\beta')=\res^{\str(v)}_e(\alpha')$. Therefore $(\alpha', \beta')\in \ker(\res^{\str(v)}_e-\res^W_e)$, and by exactness of the rows is in the image of $\res^V_{\str(v)}\oplus \res^V_W$. Let $\alpha''$ be in the preimage of $(\alpha', \beta')$. By commutativity of the below diagram, we conclude $\res^V_f(\alpha'')=\alpha$.

We now prove \cref{item:injection}. We compute  $H^2(L_V)$ using Mayer-Vietoris, and show that the blue arrow below is injective. By \cref{item:surjection}, the leftmost arrow is surjective. Using the exactness of the sequence, we conclude that $\res^V_{\str(v)}\oplus \res^V_W$ is injective on the second cohomology groups.
    \[ 
        \begin{tikzpicture}
            \node (v6) at (-9,1.5) {$H^1(L_{\str(v)})\oplus H^1(L_W)$};
            \node (v4) at (2,1.5) {$H^2(L_{\str(v)})\oplus H^2(L_W)$};
            \node (v3) at (-2.5,1.5) {$H^2(L_V)$};
            \node (v10) at (-5,1.5) {$H^1(L_e)$};
            \node (v7) at (2,-1) {$\displaystyle \bigoplus_{\substack{g\in \str(v)_\infty^{(0)}\\ g\neq e,f}}H^2(L_g)\oplus  \bigoplus_{\substack{g\in W_\infty^{(0)}\\ g\neq e}}H^2(L_g)$};

            \node (v2) at (-2.5,-1) {$\displaystyle \bigoplus_{\substack{g\in V_\infty^{(0)}\\ g\neq f}}H^2(L_g)$};
            \draw  (v3) edge[right hook->] node[above]{$\res^V_{\str(v)}\oplus \res^V_W$}(v4);
            \draw  (v2) edge[right hook->] (v7);
            \draw  (v10) edge[->] node[fill=white]{$0$} (v3);
            \draw  (v4) edge[->] node[fill=white]{$C\oplus D$} (v7);
            \draw  (v3) edge[blue, ->] node[fill=white]{$\bigoplus_{g\neq f}\res^V_g$}  (v2);

            \draw  (v6) edge[->>] (v10);
        \end{tikzpicture}\]
        Let $C= \bigoplus_{\substack{g\in \str(v)_\infty^{(0)}\\ g\neq e,f}} \res^{\str(v)}_g$ and $D= \bigoplus_{\substack{g\in W_\infty^{(0)}\\ g\neq e}} \res^{W}_g$.
        Consider now a class $\alpha\in H^2(L_V)$. Suppose that $\bigoplus_{g\neq f}\res^V_g(\alpha)=0$. We will show that $\alpha=0$. 
        By commutativity of the diagram, $(C\oplus D)\circ (\res^V_{\str(v)}\oplus \res^V_W)=0$. Because $D$ is injective and $(\res^V_{\str(v)}\oplus \res^V_W)$ is injective, we obtain that $C\circ \res^V_{\str(v)}(\alpha)=0$, and $\res^V_W(\alpha)=0$. 
                We now break into two cases:
        \begin{itemize}
            \item Case I: $\res^V_{\str(v)}(\alpha)=0$. This implies $(\res^V_{\str(v)}\oplus \res^V_W)(\alpha)=0$, which by injectivity of $\res^V_{\str(v)}\oplus \res^V_W$ tells us that $\alpha=0$.
            \item Case II: $\res^V_{\str(v)}(\alpha)\neq 0$. Observe then that  $(C\oplus \res^{\str(v)}_e)\circ \res^V_{\str(v)}$ is injective from the base case, so $\res^{\str(v)}_e\circ \res^V_{\str(v)}(\alpha)\neq 0$. Since $\res^V_W(\alpha)=0$, we obtain that 
        \[(\res^{\str(v)}_e\oplus \res^W_e)\circ (\res^V_{\str(v)}\oplus \res^V_W)(\alpha)\neq 0.\]
        This violates the exactness of the top row, so case II cannot occur.
        \end{itemize}
\end{proof}

\begin{prop}
    In the setting where $V\subset \RR^n$ has genus 0 , the constructions of \cite{matessi2018lagrangian,mikhalkin2018examples,hicks2019tropical}  give homologically minimal untwisted geometric Lagrangian lifts $L_V$ of $V$. 
\end{prop}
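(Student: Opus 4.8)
The statement asks us to check two things about the lift $L_V$ of a genus zero smooth tropical curve $V\subset\RR^n$ --- that it is homologically minimal and that it is untwisted --- and, implicitly, to produce a section $i\colon V\to L_V$. The plan is to treat homological minimality first. Since $V$ has genus zero its underlying graph is a tree, so $H_1(V)=0$, and the image of $(i)_*$ is zero for \emph{any} section; thus homological minimality reduces to the assertion that $H_1(L_V)$ is generated by the image of $(j)_*$, where $j$ is the inclusion of $L_{V^{(0)}}$ restricted away from a neighborhood of the vertices. This is already recorded for all smooth tropical curves, and I would reprove it in the present case by the induction on the number of vertices used in \cref{lem:tropcurvetopology}, now run on homology. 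For the base case $V=V_{\pants}$ one has $L_V\cong L_{\pants}\times T^{n-2}$, and $H_1(L_{\pants})\cong\ZZ^2$ is generated by (any two of) the three boundary circles of the pair of pants; each such circle lies in one of the conormal legs $N^*\underline e/N^*_\ZZ\underline e$ and hence in the image of $(j)_*$, and the $T^{n-2}$ factor is dealt with by the Künneth formula. For the inductive step one picks a leaf vertex $v$, lets $W$ be the subcurve on the remaining vertices, and uses the Mayer--Vietoris cover $L_V=L_{\str(v)}\cup L_W$ with $L_{\str(v)}\cap L_W=L_e\cong T^{n-1}\times e$; since the generator of $H_1(L_e)$ is again a boundary-torus class, and $H_1(L_{\str(v)})$, $H_1(L_W)$ are generated by boundary-torus classes by the base case and the inductive hypothesis, every class of $H_1(L_V)$ lies in the image of $(j)_*$.

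Next I would construct the section $i\colon V\to L_V$ explicitly. Over a neighborhood of each edge $e$, \cref{def:geometricLagrangianlift} gives that $L_V$ coincides with $N^*\underline e/N^*_\ZZ\underline e\cong T^{n-1}\times e$, so I take $i|_e$ to be the zero section $\underline e\times\{0\}$. Near a vertex $v$, $L_{\str(v)}\cong L_{\pants}\times T^{n-2}$, and the local tripod $V\cap\str(v)$ maps to a spine of the pair of pants --- a wedge of three arcs running out to the three boundary circles --- times a fixed point of $T^{n-2}$, arranged to agree with the zero section along the three incident edges. Because $V$ is a tree there is no monodromy obstruction to gluing these local choices into a global section $i\colon V\to L_V$.

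Untwistedness is then immediate: the composite $V\xrightarrow{\,i\,}L_V\hookrightarrow X_A\xrightarrow{\ \arg\ }F_q$ has contractible domain (a tree), hence is null-homotopic and in particular null-homologous, and independence of $q$ follows from the flat identification of the fibers of $\syza$; equivalently, $\arg\circ i$ is identically $0$ along the edges and takes values in a contractible part of $F_q$ over the vertex regions. Combined with the homological minimality above, this gives the proposition. The only genuinely delicate point is the bookkeeping in the Mayer--Vietoris induction --- one must verify that each class appearing at a gluing (the image of $H_1(L_e)$, and the new pair-of-pants classes introduced at each leaf vertex) is a boundary-torus class in the image of $(j)_*$, rather than one that only $i$ could reach --- but since this is essentially the content of \cref{lem:tropcurvetopology} and the preceding proposition, the new ingredient here, untwistedness, is the easy consequence of contractibility of a genus zero tropical curve.
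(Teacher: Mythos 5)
Your proposal is correct and follows essentially the same route as the paper: homological minimality is reduced to the fact that $H_1$ of the Lagrangian pair of pants is generated by its boundary legs, propagated over the tree by the Mayer--Vietoris induction of \cref{lem:tropcurvetopology}, and untwistedness comes down to the observation that a genus-zero tropical curve is a tree, hence contractible. You in fact supply more detail than the paper does on the construction of the section $i$ and on untwistedness; the paper's written proof dispatches homological minimality in a single sentence and spends the remainder establishing the spin property needed for the surrounding subsection, which the proposition as stated does not ask for.
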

\begin{proof}
    We prove that this Lagrangian submanifold is homologically minimal because the homology of the pair of pants is generated by the homology of the legs. If $n=2$, then $L_V$ is a surface, and therefore spin.

    To prove that the $n\geq 3$ cases are spin, we induct on the number of vertices in $V$. For the 1-vertex case, $L_{\str(v)}\simeq L_{\pants}\times T^{n-2}$. The manifolds $L_{\pants, v}\times T^{n-2}$ have trivializations given by embedding $L_{\pants, v}$ into $\RR^2$, and is therefore spin.

    As in the proof of \cref{lem:tropcurvetopology}, write $V=L_W\cup L_{\str(v)}$, where $e$ is the common edge $L_W\cap L_{\str(v)}$ By the induction hypothesis a spin structure on $L_W$. By pullback, this gives a spin structure over $L_e$.
    Since $H^1(L_{\str(v)}, \ZZ/2\ZZ)\to H^1(L_{e})$ surjects, there is no obstruction to picking a spin structure on  $L_{\str(v)}$ agree with the prescribed spin structure on $L_e$.
\end{proof}
This method of proof can be extended to a slightly larger set of examples. We say that a smooth tropical curve $V$ has planar genus if there exists cycles $c_1, \ldots c_k\subset V$ so that $\{[c_1], \ldots, [c_k]\}$ generate $H_1(V)$, and there exist 2-dimensional planes $\underline V_k\subset \RR^n$ so that $c_i\subset \underline V_k$. 
\begin{corollary}
    If $V\subset \RR^n$ is a smooth tropical curve $V$ with planar genus, then $L_V$ is spin. 
    \end{corollary}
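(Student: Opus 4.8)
The plan is to reduce to the spin statement for genus zero curves just proved, by covering $L_V$ with the Lagrangian stars $L_{\str(v)}$ and showing that the planarity hypothesis kills the obstruction to patching local spin structures around each cycle. If $n=2$ then $L_V$ is an oriented surface, hence spin, so assume $n\geq 3$; it is then enough to show that the structure data lifts to $\RR$, equivalently that $w_2(L_V)=0\in H^2(L_V;\ZZ/2\ZZ)$.

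First I would cover $L_V$ by $\{L_{\str(v)}\}$ as $v$ ranges over the vertices of $V$, arranged so that $L_{\str(v)}\cap L_{\str(v')}=L_e\cong T^{n-1}\times\RR$ when $v,v'$ are joined by an edge $e$ (and empty otherwise), with no triple overlaps since $V$ is trivalent. Each $L_{\str(v)}\cong L_{\pants}\times T^{n-2}$ is spin by the previous proposition, each overlap $L_e$ is spin, and $H^1(L_{\str(v)};\ZZ/2\ZZ)\to H^1(L_e;\ZZ/2\ZZ)$ is surjective (the mod $2$ reduction of the base case argument in the proof of \cref{lem:tropcurvetopology}). Hence one may choose spin structures on the $L_{\str(v)}$ agreeing on every overlap, and the only remaining obstruction to assembling them into a spin structure on $L_V$ is a $\ZZ/2\ZZ$-valued monodromy class, one for each cycle of the nerve of the cover, i.e.\ one for each class in a generating set of $H_1(V)$. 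This is the usual \v{C}ech/obstruction-theoretic description of spin structures relative to a good-enough open cover.

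It therefore suffices to check that this monodromy vanishes on the generating cycles $c_1,\dots,c_k$ of $H_1(V)$. Fix $i$; since $V$ is smooth, the three edge directions at any trivalent vertex are coplanar, so every vertex of $c_i$ has all of its incident edges in the $2$-plane $\underline V_{k_i}\supset c_i$, whence $P_i:=\bigcup_{v\in c_i}L_{\str(v)}$ lies in the part of $L_V$ over $\underline V_{k_i}$. Choosing a splitting $(\CC^*)^n=(\CC^*)^2\times(\CC^*)^{n-2}$ adapted to $\underline V_{k_i}$ exhibits $P_i\cong\Sigma_i\times T^{n-2}$ with $\Sigma_i$ an oriented surface with boundary, so $w_2(P_i)=w_2(\Sigma_i)\times 1=0$ and $P_i$ is spin. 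A spin structure on $P_i$ restricts to a collection of spin structures on the $L_{\str(v)}$, $v\in c_i$, that agree on overlaps and close up around $c_i$; using once more the surjectivity of the relevant $H^1(-;\ZZ/2\ZZ)$-restriction maps, the global choices can be adjusted to agree with these near $c_i$, so the monodromy around $c_i$ vanishes. As the $c_i$ generate $H_1(V)$, all monodromies vanish and the local spin structures patch to a spin structure on $L_V$.

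The main obstacle is organizational rather than conceptual: one must check that the cover $\{L_{\str(v)}\}$ can be chosen with nerve homotopy equivalent to $V$ (no spurious triple overlaps), that the mod $2$ surjectivity $H^1(L_{\str(v)};\ZZ/2\ZZ)\to H^1(L_e;\ZZ/2\ZZ)$ is compatible with the identifications of \cref{eq:conormal} along the overlaps, and that the product splitting $P_i\cong\Sigma_i\times T^{n-2}$ is compatible with those identifications along the edges of $c_i$. Each of these is routine, but together they are exactly what makes the $\ZZ/2\ZZ$ obstruction count legitimate; once they are in place the patching is standard.
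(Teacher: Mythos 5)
Your mechanism is essentially the one the paper intends: the text offers no written proof of this corollary beyond the remark that the genus-zero induction ``can be extended,'' and your plan --- cover $L_V$ by the spin pieces $L_{\str(v)}$, propagate spin structures along a spanning tree using the mod-$2$ surjectivity of $H^1(L_{\str(v)})\to H^1(L_e)$, and use planarity to exhibit the union of stars along each generating cycle as $\Sigma_i\times T^{n-2}$, hence spin --- is exactly that extension. One piece of loose language: the residual discrepancy on a non-tree edge $e$ is a full class in $H^1(L_e;\ZZ/2\ZZ)\cong(\ZZ/2\ZZ)^{n-1}$, so the obstruction is the class of the discrepancy cocycle in $\mathrm{coker}\bigl(\bigoplus_v H^1(L_{\str(v)};\ZZ/2\ZZ)\to\bigoplus_e H^1(L_e;\ZZ/2\ZZ)\bigr)$, not literally ``one $\ZZ/2\ZZ$ per cycle of the nerve.'' This does not hurt you, because your product argument kills the entire discrepancy class on the non-tree edge of $c_i$ rather than a single $\ZZ/2\ZZ$ summand, but the framing should be stated that way.

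The one genuine gap is your last step, which adjusts the global choices ``to agree with the product spin structure near $c_i$'' one cycle at a time: if two generating cycles share a vertex or an edge, the adjustment for $c_j$ can reintroduce a discrepancy around $c_i$. The fix is an observation you should make explicit. If two planar cycles pass through a common trivalent vertex $v$, each uses two of the three edges at $v$, and smoothness forces the three edge directions at $v$ to span a single $2$-plane; both cycles' planes contain that affine $2$-plane and therefore coincide. Hence the connected components of $\bigcup_i c_i$ each lie in a single plane $\underline V_k$, and the union of stars over each component is again of the form $\Sigma\times T^{n-2}$, hence spin. Now argue in the right order: first fix the product spin structures on these (pairwise disjoint) components; since contracting each component collapses all of $H_1(V)$, the quotient graph is a forest, so the remaining stars attach along tree edges and the genus-zero leaf-peeling induction, via the $H^1$ surjectivity, extends the spin structure over all of $L_V$ without disturbing what was already chosen. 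With that reorganization your proof closes.
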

The other setting where tropical Lagrangian lifts have been studied is the setting of hypersurfaces. 
\begin{lemma}
    If $V\subset \RR^n$ is a smooth tropical hypersurface, the construction of \cite{matessi2018lagrangian,hicks2020tropical} of $L_V$ is spin.
\end{lemma}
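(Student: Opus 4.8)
I would prove this by exhibiting an explicit trivialization of the second Stiefel–Whitney class, following the same geometric logic used for genus-zero curves but exploiting the hypersurface structure instead. Recall that a smooth tropical hypersurface $V\subset\RR^n$ is dual to a unimodular triangulation of the Newton polytope; the Lagrangian lift $L_V$ of \cite{matessi2018lagrangian,hicks2020tropical} is built by gluing together ``elementary pieces'' — each combinatorial vertex of $V$ contributes a piece modelled on the Lagrangian pair of pants $L_{\pants}$ times a torus factor, and these are glued along conormal tori $L_{\underline W}\cong T^{n-2}\times\RR$ over the codimension-1 strata. The key point is that each elementary piece is spin: near a vertex $v$ of $V$ there is a splitting $(\CC^*)^n \cong (\CC^*)^{n}$ in which the local model of $L_V$ is (a neighborhood in) $L_{\pants}\times T^{n-2}$ — more precisely the local model is the conormal lift of a standard simplex's normal fan, which is the higher-dimensional pair of pants $\Pi^{n-1}$ — and this admits a trivialization of $TL\oplus TL$ coming from an embedding into Euclidean space, exactly as in the genus-zero argument.

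**Key steps.** First I would record the local model: at each vertex of $V$, the lift $L_V$ restricted to a neighborhood is symplectomorphic to the Lagrangian lift of the tropical hyperplane, which is the pair of pants $\Pi^{n-1}\subset(\CC^*)^n$; this is spin because $\Pi^{n-1}$ deformation retracts onto a wedge of spheres / bouquet and more directly bounds an exact filling giving a stable trivialization (cite \cite[Proposition 3.18]{hicks2020tropical} for the homological input $H^2(L_V)\hookrightarrow\bigoplus H^2(\partial L_V)$ analogue, or the explicit computation in \cite{matessi2018lagrangian}). Second, I would set up an induction on the number of vertices of $V$ (equivalently, on the number of simplices in the dual triangulation), decomposing $L_V = L_{\str(v)}\cup L_W$ where $L_{\str(v)}$ is the elementary piece at a boundary-adjacent vertex $v$, $L_W$ is the lift of the remaining part of $V$, and the overlap is a union of conormal tori $L_e\cong T^{n-1}$ (or disjoint unions thereof) over the shared codimension-1 strata. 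Third, by induction $L_W$ carries a spin structure; restricting gives a spin structure on each overlap torus $L_e$; since $H^1(L_{\str(v)};\ZZ/2\ZZ)\to H^1(L_e;\ZZ/2\ZZ)$ is surjective — which follows from the explicit structure of $L_{\str(v)}\simeq\Pi^{n-1}$, whose $H^1$ surjects onto the $H^1$ of each boundary torus, an $n$-dimensional analogue of the $L_{\pants}\to S^1$ surjection in the base case of \cref{lem:tropcurvetopology} — there is no obstruction to extending the spin structure across $L_{\str(v)}$ compatibly with the one on $L_e$. Gluing the two compatible spin structures produces a spin structure on $L_V$.

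**Main obstacle.** The delicate point, compared with the curve case, is the overlap: a hypersurface vertex can meet the rest of $V$ along several codimension-1 facets simultaneously, so $L_{\str(v)}\cap L_W$ is in general a disjoint union of several tori rather than a single one, and one must choose the extension over $L_{\str(v)}$ to match the prescribed spin structure on \emph{all} these tori at once. I would handle this by checking that the combined restriction map $H^1(L_{\str(v)};\ZZ/2\ZZ)\to\bigoplus_{e} H^1(L_e;\ZZ/2\ZZ)$ over all facets $e$ adjacent to $v$ is still surjective — this is a direct Mayer–Vietoris/Künneth computation on the explicit local model $\Pi^{n-1}$ (its boundary tori are ``linked'' through $H^1$, so the cokernel vanishes) — and that the prescribed spin structures on the various $L_e$ already agree on their mutual intersections (they do, being restricted from the single spin structure on $L_W$). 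A secondary technical point is the orientation/gluing bookkeeping: one must ensure the transition data for the spin structures on the two open charts agree on the overlap, which is automatic once the underlying $H^1(\cdot;\ZZ/2\ZZ)$ classes match. I expect the induction itself to be routine once the local-model spinness and the $H^1$-surjectivity of the elementary piece are in hand; the bulk of the work is verifying those two facts for $\Pi^{n-1}$, which is essentially done in \cite{matessi2018lagrangian,hicks2020tropical}.
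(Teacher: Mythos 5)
Your approach is genuinely different from the paper's, and as written it has a real gap. Two concrete problems. First, the local model is misidentified: for the constructions of \cite{matessi2018lagrangian,hicks2020tropical} the lift of a tropical hyperplane is \emph{not} the complex pair of pants $\Pi^{n-1}$; by \cite[Proposition 3.18]{hicks2020tropical} it is obtained by gluing two copies of $\RR^n\setminus\bigcup_\alpha U_\alpha$ (contractible regions indexed by the lattice points of the Newton polytope) along the boundary spheres $\partial U_\alpha$, and its cohomology differs from that of $\Pi^{n-1}$ (whose Orlik--Solomon cohomology is nonzero in degree $2$, so $\Pi^{n-1}$ is in particular not a wedge of spheres, and ``bounds an exact filling'' does not give $w_2=0$). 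Relatedly, the overlap pieces for a hypersurface are lifts of $(n-1)$-dimensional facets, i.e.\ of the form $S^1\times\RR^{n-1}$, not tori $T^{n-1}$. Second, and decisively, the surjectivity you invoke --- $H^1(L_{\str(v)};\ZZ/2\ZZ)\to\bigoplus_e H^1(L_e;\ZZ/2\ZZ)$ over \emph{all} facets adjacent to a vertex --- is false by a rank count: the source has rank $n$, while a vertex of a smooth tropical hypersurface has $\binom{n+1}{2}>n$ adjacent facets, each contributing at least one $\ZZ/2\ZZ$ to the target. The weaker statement you actually need (surjectivity onto only the facets shared with $L_W$) is a nontrivial combinatorial claim that depends on which facets occur in the overlap, and is not verified; moreover those overlap facets meet each other along lifts of codimension-two strata, so the overlap is not a disjoint union and the extension problem is not the independent one you describe. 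Without surjectivity, obstruction theory does not let you extend the prescribed spin structure from the overlap into $L_{\str(v)}$, and no alternative argument is given. Finally, the induction presumes the dual complex can be peeled off one simplex at a time with controlled overlaps; unlike the tree underlying a genus-zero curve, this requires a shelling argument that is not supplied.

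For contrast, the paper's proof sidesteps all local-to-global gluing of spin structures. For $n=2$ the lift is a punctured orientable surface and for $n=3$ an orientable $3$-manifold, hence spin for dimension reasons. For $n\geq 4$ it uses the global two-chart description above: Mayer--Vietoris for $L_V=L_r\cup L_s$ with overlap $\bigcup_\alpha\partial U_\alpha$ gives an exact sequence $\bigoplus_\alpha H^1(\partial U_\alpha)\to H^2(L_V;\ZZ/2\ZZ)\to H^2(L_r;\ZZ/2\ZZ)\oplus H^2(L_s;\ZZ/2\ZZ)$ in which both outer terms vanish once $n\geq 4$ (the $\partial U_\alpha$ are spheres or disks of dimension $n-1\geq 3$). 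Hence $H^2(L_V;\ZZ/2\ZZ)=0$ and $w_2$ vanishes outright. If you want to salvage your inductive strategy, you would first need to establish spinness of the correct local model and then prove the overlap surjectivity for an explicit shelling of the dual triangulation --- at which point you would essentially be redoing the paper's global $H^2$ computation piecewise.
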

\begin{proof}
    We break into several cases. 
    \begin{itemize}
        \item If $n=2$, then $L_V$ is a punctured surface (and therefore spin).
        \item If $n=3$, then $L_V$ is an orientable 3-manifold (and therefore spin).
        \item If $n\geq 4$, then by\cite{hicks2020tropical} the Lagrangian $L_V$ is the connect sum of two copies of $\RR^n$ at several contractible regions $U_\alpha$ indexed by $\Delta$, the Newton polytope of the defining tropical polynomial for $V$. Assume that $\dim(\Delta)=n\geq 4$ (as otherwise, we may reduce to one of the previous cases).
        Following \cite[Proposition 3.18]{hicks2020tropical}, we take two charts $L_r, L_s\simeq \RR^n\setminus \bigcup_{\alpha\in \Delta}U_\alpha$ so that $L_V=L_r\cup L_s$. The $L_r, L_s$ are homotopic to $V$. The overlap $L_r\cap L_s\simeq \bigcup_{\alpha\in \Delta}\partial U_\alpha$, where each $\partial U_\alpha$ is homotopic to either $S^{n-1}$ or $D^{n-1}$. By Mayer-Vietoris, we compute
        \[ \bigoplus_{\alpha\in \Delta} H^1(\partial U_\alpha)\to H^2(L_V, \ZZ/2\ZZ)\to H^2(L_r, \ZZ/2\ZZ)\oplus H^2(L_r \ZZ/2\ZZ) .\]
        The left and right terms are zero when $n\geq 4$, so $H^2(L_V, \ZZ/2\ZZ)=0$ and our Lagrangian is spin.
    \end{itemize}
\end{proof}

  \section{Unobstructed Lagrangian lifts of tropical subvarieties}
	\label{sec:unobstructedness}
	\label{sec:obstruction}
Since the geometric Lagrangian lifts $L_V$ we construct will not be exact, to obtain a Lagrangian Floer \emph{cohomology} theory we need to show that these Lagrangian submanifolds have $\Lambda$-filtered $A_\infty$ algebra which can be unobstructed.

\subsection{Pearly model for Floer cohomology}
 We will adopt the model employed in  \cite{charest2019floer} to define $\CF(L)$. 
\begin{theorem*}[\cite{charest2019floer}]
    Let $L\subset X$ be a compact relative spin and graded Lagrangian submanifold inside a rational compact symplectic manifold $X$. Pick $h: L\to \RR$ a Morse function, and $D\subset X\setminus L$ a stabilizing divisor. There exists a choice of perturbation datum $\mathcal P$ which defines a filtered $A_\infty$ algebra $\CF(L, h, \mathcal P, D)$ whose 
    \begin{itemize}
        \item Chains are given by the Morse cochains of $L$, so that $\CF(L, h, \mathcal P, D)= \Lambda \langle \Crit(h)\rangle$. 
        \item Product structures come from counting configurations of treed disks. More precisely, given a collection of critical points $\underline x= (x_1,  \ldots, x_k)$, we define the structure coefficients
        \[\langle m^{k}(x_1\tensor \cdots \tensor x_k), x_0 \rangle = \sum_{\beta\in H_2(X, L)} (-1)^\heartsuit (\sigma(u)!)^{-1}T^{\omega(\beta)} \cdot \# \mathcal M_{\mathcal P}(X, L, D, \underline x, \beta)\]
        which determine the $A_\infty$ product structure.
        Here, $\# \mathcal M_{\mathcal P}(X, L, D, \underline x, \beta)$ is the count of points in the moduli space of $\mathcal P$-perturbed pseudoholomorphic treed disks, $\sigma(u)$ denotes the number of stabilizing points on each of these treed disks, and $\heartsuit=\sum_{i=1}^k i |x_i|$.
    \end{itemize}
    The $\Lambda$-filtered $A_\infty$ homotopy class does not depend on the choices of perturbation, divisor, and Morse function taken in the construction.
\end{theorem*}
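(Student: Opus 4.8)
Since this is the Charest--Woodward construction, the plan is to follow the stabilizing-divisor scheme of Cieliebak and Mohnke, adapted to Lagrangian boundary conditions and Morse-theoretic tree edges. It has four parts: build the moduli spaces of treed holomorphic disks together with a stabilizing divisor; make them transverse by domain-dependent perturbations; prove Gromov compactness and read off the filtered $A_\infty$ relations from the codimension-one boundary; and prove independence of the auxiliary data. I expect the transversality step together with the accompanying boundary analysis to be the main obstacle.

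First I would fix a stabilizing divisor. Because $X$ is rational, the relative version of Donaldson's hypersurface theorem produces a symplectic (and, for a reference $J$, $J$-holomorphic) hypersurface $D$ Poincar\'e dual to a large multiple of $[\omega]$ and disjoint from $L$; in particular every nonconstant $J$-holomorphic sphere or disk meets $D$. A treed disk is a stable nodal disk $\Sigma$ carrying interior and boundary marked points, glued along a metric tree $T$ whose edges have lengths in $[0,\infty]$ encoding gradient segments of $h$; the interior marked points are constrained to lie on $D$, and the number of them on each component is pinned down by that component's intersection number with $[D]$. One then defines $\mathcal M_{\mathcal P}(X, L, D, \underline x, \beta)$ to be the space of such configurations with disk components holomorphic for the domain-dependent almost complex structure prescribed by $\mathcal P$, tree edges flowing along the $\mathcal P$-perturbed gradient of $h$, interior markings on $D$, homology class $\beta$, and semi-infinite edges asymptotic to $x_1,\dots,x_k,x_0\in\Crit(h)$.

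Next I would address transversality. The point of the interior marked points is that, together with the boundary marked points and nodes, they stabilize every component --- each disk or sphere bubble acquires enough special points to have finite automorphism group --- so a domain-dependent perturbation of the pair (almost complex structure on disk components; metric and Morse function on tree edges) suffices to perturb the $\bar\partial$- and gradient-flow equations, and no abstract multivalued perturbations are needed. I would choose $\mathcal P$ inductively over the stratification of the moduli space by combinatorial type, so that the perturbations restrict compatibly to boundary strata, via a Sard--Smale argument on the universal moduli space, arranging that all moduli spaces of treed disks of expected dimension $\le 1$ are cut out transversally. Multiply covered components are handled stratum by stratum: such a component still meets $D$, its underlying simple curve can be made regular, and the branched-cover locus carries negative expected dimension. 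In parallel, the a priori energy bound from $\beta$ together with the quantization estimate --- the area of each component is at least a fixed constant times its number of intersection points with $D$ --- bounds the number of bubble components and tree edges, giving a Gromov compactness theorem for treed disks. Orientations on the zero-dimensional moduli spaces come from the relative spin structure, so the structure coefficients $\langle m^k(x_1\otimes\cdots\otimes x_k),x_0\rangle$, weighted by $T^{\omega(\beta)}$ and symmetrized by the $(\sigma(u)!)^{-1}$ factor over the orderings of the interior markings, are well-defined elements of $\Lambda$.

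To obtain the filtered $A_\infty$ relations I would examine the one-dimensional moduli spaces: their compactifications are compact $1$-manifolds with boundary, and the boundary consists exactly of configurations in which one tree-edge length degenerates to $\infty$, splitting the treed disk into two; the signed count of these boundary points vanishing is precisely the filtered $A_\infty$ equation. The remaining potential boundary phenomena --- an edge length shrinking to $0$, sphere bubbling, or a boundary node forming without marked points --- are either codimension $\ge 2$ or occur in cancelling pairs once $\mathcal P$ is generic, again thanks to the divisor marked points, and pinning this down is the crux of the argument. Finally, for independence of $(\mathcal P, D, h)$ I would run a parametrized version of the same construction, with moduli of treed disks carrying a parameter $t\in[0,1]$ interpolating the data, producing a filtered $A_\infty$ homotopy equivalence; the one genuinely new point is changing the divisor, where two stabilizing divisors of possibly different degrees are connected after passing to a common larger degree (or by stabilizing with $D^0\cup D^1$) using connectedness of the relevant space of Donaldson hypersurfaces, and one checks that a degree increase does not alter the homotopy type of $\CF(L)$. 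The hard part throughout is exactly choosing domain-dependent perturbations that are simultaneously regular on multiply covered configurations, coherent with the recursive boundary structure of the moduli spaces, and compatible across a change of stabilizing divisor.
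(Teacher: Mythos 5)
The paper does not prove this statement: it is imported verbatim as background from \cite{charest2019floer}, and the present article uses it as a black box (its only original work in this direction is the extension to non-compact, $W$-admissible Lagrangians in Appendix A, which is a different theorem). So there is no internal proof to compare yours against, and your proposal has to be judged against the cited construction itself.

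On those terms, your outline is a faithful summary of the Cieliebak--Mohnke-style strategy that Charest and Woodward actually carry out: a Donaldson-type stabilizing divisor available because $X$ is rational, treed disks with interior markings constrained to $D$ and tree edges carrying gradient segments of $h$, domain-dependent perturbations chosen coherently over the stratification by combinatorial type, energy quantization bounding the number of markings and bubbles, orientations from the relative spin structure (note the theorem as stated requires \emph{relative} spin, which your sketch should use when orienting the determinant lines), and parametrized moduli spaces for independence of $(h,\mathcal P,D)$. The places where your sketch is thinner than the real argument are exactly the ones you flag as the crux: the assertion that ``the branched-cover locus carries negative expected dimension'' does not by itself dispose of multiply covered components --- the actual mechanism is that the divisor markings stabilize every component so that the perturbation is genuinely domain-dependent and the covering symmetry is broken, and one must still control ghost components and curves mapped into $D$; and the independence under a change of stabilizing divisor is the most delicate of the invariance statements. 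At the level of detail given these remain assertions rather than proofs, but the route is the correct one.
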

When the choice of $h, \mathcal P$ and $D$ are unimportant, we will write $\CF(L)$ instead of $\CF(L, h, \mathcal P, D)$.
The most visible difference between the tautologically unobstructed setting and this more general definition is that there now exists a curvature term $m^0: \Lambda\to \CF(L)$, which obstructs the squaring of the differential to zero. We say that $L$ is \emph{unobstructed} if $\CF(L)$ has a bounding cochain $b\in \CF(L)$ (\cref{subsub:boundingcochains}). When $L$ is unobstructed, the deformed $A_\infty$ structure on $\CF(L, b)$ is a chain complex.

In this section, we discuss whether a geometric Lagrangian lift $L_V$ of a tropical subvariety is an unobstructed Lagrangian submanifold.
\label{subsec:pearlymodel}
We give an example computation in the pearly disk model to fix notation. 
\begin{example}[Running Example, Continued]
    We return to \cref{exam:localsystems}. First, we examine $\CF(F_q, \del_1)$. Since $F_q$ bounds no topological disks, it does not bound any holomorphic disks. 
        Therefore the Floer complex is the Morse-tree algebra of $F_q$. Give $F_q$ the Morse function 
        \begin{equation}
        f=\sum_{i=1}^n \cos(\theta_i)\label{eq:MorsePrimitive}
    \end{equation} 
    We label the generators of \[\CF(F_q, \nabla_1)=\Lambda\langle y^1_I\rangle\]
    where the $I\subset \{1, \ldots n\}$. The differential is given by $m^1(y^1_I)=0$, and for a particular set of perturbations the product structure is 
    \[
        m^{2}(y^1_I\tensor y^1_J)=\left\{\begin{array}{cc} \pm y^1_{I\cup J} & \text{if $I\cap J=\emptyset$}\\ 0 & \text{otherwise}\end{array}\right.
    \]
    where the sign is determined by the number of transpositions required to reorder $I\cup J$.
    \label{exam:morsemodel}
\end{example}
\begin{remark}
    To our knowledge, it is unknown if there exists a perturbation scheme for Morse flow trees so that all higher products $m^k: \CM(S^1)^{\tensor k}\to \CM(S^1)[2-k]$ vanish. 
\end{remark}
To work in the setting where $X_A$ is non-compact, we need to place restrictions on the non-compact behavior of the Lagrangian $L$ to ensure that the moduli spaces of pseudoholomorphic treed-disks considered by \cite{charest2019floer} remain compact. A natural condition to impose is that $L\subset X_A$ is admissible (\cref{def:bottleneck}) with respect to a potential function $W: X_A\to \CC$, so that the projection $W(L)$ fibers over the real axis $\RR_{>0}$ outside of a compact set. Denote by $Y_A=W^{-1}(t)$ for $t\in \RR_{\gg 0}$. Choices of different sufficiently large $t$ yield fibers which are symplectomorphic. The restriction of $L$ to a large fiber will be called $M:= L|_{Y_A}$; this is a Lagrangian submanifold of $Y_A$. By \cref{thm:sympfibrationexactsequence} there exists a treed-disk model for Lagrangian Floer cohomology $\CF(L)$ for $W$-admissible Lagrangians $L$.
Furthermore, there exist compatible choices of perturbation datum so the standard projection
\[\CF(L)\to \CF(M)\]
is a $\Lambda$-filtered map of $A_\infty$ algebras.

A useful lemma of \cite{hanlon2019monodromy} states that when we have a monomially admissible Lagrangian $L$, there exists a potential function $W$ so that $L$ is $W$-admissible.
From the data of a fan and $t\in \RR$, \cite{abouzaid2009morse} constructs \emph{tropicalized potential}, which is a symplectic fibration $W^{t, 1}_\Sigma: (\CC^*)^n\to \CC$ outside of a compact set. 
\begin{lemma}[Section 4.4 of \cite{hanlon2019monodromy}, Remark 2.10 \cite{hanlon2020functoriality}]
    Suppose that $L$ is a Lagrangian submanifold that is monomially admissible with respect to a monomial division adapted to $\Sigma$ (in the sense of \cref{def:admissiblitycondition}). Then $L$ can be made admissible for the tropicalized potential.
    \label{lemma:equivadmissibility}
\end{lemma}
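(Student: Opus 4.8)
The plan is to exploit the fact that the tropicalized potential $W^{t,1}_\Sigma$ of \cite{abouzaid2009morse} carries its \emph{own} built-in monomial division adapted to $\Sigma$, compare it with the monomial division $\Delta_\Sigma$ appearing in the hypothesis, and then observe that monomial admissibility forces the potential to take values in $\RR_{>0}$ on $L$ near infinity. To begin, I would unwind the construction of $W^{t,1}_\Sigma$: outside a compact subset of $(\CC^*)^n$ it can be written as $W^{t,1}_\Sigma = \sum_{\alpha\in A}\rho_\alpha(\syza(z))\,c_\alpha z^\alpha$, where $A$ indexes the rays of $\Sigma$, each cutoff $\rho_\alpha\colon Q\to[0,1]$ is supported in a thickening of the open star $\str(\alpha)$, and $\sum_\alpha\rho_\alpha\equiv 1$ away from a compact set; over the locus where some $\rho_\alpha\equiv 1$ the potential equals the monomial $c_\alpha z^\alpha$. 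Thus the closed cover $\{U'_\alpha\}$ refining the supports $\{\operatorname{supp}(\rho_\alpha)\}$ is itself a monomial division $\Delta'_\Sigma$ for $W^{t,1}_\Sigma$ adapted to $\Sigma$ (the exponents $k_\alpha$ may be taken equal to $1$, since each monomial is genuinely dominant over its star). A Lagrangian which is $\Delta'_\Sigma$-monomially admissible is manifestly $W^{t,1}_\Sigma$-admissible, as I explain at the end; so the content of the lemma is that a $\Delta_\Sigma$-admissible $L$ may be moved to a $\Delta'_\Sigma$-admissible one.

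Second I would reduce to the case $\Delta_\Sigma=\Delta'_\Sigma$. Both $\Delta_\Sigma$ and $\Delta'_\Sigma$ are subordinate to the star cover of $\Sigma$, so outside a compact subset of $Q$ their regions agree up to a bounded isotopy of $Q$ that keeps each region inside the star of its ray. Lifting such a base isotopy to a fibered Hamiltonian isotopy of $(\CC^*)^n$ supported over a neighborhood of the transition loci, and noting that over $U_\alpha$ the monomial-admissibility condition only constrains $\arg(c_\alpha z^\alpha)|_L$ --- which is invariant under the fibered isotopies that move $\syza$-fibers within $\str(\alpha)$ --- we may transport a $\Delta_\Sigma$-admissible $L$ to a $\Delta'_\Sigma$-admissible one while keeping it embedded, graded, and spin. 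This is the mechanism of \cite[Section 4.4]{hanlon2019monodromy} and \cite[Remark 2.10]{hanlon2020functoriality}.

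Third, assuming $L$ is now $\Delta'_\Sigma$-monomially admissible, I would check $W^{t,1}_\Sigma$-admissibility directly. Fix $p\in L$ with $q:=\syza(p)$ outside the compact set where the above holds; for every $\alpha$ with $\rho_\alpha(q)\neq 0$ one has $q\in U'_\alpha$, so $\arg(c_\alpha z^\alpha)|_L=0$ near $p$, hence $c_\alpha z^\alpha(p)\in\RR_{>0}$ (it is nonzero because $z\in(\CC^*)^n$). Therefore $W^{t,1}_\Sigma(p)=\sum_\alpha\rho_\alpha(q)\,c_\alpha z^\alpha(p)$ is a convex combination of positive reals, so $W^{t,1}_\Sigma$ maps $L$ into the positive real axis outside a compact set. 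To upgrade this to the full $W$-admissibility of \cref{def:bottleneck}, I would use that over each $\str(\alpha)$, and outside a compact set, $L$ coincides with a monomially admissible local model in which the $\arg(z^\alpha)$ direction is frozen and the remaining directions form a product, so that the part of $L$ over a ray $[t_0,\infty)\subset\CC$ is cylindrical over $W^{-1}(t_0)$ for $t_0\gg 0$; a final compactly supported Hamiltonian isotopy near $W^{-1}(t_0)$ then installs the bottleneck, using that $W^{t,1}_\Sigma$ is a symplectic fibration there.

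I expect the main obstacle to be the second step: reconciling the cover $\{U_\alpha\}$ of $\Delta_\Sigma$ with the cutoff supports of Abouzaid's potential, together with the combinatorial bookkeeping of the transition loci $U'_\alpha\cap U'_\beta$ relative to the fan. Once the two monomial divisions agree, the positivity argument of the third step is essentially formal; but constructing the fibered Hamiltonian isotopy that reconciles them --- and verifying that it preserves embeddedness, grading, and the spin structure --- is where the real work lies.
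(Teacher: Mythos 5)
The paper offers no proof of this lemma: it is imported verbatim from \cite{hanlon2019monodromy} (Section 4.4) and \cite{hanlon2020functoriality} (Remark 2.10), so there is no in-paper argument to compare yours against. Your sketch does capture the mechanism used in those references: outside a compact set the tropicalized potential is a nonnegative combination $\sum_\alpha \rho_\alpha\, c_\alpha z^\alpha$ of the locally dominant monomials, monomial admissibility forces each such $c_\alpha z^\alpha$ to take values in $\RR_{>0}$ on $L$ there, and hence $W^{t,1}_\Sigma(L)$ lands on the positive real ray outside a compact set, which is all that \cref{def:bottleneck} demands of the Lagrangian itself.

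Two of your steps are shakier than you present them. First, the supports of the cutoff functions do not ``manifestly'' form a monomial division with $k_\alpha=1$: each $\rho_\alpha$ is nonzero on a region where $c_\alpha z^\alpha$ is merely close to dominant, so the domination clause of \cref{def:admissiblitycondition} can fail there with your choice of exponents; one must either shrink the regions or keep the weighted exponents, and this is precisely the role of the distinguished ``tropical division'' in \cite{hanlon2019monodromy}. Second, and more seriously, the reduction of $\Delta_\Sigma$ to $\Delta'_\Sigma$ by ``a bounded isotopy of $Q$'' is not available: the regions of a monomial division are unbounded cones-with-thickening, and two divisions adapted to the same fan can have boundary walls that diverge linearly, so no bounded base isotopy carries one cover to the other. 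Moreover, a base isotopy lifted to $X_A$ does not in general preserve $\arg(c_\alpha z^\alpha)|_L$; the actual device is an interpolation of monomial divisions together with a genuine Hamiltonian isotopy of $L$ that rotates the fiber (argument) directions inside each overlap $\str(\alpha)\cap\str(\beta)$ so as to trade the constraint $\arg(c_\alpha z^\alpha)=0$ for $\arg(c_\beta z^\beta)=0$. With those two repairs your positivity argument in the third step goes through; the additional discussion of cylindricality and ``installing the bottleneck'' is not needed for $W$-admissibility of $L$ as defined in \cref{def:bottleneck}, only later for the choice of an admissible Morse function.
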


\subsection{Geometric Lagrangians versus Lagrangian branes}
\label{subsec:knownunobstructed}
\begin{definition}
    \label{def:Lagrangianbranelift}
    We say that an unobstructed Lagrangian submanifold $(L_V,b)$ is a \emph{Lagrangian brane lift} of $V$ if $L_V$ is a geometric Lagrangian lift of $V$.
\end{definition}
Before developing constructions of bounding cochains for geometric Lagrangian lifts, we give some examples of geometric Lagrangian lifts which are known to be unobstructed (or tautologically unobstructed) Lagrangian submanifolds. 
\begin{example}[Lagrangian pair of pants]
    In \cite{matessi2018lagrangian}, it was shown that the tropical pair of pants centered at the origin was an exact Lagrangian submanifold; a similar proof was given in \cite{hicks2021tropical} which showed that all tropical Lagrangian submanifolds constructed from the data of a dimer are exact. 
    
\begin{claim}
    Let $V\subset \RR^n$ be a tropical variety so that $0\in \underline V_i$ for all facets $\underline V_i \subset V$. Let $L_V^\eps$ be a homologically minimal lift of $V$. Then $L_V^\eps$ is exact. 
    \end{claim}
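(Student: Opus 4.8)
The plan is to produce a global primitive for $\omega$ on $X_A$ and to check, using homological minimality, that it restricts to an exact form on $L_V^\eps$. On $X_A=(\CC^*)^n$ the functions $q_i=\log|x_i|$ are globally defined and the $1$-forms $d\theta_i$ are globally defined and closed, so $\lambda:=\sum_{i=1}^n q_i\,d\theta_i$ is a global $1$-form with $d\lambda=\sum_i dq_i\wedge d\theta_i=\omega$. For any Lagrangian $L$ the form $\lambda|_L$ is closed, and $L$ is exact exactly when $[\lambda|_L]=0$ in $H^1(L;\RR)$, i.e.\ when $\lambda$ has vanishing period along every class in a generating set of $H_1(L;\ZZ)$. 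Since $L_V^\eps$ is a \emph{homologically minimal} lift, $H_1(L_V^\eps)$ is generated by the images of $i_*\colon H_1(V)\to H_1(L_V^\eps)$ and $j_*\colon H_1\big(L_{V^{(0)}}\setminus\syza^{-1}(B_\eps(V^{(1)}))\big)\to H_1(L_V^\eps)$, so it suffices to handle these two families of loops.

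For a class in the image of $j_*$: by the conormal condition of \cref{def:geometricLagrangianlift} the domain of $j$ sits inside $L_V^\eps$ as a (disjoint) union of pieces of the conormal Lagrangians $L_{\underline V_i}$, one for each maximal cell $\underline V_i$ of $V$, so such a class is represented by a loop lying in a single $L_{\underline V_i}$. The hypothesis $0\in\underline V_i$ forces the affine span of $\underline V_i$ to be a linear subspace $E\subseteq\RR^n$; choosing integral-linear coordinates with $E=\{q_{k+1}=\cdots=q_n=0\}$, one has $L_{\underline V_i}\subseteq N^*E/N^*_\ZZ E$, on which $q_{k+1}=\cdots=q_n=0$ while $\theta_1,\dots,\theta_k$ are constant. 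Therefore $\lambda|_{L_{\underline V_i}}=\sum_{i\le k}q_i\,d\theta_i+\sum_{i>k}q_i\,d\theta_i=0$, the first sum vanishing because $d\theta_i=0$ there and the second because $q_i=0$ there. Hence every class in the image of $j_*$ has zero $\lambda$-period.

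For classes in the image of $i_*$: I claim $H_1(V)=0$, so there are none. Writing $V=\bigcup_i\underline V_i$ as a union of closed convex polyhedral domains each containing the origin, the straight-line homotopy $H(x,t)=(1-t)x$ maps $V$ into itself, because $x\in\underline V_i$ implies $(1-t)x\in\underline V_i$ by convexity together with $0\in\underline V_i$; hence $V$ is contractible and $H_1(V)=0$. Combining the two cases, $\lambda|_{L_V^\eps}$ has vanishing period on a generating set of $H_1(L_V^\eps)$, so $[\lambda|_{L_V^\eps}]=0$ and $L_V^\eps$ is exact.

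The content is bookkeeping rather than analysis. The two points deserving care are: (i) that homological minimality is precisely what allows us to ignore the region $\syza^{-1}(B_\eps(V^{(1)}))$, where $L_V^\eps$ disagrees with $\bigcup_i L_{\underline V_i}$ and where $\lambda$ need not restrict to zero; and (ii) that the agreement in \cref{def:geometricLagrangianlift} is an equality of submanifolds, so that the computation $\lambda|_{L_{\underline V_i}}=0$ genuinely computes the periods of the $j_*$-classes inside $L_V^\eps$. Neither of these presents a real obstacle.
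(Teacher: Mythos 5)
Your proof is correct, and it takes a somewhat different route from the paper's. The paper uses the same global primitive ($\eta=p\,dq$, i.e.\ your $\sum_i q_i\,d\theta_i$), but kills the periods by asserting that $L_V^\eps$ retracts onto $L_V^\eps\cap\syza^{-1}(B_\eps(0))$, homotoping every loop into that neighborhood, and then letting $\eps\to 0$ so that the loop lands in the exact fiber $F_0$. That argument is short but leans on a geometric retraction and an informal limiting step, and it leaves implicit exactly where the homological-minimality hypothesis is used. You instead work directly with the generating set that homological minimality provides: the $i_*$-classes are dealt with by observing that $V$ is star-shaped about the origin (each facet is convex and contains $0$), hence contractible, so $H_1(V)=0$; and the $j_*$-classes are represented by loops lying on honest conormal pieces $L_{\underline V_i}$, where the primitive restricts to zero because $0\in\underline V_i$ forces the affine span to be linear. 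Your version is more explicit about which hypothesis does which job (homological minimality lets you ignore the region $\syza^{-1}(B_\eps(V^{(1)}))$; the condition $0\in\underline V_i$ makes the primitive vanish on each conormal piece and makes $V$ contractible), and it avoids the $\eps\to 0$ limit entirely. The one cosmetic point worth flagging is that your period computation on $L_{\underline V_i}$ uses an integral-linear change of coordinates; this is harmless because $\sum_i q_i\,d\theta_i$ is the tautological one-form of $T^*T^n$ and is invariant under such changes, but it is worth saying.
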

\begin{proof}
    Let $\eta=pdq$ be the primitive for $\omega$ on $X_A=T^*T^n$.
    We need to show that $\eta$ is exact on $L_V^\eps$; equivalently show that $\eta(\gamma)=0$ for all $[\gamma]\in H_1(L_V^\eps)$. 
    Observe that $L_V^\eps$ retracts onto $L_V^\eps \cap \syza^{-1}(B_\epsilon(0))$. Therefore, for every loop $\gamma\in H_1(L_V^\eps)$, there exists $\gamma'$ which is homotopic to $\gamma$ and lives within in $\syza^{-1}(B_\epsilon(0))$; by letting $\epsilon\to 0$ we obtain $[\gamma']=[\gamma]$ and $\gamma'\subset F_0$. 
        As $F_0$ is exact, $\eta(\gamma')=0$.
\end{proof}
    Since these Lagrangians are exact, they are tautologically unobstructed and we can conclude that $L_V^\eps$ is a tropical Lagrangian brane.
\end{example}
In some cases, one obtains tautological unobstructedness (or unobstructedness) of the Lagrangian submanifold for free. 
\begin{example}
    Curves $V\subset\RR^2$ provide an example of where we can obtain tautological unobstructedness. Since $L_V$ is a graded Lagrangian submanifold, the only holomorphic curves which might cause us difficulty are Maslov index 0 curves. However, the expected dimension of Maslov index 0 disks with boundary on a 2-dimensional Lagrangian is negative, therefore for a generic choice of almost complex structure these disks disappear and $L_V^\eps$ is tautologically unobstructed.

    It is possible for non-regular Maslov index 0 disks to appear with boundary on $L_V^\eps$, even in simple examples (see \cref{exam:feynman}). More generally, \cite{hicks2021tropical} shows that there exists a ``wall-crossing'' phenomenon which occurs for isotopies between tropical Lagrangian submanifolds, and that the count of these Maslov index 0 holomorphic disks play a crucial role in understanding coordinates on the moduli space of tropical Lagrangian submanifolds.
\end{example}
\begin{example}
    We now examine a setting outside of the mirrors to toric varieties. Let $Q$ be any tropical abelian surface; then $X_A:=T^*Q/T_\ZZ Q$ is a symplectic 4-torus. Given any tropical curve $V\subset Q$, there is a Lagrangian surface $L_V^\eps\subset X_A$. By the same reasoning as above, $L_V$ is tautologically unobstructed for generic choice of almost complex structure. 
    \label{exam:abeliansurface}
\end{example}
\begin{example}
    Another example where we know unobstructedness for geometric Lagrangian lifts is \cite{mak2020tropically}. In that setting, the base of the Lagrangian torus fibration has non-trivial discriminant locus, and the tropical Lagrangians constructed are lifts of compact genus-0 tropical curves in the base. \Citeauthor{mak2020tropically} show that the associated tropical Lagrangians are homology spheres and therefore are always unobstructed by a choice of bounding cochain (\cite{fukaya2010lagrangian}).
\end{example}
In general, other techniques are required to prove that a geometric Lagrangian lift of a tropical subvariety is unobstructed.
\begin{example}
    Given any smooth tropical hypersurface $V\subset \RR^n$, \cite{hicks2019tropical} shows that the tropical Lagrangian lift can be equipped with a bounding cochain so that $(L_V,b)$ is an unobstructed Lagrangian submanifold of $(\CC^*)^n$.
    The proof uses that $L_V$ can be constructed as a mapping cone of two Lagrangian sections in the Fukaya category; as these sections bound no holomorphic strips or disks, one expects that their Lagrangian connect sum can be equipped with a bounding cochain. In practice, the process of constructing the bounding cochain is delicate.
    
    We furthermore expect that similar methods should show that given $V=V_1\cap \cdots \cap V_k$ a transverse intersection of tropical hypersurfaces $V_i$, there exists $L_V$ an unobstructed Lagrangian lift of $V$. The Lagrangian $L_V$ is constructed by using the fiberwise sum of the lifts  \cite{subotic2010,hanlon2020functoriality}, so that 
    \[L_V=L_{V_1}+_Q\cdots +_Q L_{V_k}.\]
    While the resulting Lagrangian submanifold $L_V$ may be immersed, over the top dimensional stratum of $V$ the Lagrangian submanifold $L_V$ satisfies \cref{def:geometricLagrangianlift}. This provides the geometric realization. To obtain unobstructedness, we can also write $L_{V}$ as the geometric composition of unobstructed Lagrangian correspondences (each giving the fiberwise sum with $L_{V_i}$). It is expected (from the work \cite{wehrheim2010functoriality,fukaya2017unobstructed}) that the geometric composition of unobstructed Lagrangian correspondences is unobstructed in this setting, from which it follows that $L_V$ is unobstructed by the pushforward bounding cochain. \label{exam:hypersurfaceunobstructed}
\end{example}
\begin{example}
    Given a smooth tropical hypersurface $V$ of a tropical abelian variety $Q=\RR^n/M_\ZZ$, \cite[Example 5.2.0.7]{hicks2019tropical} constructs an unobstructed Lagrangian lift $(L_V,b)$ inside the symplectic torus $T^*Q/T^*_\ZZ Q$. The proof of unobstructedness is easier than the hypersurface setting (as one does not need to worry about issues of compactness).
    \label{exam:abelianvarieties}
\end{example}
The next two examples were suggested by Dhruv Ranganathan.
\begin{example}
    Suppose that $L_1\subset X_1, L_2\subset X_2$ be tautologically unobstructed Lagrangian submanifolds. Then $L_1\times L_2\subset X_1\times X_2$ is again a tautologically unobstructed Lagrangian submanifold. Furthermore, if the methods in \cite{amorim2017kunneth} can be adapted to the Charest-Woodward model of Floer cohomology that we use, then the product of unobstructed Lagrangians is unobstructed. It follows that when $V_i\subset Q_i$ have Lagrangian brane lifts, then so does $V_1\times V_2\subset Q_1\times Q_2$.
\end{example}
\begin{example}
    Suppose for $t\in [0, 1]$ we have geometric Lagrangian lifts $L_{V_t}$ of a family of tropical subvarieties $V_t$. Furthermore, suppose that for $t\in [0, 1)$ the lift is a Lagrangian brane lift. Then $L_{V_1}$ is a Lagrangian brane lift of $V_1$. The proof uses Fukaya's trick to choose perturbation data so that for $t$ close to 1, the $L_{V_t}$ all have the same moduli spaces of pseudoholomorphic disks. Then there exists a subsequence of bounding cochains for the $L_t$ which converge to a bounding cochain on $L_1$. 
\end{example}
There are few general criteria for determining if a Lagrangian submanifold is unobstructed. To highlight some of the subtlety of the problems, we exhibit a tropical Lagrangian which bounds a non-regular Maslov-index zero disk.
\begin{example}
    Consider the projection to the base of the Lagrangian torus fibration of the tropical Lagrangian submanifold $L_{V_4}$ drawn in \cref{fig:feynman}. Let $\ell$ be the dashed red line. Take the standard metric on $\RR^2$ so we may identify $T\RR^2$ with $T^*\RR^2$. Provided that one takes a symmetric construction of the Lagrangian pairs of pants (for example, using the construction of \cite{matessi2018lagrangian}), the holomorphic cylinder $T\ell/T_\ZZ\ell$ tropicalizing to the line $\ell$ intersects the Lagrangian $L_{V_4}$ cleanly along an $S^1$. This yields an isolated holomorphic disk with boundary on $L_{V_4}$. This is \emph{not} a regular holomorphic disk.
    \label{exam:feynman}
\end{example}
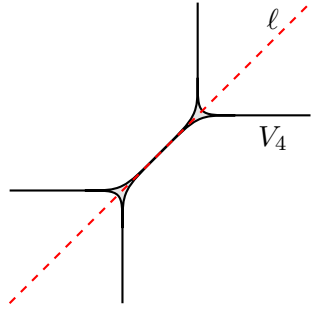
\begin{figure}
    \centering
    \begin{tikzpicture}

\draw (-1,0) -- (0.5,0) -- (0.5,-1.5) (1.5,2.5) -- (1.5,1) -- (3,1) (0.5,0) -- (1.5,1);
\draw[fill=gray!20] (0,0) .. controls (0.5,0) and (0.5,0) .. (0.5,-0.5) .. controls (0.5,0) and (0.5,0) .. (1,0.5) .. controls (0.5,0) and (0.5,0) .. (0,0);
\draw[fill=gray!20] (1.5,1.5) .. controls (1.5,1) and (1.5,1) .. (1,0.5) .. controls (1.5,1) and (1.5,1) .. (2,1) .. controls (1.5,1) and (1.5,1) .. (1.5,1.5);
\draw[dashed, red] (-1,-1.5) -- (3,2.5);
\node[below] at (2.5,1) {$V_4$};
\node[above] at (2.5,2) {$\ell$};
\end{tikzpicture}     \caption{The projection to the base of the Lagrangian torus fibration of a tropical Lagrangian. The holomorphic cylinder above the red dashed line cleanly intersects the tropical Lagrangian.}
    \label{fig:feynman}
\end{figure}
Further examples of non-regular Maslov-index zero disks are given in \cite{hicks2021observations}. In \cref{subsubsec:obstructedness}, we give examples of geometric Lagrangians lifts which are unobstructed, but not tautologically unobstructed for any choice of admissible almost complex structure on $(\CC^*)^n$.
In \cref{sec:speyer}, we show that there exists $V$ such that $L_V$ is obstructed.

\subsection{Unobstructedness at boundary}
\label{subsec:unobstructedatboundary}
We now give a method for constructing a bounding cochain for a Lagrangian which is $W$-admissible. 
By \cref{lemma:equivadmissibility}, for every $L$ a $\Delta_\Sigma$-monomially admissible Lagrangian there exists a tropicalized potential $W^{t, 1}_\Sigma: (\CC^*)^n\to \CC$ so that  $L$ is $W^{t,1}_\Sigma$-admissible. We state our results for $W$-admissible Lagrangians as the methods may be of interest beyond the monomially admissible setting.
\begin{theorem}
    Let $W: X\to \CC$ be a potential function, and suppose that $L$ is a $W$-admissible Lagrangian submanifold whose restriction to a large fiber is  $M=L\cap (W^{-1}(t))$ (where $t\in \RR_{\gg 0}$). Suppose that there exists $M_0\subset M$ a union of connected components of $M$ with the property that
    \begin{enumerate}[label=(\roman*)]
         \item the Lagrangian $M_0$ bounds no holomorphic disks and\label{item:tautologicallyunobstructed}
         \item the map $H^1(M_0)\to H^2(L, M_0)$ is surjective. \label{item:contactboundarysurjects}
    \end{enumerate}
    Then $L$ is unobstructed.
    \label{thm:unobstructed}
\end{theorem}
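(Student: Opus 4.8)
The plan is to build a bounding cochain for the filtered $A_\infty$ algebra $\CF(L)$ by the usual order-by-order obstruction argument, using the fibration structure to channel every obstruction class into the image of relative cohomology, where hypothesis~\cref{item:contactboundarysurjects} forces it to vanish. By \cref{thm:sympfibrationexactsequence} and the non-compact pearly model of \cref{app:pearlymodel}, choose a Morse function on $L$ adapted to $W$ and compatible perturbation data so that $\CF(L)$ is defined together with the standard surjective $\Lambda$-filtered $A_\infty$ algebra map $\pi\colon \CF(L)\to\CF(M)$; composing with the projection $\CF(M)=\CF(M_0)\oplus\CF(M\setminus M_0)\to\CF(M_0)$ onto the factor of the union of components $M_0$ gives a surjective filtered $A_\infty$ map $\pi_0\colon \CF(L)\to\CF(M_0)$. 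Hypothesis~\cref{item:tautologicallyunobstructed} says $M_0$ bounds no holomorphic disks, so $\CF(M_0)$ is literally tautologically unobstructed: its curvature $m^0$ vanishes and $0$ is a bounding cochain. The kernel $\ker\pi_0$, with its induced $m^1$, has cohomology the relative cohomology $H^\bullet(L,M_0)$, and the inclusion $\ker\pi_0\hookrightarrow\CF(L)$ induces in degree $2$ the map $j^*\colon H^2(L,M_0)\to H^2(L)$ of the long exact sequence of the pair $(L,M_0)$; this is the content of the restriction square in the pearly model.

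Now apply the $A_\infty$ lemma \cref{lemma:unobstructing}: construct $b=\sum_i b_i$ with $b_i\in\CF^1(L)$ of strictly increasing Novikov energy, maintaining inductively that the curvature $m^0_{b^{(k)}}$ of the partial sum has energy $>\lambda_k$ and that $\pi_0(b^{(k)})$ is a bounding cochain of $\CF(M_0)$, starting from $b^{(0)}=0$ (which works since $m^0_{M_0}=0$). By the filtered $A_\infty$ relations the leading-order term $\mathfrak{o}_{k+1}$ of $m^0_{b^{(k)}}$ is an $m^1$-cocycle of degree $2$; since $\pi_0$ is a filtered $A_\infty$ map carrying $b^{(k)}$ to a bounding cochain of the curvature-free $\CF(M_0)$, we get $\pi_0(\mathfrak{o}_{k+1})=0$, so $\mathfrak{o}_{k+1}\in\ker\pi_0$ and its class in the $m^1$-cohomology of $\CF(L)$ lies in the image of $j^*$. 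But hypothesis~\cref{item:contactboundarysurjects}, that the connecting map $H^1(M_0)\to H^2(L,M_0)$ of the pair is surjective, is equivalent by exactness to $j^*=0$; hence $[\mathfrak{o}_{k+1}]=0$, and we may choose $b_{k+1}$ with $m^1b_{k+1}=-\mathfrak{o}_{k+1}$, adjusting $b_{k+1}$ by an $m^1$-cocycle (using surjectivity of $\pi_0$ on cochains) so that $\pi_0(b^{(k+1)})$ remains a bounding cochain of $\CF(M_0)$, which preserves the inductive hypothesis.

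Since the correction energies can be taken to tend to infinity — the relevant pearly moduli spaces realize only discretely many energy values, bounded below — the sum $b=\sum_i b_i$ converges in the energy filtration to $b\in\CF^1(L)$ with $m^0_b$ of energy larger than every $\lambda_k$, hence $m^0_b=0$; because $\mathfrak{o}_{k+1}$ has degree $2$ while the unit of $\CF(L)$ has degree $0$, no weak curvature-unit term can survive, so $b$ is a genuine bounding cochain and $(L,b)$ is a Lagrangian brane. The main obstacle is the geometric input packaged in the first paragraph: establishing, within the non-compact Charest--Woodward-type model, that $\pi_0$ can be realized as a strict, surjective, filtered $A_\infty$ map and that the $m^1$-cohomology of $\ker\pi_0$ is genuinely $H^\bullet(L,M_0)$, with the inclusion inducing the topological restriction and connecting maps — that is, that the pearly model faithfully reproduces the long exact sequence of $(L,M_0)$. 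A secondary point is the bookkeeping that propagates the condition that $\pi_0(b^{(k)})$ is a bounding cochain through the induction, which is where surjectivity of $\pi_0$ on cochains enters; both are handled abstractly by \cref{lemma:unobstructing} once the model is in place.
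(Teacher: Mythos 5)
Your proposal is correct and follows essentially the same route as the paper: identify the triple $\ker\pi_0=\CF(L,M_0)\to\CF(L)\to\CF(M_0)$ supplied by \cref{thm:sympfibrationexactsequence}, observe that hypothesis (i) makes $\CF(M_0)$ tautologically unobstructed and the kernel a strong ideal, translate hypothesis (ii) into the vanishing of $H^2(L,M_0)\to H^2(L)$, and run the order-by-order correction of \cref{lemma:unobstructing}. The one hypothesis of that lemma you do not explicitly check is the anti-commutative DGA model for the quotient --- which is what actually licenses your step ``adjust $b_{k+1}$ so that $\pi_0(b^{(k+1)})$ remains a bounding cochain'' --- but it holds here because $M_0$ bounds no disks, so $\CF(M_0)=\CM(M_0)\simeq\Omega^\bullet(M_0)$.
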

The idea of proof is to construct the bounding cochain for $L$ by ``lifting the curvature term of $L$ to the boundary $M_0$''. 
The condition that $H^1(M_0)\to H^2(L, M_0)$ shows that curvature term (which takes values in the subcomplex $H^2(L, M)$) is the coboundary of something coming from the boundary $M_0$ of $L$. The algebraic content of this statement is \cref{lemma:unobstructing}.
\begin{proof}
    We show that the $A_\infty$ algebras $A=\CF(L, M_0), B= \CF(L), C=\CF(M_0)$ satisfy the conditions \cref{item:anticommutes,item:surjects,item:ses}
    of \cref{lemma:unobstructing}. 
    From \cref{thm:sympfibrationexactsequence}, the sequence $A\to B \to C$ is exact, and $A$ is an $A_\infty$ ideal. Since $M_0$ bounds no holomorphic disks, $C$ is tautologically unobstructed and $A$ is a strong ideal, giving us  (\cref{lemma:unobstructing}:\cref{item:ses}).
    Because $M_0$ bounds no holomorphic disks, $\CF(M_0)=\CM(M_0)$, which is quasi-isomorphic to $\Omega^\bullet(M)$. Thus we have (\cref{lemma:unobstructing}:\cref{item:anticommutes}).
    Finally, the hypothesis that $H^1(M_0)\to H^2(L, M_0)$ surjects is exactly (\cref{lemma:unobstructing}:\cref{item:surjects}).
\end{proof}
We give an example that relates to the discussion in \cite[Section 5.2]{ekholm2013notes}.
\begin{example}[Aganagic-Vafa Brane]
    Let $A\in \RR_{>0}$ be some constant. The Aganagic-Vafa (AV) brane is a Lagrangian submanifold $L_A\subset \CC^3$ parameterized by 
    \begin{align*}
        D^2\times S^1\to& \CC^3\\
        (r_1, \theta_1, \theta_2)\mapsto&\left(\left(\sqrt{A^2+r^2}\right)e^{-i(\theta_1+\theta_2)},re^{i\theta_1}, re^{i\theta_1}\right) 
    \end{align*}
    The Lagrangian $L_A$ is admissible for the potential function $W(z_1, z_2, z_3)=z_1z_2z_3$.
    The restriction to the fiber $M_A\subset W^{-1}(s)=(\CC^*)\times (\CC^*)$ is a product-type torus, so it bounds no holomorphic disks, and we may apply \cref{thm:unobstructed} to conclude that this Lagrangian is unobstructed by a bounding cochain.

    The bounding cochain corrects this Lagrangian submanifold so that it agrees with predictions from mirror symmetry. By application of the open mapping theorem, the only holomorphic disks with boundary on $L_A$ for the standard complex structure must lie in the fiber $W^{-1}(0)$; in fact, the only simple holomorphic disk with boundary on $L_A$  is parameterized by
    \begin{align*}
        u: (D^2, \partial D^2)\to& (\CC^2, L_A)\\
        z\mapsto& (Az, 0, 0).
    \end{align*}
        A computation shows that the partial Maslov indices of this disk are $(2, -1, -1)$ and therefore this is a regular Maslov index zero disk by \cite{oh1995riemann}.
    This shows that the bounding cochain constructed by \cref{thm:unobstructed} is nontrivial.

    Under an additional assumption \cite[Assumption 5.2.3]{hicks2019wallcrossing} one can compute the $m^0$-term, which counts the multiple covers of the disk $u$ with an appropriate weight. The bounding cochain is $\sum_{k=1}^\infty \frac{1}{k}T^{k\omega(u)} x$, where $x\in \CM(M_A)$ is a meridional class of the torus. 

    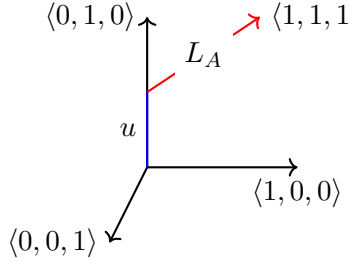
\begin{figure}
        \centering
        \begin{tikzpicture}

\draw[<->] (0,1.5) -- (0,-0.5)-- (2,-0.5); 
\draw[<-] (-0.5,-1.5) -- (0,-0.5) ;
\draw[->,red] (0,0.5) -- (1.5,1.5);

\node[below] at (2,-0.5) {$\langle 1,0,0\rangle$};
\node[left] at (-0.5,-1.5) {$\langle 0,0,1\rangle $};
\node[left] at (0,1.5) {$\langle 0,1,0\rangle$};
\node[right] at (1.5,1.5) {$\langle 1,1,1$};
\node[fill=white] at (0.75,1) {$L_A$};
\draw[thick, blue] (0,0.5) -- (0,-0.5);
\node[left] at (0,0) {$u$};
\end{tikzpicture}         \caption{The projection of the AV Lagrangian $L_A$ to the base $(\RR_{\geq 0})^3$ of the Lagrangian torus fibration of the Aganagic-Vafa Lagrangian  $L_A\subset \CC^3$. We also draw the projection of the single simple holomorphic disk which contributes to a bounding cochain for $L_A$.}
    \end{figure}
    We remark that the Lagrangian $L_A$ is an example of a tropical Lagrangian submanifold considered in \cite{mikhalkin2018examples}, and the projection of $L_A$ under the moment map $\CC^3\to Q=\RR_{\geq 0}^3$ is the ray $( |A|^2, 0, 0) + t\langle 1, 1, 1 \rangle$.
\end{example}

\begin{corollary}
    Let $V\subset Q$ be a genus zero smooth tropical curve. Let $L_V$ be a homologically minimal geometric Lagrangian lift of $V$. Then $L_V$ is unobstructed, so there exists $(L_V, b)$ a Lagrangian brane lift of $V$.
    \label{thm:genuszerounobstructed}
\end{corollary}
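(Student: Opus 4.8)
The plan is to deduce \Cref{thm:genuszerounobstructed} directly from \Cref{thm:unobstructed} by verifying its two hypotheses for the geometric Lagrangian lift $L_V$ of a smooth genus zero tropical curve $V$. First I would fix a fan $\Sigma$ to which $V$ is adapted (each semi-infinite edge of $V$ points along a ray of $\Sigma$), so that by the claim in \Cref{subsec:admissible} the lift $L_V$ is $\Delta_\Sigma$-monomially admissible, and by \Cref{lemma:equivadmissibility} it is $W$-admissible for the tropicalized potential $W=W_\Sigma^{t,1}:(\CC^*)^n\to\CC$. Writing $M:=L_V\cap W^{-1}(t)$ for a large fiber, the conormal model \cref{eq:conormal} identifies $M$ with a disjoint union of tori $\bigsqcup_{f\in V^{(0)}_\infty} M_f$, one for each semi-infinite edge $f$, where each $M_f\cong T^{n-1}$ is a product-type (in fact, a periodized conormal) torus inside the fiber $W^{-1}(t)\cong(\CC^*)^{n-1}$.

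Next I would take $M_0:=M_f$ to be a \emph{single} such boundary torus for a chosen semi-infinite edge $f$. Hypothesis \cref{item:tautologicallyunobstructed} of \Cref{thm:unobstructed} holds because $M_0$ is a product-type torus in $(\CC^*)^{n-1}$: it is a linear torus fiber of an SYZ fibration and bounds no topological — hence no holomorphic — disks (its class in $\pi_2$ is trivial, just as in \cref{exam:runningexample}). For hypothesis \cref{item:contactboundarysurjects} I must show the connecting map $H^1(M_0)\to H^2(L_V,M_0)$ is surjective. From the long exact sequence of the pair $(L_V,M_0)$, this is equivalent to the statement that $H^2(L_V)\to H^2(M_0)$ is injective — and this is exactly \Cref{lem:tropcurvetopology}\cref{item:injection}: for a smooth genus zero tropical curve the restriction $\res^V_{V_\infty\setminus f}\colon H^2(L_V)\to\bigoplus_{g\ne f}H^2(L_g)$ is injective, and since each $L_g$ deformation-retracts onto its boundary circle in $M$, this factors through (indeed, equals up to the identification of $H^2(L_g)$ with $H^2$ of the corresponding component of $M\setminus M_0$) the restriction to $H^2(M\setminus M_0)$, giving injectivity of $H^2(L_V)\to H^2(M)\to H^2(M/M_0)$; a small diagram chase upgrades this to injectivity of $H^2(L_V)\to H^2(M_0)$ after noting that $H^2(L_V)\to\bigoplus_g H^2(L_g)$ already being injective forces $H^2(L_V)\to H^2(L_f)=H^2(M_0)$ to be injective once we know it is injective onto the complementary summands together with the base-case computation for the pair of pants. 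I would state this cleanly as: by \Cref{lem:tropcurvetopology} the map $H^2(L_V)\to H^2(M)$ is injective, and since the base case shows it remains injective after restricting to any single boundary component's cohomology in the presence of the others, $H^1(M_0)\to H^2(L_V,M_0)$ is onto.

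With both hypotheses of \Cref{thm:unobstructed} in hand, that theorem produces a bounding cochain $b$ so that $(L_V,b)$ is a Lagrangian brane; since $L_V$ is a geometric Lagrangian lift of $V$ (being embedded, graded — by the results of \Cref{subsec:graded} for genus zero — spin — by \Cref{subsec:spin} — and admissible), the pair $(L_V,b)$ is a Lagrangian brane lift of $V$ in the sense of \Cref{def:Lagrangianbranelift}. This completes the proof.

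\textbf{Anticipated main obstacle.} The delicate point is the reduction of hypothesis \cref{item:contactboundarysurjects} to \Cref{lem:tropcurvetopology}: I must be careful that restricting to a \emph{single} boundary torus $M_0=M_f$ (rather than all of $M$) still gives an injection on $H^2$, and that the homotopy equivalence $L_g\simeq S^1_g\subset M$ correctly identifies $H^2(L_V)\to H^2(M_0)$ with the map $\res^V_f$ appearing in the lemma. This is exactly why the lemma was phrased for $V_\infty\setminus f$ and separately for surjectivity onto a single $H^1(L_f)$ — combining the surjectivity statement \cref{item:surjection} (which lets us choose $M_0$ so the remaining components carry enough cohomology) with the injectivity statement \cref{item:injection} is what makes a single component suffice; getting this bookkeeping right, rather than any hard analysis, is the crux.
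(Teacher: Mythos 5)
Your overall strategy is the paper's: check the two hypotheses of \cref{thm:unobstructed} for $L_V$ using admissibility and \cref{lem:tropcurvetopology}. However, your choice of $M_0$ is wrong, and it breaks the verification of hypothesis \cref{item:contactboundarysurjects}. You take $M_0=M_f$ to be a \emph{single} boundary torus. As you correctly observe, surjectivity of $H^1(M_0)\to H^2(L_V,M_0)$ is equivalent to injectivity of $H^2(L_V)\to H^2(M_0)\cong H^2(T^{n-1})$, and that map is simply not injective once $n\geq 3$. Already in the one-vertex case $L_V\simeq L_{\pants}\times T^{n-2}$, K\"unneth gives $\dim H^2(L_V)=\binom{n-2}{2}+2(n-2)$ while $\dim H^2(T^{n-1})=\binom{n-2}{2}+(n-2)$, so injectivity fails by a dimension count. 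This is precisely why \cref{lem:tropcurvetopology}\cref{item:injection} is stated as injectivity of the restriction to $\bigoplus_{g\neq f}H^2(L_g)$, i.e.\ to \emph{all semi-infinite edges except one}, and why its base case only proves injectivity of $H^1(L_{\pants})\to H^1(S^1_{e_1}\cup S^1_{e_2})$ into \emph{two} boundary circles (it cannot inject into one, since $H^1(L_{\pants})\cong\ZZ^2$). Your ``small diagram chase'' claiming that injectivity into the complementary summands plus the base case upgrades to injectivity into the single summand $H^2(L_f)$ is exactly the step that fails; no combination of \cref{item:surjection} and \cref{item:injection} makes a single component suffice.

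The fix is to reverse your choice: take $M_0=\bigcup_{g\in V_\infty^{(0)},\,g\neq f}T^{n-1}_g$, the union of \emph{all} boundary tori except the one over a chosen edge $f$, which is what the paper does. Hypothesis \cref{item:tautologicallyunobstructed} still holds for this $M_0$ (each component is a conormal torus with $\pi_2(X_A,L_e)=0$, so it bounds no holomorphic disks), and \cref{lem:tropcurvetopology}\cref{item:injection} now gives exactly the injectivity of $H^2(L_V)\to\bigoplus_{g\neq f}H^2(L_g)=H^2(M_0)$ that the long exact sequence of the pair $(L_V,M_0)$ requires. The remainder of your argument (admissibility via a fan $\Sigma$ adapted to $V$, gradedness and spinness from \cref{subsec:graded,subsec:spin}, and the appeal to \cref{def:Lagrangianbranelift}) is fine.
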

\begin{proof}
    We show that the Lagrangian $L_V$ satisfies the criteria of \cref{thm:unobstructed}. 
    Let $V_\infty^{(0)} \subset V$ be the set of semi-infinite edges of $V$. 
    The boundary of this tropical Lagrangian realization $M\subset Y_A$ is contained within the lift of the semi-infinite edges $ \bigsqcup_{e\in V_\infty^{(0)}} L_{e}=T^{n-1}_e\times e$.
    Therefore, $M$ is the disjoint union of tori indexed by the semi-infinite edges of $V$, $\bigcup_{e\in V_\infty^{(0)}} T^{n-1}_e$.
    At each edge, we see that $\pi_2(X_A, L_{e} )=0$.
    It follows that $M\subset Y_A$ bounds no holomorphic disks, so we satisfy (\cref{thm:unobstructed}:\cref{item:tautologicallyunobstructed}).
    Select $f\in V_{\infty}^{(0)}$ any edge, and let $M_0=\bigcup_{\substack{g\in V_\infty^{(0)}}\\ g\neq f}T^{n-1}$.
    
    It remains to prove (\cref{thm:unobstructed}: \cref{item:contactboundarysurjects}) that the image of $H^1(M_0)$ generates $H^2(L_V, M_0)$. 
    From \cref{lem:tropcurvetopology}, for any semi-infinite edge $f$ of $V$, $\res^V_{V_\infty\setminus f}:H^2(L_V)\to \bigoplus_{\substack{g\in V_\infty^{(0)} \\ g\neq f}} H^2(L_g)$ is an injection. From the long exact sequence for relative cohomology,
    \[\bigoplus_{\substack{g\in V_\infty^{(0)} \\ g\neq f}} H^1(L_g)\to  H^2(L_V,M_0)\xrightarrow{0}H^2(L_V)\into \bigoplus_{\substack{g\in V_\infty^{(0)} \\ g\neq f}} H^2(L_g)\]
    the leftmost arrow surjects.
\end{proof} \section{Faithfulness: unobstructed lifts as \texorpdfstring{$A$}{A}-realizations}
	\label{sec:faithfulness}
	Given a Lagrangian torus fibration $X_A\to Q$, the  \emph{A-tropicalization} of a tautologically unobstructed Lagrangian submanifold $L$ is the set of points $q\in Q$ so that $\HF(L, (F_q, \nabla))\neq 0$ for some choice of local system $\nabla$ on $F_q$ (\cref{eq:asupport}). 
We now describe the $A$-tropicalization when $L$ is unobstructed by bounding cochain. Because we again work in the scenario where the space $X$ is non-compact, we must apply a taming condition at infinity to study Floer cohomology. By the same arguments for \cref{thm:sympfibrationexactsequence}, whenever $L_0$ is admissible and $L_1$ is compact for a potential $W: X_A\to \CC$, there exists a well defined $\CF(L_0,\del_0)-\CF(L_1,\del_1)$ bimodule $\CF((L_0,\del_0), (L_1,\del_1))$ given by \cite{charest2019floer}. As in the setting of \cref{def:floercomplex}, $\CF((L_0,\del_0), (L_1,\del_1))$ is generated on the transverse intersections between $L_0$ and $L_1$. The $A_\infty$ bimodule structure comes from counting pseudoholomorphic treed strips. 
Observe that we require $L_1$ to be compact to avoid issues of determining how to apply wrapping Hamiltonians in the definition. 
\begin{example}[Running Example, Continued]
    We return to \cref{exam:morsemodel}.
    Now we bring in the second Lagrangian $(L_{\underline V}, \del_0)$ which we give the trivial local system. 
    The bimodule $\CF((L_{\underline V}, \del_0), (F_q, \del_1))$ has the same generators as \cref{exam:localsystems}; following \cref{not:chains}, we call these generators $x^{01}_J$, where $J\subset \{0, \ldots, n-k\}$. Since neither $F_q$ nor $L_{\underline V}$ bound holomorphic disks, the differential agrees with \cref{exam:localsystems},
    \[m^{1}(x^{01}_I)=\sum_{I\lessdot J} \pm T^{\lambda_0} (\id-P^{\nabla_1}_{c_j}) x_J.\]
    where $\lambda_0$ is the area of the small holomorphic strips. 
    
    We now describe the module product structure. This is given by counts of configurations of a Morse flow-line on $F_q$ which are incident to a strip with boundary on $F_q\cup L_{\underline V}$. Recall that the Hamiltonian push off for $L_{\underline V}$ is given by \cref{eq:LagrangianPrimitive} while the Morse function for $F_q$ is given by \cref{eq:MorsePrimitive}. As before, we use $\{y^1_I\}_{I\subset \{1, \ldots n\}}$ to label the critical points of $f: F_q\to \RR$.
    The moduli space of strips from $\underline x^{01}_I$ and $\underline x_J$ is non-empty when $I<J$; the boundary of the strips sweep out the subtorus spanned by the indices of $J\setminus I$. The downward flow space of $y_K$ is the subtorus spanned by indices $\{1, \ldots, n\}\setminus K$. These two subtori intersect transversely only when 
    $(J\setminus I)\sqcup (\{1, \ldots n\}\setminus K)= \{1, \ldots, n\}$, which can be rephrased as 
    \begin{align*}
        J=K\cup I && K\cap I = \emptyset.
    \end{align*}
    See \cref{fig:productStrip} for a treed strip that contributes to the product.
    From this, it follows that the module product structure is given by 
    \[m^{2}(x^{01}_I\tensor y^{1}_J)=\left\{\begin{array}{cc} P^{\del_1}_{\partial u^+}T^{|J|\cdot \lambda_0}x^{01}_{I\cup J} & \text{if $I\cap J=\emptyset$ and $I\cup J\subset\{0, \ldots, n-k\}$}\\ 0 & \text{otherwise} \end{array} \right.\]
    Here, $|J|\cdot \lambda_0$ is the area of a holomorphic strip from $x^{01}_i$ to $x^{01}_{I\cup J}$, and $P^{\del_1}_{\partial u}$ is the holonomy of the local system along the $F_q$ boundary of the strip. 
    We remark that when $J=\emptyset$, the same formula holds (simply that $u^+$ is regarded as the constant strip at $x^{01}_I$). 
    The map $m^{2}(x^{01}_\emptyset, -): HF^1((F_q, \nabla_0))\to T^{\lambda_0}HF^1((L_{\underline V}, \del_0),(F_q, \del_1))$ surjects whenever the local system $\del_1$ has holonomy of the form $\id+ T^{\lambda_1}A$ along all the $F_q$ boundary of all strips. 
    \label{exam:boundingcochain}
\end{example}
\begin{figure}
    \centering
    \begin{tikzpicture}

\usetikzlibrary{patterns}

\fill[gray!20]  (-2.5,2) rectangle (1.5,-2);
\draw[blue, thick] (-2.5,0) .. controls (-2,1.5) and (-1,1.5) .. (-0.5,0) .. controls (0,-1.5) and (1,-1.5) .. (1.5,0);
\draw[red, thick] (-2.5,0) -- (1.5,0);
\node[below] at (-1.5,0) {$y_{\emptyset}^1$};
\node[above] at (0.5,0) {$y_1^1$};
\node[above right] at (-0.5,0) {$x_{1}^{01}$};
\node[below right] at (1.5,0) {$x_{\emptyset}^{01}$};
\draw[dashed] (-2.5,2) -- (1.5,2) (-2.5,-2) -- (1.5,-2);
\draw (-2.5,-2) -- (-2.5,2);
\draw (1.5,-2) -- (1.5,2);
\node[fill=red, circle, scale=.4] at (-1.5,0) {};
\node[fill=red, circle, scale=.4] at (0.5,0) {};
\node[fill=purple, circle, scale=.4] at (-0.5,0) {};
\node[fill=purple, circle, scale=.4] at (1.5,0) {};
\fill[pattern = north west lines] (-0.5,0) .. controls (0,-1.5) and (1,-1.5) .. (1.5,0);
\node[fill=gray!20] at (0.5,-0.5) {$u_+$};
\end{tikzpicture}     \caption{The treed strip contributing to the bimodule produce $m^2(x^{01}_{\emptyset}\tensor y^1_{1})=x^{01}_1$.}
    \label{fig:productStrip}
\end{figure}
\subsection{Definition of support}
When Lagrangians $(L_0, \del_0)$ and $(L_1,\del_1)$ are unobstructed by bounding cochains $b_0, b_1$, we can deform the Lagrangian intersection Floer cohomology  $\CF((L_0, \del_0), (L_1,\del_1))$ by these bounding cochains to obtain $\CF((L_0,\del_0, b_0), (L_1, \del_1,b_1))$, a $\CF(L_0,\del_0, b_0)-\CF(L_1,\del_1, b_1)$ bimodule. Since $\CF(L_j, \del_0, b_i)$ have no curvature, the differential $m^1:\CF((L_0,\del_0, b_0), (L_1,\del_1, b_1))\to \CF((L_0,\del_0, b_0), (L_1,\del_1, b_1))$ squares to zero, giving us cohomology groups which we can study. 
\begin{definition}
    Let $(L,\nabla, b)\subset X_A$ be an admissible Lagrangian brane. The \emph{$A$-tropicalization} of $(L,\nabla, b)$ is the set 
    \[\tropa(L,\nabla, b):=\left\{q \st \text{$\exists (F_q,\nabla')$  such that , $ \HF((L,\nabla, b), (F_q, \nabla'))\neq 0.$}\right\}\]
\end{definition}

\begin{remark}
    Suppose that there is a bounding cochain $b'$ for $F_q$ so that 
    \[\HF((L,\nabla, b), (F_q, \nabla', b')\neq 0.\]
     As $F_q$ is tautologically unobstructed, a general principle of Lagrangian Floer cohomology (the divisor axiom) states that there exists a local system called $\nabla''$ so that 
    \begin{equation}
        \HF((L,\nabla, b), (F_q, \nabla''))= \HF((L, \nabla,b),(F_q, \nabla', b')) \label{eq:divisor}
    \end{equation}
        The local system $\nabla''$ is usually denoted as $\exp(b')$. 
    To our knowledge, the divisor axiom has not been proven for \cite{charest2019floer} model of Lagrangian intersection Floer cohomology. In \cite{auroux2008special} a proof of the divisor axiom was given for the de Rham version of open Gromov-Witten invariants. The central idea of the proof is that the coefficients in the exponential function make an appearance through the application of the ``forgetting boundary points'' relation between moduli spaces of holomorphic disks. The coefficients $\frac{1}{k!}$ in the expansion of the exponential function show up via the number of ways one can forget boundary marked points. Under the assumptions that Auroux uses, the forgetful axiom for pseudoholomorphic disks holds. In the Charest-Woodward model for $\CF(L_0, L_1)$ we do not expect that perturbations for Morse theory admit a ``forgetting marked point'' axiom. In our setting (where $\omega(\pi_2(X_A, F_q))=0$) the arguments used in \cref{thm:support} show that for all $(F_q, \nabla', b')$ there exists $(F_q, \nabla'')$ so that the identity on $(F_q, \nabla', b')$ factors through $(F_q, \nabla'')$ and vice-versa. Provided that a Charest-Woodward model of the Fukaya \emph{category} with homotopy unit exists, this would prove \cref{eq:divisor} (although not give the closed-form expression for $\nabla'$ as the exponential of the bounding cochain as in the de Rham version).
    From the divisor axiom, it follows that the $A$-tropicalization can be rewritten as:
    \begin{equation}
        \tropa(L,\nabla, b)=\{q \st \exists (F_q,\nabla',b') \text{ with }  \HF((L,\nabla, b), (F_q, \nabla',b'))\neq 0\}
        \label{eq:divisoraxiomassumption}
    \end{equation}
    There remain some subtle differences between bounding cochains and local systems in general. It is clear that we can only expect to replace bounding cochains with local systems in the setting where $L$ is tautologically unobstructed. Furthermore, we do not expect that when $L$ is tautologically unobstructed that we can replace $(L, \nabla)$ with $(L, b)$.  This is because $\val(b)>0$, so it can only be expected to represent local systems whose holonomy is of the form $\id+T^\lambda A$ where $A\in U_\Lambda, \lambda >0$. In the specialization to Lagrangian tori in a Lagrangian torus fibration, we believe that the requirement that $\val(b)>0$ may be loosened to $\val(b)\geq 0$ by application of the reverse isoperimetric inequality (in the same fashion way that the reverse isoperimetric inequality is used to prove that the family Floer sheaf has structure coefficients defined over an affinoid algebra). 
\end{remark}

\subsection{\texorpdfstring{$A$}{A}-tropicalization of tropical Lagrangian lifts}
\label{subsec:supportmatchestropicalization}
In general, it is difficult to compute $\tropa(L, b)$, as it requires having a very good understanding of the differential on $\CF((L,b), (F_q, b'))$.  In \cref{exam:pairofpantsupport} we performed this computation for the pair of pants $V\subset \RR^2$. Computation of the $A$-tropicalization is more tractable when the Lagrangian $L_V$ is a geometric lift of a tropical subvariety because we have good control of leading order contributions to the differential.

The main tool that we use to compute the $A$-tropicalization is the following lemma.
\begin{lemma}
    Let $L_{V}$ be a Lagrangian lift of a tropical curve, and let $U\subset Q$ be an open set such that  $L|_{\syza^{-1}(U)}=L_{\underline V}|_{\syza^{-1}(U)}$. For $q\in U$, let $R(q)$ be the distance from $q$ to $Q\setminus U$. 
    There exists a function $A_{L_V, U}: \RR_{\geq 0}\to \RR$ so that 
    every holomorphic strip $u$ with boundary on $L\cup F_q$ with $q\in U$ is either:
    \begin{itemize}
        \item ``small'' and has image contained within $\syza^{-1}(U)$, and therefore describes a holomorphic strip with boundary on $L_{\underline V}\cup F_q$ or
        \item ``large'' and has symplectic energy greater than $A_{L_V, U}(R(q))$.
    \end{itemize}
    Furthermore, there exists a constant $C_{L_V, U}$ so that 
    \[\lim_{R\to\infty}{\frac{A_{L_V, U}(R)}{R}}=2\cdot C_{L_V, U}.\]
            Additionally, we may replace $F_q$ with a small Hamiltonian push off of $F_q$ while preserving the bound.
    \label{lemma:reverseIso}
\end{lemma}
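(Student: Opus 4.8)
The plan is to reduce the dichotomy to an ``escape'' statement in the base $Q$ and then prove a \emph{linear} energy lower bound for escaping strips by passing to one–parameter–subgroup coordinates. First I would fix the perturbation datum defining the moduli spaces so that, over $\syza^{-1}(U)$, the almost complex structure is the standard integrable one for which the components $\Log|z_i|$ of $\syza$ are pluriharmonic; this is harmless, since changing $J$ away from $\syza^{-1}(U)$ affects nothing in the statement, and it makes the ``small'' clause automatic because $L$ coincides with $L_{\underline V}$ there. Given a strip $u\colon\RR_s\times[0,1]_t\to X_A$ with $u(\cdot,0)\subset F_q$ and $u(\cdot,1)\subset L$, the composition $\syza\circ u$ equals the constant $q$ on $\RR\times\{0\}$; hence if the image of $u$ is not contained in $\syza^{-1}(U)$ then $\syza\circ u$ attains a value in $Q\setminus U$, so it travels a distance at least $R(q)$ from $q$. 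It then suffices to show that any strip as above whose base image travels a distance $\geq\rho$ has energy at least an increasing, linearly growing quantity $A_{L_V,U}(\rho)$, and to declare such a strip ``large''.

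For the energy bound I would combine a confinement step with a fibred monotonicity estimate. On the part of the strip lying over $\syza^{-1}(U)$, the $\partial_1$–boundary maps into $L_{\underline V}$, which is a periodised conormal; the components of $\syza\circ u$ transverse to the relevant cell $\underline V$ are then harmonic and pinned to a common constant along that boundary (and at the ends, which limit to intersection points near $q$), so by the maximum principle the base image there is confined to a small neighbourhood of $\syza^{-1}(\underline V)$. Along $\underline V$, choose a primitive integral covector $\nu$ spanning a line in the direction of $\underline V$ and consider the holomorphic coordinate $w=\Log z^{\nu}\circ u$, a holomorphic map to the cylinder $\CC/c\ZZ$ whose circumference $c$ is the lattice period of the corresponding fibre loop: its real part is the affine coordinate along $\underline V$ and its imaginary part is pinned on the $L_{\underline V}$–boundary and free on the $F_q$–boundary. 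Passing to the universal cover, a strip whose $F_q$–boundary winds $d$ times around that fibre circle becomes, in $\CC$, a region bounded by a vertical line and two horizontal lines at heights differing by $|d|c$, reaching horizontal extent $\geq\rho$; hence its area, and so the energy of $u$ via the fixed normalisation relating $\tfrac{i}{2}\,dw\wedge d\bar w$ to $\omega$, is at least $|d|\,c\,\rho$ up to constants. If $d=0$ the base image along $\underline V$ is bounded, and together with the confinement this means $u$ stays inside $\syza^{-1}(U)$, i.e.\ it is small; otherwise $|d|\geq 1$ and the energy is $\geq(\text{const})\cdot\rho$. Taking the minimum of the resulting constants over the finitely many cells of $\underline V$ the strip can meet, and summing the contributions if it straddles several cells, produces $A_{L_V,U}$ with the asserted linear growth. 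The rate is sharp — and this identifies $2C_{L_V,U}$ — because it is realised up to lower order by the explicit holomorphic cylinders $T\ell/T_\ZZ\ell$ over rational lines $\ell\subset\underline V$, exactly as the ``big strip'' $u^{qv}$ in \cref{exam:pairofpantsupport}.

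The main obstacle is precisely this \emph{linearity}: naive monotonicity on balls in $X_A$ gives only a quadratic bound up to the fibre diameter and nothing beyond it, and a priori a strip could escape cheaply by spiralling in the compact fibre directions, so one must use holomorphicity to tie the base coordinate along $\underline V$ to a periodic conjugate variable and force a nonzero integral winding number; making the confinement/winding argument uniform over strips whose $L$–boundary itself wanders out of $U$, or that approach a lower–dimensional stratum of $V$ inside $U$, requires decomposing $u$ over the finitely many cells and estimating each piece separately. The remaining points are routine: one must check that replacing $F_q$ by a $C^1$–small Hamiltonian push–off — for instance the perturbation by $\sum\lambda_0\cos\theta_i$ of \cref{exam:runningexample} — changes all distances, areas and boundary conditions only by an amount bounded independently of $\rho$, so that the estimate survives with $A_{L_V,U}$ replaced by $A_{L_V,U}(\cdot)-O(1)$ and the asymptotic slope $2C_{L_V,U}$ unchanged.
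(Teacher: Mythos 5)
Your proposal takes a genuinely different route from the paper, which deduces the energy bound from the reverse isoperimetric inequality for strips (\cite{groman2014reverse}, in the corner version of \cite{chasse2023reverse}): there, energy is bounded below by $C(R)$ times the length of the part of $\partial u$ lying on $L_V$ outside a small neighbourhood of $F_q$, an escaping boundary must contain at least two arcs running from near $F_q$ to $X\setminus\syza^{-1}(U)$, each of length at least $R-C(R)$, and this yields $A_{L_V,U}(R)=2C(R)\bigl(R-C(R)\bigr)$ together with the slope $2C_{L_V,U}$ (the $2$ coming from the two arcs and $C_{L_V,U}$ from the injectivity radius, not from a winding number times a circumference). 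Your winding/flux argument is attractive because it is elementary, but as written it has a genuine gap exactly at the case that matters. The confinement step is circular: the transverse coordinates $\langle\syza\circ u,\mu\rangle$ are pinned only on the portion of the $L$-boundary lying over $\syza^{-1}(U)$, so the maximum principle applied to $u^{-1}(\syza^{-1}(U))$ bounds them by their values on the escape locus --- which is precisely what you are trying to show does not exist. Consequently a strip can leave $\syza^{-1}(U)$ with \emph{zero} winding around the fibre circle dual to $\nu$, simply by its $L$-boundary wandering into the region where $L\neq L_{\underline V}$ and the boundary condition $\Im w=0$ fails; then neither branch of your dichotomy ($d=0\Rightarrow$ bounded, $d\neq0\Rightarrow$ energy $\geq c\rho$) applies. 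This is not a degenerate corner case but the main mechanism: the big strip $u^{qv}$ of \cref{exam:pairofpantsupport} pays its energy because its $L$-boundary travels along the leg to the vertex and back, and it is the length of that excursion --- not a winding number in a single cylinder coordinate --- that the estimate must see. Your proposed fix (``decompose $u$ over the finitely many cells'') is where the actual work lies and is not carried out: the pieces are cut along interior arcs with no Lagrangian boundary condition, so the level sets $\{\Re w=x\}$ terminate on arcs where $\Im w$ is uncontrolled and the flux is no longer quantized.

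A secondary issue is your reliance on the integrable complex structure: the strips in question are those contributing to the (perturbed, treed) Floer differential, and outside $\syza^{-1}(U)$ --- again, exactly where the estimate is needed --- you have no control on $J$, so the coordinates $\Log z^\nu\circ u$ are not holomorphic there. The reverse isoperimetric inequality is robust under tame perturbations of $J$ (only the constants change), which is why the paper's argument goes through; if you want to keep the complex-analytic route you would need to either localize the entire estimate to $\syza^{-1}(U)$ (facing the cut-boundary problem above) or replace the winding argument by a length--energy inequality, at which point you have essentially rederived the reverse isoperimetric inequality. The final remarks about the Hamiltonian push-off and about the explicit cylinders realizing the slope $2C_{L_V,U}$ are fine as sanity checks but do not substitute for the missing lower bound.
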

\begin{proof}
    The lemma is an application of the reverse isoperimetric inequality from \cite{groman2014reverse}. We use the proof for holomorphic strips which is employed by \cite{chasse2023reverse} following \cite{duval2016result,abouzaid2017homological}. Recall that the reverse isoperimetric inequality states that given a Lagrangian $L$ and choice of almost complex structure $J$ there exists a constant $A_{L,J}$ so that  we can lower-bound the energy of pseudoholomorphic disks $u$ with boundary on $L$ by: 
    \begin{equation}
        A_{L,J}\cdot \ell(\partial u)\leq  \int_u \omega, \label{eq:reverseIso}
    \end{equation}
    where $\ell$ is the length as computed by the metric determined by $J, \omega$. 
    The reverse isoperimetric inequality for pseudoholomorphic strips requires the intersections of our Lagrangians $L_V, F_q$ to be ``locally standard'' \cite[Definition II.1]{chasse2023reverse}. Any $F_q$ and $L_{\underline V}$ satisfy this criterion, so whenever $q\in U$ the Lagrangian submanifolds $L_V, F_q$ have locally standard intersection. 
    The reverse isoperimetric inequality from \cite{chasse2023reverse} can be stated as
    \begin{equation}
        s\cdot A_{L_V, F_q}\cdot \ell(\partial u\cap B(C)^c)\leq  \omega( u\cap \tilde U_s) \label{eq:reverseIsoCorner}
    \end{equation}
    where $C$ is chosen so that the radius $s$ normal neighborhoods  $N_s(L_{V}), N_s(F_q)$ are defined for all $s<C$, and 
    \begin{align*}
        \tilde U_s= N_s(L_V)\cup N_s(F_q) && B(C)= N_C(L_V)\cap N_C(F_q).
    \end{align*}
    We obtain a weaker, but more applicable bound by making the replacement  $B(C):=\syza^{-1}(B_C(q))$, for which $F_q$ is a subset. With this substitution, the left-hand side of \cref{eq:reverseIsoCorner} only depends on the length of the boundary of $u$ in $L_V$.
    As the excluded neighborhood $B(C)$ is monotonic in $C$, if we choose  $C(R)< \min(R/4, C_{L_V, U})$ where $C_{L_V, U}$ is the injectivity radius of $L_V$ we can impose the additional condition that $\syza(B(C(R)))\subset U$. 
    We now bound the constant $A_{L_V, F_q}$, which is called $K$ in  \cite[Corollary II.11]{chasse2023reverse}. It is the product of the constants 
    \begin{itemize}
        \item $C_1, C_2^{-1}$ from \cite[Proposition II.8]{chasse2023reverse}, which provide constants of domination between the pseudometric given by a particular plurisubharmonic function $h$ and the standard metric, and 
        \item $A$, which provides a bound for  $|\grad h|$ over $\tilde U_s$
    \end{itemize} 
    A special feature of tropical Lagrangian submanifolds is that over the region $\syza^{-1}(U)\setminus B(C)$, the function $h$ agrees with the distance to $L_{\underline V}$. As $L_V$ is totally geodesic over $\syza^{-1}(U)$, we obtain that $dd^c h(-, \sqrt{-1}-)$ agrees with the metric induced by the standard metric over this chart. Therefore, the constants $C_1, C_2^{-1}$ and $A$ are all $1$ over this region.
    By restricting the integral on the penultimate line of \cite[Equation 9]{chasse2023reverse} to the region $\syza^{-1}(U)$, we may replace $A_{L_V, F_q}$ with $1$ to obtain the bound $ C(R)\cdot\ell(\partial u\cap B(C(R))^c \cap \syza^{-1}(U))\leq  \omega(u)$.
    
    We now show that every strip is either ``small'' or ``large'':
    \begin{itemize}
        \item Suppose that $\partial u\subset \syza^{-1}(U)$; then $u$ describes a strip with boundary on $L_{\underline V}\cup F_0$; we know that all such strips are contained within $\syza^{-1}(U)$ and have an upper bound for their energy.
        \item Otherwise $\partial u\not\subset \syza^{-1}(U)$. Let $\ell_Q$ be distance as measured on $Q$. Observe that for any path $\gamma$ with one endpoint in $F_q$ and another endpoint in $X\setminus \syza^{-1}(U)$ we have the bound 
        \[R-C(R) \leq \ell_Q(\syza(\gamma\cap B(C)^c))\leq   \ell(\gamma\cap B(C)^c)\]
         Since the boundary of $u$ must have at least two such paths, 
        \[A_{L_V, U}(R):= 2C(R) \cdot( R-C(R))< \omega (u).\]
    \end{itemize}
    As $R\to\infty$, we have that $C(R)\to C_{L_V, U}$, from which we obtain the asymptotic behavior of $A_{L_V, U}$.
\end{proof}
    The constant $C_{L_V, U}$ giving the injectivity radius of  $L_{\underline V}$ can be computed from the tropical data of $\underline V$. 
        In the 2-dimensional setting, we obtain the following nice relation. At a top dimensional stratum (edge) $e$ with integral primitive direction $\vec v$, the constant $C_{L_V, U}$ is in \cref{lemma:reverseIso} is $\frac{1}{2|\vec v|}$. The bound for the holomorphic energy of the strips becomes 
    \[2 C\cdot (R-C_{L_V, U})=\frac{R-C_{L_V, U}}{|\vec v|}\]
    We observe that  $\frac{R}{|\vec v|}$ is the \emph{affine radius} of the neighborhood around the point $q$. This can be observed in \cref{exam:pairofpantsupport,example:obstructedline} where the affine lengths of edges in tropical curves govern the areas of holomorphic disks and strips which appear in those computations.
    
\begin{lemma}
    Let $U\subset Q$ be a neighborhood of $q$.
    Suppose that $(L,\nabla, b_0)\subset X_A$ is a Lagrangian brane, whose restriction to $\syza^{-1}(U)$ is 
    \[L|_{\syza^{-1}(U)}=L_{\underline V,m}|_{\syza^{-1}(U)}\]
    where $\underline V\subset U$ is a $k$-dimensional linear subspace, and  $m$ the multiplicity.
    Then there exists a choice of bounding cochain and local system on $F_q$ so that
    \[HF^0((L,\nabla_0, b_0), (F_q,\nabla, b))= \Lambda.\]
    \label{thm:support}
\end{lemma}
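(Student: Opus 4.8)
The plan is to reduce the computation of $HF^0$ to a single scalar equation in the holonomies of a local system on $F_q$, and then solve that equation by a $T$--adic fixed--point iteration. First I would make the intersection transverse by replacing $F_q$ with a small Hamiltonian push--off, generated by a Hamiltonian supported inside $\syza^{-1}(U)$; this is permitted by the last sentence of \cref{lemma:reverseIso}. Since $L$ agrees with $L_{\underline V}$ over $\syza^{-1}(U)$ and the push--off stays in $\syza^{-1}(U)$, the intersection $L\cap F_q=L_{\underline V}\cap F_q$ lies in $\syza^{-1}(U)$, and under the identification of $\syza^{-1}(U)$ with an open subset of $T^*F_q$ the local picture is exactly that of \cref{exam:runningexample,exam:localsystems}: the generators of $\CF((L,\nabla_0,b_0),(F_q,\nabla',0))$ are classes $x_J$, $J\subseteq\{1,\dots,n-k\}$, with $\deg(x_J)=|J|$. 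Since this complex lives in non--negative degrees, $HF^0$ is the kernel of the ($b_0$--deformed) differential restricted to the line $\Lambda\langle x_\emptyset\rangle$; hence $HF^0=\Lambda$ exactly when $m^1_{b_0}(x_\emptyset)=0$, and $HF^0=0$ otherwise. (I describe the weight--one case; the weighted case runs identically on the $m$ parallel sheets.)

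Next I would compute the leading behaviour of $m^1_{b_0}(x_\emptyset)=\sum_{j=1}^{n-k}c_j\,x_{\{j\}}$. By \cref{lemma:reverseIso} every treed holomorphic strip with boundary on $L\cup F_q$ is either \emph{small} --- with image inside $\syza^{-1}(U)$, hence a treed strip on $L_{\underline V}\cup F_q$ --- or \emph{large}, of energy at least $\rho:=A_{L_V,U}(R(q))>0$. Choosing the push--off small enough that its size $\lambda_0$ is $<\rho$, the small strips carrying no insertion of $b_0$ are exactly the two area--$\lambda_0$ strips of \cref{exam:localsystems}, contributing $T^{\lambda_0}(1-z_j)$ to $c_j$, where $z_j=P^{\nabla'}_{c_j}\in U_\Lambda$ is the holonomy of $\nabla'$ around the $j$--th of the loops $c_1,\dots,c_{n-k}$ of \cref{exam:localsystems}. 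Every other contribution has strictly larger valuation: a large strip contributes from $T^{\ge\rho}$, and any configuration with at least one $b_0$--insertion carries an extra factor of valuation $\ge\val(b_0)>0$ above area $\ge\lambda_0$. Thus
\[
    c_j=T^{\lambda_0}\bigl(1-z_j\bigr)+e_j,\qquad \val(e_j)>\lambda_0,
\]
where $e_j$ depends on $z=(z_1,\dots,z_{n-k})$ through monomials $T^a z^w$ with $a\ge\lambda_0+\delta$, $\delta:=\min(\val(b_0),\rho-\lambda_0)>0$, so that $\val\bigl(e_j(z)-e_j(z')\bigr)\ge\lambda_0+\delta+\val(z-z')$.

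Finally I would solve the system $c_j=0$, $j=1,\dots,n-k$. Setting $\Phi_j(z):=1+T^{-\lambda_0}e_j(z)\in 1+\Lambda_{>0}$, this system is the fixed--point equation $z=\Phi(z)$; by the estimate above $\Phi$ is a contraction of $(1+\Lambda_{>0})^{n-k}$ with uniform gap $\delta$, so successive approximation from $(1,\dots,1)$ converges $T$--adically to a unique $z^\ast$. I then take $\nabla=\nabla'$ to be the unitary local system on $F_q$ with holonomies $z^\ast$ along $c_1,\dots,c_{n-k}$ and trivial along the remaining $k$ loops (which never enter $m^1(x_\emptyset)$ and whose triviality records $q\in\underline V$), and $b=0$ on $F_q$ (a genuine bounding cochain, as $F_q$ bounds no disk). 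With this data $m^1_{b_0}(x_\emptyset)=0$, so $HF^0((L,\nabla_0,b_0),(F_q,\nabla,0))=\Lambda$. The formal assertion that one may cancel the full curvature contribution inside the degree--one part, not merely its leading term, is \cref{lem:submoduledeformation}. I expect the genuine obstacle to be the valuation separation $\val(e_j)>\lambda_0$: this is precisely where the reverse isoperimetric bound of \cref{lemma:reverseIso} and the positivity $\val(b_0)>0$ are indispensable, and without it the corrections would not be subleading. Once it is in hand, the transversality reduction and the fixed--point iteration are routine, and the only remaining subtlety --- that the undeforming data on $F_q$ assembles into an honest local system (equivalently, via the divisor axiom, a Maurer--Cartan bounding cochain) rather than a purely formal solution --- is also handled by \cref{lem:submoduledeformation} together with tautological unobstructedness of $F_q$.
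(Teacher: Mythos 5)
Your proposal is correct and its first two steps (reduction to the local conormal model via the small/large dichotomy of \cref{lemma:reverseIso}, and the identification of the leading term $T^{\lambda_0}(1-z_j)$ of $\langle m^1(x_\emptyset),x_{\{j\}}\rangle$) coincide with the paper's. Where you diverge is in how the leading term is cancelled: the paper keeps the local system on $F_q$ essentially fixed and produces a \emph{bounding cochain} $b$ on $F_q$ by showing that $m^{2}(x_\emptyset\tensor -):CF^1(F_q,\Lambda_{\geq 0})\to CF^1(L_V,F_q,\Lambda_{\geq\lambda_0})/CF^1(L_V,F_q,\Lambda_{\geq\lambda_1})$ surjects and then invoking \cref{lem:submoduledeformation}, which runs a $T$-adic induction deforming both $b$ and the degree-zero element $e$; you instead set $b=0$ and solve for a non-unitary local system $\nabla$ by a contraction-mapping iteration on the holonomies. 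These are two packagings of the same undeformation, and the paper itself carries out your variant in the proof of \cref{cor:atropicalization} ("the same argument as in \cref{lem:submoduledeformation} can be used to construct a local system term by term"), so your route is legitimate and arguably more concrete; the bounding-cochain route has the advantage of not needing the divisor-axiom-type dictionary between local systems and Maurer--Cartan elements, which the paper is careful to treat as only conjecturally available in the Charest--Woodward model. Two small imprecisions to fix: (a) your claim that the remaining $k$ holonomy loops of $F_q$ "never enter $m^1(x_\emptyset)$" is only justified for the small strips --- large strips may wind around those directions too; this does not hurt the argument (you are simply fixing those $k$ parameters to $1$ and solving for the other $n-k$), but the assertion as stated is unsupported. (b) For the fixed-point map $\Phi$ to be well defined on all of $(1+\Lambda_{>0})^{n-k}$ you need the series $e_j(z)=\sum_w c_w z^w$ to converge, i.e.\ $\val(c_w)\to\infty$ as $|w|\to\infty$; this is again supplied by \cref{lemma:reverseIso} (the linear-in-boundary-length lower bound controls winding), but it is a separate use of that lemma from the valuation gap $\val(e_j)>\lambda_0$ that you highlight, and should be stated explicitly.
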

\begin{proof}
    To reduce notation in the proof, we will take the same simplifying assumptions as in \cref{lemma:reverseIso}. Additionally, we assume that the local system $\nabla_0$ and bounding cochain $b_0$ on $L$ are trivial.

    We see that $L|_{\syza^{-1}(U)}\cap F_q$ cleanly intersect along a $T^{n-k}\subset F_0$ ; morally we now apply spectral sequence of \cite{pozniak1994floer,schmaschke2016floer} to compute the Floer cohomology of $\CF(L,F_0)$ as a deformation of $C^\bullet(T^{n-k}).$
    Because $L|_{\syza^{-1}(U)}=L_{\underline V}|_{\syza^{-1}(U)}$, we can apply \cref{lemma:reverseIso}. 
    Following \cref{exam:boundingcochain}, apply a Hamiltonian isotopy to $L$ so that $L, F_q$ intersect transversely. Take the perturbation small enough so that the area of the holomorphic strips $\lambda_0$ is less than the bound $\lambda_1:=A_{L_V, U}(R)$ provided by \cref{lemma:reverseIso}. 
    By \cref{lemma:reverseIso}, the map $ m^{2}: \CF(L_V, F_q)\tensor \CF(F_q)\to \CF(L_V, F_q)$  agrees with \cref{exam:boundingcochain} at valuation less than $\lambda_1$.
    \begin{equation}    
        m^{2}(x^{01}_I\tensor x^{1}_J)\equiv \left\{\begin{array}{cc} T^{\lambda_0}x^{01}_{I\cup J} & \text{if $I\cap J=\emptyset$ and $I\cup J\subset\{0, \ldots, n-k\}$}\\ 0 & \text{otherwise} \end{array} \mod T^{\lambda_1}\right.\label{eq:loworderproduct}
    \end{equation}
    Let $\CF(F_q, \Lambda_{\geq 0})$ and $\CF(L_V, F_q, \Lambda_{\geq 0})$ be the filtered $A_\infty$ algebra and bimodule where we use $\Lambda_{\geq 0}$ rather than $\Lambda$-coefficients.     It follows that the map on chains
    \[m^{2}:(x^{01}_\emptyset)\tensor CF^1(F_q, \Lambda_{\geq  0}) \to CF^1(L_V, F_q, \Lambda_{\geq \lambda_0})/CF^1(L_V, F_q, \Lambda_{\geq \lambda_1})\]
    surjects.
    Therefore $\CF(L_V, F_q, \Lambda_{\geq 0})$ as a right  $\CF(F_q, \Lambda_{\geq 0})$ module satisfies the criterion of \cref{lem:submoduledeformation} and there exists $b\in \CF(F_q)$ so that $HF^0(L_V, (F_q,b))\neq 0$.

    To extend to the setting where $L$ has a local system $\del_0$, we simply require that $F_q$ be equipped with a local system $\del_1$ which agrees with $\del_0$ on torus spanned by the classes $\{c_1, \ldots c_{n-k}\}$.
\end{proof}
\begin{remark}
    The constant $\lambda_0$ can be taken to zero provided that one works with a model of $\CF(L_V, F_q)$ which allows for clean intersections between $L_V$ and $F_q$; the proof of \cref{lem:submoduledeformation} becomes slightly simpler in that setting. The pearly model developed by \cite{charest2019floer} allows for such configurations of Lagrangian submanifolds.
\end{remark}
\begin{corollary}
    Let $(L,\del_0 ,b_0)$ and $(F_q,\del, b)$ be as above. Then 
    \[\HF((L, \del_0, b_0), (F_q, \del, b))= \bigwedge_{i\in \{1, \ldots, n-k\}} \Lambda\langle x_i \rangle\]
\end{corollary}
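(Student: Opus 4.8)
The plan is to upgrade the argument of \Cref{thm:support} so that it controls the $b$-deformed Floer differential in every degree, not only in degree $0$. After a small Hamiltonian perturbation making $L$ and $F_q$ transverse, the generators of $\CF((L,\del_0),(F_q,\del))$ are the points $x^{01}_I$ indexed by subsets $I\subseteq\{1,\dots,n-k\}$ with $\deg x^{01}_I=|I|$, so that as a graded $\Lambda$-vector space $\CF((L,\del_0),(F_q,\del))\cong\bigwedge_{i\in\{1,\dots,n-k\}}\Lambda\langle x_i\rangle$ with grading coming from the Maslov index as in \Cref{exam:runningexample}. It therefore suffices to produce a bounding cochain $b$ on $F_q$ (necessarily the one already built in \Cref{thm:support}) for which the deformed differential $m^1_b$ vanishes identically, since then the cohomology equals the underlying complex.

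First I would invoke \Cref{lemma:reverseIso} exactly as in the proof of \Cref{thm:support}: every treed strip contributing to the $A_\infty$ bimodule operations is either ``small'', with image contained in $\syza^{-1}(U)$ and hence governed by the local model $L_{\underline V}$ of \Cref{exam:localsystems,exam:boundingcochain}, or ``large'', of symplectic area at least $\lambda_1=A_{L_V,U}(R)$. Taking the perturbation so small that all small-strip areas are bounded by some $\lambda_0$ with $(n-k)\lambda_0<\lambda_1$, and equipping $F_q$ with a local system $\del$ whose holonomy agrees with that of $\del_0$ along the directions $c_1,\dots,c_{n-k}$ tangent to the clean intersection $T^{n-k}=L|_{\syza^{-1}(U)}\cap F_q$, the computation of \Cref{exam:localsystems} shows that the small-strip contributions to $m^1$ cancel. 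Hence $m^1\equiv 0$ and the module product $m^2$ agrees with the exterior-multiplication formula of \Cref{exam:boundingcochain}, both modulo $T^{\lambda_1}$; in particular $m^2(x^{01}_\emptyset\otimes-)\colon \CF(F_q)\to \CF((L,\del_0),(F_q,\del))$ is surjective modulo $T^{\lambda_1}$ onto the \emph{whole} complex, not merely onto $x^{01}_\emptyset$.

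The crux is then to feed this into \Cref{lem:submoduledeformation} in the form that produces $b\in\CF(F_q)$ for which $m^1_b$ vanishes identically on $\CF((L,\del_0,b_0),(F_q,\del,b))$: the recursion proving \Cref{thm:support} solves order by order in the Novikov filtration for $b$ so as to absorb the large-strip corrections, and the surjectivity of $m^2(x^{01}_\emptyset\otimes-)$ in \emph{all} degrees is precisely what makes each step of that recursion solvable, so the output $b$ kills every structure coefficient of $m^1_b$, not only the one detecting $HF^0$. With $m^1_b=0$ we obtain
\[
\HF((L,\del_0,b_0),(F_q,\del,b))=\bigwedge_{i\in\{1,\dots,n-k\}}\Lambda\langle x_i\rangle,
\]
recovering the running-example computation $\HF(L_{\underline V},F_q)$ of \Cref{exam:runningexample}. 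The main obstacle is purely algebraic bookkeeping inside \Cref{app:ainfinity}: one must check that \Cref{lem:submoduledeformation} genuinely yields a bounding cochain annihilating the full differential (and not merely a degree-$0$ cohomology class), and that the perturbation and local-system choices of \Cref{thm:support} can be made so the small-strip differential vanishes simultaneously in every degree. I expect no geometric input beyond \Cref{lemma:reverseIso} is required.
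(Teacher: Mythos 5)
Your target is right (show the $b$-deformed differential vanishes identically, so that $\HF=\CF$), and your first step — using \Cref{lemma:reverseIso} to see that $m^2(x^{01}_\emptyset\otimes -)\colon \CF(F_q,b)\to \CF(L,(F_q,b))$ surjects onto the whole complex, by the same filtration argument as in \Cref{thm:support} — matches the paper. But the mechanism you propose for concluding $m^1_b=0$ has a genuine gap. You want to re-run the order-by-order recursion of \Cref{lem:submoduledeformation} and claim that surjectivity of $m^2(x^{01}_\emptyset\otimes-)$ ``in all degrees'' makes each step solvable so that the resulting $b$ kills \emph{every} structure coefficient of $m^1_b$. That recursion, however, solves for a single bounding cochain $b$ (together with a correction of a single element $e$) annihilating $m^{0|1|0}_{(A,a)|M|B}(e)$ — one equation per order. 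Demanding that the same $b$ simultaneously annihilate $m^1_b(x^{01}_I)$ for all $I$ is an overdetermined system: the space of degree-one deformations of $F_q$ has rank $n$, while there is one obstruction class for each pair $I\lessdot J$. Surjectivity of the single map $[a]\mapsto[m^{1|1|0}(a\otimes\underline e)]$ says nothing about the solvability of that larger system, so the recursion as you describe it does not close.

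The paper avoids this entirely with an algebraic observation you are missing: it keeps the $b$ already produced by \Cref{thm:support} (which only guarantees $m^1_b(x^{01}_\emptyset)=0$), notes that every element of $\CF(F_q,b)$ is closed, and then applies the $A_\infty$ bimodule Leibniz relation, which forces $m^1_b\bigl(m^2(x^{01}_\emptyset\otimes y)\bigr)=\pm m^2\bigl(m^1_b(x^{01}_\emptyset)\otimes y\bigr)\pm m^2\bigl(x^{01}_\emptyset\otimes m^1_b(y)\bigr)=0$. Hence the image of $m^2(x^{01}_\emptyset\otimes-)$ consists of closed elements, and since that map is surjective the entire complex is closed, i.e.\ $m^1_b=0$ and $\HF=\CF=\bigwedge_{i}\Lambda\langle x_i\rangle$. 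So the surjectivity you correctly establish should be combined with closedness of $x^{01}_\emptyset$ and the Leibniz rule, rather than fed back into the deformation recursion.
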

\begin{proof}
    Again for expositional purposes, we assume that $\del_0$ and $\del$ are trivial local systems, assume that the multiplicity of the local model $\underline V$ is one, and suppress the bounding cochain on $L$.
    On chains, the action of $\CF(F_q, b)$ on $\CF(L, (F_q, b))$ is a deformation of the action of $\CF(F_q, b)$ on $\CF( L, (F_q, b))$. 
    By using an argument on filtration similar to the one above, the map
    \[m^2(x_\emptyset, -): \CF((F_q, b)) \to \CF(L, (F_q, b))\]
    is a surjection.
    As every class in $\CF((F_q, b))$ is closed and we've proven that $x_\emptyset$ is closed,  every element in $\CF(L, (F_q, b))$ is closed. 
    This proves that $m^1_{\CF(L,(F_q, b'))}=0$, and that $\HF(L, (F_q, b))= \CF(L, (F_q, b))=\bigwedge_{i\in \{1, \ldots, n-k\}} \Lambda\langle x_i \rangle$.
\end{proof}

\begin{corollary}
    Let $(L_V, b)$ be an unobstructed geometric Lagrangian lift of $V$. Then $V^{(0)}\setminus V^{(1)}\subset \tropa(L_V, b)$.
    \label{cor:atropicalization}
\end{corollary}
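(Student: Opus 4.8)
The plan is to deduce the corollary directly from \cref{thm:support} by a localization argument. Fix a point $q\in V^{(0)}\setminus V^{(1)}$. By definition of the codimension-$1$ strata, $q$ lies in the relative interior of a top-dimensional face $\underline V\subset V$, so there is an open neighborhood $U\subset Q$ of $q$ that avoids $V^{(1)}$ entirely; shrinking $U$ if necessary we may take $U$ to be an open ball whose closure is disjoint from $V^{(1)}$. By \cref{def:geometricLagrangianlift}\cref{item:conormalcondition} (choosing $\eps$ small enough that $B_\eps(V^{(1)})$ is disjoint from $U$), we have
\[
L_V\big|_{\syza^{-1}(U)} = L_{\underline V}\big|_{\syza^{-1}(U)},
\]
and $\underline V\cap U$ is (a translate of) a $k$-dimensional linear subspace, where $k=\dim V$. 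Thus $L_V$ restricted to $\syza^{-1}(U)$ is exactly the local model $L_{\underline V,m}|_{\syza^{-1}(U)}$ appearing in the hypothesis of \cref{thm:support}, with multiplicity $m=1$ (the weight-$1$ case; the weighted case is identical after replacing $L_{\underline V}$ with the disjoint union of $m$ copies).

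Now I would simply invoke \cref{thm:support} with this $U$ and this $q$: since $(L_V,b)$ is by assumption an unobstructed geometric Lagrangian lift of $V$, it is in particular a Lagrangian brane, so the hypotheses of the lemma are met, and there exists a local system $\nabla$ and a bounding cochain $b'$ on $F_q$ with
\[
HF^0\big((L_V,\nabla_0,b),(F_q,\nabla,b')\big)=\Lambda\neq 0.
\]
In particular $\HF\big((L_V,b),(F_q,\nabla,b')\big)\neq 0$, so by the definition of the $A$-tropicalization (\cref{eq:divisoraxiomassumption}, or directly the definition of $\tropa$ allowing bounding cochains on the fiber) we conclude $q\in\tropa(L_V,b)$. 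Since $q\in V^{(0)}\setminus V^{(1)}$ was arbitrary, $V^{(0)}\setminus V^{(1)}\subset\tropa(L_V,b)$.

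The only genuine content beyond bookkeeping is already packaged inside \cref{thm:support} (whose proof in turn relies on the reverse isoperimetric estimate of \cref{lemma:reverseIso} to separate ``small'' from ``large'' holomorphic strips and on the deformation lemma \cref{lem:submoduledeformation}); so for this corollary the main point to be careful about is purely the matching of hypotheses --- namely verifying that the local model near an interior point of a top face really is the linear local model required by \cref{thm:support}, and that ``Lagrangian brane lift'' supplies the local system and bounding cochain data that \cref{thm:support} takes as input. I expect no real obstacle here; it is essentially an unwinding of \cref{def:geometricLagrangianlift} together with a citation of \cref{thm:support}. (The reverse inclusion, giving the full faithfulness statement $\tropa(L_V,b)=V$, requires additionally ruling out points outside $V$ and handling the lower-dimensional strata $V^{(1)},V^{(2)},\dots$; that is the subject of the surrounding section and not needed for this corollary.)
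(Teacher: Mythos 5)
Your localization step is fine and matches what the paper intends: near an interior point $q$ of a top face, \cref{def:geometricLagrangianlift}\cref{item:conormalcondition} puts $L_V$ into the linear local model required by \cref{thm:support}, and that lemma produces a local system $\nabla$ and a bounding cochain $b'$ on $F_q$ with $HF^0((L_V,b),(F_q,\nabla,b'))=\Lambda$. The gap is in your last step. The definition of $\tropa(L,\nabla,b)$ in the paper quantifies only over local systems $\nabla'$ on the fiber --- it does \emph{not} allow bounding cochains on $F_q$ --- so "directly the definition of $\tropa$ allowing bounding cochains on the fiber" is not available. The alternative you offer, \cref{eq:divisoraxiomassumption}, is exactly the divisor axiom, and the paper explicitly states that this axiom has not been proven for the Charest--Woodward model of Floer cohomology used here. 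So as written your argument proves the corollary only conditionally on the divisor axiom, which is precisely the caveat the paper makes before giving its unconditional proof.

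The paper closes this gap by rerunning the deformation argument of \cref{thm:support} (i.e.\ the term-by-term construction of \cref{lem:submoduledeformation}) entirely within the space of $U_\Lambda$-local systems on $F_q$, rather than in the space of bounding cochains: the key observation is that the map
\[
H^1(F_q,U_\Lambda)\to CF^1(L_V,F_q,\Lambda_{\geq\lambda_0})/CF^1(L_V,F_q,\Lambda_{\geq\lambda_1}),\qquad \nabla\mapsto m^1_{(F_q,\nabla)}(x_\emptyset),
\]
is surjective, so one can solve $m^1_{(F_q,\nabla)}(x_\emptyset)=0$ order by order in the valuation and obtain an honest local system $\nabla$ with $\HF((L_V,b),(F_q,\nabla))\neq 0$. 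To repair your proof you should either state the divisor-axiom assumption explicitly, or replace the citation of \cref{thm:support} with this local-system version of its deformation argument.
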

If we assume \cref{eq:divisoraxiomassumption}, this immediately follows.
\begin{proof} 
    The proof of \cref{thm:support} can be modified to replace the bounding systems everywhere with local systems. The needed observation is that the map from the space of local systems 
    \begin{align*}
        H^1(F_q, U_{\Lambda}) \to& CF^1(L_V, F_q, \Lambda_{\geq \lambda_0})/CF^1(L_V, F_q, \Lambda_{\geq \lambda_1})\\
        \nabla \mapsto& m^1_{(F_q, \nabla)}(x_{\emptyset})
    \end{align*}
    is surjective. The same argument as in \cref{lem:submoduledeformation} can be used to construct a local system term by term so that $ m^1_{(F_q, \nabla)}(x_{\emptyset})=0$. See also \cite[Proposition 5.13]{sheridan2020lagrangian} which proves a similar statement for tropical curves using the implicit function theorem \cite[Section 10.8]{abhyankar2001local}.
\end{proof}
 \section{\texorpdfstring{$B$}{B}-realizability and unobstructedness}
	\label{sec:realizableHMS}
	\subsection{HMS for $(\CC^*)^n$}
\subsubsection{Construction of the mirror space}
Given $\syza: X_A\to Q$ a Lagrangian torus fibration, there is a rigid analytic space $X_B$ with a tropicalization map $\tropb: X_B\to Q$. 
As a set, $X_B$ is the set of Lagrangian torus fibers equipped with $U_{\Lambda}$ local system,
\[X_B:= \{(F_q, \nabla)\}\]
which comes with a map $\syzb:X_B\to Q$ sending $(F_q, \nabla)\mapsto q$.
When $Q=\RR^n$, the points of $X_B$ are in bijection with $(\Lambda^*)^n$. We now describe, following \cite{abouzaid2014family,einsiedler2006non}, how this can be realized as the set of points of a rigid analytic space. We also recommend the discussion in \cite[Section 5.1]{sheridan2020lagrangian}.

The \emph{Tate algebra} in $n$-variables over $\Lambda$ is the set of formal power series 
\[T_n:=\left\{\sum_{A\in \ZZ^n}f_A z^A \st f_A\in \Lambda, \val(f_A)\to \infty \text{ as } |A|\to\infty \right\},\]
which is equipped with the \emph{sup-norm}
\[\left\|\sum_{A\in \ZZ^n} f_A z^A\right\|:=\max_{A}|f_A|\geq 0\]
We note that the maximal ideals of $T_n$ are $\{(f_1, \ldots, f_n)\st \val(f_i)\leq 1\}$.

To build our spaces we will glue together \emph{affinoid algebras}, which are quotients of the Tate algebra. The affinoid algebras we will look at are the polytope algebras. Given a bounded rational polytope $P\subset \RR^n$, define
\[\mathcal O_P:=\left\{\sum_{A\in \ZZ^n} f_A z^A \st \val(f_A)+A\cdot p\to \infty \text{ as } \|A\|\to \infty \text{ for all } p \in P \right\}.\]
This is the affinoid algebra.  The elements of this affinoid algebra have the property that they converge when evaluated on $z\in (\Lambda^*)^n$ with $\val(z)\in P$. Furthermore, the points of $\mathcal O_P$ are seen to be in bijection with the points of $\syzb^{-1}(P)$. When $Q$ is compact $X_B$ can be covered by finitely many sets $\syzb^{-1}(P)$ giving $X_B$ the structure of a rigid analytic space.

\subsubsection{From Lagrangians to Coherent Sheaves}
Due to the limitations on currently existing constructions for Fukaya categories, we do not have homological mirror symmetry for a category of non-exact Lagrangian submanifolds in $(\CC^*)^n$. However, different aspects of this homological mirror symmetry statement exist in the literature with strengthened hypotheses. 
\begin{itemize}
    \item The family Floer functor associates to a compact Lagrangian torus fibration $\syza: X_A\to Q$  a rigid analytic space $X_B\to Q$ whose points are in bijection with Lagrangian tori $F_q\subset X_A$ equipped with $U_\Lambda$ local system.  Furthermore, \cite[Theorem 2.10]{abouzaid2017family} constructs a faithful $A_\infty$-functor $\mathcal F: \Fuk^{taut}(X_A)\to \text{Perf}(X_B)$. Here $\Fuk^{taut}(X_A)$ is the Fukaya category of tautologically unobstructed Lagrangian submanifolds.
    \item In the exact setting, we have a complete proof of homological mirror symmetry for $(\CC^*)^n$. The proof comes from recasting a section $L(0)$ of the fibration $\syza: (\CC^*)^n\to Q$ as a cotangent fiber in $T^*T^n$, which is known to generate the exact Fukaya category. A computation shows that the $A_\infty$ algebra $\CF(L(0), L(0))$ is homotopy equivalent to $\hom(\mathcal O_{\CC^n}, \mathcal O_{\CC^n})$.
        In fact, we have a little bit more: it is known that the partially wrapped Fukaya category is mirror to the derived category of coherent sheaves on a toric variety (\cite{abouzaid2006homogeneous,kuwagaki2020coherent}).
\end{itemize}
For this paper, we will only compute $\CF((L_V,b), (F_q, \nabla))$, which means that we need substantially less than a HMS functor of \cite{abouzaid2017family}.
\begin{theorem}[\cite{abouzaid2014family}]
    Consider the Lagrangian torus fibration $\syza:X_A\to Q$, with $Q$ \emph{compact}.
        From this data we can construct a rigid analytic mirror space $X_B$ whose points $z$ are in bijection with pairs $(F_q, \nabla)$.
    For any \emph{tautologically unobstructed} Lagrangian brane $L\subset X_A$, there exists a coherent sheaf $\mathcal F(L)$ on $X_B$ so that 
    \[\hom(\mathcal F(L), \mathcal O_{z})= HF^0(L, (F_q, \nabla)).\]
    \label{thm:familyfloer}
\end{theorem}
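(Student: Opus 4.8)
The plan is to reconstruct the family Floer construction of \cite{abouzaid2014family,abouzaid2017family}: to bundle the individual complexes $CF^\bullet(L,(F_q,\nabla))$ over all pairs $(F_q,\nabla)$ into a single sheaf of modules over the structure sheaf of $X_B$, and then to check that this sheaf is coherent with the prescribed stalks. Since $L$ is tautologically unobstructed there are no disk bubbles, so the complexes $CF^\bullet(L,(F_q,\nabla))$ are honest (curvature-free) complexes and the whole argument takes place at the level of (twisted) complexes rather than curved $A_\infty$ modules.

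First I would take the rigid analytic space $X_B$ built above from a finite cover of the \emph{compact} base $Q$ by bounded rational polytopes $\{P_i\}$, with the polytope algebras $\mathcal O_{P_i}$ as charts. Over a single chart $\mathcal O_P$, fix a basepoint $q_0\in P$ and perturb $L$ so that $L\pitchfork F_{q_0}$; I would then define a complex of finite-rank free $\mathcal O_P$-modules whose underlying module is $\mathcal O_P\langle L\cap F_{q_0}\rangle$ and whose differential counts holomorphic strips $u$ with boundary on $L\cup F_{q_0}$, each weighted by $T^{\omega(u)}z^{[\partial^1 u]}$, where $[\partial^1 u]\in H_1(F_{q_0})\cong\ZZ^n$ records the homology class of the $F_{q_0}$-boundary arc --- exactly the monomial substitution of \cref{exam:localsystems}. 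Gromov compactness gives $m^1\circ m^1=0$ for this $z$-weighted count as in \cref{def:floercomplex}, and specializing the coefficients at the point $z=(F_q,\nabla)$ (i.e.\ substituting for $z^A$ the $\nabla$-monodromy around the class $A$ times $T^{A\cdot q}$) returns the ordinary differential on $CF^\bullet(L,(F_q,\nabla))$.

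The crux --- and what I expect to be the main obstacle --- is to verify that these structure constants actually lie in the affinoid algebra $\mathcal O_P$ rather than merely in $\Lambda$; that is, for the coefficient $c_A$ of $z^A$ in a given matrix entry one needs $\val(c_A)+A\cdot p\to\infty$ uniformly for $p\in P$ as $\|A\|\to\infty$. This is precisely the content of the reverse isoperimetric inequality of \cite{groman2014reverse}, applied here much as in \cref{lemma:reverseIso}: a strip whose $F_{q_0}$-boundary winds far must enclose proportionally large symplectic area, and this linear lower bound on $\omega(u)$ in terms of $\|[\partial^1 u]\|$ is exactly the decay needed for convergence over $\mathcal O_P$. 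A secondary obstacle is gluing: the continuation maps between the complexes built from $F_{q_i}$ and $F_{q_j}$ over $\mathcal O_{P_i\cap P_j}$ are quasi-isomorphisms, and the higher continuation homotopies must be organized into a homotopy-coherent descent datum; by a twisted-complex / homotopy-limit argument this assembles the local complexes into a global object $\mathcal F(L)\in D^b_{dg}\Coh(X_B)$, which is perfect because it is locally a bounded complex of finite free modules. Finally, taking the fiber of $\mathcal F(L)$ at $z=(F_q,\nabla)$ is the base change $-\otimes_{\mathcal O_P}\Lambda$ along evaluation at $z$, which by the observation above recovers $CF^\bullet(L,(F_q,\nabla))$, and hence yields $\hom(\mathcal F(L),\mathcal O_z)=HF^0(L,(F_q,\nabla))$.
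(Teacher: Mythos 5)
Your outline is correct and is essentially the same construction the paper relies on: the theorem is quoted from Abouzaid's family Floer work rather than reproved here, and the paper's own unpacking of that construction (in the proof of \cref{prop:compactification} and the discussion surrounding \cref{ass:hmscn}) follows exactly your steps --- affinoid charts from a finite polytopal cover of the compact base, structure constants $\sum_a c_a z^a$ whose membership in $\mathcal O_P$ is guaranteed by the reverse isoperimetric inequality, and homotopy-coherent gluing of the local complexes via continuation maps. The only point worth flagging is that you should make explicit the use of Fukaya's trick: the strip counts over a chart must be arranged (by perturbing $L$ to a union of flat sections over $P$ and pulling back almost complex structures by fiberwise translation) to be literally independent of $q\in P$, so that the entire $q$-dependence is carried by the monomials $z^a$; this is what justifies evaluating a single fixed count at every point of the chart.
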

\begin{assumption}
    \Cref{thm:familyfloer} still holds under the following weakened assumptions:
    \begin{itemize}
        \item[(*)] The base is allowed to be $Q=\RR^n$, and we additionally require that the Lagrangian $L$ be monomially admissible.
        \item[(**)] The Lagrangian $L$ is allowed to be unobstructed by bounding cochain, in which case there exists a coherent sheaf $\mathcal F(L,b)$ on $X_B$ so that 
        \[\hom(\mathcal F(L,b), \mathcal O_{z})= HF^0((L,b), (F_q, \nabla)).\]
    \end{itemize}
    \label{ass:hmscn}
\end{assumption}

We now discuss the difficulties, expectations, and progress of proving the assumption. The primary difficulties arise from non-compactness and unobstructedness.

Non-compactness presents three immediate issues.  The first is Gromov compactness. We expect that after one places appropriate taming conditions on our Lagrangian submanifolds (as in \cref{app:pearlymodel}) the moduli spaces needed to construct the family Floer functor can be given appropriate compactifications. 

The second more difficult issue regards the role that wrapping plays in computing the Floer cohomology between two non-compact Lagrangians. In the exact setting, the morphism space between two Lagrangians is computed as the limit of $\CF(\phi^i(L_0), L^i)$, where $\phi^i$ is a wrapping Hamiltonian, and the limit is taken over continuation maps. In the non-exact setting, these continuation maps have a non-zero valuation, and only have inverses defined over the Novikov field (with possibly negative valuation). To our knowledge, this version of the Fukaya category has not been constructed. However, since for our application, we only need to compute Floer cohomology against Lagrangian torus fibers (which are compact), we can ignore the issues of the wrapping Hamiltonian. 

Finally, there is the issue of coherence of $\mathcal F(L, b)$. Here, we use the monomial admissibility condition. We recall the proof of coherence when $Q$ is compact. The sheaf $\mathcal F(L, b)$ is constructed by defining it over affinoid domains on the mirror, which correspond to convex domains $U\subset Q$. The convex domain  $U$ is ``small enough''  if there exists a Hamiltonian isotopy of $L$ so that it intersects all Lagrangian torus fibers $F_q$ with $q\in U$ transversely. Over each small enough $U$, the sheaf is computed by $\CF((L, b), (F_q, \nabla))\tensor \mathcal O_U$, where $\mathcal O_U$ affinoid ring of the affinoid domain $X_{U, B}$ associated to the convex domain $U$. Since $\CF((L, b), (F_q, \nabla))$ is finitely generated, and (in the compact setting) we can cover $Q$ with finitely many such $U$, we obtain that the mirror sheaf is coherent. If we drop the condition of $Q$ being compact, and impose the condition that $L$ is monomially admissible, we can still cover $Q$ with a finite set of convex (possibly non-compact) small enough domains $U\subset Q$ by using invariance of the Lagrangian submanifold under symplectic flow in the direction of the monomial ray over each monomial region.

We now remark upon the difficulty of unobstructedness. \cite[Remark 1.1]{abouzaid2017family} states that the ``tautologically unobstructed'' hypothesis for construction of the family Floer functor is technical in nature, and it is expected that the family Floer functor should carry through using unobstructed Lagrangian submanifolds. As we do not require functoriality, such an adaptation of family Floer cohomology to the Charest-Woodward model would not require studying moduli spaces beyond those already studied in \cite{charest2019floer}. We believe the main items left to prove for this construction are the following:
\begin{itemize}
    \item Showing that ``Fukaya's trick'' for pulling back perturbation datum between Lagrangian fibers over sufficiently small convex domains can be worked out in the more technically challenging setting of domain-dependent perturbations. This does not appear to present a problem when working with the set up of \cite{charest2019floer}.
    \item Showing that ``homotopies of continuation maps'' exist in the version of Lagrangian intersection Floer cohomology one is working with. In \cite{charest2019floer}, continuation maps are constructed using holomorphic quilts. There is also an additional challenge of showing that one can construct homotopies of continuation maps corresponding to changes in the choice of stabilizing divisor.
\end{itemize}
Finally, we note that the work in progress of Abouzaid, Gromann, and Varolgunes generalizing \cite{groman2018wrapped,varolgunes2021mayer} to the Fukaya category will prove homological mirror symmetry for unobstructed Lagrangian submanifolds of $(\CC^*)^n$, giving us \cref{ass:hmscn}.
\begin{remark}
    A different approach that would bypass family Floer theory would be to expand homological mirror symmetry for toric varieties \cite{abouzaid2006homogeneous,kuwagaki2020coherent} in the non-exact setting. This would involve developing \cite{ganatra2018sectorial} to the non-exact setting. While there is no clear obstruction to expanding the Liouville sector framework to include obstructed Lagrangian submanifolds that are geometrically bounded, there are at least two technical and challenging issues that would need to be overcome. 
    Many of the arguments used in \cite{ganatra2018sectorial} would have to be carefully redone by replacing geometric bounds obtained by energy and exactness with other methods for bounding holomorphic disks. In fact, these techniques are already employed in a limited capacity in \cite{ganatra2018sectorial} for the proof of the K\"unneth formula (as products of cylindrical Lagrangians are usually not cylindrical). 
    The second issue is understanding how to incorporate curvature into the homological algebra constructions employed by \cite{ganatra2018sectorial}. One possible workaround would be to first construct the partially wrapped pre-category of Lagrangian branes that are equipped with bounding cochains (which is an uncurved filtered $A_\infty$ pre-category) and localize at continuation maps to construct the partially wrapped category. This already requires some care, as it is not immediately clear how the filtration would play a role in this localization (the continuation maps would have positive energy, so there may be convergence issues). 
    The second, more ambitious approach would be to attempt to construct a ``curved partially wrapped Fukaya category'', by starting with a partially wrapped Fukaya pre-category whose objects are (potentially obstructed) Lagrangian submanifolds. 
    This second approach would require one to understand what a filtered $A_\infty$ pre-category is and also to construct localizations of these categories.
\end{remark}
\subsection{Unobstructed Lagrangian lift implies $B$-realizability}
By employing \cite{abouzaid2017homological} (with the possible extensions stated in \cref{ass:hmscn}) we can associate to each Lagrangian brane $(L_V, b)$ a closed analytic subset of $X_B$:
\[
     \YB(L_V, b):= \Supp(H^0(\mathcal F(L_V, b))).
\]
\begin{corollary}
    Consider the Lagrangian torus fibration $\syza:(\CC^*)^n=X_A\to Q$ and a tropical subvariety $V\subset Q$. 
    Suppose that $(L_V, b)$ is a Lagrangian brane lift of $V$. Then:
    \begin{itemize}
        \item $(L_V,b)$ is an $A$-realization of $V$ in the sense that $\tropa(L_V,b)=V$
        \item $V$ is $B$-realizable.
    \end{itemize}
    \label{cor:realizability}
\end{corollary}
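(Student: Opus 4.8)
The goal of \Cref{cor:realizability} is to combine the faithfulness result (\Cref{thm:support} and its corollary \Cref{cor:atropicalization}) with the family Floer functor of \Cref{thm:familyfloer} (using \Cref{ass:hmscn} in the non-compact or non-tautologically-unobstructed cases). The plan is to prove the two bullet points in sequence, the second building on the first.

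\textbf{Step 1: A-realizability, $\tropa(L_V,b) = V$.} I would first establish the inclusion $\tropa(L_V,b)\subseteq \syza(L_V)$ via \Cref{prop:syzcontainstrop} (which, although stated for tautologically unobstructed $L$, extends verbatim to the bounding-cochain setting since the Floer complex $\CF((L_V,b),(F_q,\nabla))$ is still generated by $L_V\cap F_q$, hence vanishes when $F_q$ is disjoint from $L_V$). By the defining property \cref{eq:conormal} of a geometric Lagrangian lift, $\syza(L_V)$ is contained in the $\eps$-neighborhood of $V$ for every $\eps$, so taking $\eps\to 0$ gives $\tropa(L_V,b)\subseteq \overline{V}=V$. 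For the reverse inclusion, \Cref{cor:atropicalization} gives $V^{(0)}\setminus V^{(1)}\subseteq \tropa(L_V,b)$, i.e. the open strata are contained in the support. Since $\tropa(L_V,b)$ is closed (this is the one point requiring a small argument: the Floer support of a brane is a closed subset of $Q$, which follows because the continuation/deformation maps vary continuously and the Floer cohomology is locally constant away from a closed wall; alternatively one invokes coherence of $\mathcal F(L_V,b)$ from \Cref{ass:hmscn} to identify $\tropa(L_V,b)$ with the support of a coherent sheaf, which is closed by definition), taking closures yields $V = \overline{V^{(0)}\setminus V^{(1)}}\subseteq \tropa(L_V,b)$. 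Combining the two inclusions gives $\tropa(L_V,b)=V$.

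\textbf{Step 2: $B$-realizability.} Invoking \Cref{thm:familyfloer} together with \Cref{ass:hmscn} (the hypothesis (*) handles $Q=\RR^n$ with $L_V$ monomially admissible — which holds by the claim in \Cref{subsec:admissible} since a tropical-curve lift adapted to a fan $\Sigma$ is $\Delta_\Sigma$-admissible — and (**) handles the bounding cochain), there is a coherent sheaf $\mathcal F(L_V,b)$ on the rigid analytic mirror $X_B$ with $\hom(\mathcal F(L_V,b),\mathcal O_z)=HF^0((L_V,b),(F_q,\nabla))$ for the point $z\leftrightarrow(F_q,\nabla)$. Define $\YB(L_V,b):=\Supp(H^0(\mathcal F(L_V,b)))$, a closed analytic subset of $X_B$. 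By the divisor-axiom reformulation \cref{eq:divisoraxiomassumption} (or directly from \Cref{cor:atropicalization} and the corollary computing $\HF((L,\del_0,b_0),(F_q,\del,b))=\bigwedge\Lambda\langle x_i\rangle\neq 0$), a point $z$ lies in $\YB(L_V,b)$ precisely when $q=\tropb(z)\in\tropa(L_V,b)$. Applying the tropicalization map $\tropb:X_B\to Q$ and using Step 1, $\tropb(\YB(L_V,b)) = \tropa(L_V,b) = V$. Hence $V$ is $B$-realizable by the closed analytic subset $\YB(L_V,b)$.

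\textbf{Main obstacle.} The genuinely hard input is entirely upstream — it is \Cref{ass:hmscn}, i.e. extending Abouzaid's family Floer construction of \cite{abouzaid2017homological} to the non-compact monomially-admissible setting and to Lagrangians unobstructed only by a bounding cochain (Fukaya's trick with domain-dependent perturbations, homotopies of continuation maps, coherence of the mirror sheaf via the finite monomial-admissible cover). Given that assumption, the proof of \Cref{cor:realizability} itself is essentially bookkeeping: the only subtlety internal to this corollary is making sure the Floer support $\tropa(L_V,b)$ is closed so that the equality $V^{(0)}\setminus V^{(1)}\subseteq\tropa(L_V,b)\subseteq V$ upgrades to $\tropa(L_V,b)=V$ after taking closures — and this is immediate once $\tropa(L_V,b)=\tropb(\Supp \mathcal F(L_V,b))$ is identified with the support of a coherent sheaf.
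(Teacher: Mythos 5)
Your proposal follows essentially the same route as the paper: combine \Cref{cor:atropicalization} (giving $V^{(0)}\subset \tropa(L_V,b)\subset V$) with \Cref{ass:hmscn} to identify $\tropa(L_V,b)$ with $\tropb(\YB(L_V,b))$, and then upgrade the inclusion of the open strata to all of $V$ by a closedness argument. The one soft spot is your justification of closedness: your first suggested argument (that the Floer support is closed because continuation maps vary continuously) is not substantiated and is not what the paper uses, and your second argument conflates ``$\Supp\mathcal F(L_V,b)$ is closed in $X_B$'' with ``its image under $\tropb$ is closed in $Q$'' --- these are not the same, since $\tropb$ need not send closed sets to closed sets. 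The paper closes this gap by citing Gubler's theorem that the tropicalization of a closed analytic subset is a finite union of closed rational polyhedra, hence closed; with that citation in place your argument matches the paper's.
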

\begin{proof}
    By \cref{ass:hmscn}, $\tropa(L_V,b)=\tropb(\YB(L_V, b))$. 
    In \cref{cor:atropicalization}, we proved that $V^{(0)}\subset \tropa(L_V, b)\subset V$. 
    Since $\YB(L_V, b)$ is a closed analytic subset,  $\tropb(\YB(L_V, b))$ is the union of closed rational polyhedra in $N_\RR$ \cite[Proposition 5.2]{gubler2007tropical}. As a result, $\tropb$ is closed and contains $\overline{V^{(0)}}=V$.
    It follows that $\YB(L_V, b)$ is a closed analytic subset of $X_B$ which realizes $V$. 
\end{proof}

\begin{corollary}
    Assuming \cref{ass:hmscn} (*), (**), let $V$ be a smooth hypersurface or a smooth genus zero tropical curve in $\RR^n$. Then $V$ is $B$-realizable. 
    \label{cor:hypersurfacerealizable}
\end{corollary}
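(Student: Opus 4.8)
The plan is to reduce the statement to \cref{cor:realizability} by exhibiting, in each of the two cases, a Lagrangian brane lift $(L_V,b)$ of $V$, and then invoking that corollary under \cref{ass:hmscn}(*),(**). Thus the work is entirely in assembling the inputs already established in \S\ref{sec:geometricrealization}--\S\ref{sec:obstruction}.

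\emph{Case of a smooth genus zero tropical curve $V\subset\RR^n$.} First I would fix a complete fan $\Sigma$ whose rays include the directions of the semi-infinite edges $V^{(0)}_\infty$ (so that $V$ is adapted to $\Sigma$). The constructions of \cite{matessi2018lagrangian,mikhalkin2018examples,hicks2019tropical} produce an embedded geometric Lagrangian lift $L_V$: it is homologically minimal and untwisted (proposition in \S\ref{subsec:graded}), hence graded; it is spin by the proposition in \S\ref{subsec:spin}, using that $V$ has genus zero; and it is $\Delta_\Sigma$-monomially admissible by the claim in \S\ref{subsec:admissible}. This checks all clauses of \cref{def:geometricLagrangianlift}. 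Next, by \cref{thm:genuszerounobstructed} (Theorem C) such an $L_V$ is unobstructed, so there is a bounding cochain $b$ making $(L_V,b)$ a Lagrangian brane lift of $V$.

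\emph{Case of a smooth tropical hypersurface $V\subset\RR^n$.} Here $V$ is the non-affine locus of a tropical polynomial with Newton polytope $\Delta$; let $\Sigma$ be its dual fan. By Theorem 3.1.7 of \cite{hicks2020tropical}, the construction of $L_V$ from \cite{hicks2019tropical} is $\Delta_\Sigma$-monomially admissible; by the propositions of \S\ref{subsec:graded} and the lemma of \S\ref{subsec:spin} it is homologically minimal, graded and spin; hence $L_V$ is a geometric Lagrangian lift. By \cite{hicks2019tropical} (recalled in \cref{exam:hypersurfaceunobstructed}) $L_V$ admits a bounding cochain $b$, so $(L_V,b)$ is again a Lagrangian brane lift of $V$.

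\emph{Conclusion.} In both cases I would then apply \cref{cor:realizability}: assuming \cref{ass:hmscn}(*) (to accommodate the noncompact base $Q=\RR^n$ together with the monomial admissibility just verified) and (**) (to accommodate the bounding cochain $b$), one gets $\tropa(L_V,b)=V$ and that the closed analytic subset $\YB(L_V,b)=\Supp(H^0(\mathcal F(L_V,b)))\subset X_B$ realizes $V$. The only point that is not pure bookkeeping is this last invocation: the family Floer construction of \cite{abouzaid2014family,abouzaid2017homological} is proved only in the compact, tautologically unobstructed setting, and pushing it through for a monomially admissible, bounding-cochain-unobstructed Lagrangian is exactly the content of \cref{ass:hmscn} --- which is why the corollary is stated conditionally. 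So the ``main obstacle'' is not a calculation internal to this proof but the reliance on that assumption; everything else is the verification, done in \S\ref{sec:geometricrealization}--\S\ref{sec:obstruction}, that the classical constructions yield lifts satisfying all of \cref{def:geometricLagrangianlift} and are unobstructed.
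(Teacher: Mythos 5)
Your proposal is correct and matches the paper's intended argument: the paper treats this corollary as an immediate consequence of \cref{cor:realizability} together with the Lagrangian brane lifts already constructed (via \cref{thm:genuszerounobstructed} for genus zero curves and the hypersurface unobstructedness of \cite{hicks2019tropical} recalled in \cref{exam:hypersurfaceunobstructed}), and gives no separate proof. Your assembly of the admissibility, gradedness, spin, and unobstructedness inputs is exactly the bookkeeping the paper leaves implicit.
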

\begin{corollary}
    Assuming \cref{ass:hmscn} (**), let $V$ be a  smooth tropical hypersurface of a tropical abelian variety $Q=\RR^n/M_\ZZ$. Then $V$ is $B$-realizable.
    \label{cor:abelianvarietyrealizable}
\end{corollary}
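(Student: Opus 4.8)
The plan is to produce a Lagrangian brane lift $(L_V,b)$ of $V$ inside the symplectic torus $X_A:=T^*Q/T^*_\ZZ Q$ and then run the argument of \cref{cor:realizability} in the compact-base case. The geometric lift, together with its bounding cochain, is already available: by \cref{exam:abelianvarieties}, following \cite[Example 5.2.0.7]{hicks2019tropical}, a smooth tropical hypersurface $V\subset Q=\RR^n/M_\ZZ$ admits a Lagrangian lift $L_V\subset X_A$ and a bounding cochain $b$ with $(L_V,b)$ unobstructed; since $X_A$ is compact, the construction of $b$ avoids the taming and Gromov-compactness subtleties present in the $\RR^n$ case. The construction satisfies (\cref{def:geometricLagrangianlift}:\cref{item:hamiltonianiso,item:conormalcondition}) by the same local computation near the strata of $V$ as over $\RR^n$, and monomial admissibility is vacuous because $X_A$ is compact. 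So the content of the proof is to check that $L_V$ is embedded, graded, and spin in the abelian-variety setting.

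For gradedness I would reuse \S\ref{subsec:graded}: $c_1(X_A)=0$, so a holomorphic volume form determines a phase map $\det^2\circ s_{L_V}\colon L_V\to S^1$, and it suffices to see that $L_V$ is homologically minimal and untwisted. This is a chart-by-chart statement about the local models near the strata of $V$, which are identical to those over $\RR^n$; applying \cite[Proposition 3.18]{hicks2020tropical} in each chart gives homological minimality, and the untwisted condition is again local. For the spin property I would run the case analysis of the last lemma of \S\ref{subsec:spin}: if $n=2$ then $L_V$ is a closed orientable surface, hence spin; if $n=3$ then $L_V$ is a closed orientable $3$-manifold, hence spin; if $n\geq 4$ then one writes $L_V=L_r\cup L_s$ using the two charts of \cite[Proposition 3.18]{hicks2020tropical}, each homotopy equivalent to $V$, with overlap a disjoint union of pieces homotopy equivalent to spheres or disks indexed by the Newton polytope of $V$, and runs Mayer--Vietoris with $\ZZ/2\ZZ$ coefficients to kill $H^2(L_V;\ZZ/2\ZZ)$.

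With $(L_V,b)$ in hand as a Lagrangian brane lift, the conclusion follows exactly as in \cref{cor:realizability}, with the simplification that $Q$ is \emph{compact}: \cref{thm:familyfloer} of \cite{abouzaid2014family} already supplies the mirror rigid analytic space $X_B$ for compact base, so we need only \cref{ass:hmscn}(**) (the extension to Lagrangians unobstructed by a bounding cochain) and \emph{not} (*). By \cref{ass:hmscn}(**) there is a coherent sheaf $\mathcal F(L_V,b)$ on $X_B$ with $\hom(\mathcal F(L_V,b),\mathcal O_z)=HF^0((L_V,b),(F_q,\nabla))$, so $\YB(L_V,b):=\Supp H^0(\mathcal F(L_V,b))$ is a closed analytic subset of $X_B$ with $\tropb(\YB(L_V,b))=\tropa(L_V,b)$. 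By \cref{cor:atropicalization} (whose proof via \cref{thm:support} and \cref{lemma:reverseIso} applies since $L_V$ and $F_q$ are compact), $V^{(0)}\setminus V^{(1)}\subset\tropa(L_V,b)\subset V$; and $\tropb$ of a closed analytic subset is a closed polyhedral set by \cite[Proposition 5.2]{gubler2007tropical}, hence equal to $\overline{V^{(0)}}=V$. Therefore $\YB(L_V,b)$ $B$-realizes $V$.

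The step I expect to be the main obstacle is verifying the spin property (and homological minimality/gradedness) in the abelian-variety case when $n\geq 4$: over $\RR^n$ the Mayer--Vietoris charts are $\ZZ/2\ZZ$-acyclic in the relevant degrees, but in an abelian variety the corresponding charts reflect the lattice $M_\ZZ$ and need not be simply connected, so one must argue that $H^2(L_V;\ZZ/2\ZZ)$ is still generated by classes pulled back from the (spin) local models near the strata of $V$ — i.e. that the only potential obstruction to spinness is combinatorial and is already resolved in the toric computation — rather than reading it off directly from acyclicity of the charts.
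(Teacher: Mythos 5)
Your proposal is correct and follows essentially the same route as the paper: the Lagrangian brane lift $(L_V,b)$ in the compact torus $T^*Q/T^*_\ZZ Q$ is taken from \cref{exam:abelianvarieties} (i.e.\ the construction of \cite{hicks2019tropical}), and then the support/realizability argument of \cref{cor:realizability} is applied, with only \cref{ass:hmscn}~(**) needed because the base is compact and \cref{thm:familyfloer} already covers that case. The extra verification of the graded/spin conditions (and the caveat you raise for $n\geq 4$) concerns details the paper delegates to the cited construction rather than a genuinely different argument.
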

\begin{corollary}
    \emph{Without} assuming any portion of \cref{ass:hmscn}, let $V$ be a 3-valent tropical curve in a tropical abelian surface $Q$. Then $V$ is $B$-realizable.
    \label{thm:abeliansurfaceunobstructedness}
\end{corollary}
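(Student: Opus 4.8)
The plan is to assemble the two ingredients already established for this setting: geometric realizability and tautological unobstructedness, followed by faithfulness and then the consequence for $B$-realizability. First I would invoke the constructions of \cite{matessi2018lagrangian,mikhalkin2018examples,hicks2019tropical} to produce a geometric Lagrangian lift $L_V \subset X_A = T^*Q/T^*_\ZZ Q$ of the $3$-valent tropical curve $V$; since $Q$ is a tropical abelian surface, $X_A$ is a symplectic $4$-torus and $L_V$ is a closed oriented surface. Gradedness follows because $c_1(X_A)=0$ and the lift is homologically minimal and untwisted (the relevant propositions in \cref{subsec:graded}); spinness is automatic since $L_V$ is an orientable surface. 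This verifies all the conditions of \cref{def:geometricLagrangianlift}. The key point distinguishing the abelian surface case from $(\CC^*)^n$ is that $X_A$ is \emph{compact}, so no admissibility or potential-function machinery is needed, and \cref{def:floercomplex}\cref{cond:compact} holds outright.

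Next I would establish that $L_V$ is tautologically unobstructed, exactly as in \cref{exam:abeliansurface}: since $L_V$ is a graded Lagrangian surface, the only pseudoholomorphic disks that could obstruct are Maslov index $0$ disks, and for a $2$-dimensional Lagrangian the expected dimension of the moduli space of such disks is negative. Hence for a generic $\omega$-compatible almost complex structure $J$ these disks do not appear, $\omega(\pi_2(X_A,L_V))$ is irrelevant because $m^0$ vanishes, and the Floer complex $\CF(L_V)$ has $b=0$ as a (trivial) bounding cochain. So $(L_V,0)$ is a Lagrangian brane lift of $V$ in the sense of \cref{def:Lagrangianbranelift}. Note this is where the statement's claim of bounding cochain $0$ comes from.

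Now faithfulness follows from \cref{thm:support} (and its corollary \cref{cor:atropicalization}): for each $q$ in the interior of a top-dimensional stratum of $V$, the lift $L_V$ agrees near $F_q$ with the conormal model $L_{\underline V}$ of an affine line, and \cref{thm:support} produces a local system $\nabla$ on $F_q$ with $HF^0((L_V,0),(F_q,\nabla))=\Lambda\neq 0$. Combined with $\tropa(L_V,0)\subset \syza(L_V) = V$ from \cref{prop:syzcontainstrop}, and taking closures, we get $\tropa(L_V,0)=V$. Finally, $B$-realizability: here $X_A$ is compact, so \cref{thm:familyfloer} applies directly to the tautologically unobstructed brane $(L_V,0)$ \emph{without} any portion of \cref{ass:hmscn} — this is precisely why the statement says no assumption is needed. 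The family Floer functor yields a coherent sheaf $\mathcal F(L_V)$ on the rigid analytic mirror $X_B$ with $\hom(\mathcal F(L_V),\mathcal O_z) = HF^0(L_V,(F_q,\nabla))$, so $\YB(L_V,0):=\Supp(H^0(\mathcal F(L_V)))$ is a closed analytic subset of $X_B$ with $\tropb(\YB(L_V,0)) = \tropa(L_V,0) = V$ (using \cite[Proposition 5.2]{gubler2007tropical} to pass between the support and its tropicalization as a closed polyhedral set, exactly as in the proof of \cref{cor:realizability}). Hence $V$ is $B$-realizable.

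The main obstacle I would anticipate is not conceptual but one of bookkeeping: verifying that the hypotheses of \cref{thm:support} genuinely apply to $L_V$ near every top-dimensional stratum of a $3$-valent curve in an abelian surface — in particular that the local model really is $L_{\underline V}$ for a linear subspace $\underline V$, which requires the geometric lift to be locally standard along edges, and that the reverse isoperimetric inequality input (\cref{lemma:reverseIso}) is available in the compact torus setting. Since \cref{lemma:reverseIso} is stated for Lagrangian lifts of tropical curves without a compactness restriction on $Q$ and its proof is local near $F_q$, this should go through, but it is the step that deserves the most care.
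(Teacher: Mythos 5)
Your proposal is correct and follows essentially the same route as the paper: compactness of $X_A=T^*Q/T^*_\ZZ Q$ lets you drop \cref{ass:hmscn} and apply \cref{thm:familyfloer} directly, and tautological unobstructedness comes from gradedness in dimension $2$ (Maslov index $0$ disks have expected dimension $-1$), exactly as in \cref{exam:abeliansurface}; faithfulness and the passage to $B$-realizability then run as in \cref{cor:atropicalization} and \cref{cor:realizability}. The one substantive divergence is the source of the geometric lift. The paper does not invoke the $\RR^n$ constructions of \cite{matessi2018lagrangian,mikhalkin2018examples,hicks2019tropical} here; it builds $L_V$ from an affine dimer model attached to each $3$-valent vertex (\cite{holmes2022affine} together with \cite{hicks2021tropical}), and this is precisely where the $3$-valency hypothesis in the statement is used. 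Your write-up never explains what role $3$-valency plays, which is the one point you would need to supply: either justify that the cited constructions extend to tropical abelian surfaces for the class of curves considered, or route the geometric realization through the dimer-model construction as the paper does. The remainder of your argument, including the observation that the bounding cochain can be taken to be $0$, matches the paper.
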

\begin{proof}
    The condition of 3-valency comes from using 
    \begin{itemize}
        \item  \cite{holmes2022affine} to build an affine dimer model associated to each 3-valent vertex and
        \item  \cite{hicks2021tropical} to build tropical Lagrangian lifts from a dimer model.
    \end{itemize}
    We now address why \cref{ass:hmscn} may be dropped. Since $Q$ is a tropical abelian surface (and is therefore compact), the symplectic manifold $X_A$ is compact. Since the Lagrangian lift $L_V$ is graded of dimension 2, it is tautologically unobstructed for a generic choice of almost complex structure (as Maslov index 0 disks appear in expected dimension -1). 
\end{proof}

\subsection{Nonplanar tropical curves do not have tautologically unobstructed lifts}
\label{subsubsec:obstructedness}
Even in the setting where $V$ is a genus zero tropical curve, it is rare for the Lagrangian lift $L_V$ to be a tautologically unobstructed Lagrangian submanifold. 

Before constructing an example, we observe that the valuations of the ``big-strips'' in \cref{thm:support} are dictated by the radius of the neighborhood $U_q$ that we can construct around the point $q$ which is disjoint from $V^{(1)}$.
In particular, this can be applied to  \cite[Proposition 5.10]{sheridan2020lagrangian} to show that tautologically unobstructed Lagrangian lifts of tropical curves have supports that extend to an appropriate toric compactification of the mirror algebraic torus.
\begin{prop}
    Let $\Sigma$ be a fan. Suppose that $V$ is a tropical curve with semi-infinite edges in the directions of the rays of  $\Sigma$. Suppose the fan of $\Sigma$ has the additional property that $\langle \alpha, \beta\rangle \leq 0$ for all 1-dimensional cones $\alpha \neq  \beta$ and $\langle -, - \rangle$ is the standard inner product.
        Then $\YB((L_V,b),0)$ compactifies to a rigid analytic space inside $X_B(\Sigma)$, the rigid analytic toric variety with fan $\Sigma$.
    \label{prop:compactification}
\end{prop}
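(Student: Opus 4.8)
The plan is to show that the closure of $\YB((L_V,b),0)$ inside $X_B(\Sigma)$ is still a closed analytic subset, by exhibiting, near each toric boundary divisor of $X_B(\Sigma)$, convergent defining equations for it. I would use that $\YB((L_V,b),0)$ is the support of the family-Floer mirror sheaf $\mathcal F(L_V)$ (\cref{thm:familyfloer}, in the non-compact monomially admissible form of \cref{ass:hmscn}$(*)$; since $V$ is adapted to $\Sigma$, \cref{def:geometricLagrangianlift} makes $L_V$ $\Delta_\Sigma$-monomially admissible), and that by \cref{cor:atropicalization} together with \cite[Proposition 5.2]{gubler2007tropical} its tropicalization is $V$.

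First I would recall the structure of $X_B(\Sigma)$: it is glued from affinoid charts indexed by the maximal cones $\sigma$ of $\Sigma$, and inside a chart containing a ray $\alpha$ the toric divisor $D_\alpha$ is cut out by the monomials $z^m$ with $m\in\sigma^\vee\cap M$ and $\langle m,\alpha\rangle>0$. It therefore suffices to extend the ideal sheaf of $\YB((L_V,b),0)$ across each $D_\alpha$. Fix $\alpha$ and let $e$ be the semi-infinite edge of $V$ pointing in direction $\alpha$; outside a compact set $L_V$ coincides with the conormal torus $L_{\underline e}$, whose family-Floer mirror over a region $\syza^{-1}(U)$ is the one-dimensional coordinate subscheme cut out by $z^{m_i}-c_i$ for $m_1,\dots,m_{n-1}$ a basis of $\alpha^\perp\cap M$. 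These monomials satisfy $\langle m_i,\alpha\rangle=0$, hence have constant valuation along the end and extend as coordinates across $D_\alpha$; so the leading-order ($\Lambda_{\geq 0}/\Lambda_{>0}$) part of the equations of $\YB((L_V,b),0)$ near the end already extends.

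The crucial step is controlling the corrections. For $q$ far out along $e$ take $U=B_{R(q)}(q)$ disjoint from $V^{(1)}$ with $R(q)\to\infty$ and $L_V|_{\syza^{-1}(U)}=L_{\underline e}|_{\syza^{-1}(U)}$. By \cref{lemma:reverseIso} every holomorphic strip with boundary on $L_V\cup F_q$ is either small — and so contributes to the $(L_{\underline e},F_q)$ model — or large, of area at least $A_{L_V,U}(R(q))$ with $A_{L_V,U}(R)/R\to 2C_{L_V,U}>0$, where $C_{L_V,U}$ is determined by the affine length scale of $e$. Hence the discrepancy between the structure constants of $\mathcal F(L_V)$ and those of the local model $L_{\underline e}$ has valuation growing linearly in the affine distance into $e$, i.e. linearly in $\val(z^m)$ for the boundary monomials with $\langle m,\alpha\rangle>0$; these corrections are therefore divisible by arbitrarily high powers of the boundary monomials, so the ideal of $\YB((L_V,b),0)$ is generated near $D_\alpha$ by power series that in fact converge on the enlarged affinoid chart. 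Globally this is the input to \cite[Proposition 5.10]{sheridan2020lagrangian}; the hypothesis $\langle\alpha,\beta\rangle\le 0$ for distinct rays is what guarantees that $\Sigma$ carries a monomial division whose stars cover a neighborhood of the toric boundary and that the local extensions at the various $D_\alpha$ are mutually compatible, so that they glue to a closed analytic subset of $X_B(\Sigma)$.

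I expect the main obstacle to be the rigidification step: turning the heuristic ``the big-strip corrections vanish to positive order on $D_\alpha$'' into the statement that the family-Floer structure constants, a priori only convergent over the affinoid of a region of $Q$ bounded away from $D_\alpha$, genuinely define elements of the affinoid algebra attached to a maximal cone of $\Sigma$ — and uniformly enough near the lower-dimensional toric strata, where several rays interact and where the hypothesis on $\Sigma$ is really needed — so that the glued ideal sheaf is coherent. The linear asymptotics in \cref{lemma:reverseIso}, together with the identification of $C_{L_V,U}$ with the reciprocal of twice the affine length scale of $e$, are precisely the device that converts ``symplectic area of a big strip'' into ``order of vanishing along $D_\alpha$'', and checking this conversion carefully is the heart of the argument.
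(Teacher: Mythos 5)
Your overall route is the paper's: realize $\YB((L_V,b),0)$ as the support of the family Floer sheaf, cover $X_B(\Sigma)$ by affinoid charts $\mathcal O_{P_\sigma}$ attached to unbounded convex sets $P_\sigma=P'+\sigma$, and show the structure constants $\langle m^1_z(x),y\rangle=\sum_a c_a z^a$ converge on these enlarged charts, with \cite[Proposition 5.10]{sheridan2020lagrangian} and the reverse isoperimetric inequality doing the analytic work. However, there is a genuine gap in how you rule out poles along the toric divisor, and it is tied to a misattribution of the hypothesis on $\Sigma$.

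Membership in $\mathcal O_{P_\alpha}$ requires two separate things: (i) only exponents $a$ with $\langle a,\alpha\rangle\geq 0$ appear, and (ii) $\val(c_a)+a\cdot p\to\infty$. \Cref{lemma:reverseIso} delivers (ii): it bounds the \emph{valuation} of the Novikov coefficient of a big strip from below. It does not by itself control the \emph{sign} of the exponent $\langle a,\alpha\rangle$, i.e.\ the winding of $\partial u$ around the divisor direction, which is what "divisible by high powers of the boundary monomial" actually means; a big strip of large area could a priori wind negatively, and the resulting term $c_az^a$ would blow up as $\val(z^\alpha)\to\infty$. In the paper, (i) is obtained from the fact that the condition $\langle\alpha,\beta\rangle\leq 0$ for all other rays $\beta$ forces the \emph{entire} Lagrangian $L_V$ (not just its $\alpha$-end) to project under the single monomial $\chi^\alpha$ into a compact set union a real ray; an open-mapping/positivity-of-intersection argument applied to $\chi^\alpha\circ u$ then yields $\langle a,\alpha\rangle\geq 0$ for every contributing strip. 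This is precisely the role of the hypothesis on $\Sigma$, whereas you assign it to existence of a monomial division covering the boundary and to compatibility of the gluings — roles it does not play (the finite cover by sets $P_\sigma$ with $\syza(L_V)\subset P_\sigma$ iff $\dim\sigma\leq 1$ is available without it). To close the gap you should either run the $\chi^\alpha$-admissibility argument, or, if you want to extract positivity of exponents from the area bound alone, you must carry out the Fukaya-trick bookkeeping identifying the coefficient $c_a$ across all fibers $F_q$ arbitrarily far along the edge and compare the linear growth of $A_{L_V,U}(R(q))$ against the flux term $\langle a,q\rangle$ — a delicate argument you have not supplied.
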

\begin{proof}
    We first describe the rigid analytic structure on $X_B(\Sigma)$ given by \cite{rabinoff2012tropical}.
    From \cite{payne2009fibers}, the space $X_B(\Sigma)$ comes with a fibration $\tropb: X_B(\Sigma)\to Q(\Sigma)$, which is a partial compactification of $Q$ (see \cite[Definition 3.6]{rabinoff2012tropical}). \Citeauthor{rabinoff2012tropical} then covers $X_B(\Sigma)$ with charts given by the max-spec of affinoid algebras.    

    Let $P_\sigma\subset Q$ denote a convex set which can be written as the form $P'+\sigma$ for $\sigma\in \Sigma$ and some convex compact polytope $P'\subset Q$. Associated to $P_\sigma$ is a subset $\overline P_\sigma\subset Q(\Sigma)$, and  an affinoid algebra
    \[\mathcal O_{P_\sigma}:=\left\{\sum_{A\in (\sigma^\vee\cap \ZZ^n)} f_A z^A \st\val(f_a)+a\cdot p\to \infty \text{ as } \|A\|\to \infty \text{ for all } p \in P_\sigma \right\}.\]
    We can cover $X_B(\Sigma)$ with charts given by the max-spec of $\mathcal O_{P_\sigma}$ (which covers $\tropb^{-1}(\overline P_\sigma))$.
    
    We now unpack what it means for a Lagrangian submanifold $(L,b)$ constructed via family Floer theory to give a coherent sheaf $\mathcal F((L, b))$ on the rigid analytic space $X_B(\Sigma)$. 
    In the family Floer construction, for a sufficiently small convex polytope $P$ in the base of $Q$, one takes a Hamiltonian perturbation $L_{ P}$ of $L$ so that $L_{P}|_{\syza^{-1}(P)}$ is a disjoint set of flat sections of $\syza^{-1}(P)\to P$, and that the bounding cochain is similarly parallel to the flat section. As a result, we may identify the chains $\CF((L_P,b), (F_q, \nabla))$ for all $q\in P$. Additionally, for $q\in P$, one can appropriately choose almost complex structures (using Fukaya's trick) so that the moduli spaces of strips contributing to the differential on $\CF((L,b), (F_q, \nabla))$ does not depend on $q$. Because the bounding cochain on $L$ is parallel to the flat section, the contribution of $b$ to the differential on $\CF((L,b), (F_q, \nabla))$ does not depend on $q\in P$. As a consequence, the dependence of the structure coefficients $\langle m^1_{(L, b), (F_q, \nabla)}(x), y\rangle$  on $(F_q, \nabla)$ factors through the flux homomorphism. Pick a base point $x_0$ on $F_q$ and for each $x\in L\cap F_q$ a path $\gamma_{x}$ from $x_0$ to $x$. By identifying $(F_q, \nabla)$ with a point $z\in \tropb^{-1}(P)$, we obtain that 
    \[\langle m^1_z(x), y\rangle = \sum_{a\in H_1(F_q)} c_az^a\]
    where $c_a$ is the area and local-system weighted count of pseudoholomorphic strips $u$ so that $[\gamma_x\cdot \partial_{F_q}u \cdot \gamma_y^{-1}]=a$. 
    The Lagrangian $L$ defines a complex of sheaves $\mathcal F(L)$ over $(X_B|_{P})$ if these structure coefficients belong to $\mathcal O_P$. The restriction maps compose up to homotopy of the chain complex. This is proven using the reverse isoperimetric inequality to bound the area of holomorphic strips $u$ (which govern convergence) below by the winding of the $F_q$ boundary component of $U$ (which governs the exponent appearing in $z^a$).  To obtain a coherent sheaf of complexes on $X_B$, one must be able to cover $Q$ with finitely many sufficiently small sets $P$. When $Q$ is compact, this is always possible. In the setting we study, we must take some of the sets $P$ to be of the form $P_\sigma$ in order to construct a finite cover.
    
    We now perform this construction for $L_V$ our tautologically unobstructed Lagrangian lift of a tropical curve $V$. Let $e$ be a semi-infinite edge of $V$ pointing in the $\alpha$ direction, where $\alpha\in \Sigma$ is a 1-dimensional cone. Then there exists a $P_\alpha$ so that $V|_{P_\alpha}$ is a 1-dimensional ray. Since $\langle \alpha, \beta \rangle <0$ for all 1-dimensional rays $\beta\neq \alpha$, the projection $\chi^\alpha: X_A\to \CC$ given by the $\alpha$-monomial has the property that the $\chi^\alpha|_{L_V}: L_V\to \CC$ fibers over a real ray outside of a compact set. This is the main input needed in \cite[Proposition 5.10]{sheridan2020lagrangian} to show that the differential on $\CF(L_V, (F_q, \nabla))$ is of the form $\sum_{a\in H_1(F_q), \langle a, \alpha\rangle\geq 0} c_az^a$, and that $\val(c_a)+a\cdot p\to \infty \text{ as } \|A\|\to \infty \text{ for all } p \in P_\alpha$. It follows that $\langle m^1_z(x), y\rangle\in \mathcal O_{P_\alpha}$. 

    We can choose a finite cover of $Q$ by sets of the form $P_\sigma$ so that $\syza(L_V)\subset P_\sigma$ if and only if $|\sigma|\leq 1$. It follows that $\mathcal F(L_V)$ define a sheaf on $X_B(\Sigma)$.
\end{proof}

\begin{figure}
    \centering
\begin{subfigure}{.45\linewidth}
    \centering
    \begin{tikzpicture}[yscale=-1]

\fill[fill=red!20] (-1.5,-2) -- (-1.5,-1) -- (1,-1) -- (1,-2);
\draw (-2.5,0) -- (-0.5,0) -- (-0.5,-2);
\draw (1,1.5) -- (-0.5,0);

\node at (0,1) {$C$};

\node at (0.5,-1.5) {$P_\sigma$};
\node[fill=red, scale=.5, circle] at (-0.5,-1.5) {};
\node[left] at (-0.5,-1.5) {$F_q$};
\end{tikzpicture}     \caption{Condition on Tropical Curve}
    \label{fig:valprojection}
\end{subfigure}
\begin{subfigure}{.45\linewidth}
    \centering
    \begin{tikzpicture}[scale=.4]

\usetikzlibrary{patterns}
\draw[fill=gray!20]  (5.5,-4.5) rectangle (-5.5,4.5);
\draw[pattern=north west lines, pattern color=blue!20, draw=red]  (0,0) ellipse (4 and 4);

\draw[fill=gray!50] (2.5,0) .. controls (2,0) and (1.5,0.5) .. (1,1) .. controls (0.5,1.5) and (-1.5,1.5) .. (-2,1) .. controls (-2.5,0.5) and (-2,-0.5) .. (-2,-1) .. controls (-2,-2) and (1,-1.5) .. (1.5,-1) .. controls (2,-0.5) and (2,0) .. (2.5,0);
\draw (2.5,0) -- (5.5,0);
\node at (-4.5,3.5) {$\mathbb C$};
\node at (-0.5,0) {$\chi^\alpha(L_V)$};
\node[right, red] at (2,3.5) {$F_{q}$};
\node[circle, fill=black, scale=.2] at (4,0) {};
\end{tikzpicture}     \caption{Projection on $\chi^{\alpha}: X_A\to \CC$}
    \label{fig:eprojection}
\end{subfigure}
\caption{Monomial admissibility forces strips with $\langle \partial u, \alpha\rangle>0$ to have large symplectic area.}
\end{figure}
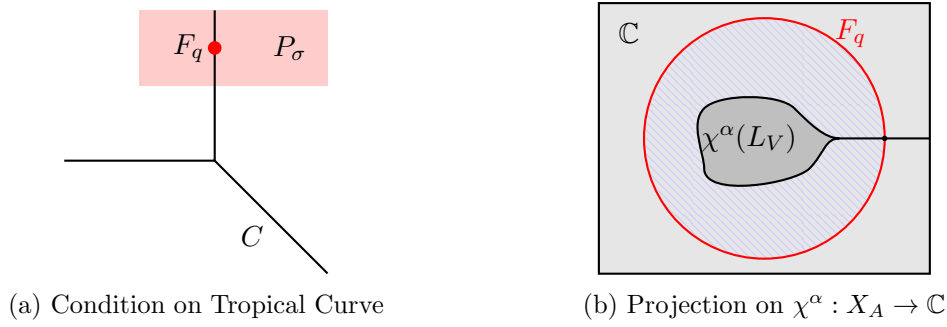
In the setting above (where $L_V$ is tautologically unobstructed and equipped with the trivial local system), the above computation not only shows that $\mathcal F(L_V)$ extends to $X_B(\Sigma)$ but also shows that we can compute the points in the compactifying locus. 
For a semi-infinite edge $e$, let $P_\alpha=P+\langle \alpha\rangle$ a convex polytope whose only intersection with $V$ is along the edge $e$. Without loss of generality, we will assume that the edge $e$ is of the form $(t,0, \ldots, 0)\subset Q=\RR^n$, with $t$ tending to $\infty$. We can write the max-spec of $P_\alpha$ as 
\[\{(z_1, \ldots, z_n)\in \Lambda\times (\Lambda^*)^{n-1} \st \val(z_1, \ldots, z_n)\in \overline{P_\alpha}\subset (\RR\cup \infty)\times \RR^{n-1}\}.\] We prove that the point $(0,1,\ldots 1)\in \Supp(\mathcal F(L_V))$. The $\langle a, \alpha \rangle=0$ terms of $\langle m^1_z(x_\emptyset), x_I\rangle$ agree with holomorphic strips for the differential on $\CF(L_{\underline e}, (F_q, \nabla))$, so we can write 
\[\langle m^1_z(x_\emptyset), x_I\rangle =(1-z^{\langle I, a\rangle})+ \sum_{a\in H_1(F_q), \langle a, \alpha\rangle >0} c_az^a\]
When we have a sequence of points $\{z^k\}_{k\in \NN}$ with the property that $m^1_{z^k}(x_\emptyset)=0$ (i.e. $z^k\in \Supp(\mathcal F(L_V))$) and $\lim_{k\to\infty} \val(z_1^k)= \infty$ (so that the limit belongs to the compactifying toric divisor), the above equation states that $\lim_{k\to\infty } \val(z^k_i)=1$ for all $i\neq 1$. 
We conclude that the closure of $\Supp(\mathcal F(L_V))$ inside of $X_B(\Sigma)$ contains the point $(0, 1, \ldots, 1)$. 

 We now construct an example of a Lagrangian brane lift of a tropical curve that is unobstructed, but not tautologically unobstructed. 
\begin{example}
    \label{example:obstructedline}
    Consider the tropical line $V_c\in \RR^3$ drawn in \cref{fig:lineinrr3}. The tropical line $V_c$ has two pants centered at the points $(0,0,0)$ and $(-c,-c,0)$, and whose legs at $(0,0,0)$ point in the directions
    \begin{align*}
        e_1= \langle 1, 0,0\rangle && e_2=\langle 0, 1, 0 \rangle, && e_c= \langle -1, -1, 0 \rangle 
    \end{align*}
    and whose legs at $(-c,-c,0)$ point in the directions
    \begin{align*}
        e_3= \langle 0, 0, 1 \rangle && e_4= \langle -1, -1, -1 \rangle && -e_c = \langle 1, 1, 0 \rangle.
    \end{align*}
    We prove that $L_{V_c}$ bounds a holomorphic disk for all but at most 1 value of $c$.

    Assume for contradiction that for all values of $c$ the Lagrangian submanifold $L_{V_c}$ is tautologically unobstructed, and requires no bounding cochain.
    Then the Lagrangians $L_{V_c}$ satisfy the conditions of \cref{prop:compactification}, so each $Y_{V_c}:=\Supp(\mathcal F(L_{V_c}))$ compactifies to give a curve inside of $\mathbb P^3$. Since this curve intersects each of the toric divisors at a single point, we conclude that every $Y_{V_c}$ is a line in $\mathbb P^3$.
    Furthermore, every one of these lines contains the points $(1:0:0:0)$ and $(0:1:0:0)$ in $\mathbb P^3$.
    Since a line in $\mathbb P^3$ is determined by two points, this implies that $ Y_{V_c}=  Y_{V_{c'}}.$ However, as $V_c\neq V_{c'}$, they cannot be realized by the same subvariety, a contradiction. 
\end{example}

\begin{figure}
    \centering
    \begin{tikzpicture}[xscale=-1]

\draw[fill=gray, opacity=.25] (-6.5,4) -- (-6.5,0) -- (-0.5,0) -- (-0.5,4) -- cycle;
\draw[fill=gray, opacity=.25] (-6.5,0) -- (-5.5,-1.5) -- (0.5,-1.5) -- (-0.5,0) -- cycle;
\begin{scope}[shift={(-1.5,0)}]

\draw[thick, ->] (-3,1.5) -- (-5.5,1.5);
\draw[thick, ->] (-3,1.5) -- (-3,3);
\draw[thick,->] (-2,0.5) -- (-1,-1);
\draw[thick,->] (-2,0.5) -- (0,1.5);
\draw[thick, red] (-2,0.5) -- (-3,1.5);

\node at (-5.5,1) {$e_1$};
\node at (-2.5,3) {$e_2$};
\node at (-0.5,-0.5) {$e_3$};
\node at (-0.5,1.5) {$e_4$};
\end{scope}
\begin{scope}[shift={(6,0)}]

\draw[fill=gray, opacity=0.25] (-6.5,4) -- (-6.5,0) -- (-5.5,-1.5) -- (-5.5,2.5) -- cycle;
\end{scope}
\node at (-4.5,0.5) {$c$};

\end{tikzpicture}      \caption{A tropical line $V_c$. The Lagrangian lift $L_{V_c}$ necessarily bounds a holomorphic disk; we conjecture that the projection to the base of the Lagrangian torus fibration of this holomorphic disk lives over the red edge and has area controlled by the affine length $c$.}
    \label{fig:lineinrr3}
\end{figure}
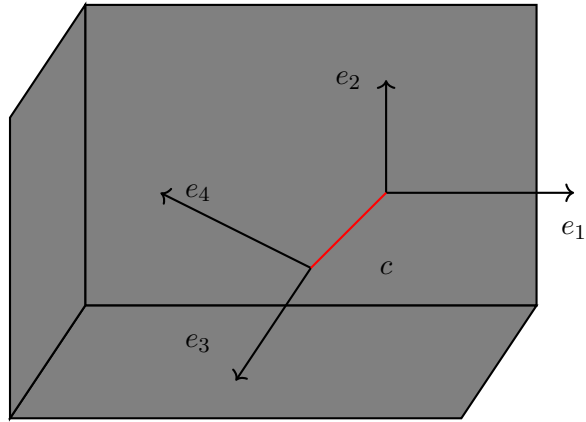
This doesn't contradict the realizability of $V_c$. Indeed, by \cref{thm:genuszerounobstructed}, the bounding cochain on $L_{V_c}$ need only be supported on three of the four legs of $L_{V_c}$. However, the above argument shows that one cannot construct a bounding cochain for $L_{V_c}$ which restricts to zero on the two semi-infinite edges which share a vertex (which implies that the bounding cochain cannot be zero).

Using mirror symmetry, we can ``back solve'' for the valuation of the holomorphic disk which necessitates the use of a bounding cochain on $L_{V_c}$. We may assume that the bounding cochain has trivial restriction to the $e_1$ edge. It follows that the tropical line $Y_{V_c}$ may intersect toric divisors at the points $(0,1,1), (1+\exp(b_1), 0,1+\exp(b_3)), (z^{-c}+\exp(c_1), z^{-c}+\exp(c_2), 0)$. Since these have to satisfy the equation of a line, there exists $t$ so that 
\[(1-t)(0,1,1)+t(\exp(b_1), 0, \exp(b_2))=(z^{-c}(\exp(c_1)),z^{-c}(\exp(c_2)), 0).\]
From examining the third term, $t=(1-\exp(b_2))^{-1}$, we already see that $b_2\neq 0$. From examining the third term
\[(1-\exp(b_2))^{-1}\exp(b_1)=z^c(\exp(c_1)).\]
from which we see that $\val(b_2)=c$. From this, we conclude that there exists a pseudoholomorphic disk of energy $c$ on $L_{V_c}$.  
 	\subsection{Speculation on Speyer's well-spacedness criterion}
	\label{sec:speyer}
\Cref{cor:realizability} proves the forward direction of \cref{conj:unobstructednessisrealizable}.
To investigate the reverse direction, we look at an example of a non-realizable tropical curve.
In \cite{Mikhalkin2004Amoebas} it was observed that every cubic curve in $\CP^3$ is planar (\cref{fig:realizable}). Consequently, the example drawn in \cref{fig:nonrealizable} --- a tropical cubic which is not contained within any tropical plane --- cannot arise as the tropicalization of any curve in $\CP^3$.
\begin{figure}
    \centering
    \begin{subfigure}[t]{.45\linewidth}
        \centering
    \begin{tikzpicture}[scale=.75]

\begin{scope}[]

\draw[fill=gray, opacity=0.25] (0.5,0) -- (-0.5,2) -- (-0.5,4.5) -- (0.5,3) -- cycle;
\draw[fill=gray, opacity=0.25] (-4.5,3) -- (-4.5,0) -- (0.5,0) -- (0.5,3) -- cycle;
\draw[fill=gray, opacity=0.25] (0.5,3) -- (-0.5,4.5) -- (-5.5,4.5) -- (-4.5,3) -- cycle;
\draw[fill=gray, opacity=0.25] (-0.5,4.5) -- (0.5,3) -- (2.5,4) -- (1.5,5.5)-- cycle;
\draw[fill=gray, opacity=0.25] (0.5,0) -- (2.5,1) -- (2.5,4) -- (0.5,3) -- cycle;
\draw[thick] (-3,2) -- (-2.5,2) -- (-2.5,1.5) -- (-3,1) -- (-3.5,1) -- (-3.5,1.5) -- (-3,2) 
(-4.5,0.5) -- (-4,0.5) -- (-4,0) (-3.5,1) -- (-4,0.5) 
(-3.5,1.5) -- (-4.5,1.5) (-3,1) -- (-3,0) 
(-3,2) -- (-3,2.5) -- (-2.5,3) (-3,2.5) -- (-4.5,2.5)
(-2.5,2) -- (-1.5,3) (-2.5,1.5) -- (-2,1.5) -- (-0.5,3)
(-2,1.5) -- (-2,0) (-3.5,4.5) -- (-2.5,3) (-1.5,3) -- (-2.5,4.5) (-0.5,3) -- (-1.5,4.5);
\draw[fill=gray!40, opacity=0.75](-4.5,3) -- (-2.5,4) -- (2.5,4) -- (0.5,3) -- cycle;
\draw[thick] (-2.5,3) -- (-0.5,4) (-1.5,3) -- (0.5,4) (-0.5,3) -- (1.5,4);

\draw[ultra thick, red] (-3,2) -- (-3,2.5) -- (-2.5,3);
\draw[ultra thick, green] (-2.5,2) -- (-1.5,3);
\end{scope}

\end{tikzpicture}      \caption{ A well-spaced tropical curve. The affine lengths of the red and green match. }
    \label{fig:realizable}
    \end{subfigure}
    \begin{subfigure}[t]{.45\linewidth}
        \centering
    
\begin{tikzpicture}[scale=.75]
\begin{scope}[shift={(10,0)}]

\draw[fill=gray, opacity=0.25] (0.5,0) -- (-0.5,2) -- (-0.5,4.5) -- (0.5,3) -- cycle;
\draw[fill=gray, opacity=0.25] (-4.5,3) -- (-4.5,0) -- (0.5,0) -- (0.5,3) -- cycle;
\draw[fill=gray, opacity=0.25] (0.5,3) -- (-0.5,4.5) -- (-5.5,4.5) -- (-4.5,3) -- cycle;
\draw[fill=gray, opacity=0.25] (-0.5,4.5) -- (0.5,3) -- (2.5,4) -- (1.5,5.5)-- cycle;
\draw[fill=gray, opacity=0.25] (0.5,0) -- (2.5,1) -- (2.5,4) -- (0.5,3) -- cycle;

\draw[fill=blue!20] (-3.5,1.5) -- (-3.5,1) -- (-3,1) -- (-2.5,1.5) -- (-2.5,2) -- (-3,2) -- cycle;
\draw[thick] (-3,2) -- (-2.5,2) -- (-2.5,1.5) -- (-3,1) -- (-3.5,1) -- (-3.5,1.5) -- (-3,2) 
(-4.5,0.5) -- (-4,0.5) -- (-4,0) (-3.5,1) -- (-4,0.5) 
(-3.5,1.5) -- (-4.5,1.5) (-3,1) -- (-3,0) 
(-3,2) -- (-3,2.5) -- (-2.25,3.25) (-3,2.5) -- (-4.5,2.5)
(-2.5,2) -- (-1.5,3) (-2.5,1.5) -- (-2,1.5) -- (-0.5,3)
(-2,1.5) -- (-2,0) (-3.25,4.75) -- (-2.25,3.25) (-1.5,3) -- (-2.5,4.5) (-0.5,3) -- (-1.5,4.5) (-2.25,3.25) -- (0.25,4.25) ;

\draw[red, ultra  thick] (-3,2) -- (-3,2.5) -- (-2.25,3.25);
\draw[green, ultra thick] (-2.5,2) -- (-1.5,3);
\draw[fill=gray!40, opacity=0.75](-4.5,3) -- (-2.5,4) -- (2.5,4) -- (0.5,3) -- cycle;
\draw[thick] (-1.5,3) -- (0.5,4) (-0.5,3) -- (1.5,4);

\end{scope}
\end{tikzpicture}     \caption{A non-realizable tropical curve. The affine length of the green segment is uniquely minimal among all paths from the cycle to the non-linearity locus of the curve.}
    \label{fig:nonrealizable}
    \end{subfigure}
    \caption{}
\end{figure}
\begin{corollary}
    Let $V$ be the tropical curve from \cite[Example 5.12]{Mikhalkin2004Amoebas}. Then the standard lift of $L_V$ is an obstructed Lagrangian.
\end{corollary}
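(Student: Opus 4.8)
The plan is to argue by contradiction and invoke the forward direction of \cref{conj:unobstructednessisrealizable} proved in \cref{cor:realizability}, running (in a stronger form) the scheme ``unobstructed $\Rightarrow$ $B$-realizable $\Rightarrow$ contradiction with non-planarity'' used in \cref{example:obstructedline}. First I would recall that the standard lift $L_V$ is a geometric Lagrangian lift of $V$ in the sense of \cref{def:geometricLagrangianlift}: after a refinement we may pick a complete fan $\Sigma$ of $\RR^3$ whose rays contain the primitive directions of the semi-infinite edges of $V$, so that $L_V$ is $\Delta_\Sigma$-monomially admissible by the criterion of \cref{subsec:admissible}; moreover $L_V$ is homologically minimal and untwisted (hence graded, as in \cref{subsec:graded}), and, being an orientable $3$-manifold, it is spin.

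Now suppose for contradiction that $L_V$ is unobstructed, so there is a bounding cochain $b$ with $(L_V,b)$ a Lagrangian brane lift of $V$. By \cref{cor:realizability} (which uses \cref{ass:hmscn}), $V$ is then $B$-realizable: there is a closed analytic subset $\YB\subset(\Lambda^*)^3$ with $\tropb(\YB)=V$. Since the semi-infinite edges of $V$ occur in triples along the four rays of the fan of $\PP^3$, the toric closure $\overline\YB\subset\PP^3_\Lambda$ is a curve of degree $3$, hence --- by the observation recalled in \cite[Example 5.12]{Mikhalkin2004Amoebas} that every cubic curve in projective $3$-space is planar --- it is contained in a hyperplane $H\cong\PP^2_\Lambda$. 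But then $\YB\subset H\cap(\Lambda^*)^3$, so $V=\tropb(\YB)$ is contained in the tropical plane $\tropb(H\cap(\Lambda^*)^3)$, contradicting the defining property of $V$ (see \cref{fig:nonrealizable}) that it lies in no tropical plane. Therefore $L_V$ admits no bounding cochain; it is obstructed.

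The step I expect to be the main obstacle is this compactification-and-planarity argument over the Novikov field. Mikhalkin's observation and Speyer's well-spacedness criterion \cite{speyer2014parameterizing} are formulated for complex curves (respectively for curves in a degenerating family), and --- especially because $V$ is superabundant, so a realization could have strictly smaller genus than $V$ --- one must argue carefully that a closed analytic subset of $(\Lambda^*)^3$ with the combinatorial type of $V$ is forced into a hyperplane, either by transporting the classical genus--degree classification of space curves to the non-archimedean analytic category or by extracting the embedding $\overline\YB\hookrightarrow\PP^3_\Lambda$ directly from \cref{prop:compactification}. As a byproduct, note that since the fan of $\PP^3$ satisfies the non-positive pairing hypothesis of \cref{prop:compactification}, this second route already yields \emph{unconditionally} (without \cref{ass:hmscn}) the weaker conclusion that $L_V$ is not tautologically unobstructed, i.e.\ that it bounds a holomorphic disk; promoting this to genuine obstructedness is precisely what requires the unobstructed family Floer package assumed in \cref{ass:hmscn}.
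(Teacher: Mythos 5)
Your argument is essentially the paper's: the corollary appears there with no explicit proof, being treated as an immediate consequence of the preceding observation that every cubic in projective $3$-space is planar (so the non-planar $V$ cannot be $B$-realized) together with the contrapositive of \cref{cor:realizability}. The subtleties you flag --- transporting the planarity/non-realizability statement to closed analytic subsets of $(\Lambda^*)^3$ via the compactification of \cref{prop:compactification}, and the tacit dependence on \cref{ass:hmscn} --- are exactly the points the paper leaves implicit, so your write-up is a faithful (and more careful) rendering of the intended argument.
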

A general criterion for understanding this phenomenon was stated in \cite{speyer2014parameterizing}. 
\begin{theorem*}[Speyer's Well-Spacedness]
    Let $V$ be a genus-one tropical curve whose cycle is contained within a linear subspace $H$. Let $d_1, \ldots d_k$ be the affine lengths of paths along the edges of $V$ to the boundary of $V\cap H$. If the minimal distance occurs at least twice, the curve $V$ is realizable.
\end{theorem*}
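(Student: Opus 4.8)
The plan is to prove the stronger statement that when $V$ is well-spaced the geometric Lagrangian lift $L_V$ admits a bounding cochain, so that $(L_V,b)$ is a Lagrangian brane lift of $V$; $B$-realizability then follows from \cref{cor:realizability}. Speyer's formulation in terms of degenerating families of algebraic curves is a priori different from realizability by a closed analytic subset, so to recover it exactly one would afterwards compactify $L_V$ as in \cref{prop:compactification}, using a fan adapted to the semi-infinite edges of $V$; I would treat that as a separate bookkeeping step. The first substantive step is to record the topology of $L_V$: since $V$ has genus one, the Mayer--Vietoris argument of \cref{lem:tropcurvetopology} breaks down --- the cycle of $V$ contributes an extra generator to $H^1(L_V)$, and $H^2(L_V)\to\bigoplus_g H^2(L_g)$ need not be injective --- so \cref{thm:unobstructed} does not apply off the shelf. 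This is exactly the point at which the extra hypothesis of well-spacedness must do its work.

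The second step is to identify the leading order of the curvature term $m^0=\sum_{k\ge 1}T^{\lambda_k}m^0_k\in\CF(L_V)$, with $0<\lambda_1<\lambda_2<\cdots$. Outside an $\eps$-neighborhood of $V^{(1)}$ the Lagrangian $L_V$ agrees with a periodized conormal $L_{\underline V}$, which bounds no disks, so \cref{lemma:reverseIso} applies: a holomorphic disk with boundary on $L_V$ is either ``small'', hence supported in a region where $L_V$ is linear and therefore nonexistent, or ``large'', with symplectic area bounded below in terms of the affine distance from the cycle of $V$ to its non-linearity locus. The minimal such area is $\lambda_1$, controlled by $d_{\min}=\min_i d_i$, and is attained by disks whose valuation projection is a segment running from the cycle out along an edge realizing the minimum, with boundary class $\pm c\in H_1(L_V)$ the cycle class. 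When the minimum is attained at two or more edges there are two or more such minimal disks, and I would argue --- via the clean-intersection picture of \cref{exam:pairofpantsupport} together with the orientation conventions on $L_V$ --- that the resulting contribution $[m^0_1]$ to the degree-two Floer cohomology of $L_V$ (relative to the boundary, as in \cref{thm:unobstructed}) vanishes, either because the signed contributions cancel or because $m^0_1$ is a coboundary of a cochain supported near the cycle. If the minimum is attained exactly once then $[m^0_1]\ne 0$ and $L_V$ is obstructed; that is the reverse direction and is handled separately.

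With the leading obstruction killed, the third step is the usual order-by-order correction for filtered $A_\infty$ algebras: given $b_{\le k}$ with $\sum_j m^j(b_{\le k}^{\otimes j})\equiv 0\bmod T^{\lambda_{k+1}}$, one shows the next obstruction $o_{k+1}$ vanishes, chooses $b_{k+1}$, and iterates, the series converging in the Novikov topology. To control all of the $o_{k+1}$ at once I would split $V=V_{\mathrm{tree}}\cup B_\eps(V\cap H)$ into a tree part and a neighborhood of the cycle: over $V_{\mathrm{tree}}$, \cref{thm:genuszerounobstructed} (equivalently \cref{thm:unobstructed}) already supplies a bounding cochain supported on the semi-infinite edges, while over the cycle region the contributing disks are governed by a toric local model whose multiple-cover contributions can be summed explicitly, in the style of the Aganagic--Vafa computation in \cref{subsec:unobstructedatboundary}. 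A Fukaya-trick and gluing argument --- of the kind used in the family-of-lifts example of \cref{subsec:knownunobstructed} --- should then patch these into a global bounding cochain, the well-spacedness hypothesis being precisely what makes the two pieces compatible at the matching order $T^{\lambda_1}$.

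The hard part is the third step: showing that once the leading obstruction has been cancelled the entire tower of higher obstructions can be cancelled too, i.e. that well-spacedness is not merely necessary to kill $o_1$ but sufficient for the whole construction. Two subtleties will demand care. First, regularity of the relevant Maslov-index-zero disks: as \cref{exam:feynman} and \cref{example:obstructedline} illustrate, tropical Lagrangian lifts routinely bound non-regular Maslov-zero disks, so one needs either a careful transversality argument or a direct evaluation of these disk counts from the tropical combinatorics of $V$. Second, convergence and affinoid-boundedness of the resulting $b$: here \cref{lemma:reverseIso} re-enters to guarantee that the energies of the successive corrections grow, so that $b$ has positive valuation and \cref{ass:hmscn} can be invoked to produce the coherent mirror sheaf $\mathcal F(L_V,b)$ realizing $V$.
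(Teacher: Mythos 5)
The first thing to flag is that the paper does not prove this statement: it is quoted from Speyer's work, where it is established by non-Archimedean deformation-theoretic methods, and the paper's own discussion in \cref{sec:speyer} is explicitly labelled as speculation (see the entry ``Spec.''\ in \cref{tab:realizability}). Your proposal follows that speculative outline --- identify the leading term of $m^0$ with the minimal-affine-length disks emanating from the cycle, argue that well-spacedness makes the two minimal contributions cancel, then correct order by order --- but it does not close the gaps that the paper itself flags as open. Concretely: (1) the existence and regularity of the conjectured minimal-energy disks, whose valuation projection is the cycle together with a shortest path to the non-linearity locus of $V\cap H$, is exactly what the paper only conjectures; these configurations arise by gluing a non-regular Maslov-index-zero disk over the cycle onto a regular piece, and no transversality or gluing theorem is supplied, by you or by the paper. (2) The claim that $[m^0_1]$ vanishes when the minimum is attained twice is asserted ``either because the signed contributions cancel or because $m^0_1$ is a coboundary''; these are different mechanisms with different consequences for the higher obstructions, and neither is verified. (3) The order-by-order step cannot simply invoke \cref{thm:unobstructed} or \cref{lemma:unobstructing}: as you correctly note, the surjectivity hypothesis $H^1(M_0)\to H^2(L_V,M_0)$ fails in genus one, and your proposed replacement (splitting $V$ into a tree part and a cycle neighbourhood and patching) is a plan rather than an argument --- the compatibility ``at the matching order $T^{\lambda_1}$'' is precisely the content that would need to be proved.

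Two further mismatches with the statement as written. Even granting a bounding cochain, the passage to $B$-realizability via \cref{cor:realizability} requires \cref{ass:hmscn}, which is an assumption in this paper, so the symplectic route yields at best a conditional proof. And Speyer's theorem concerns realizability by parameterized curves in a degenerating family, which the paper is careful to distinguish from realizability by a closed analytic subset (see the footnote to \cref{tab:realizability}); identifying the two notions in genus one is an additional step your proposal does not address beyond a reference to \cref{prop:compactification}, which is proved only for tautologically unobstructed lifts.
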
 
We now speculate on how Speyer's well-spacedness criteria can be understood in terms of holomorphic disks with boundary on $L_V$. 
For $L_V$ to be unobstructed, it is necessary for the lowest energy terms in $m^0$ to be null-homologous. In particular: the set
\[\left\{u \st \omega(u)\leq \min_{\substack{0\neq [\partial u'] \in H^2(L, M)}} \omega(u')\right\}\]
of minimal area non-null-homologous disks must contain at least two elements. This matches the ``two minimal distance'' criterion of Speyer's Well-Spacedness theorem.

In \cite{hicks2021observations}, we saw that tropical cycles on $W\subset \RR^2$ are related to non-regular Maslov-index zero disks with boundaries on the Lagrangian lifts $L_W$; it was speculated that these Maslov-index zero disks could appear regularly if they were glued onto a regular holomorphic disk or strip.  In \cref{example:obstructedline} we saw that the Lagrangian brane lift of a small neighborhood of the green segment in \cref{fig:nonrealizable} must have a regular disk with energy given by the affine length of the edge.

In the example given by \cref{fig:nonrealizable}, we conjecture that there are  regular holomorphic disks with boundaries on $L_V$ whose projections under the moment map are:
\begin{itemize}
    \item The union of the blue hexagon (a non-regular disk) and green path (a regular disk); call this speculative disk $u_1$ and
    \item The union of the blue hexagon (a non-regular disk) and red path (a regular disk); call this speculative disk $u_2$.
\end{itemize}
Using that the area of homology classes of disks with boundary on $L_V$ correspond to affine length, the disks $u_1$ and $u_2$ have the matching symplectic area if the affine lengths of the green and blue path match.
In this case, the homology class of $[\partial u_1]-[\partial u_2]$ doesn't wrap around the portion of the homology of $L_V$ which arises from $V$, and by a similar argument used in \cref{thm:genuszerounobstructed} we see that $[\partial u_1]-[\partial u_2]\subset H_1(L_{V_\infty^{(0)}})$. We could then apply the methods used in the proof of \cref{thm:genuszerounobstructed} to conclude that $L_V$ is unobstructed.

In the event that $\omega(u_2)$ is uniquely minimal, the boundary of $\partial(u_2)$ is a non-trivial homology class in $H_1(L_V)$, suggesting that the contribution to $m^0\in \CF(L_V)$ is a non-removable obstruction. 
 	\subsection{Deformations, superabundance, and not-wide}
	\label{app:jacobian}
	\subsubsection{Geometric Deformations of \texorpdfstring{$L$ and $(V, \mathcal L)$}{L and (V, L)}}
Given $V\subset \RR^n$ a tropical subvariety, a Lagrangian $L_V$ should correspond to a lift of $V$  equipped with a line bundle. In this section, we examine how the deformations of $L_V$ up to Hamiltonian isotopy match deformations of a tropical curve equipped with a line bundle $(V, \mathcal L)$.

Given a fixed tropical line bundle $\mathcal L\to V$ we can identify deformations of $\mathcal L$ with $H^1(V, \RR)$: this is because deformations of invertible locally integral affine functions from $U$ to $\RR$ correspond to constant differences. 
Similarly, the deformations of $V\subset \RR^n$ as a smooth tropical subvariety can be computed sheaf-theoretically. 
We choose a cover conducive to this computation. 
To each $v\in V$, let $\str(v)$ be the union of the edges that contain $v$. We allow $v$ to be a leaf (at the end of a semi-infinite edge).  Then the $\str(v)$ form a cover of $V$, with $\str(v)\cap \str(w)=\underline V_{vw}$ whenever $vw$ is an edge. There are two types of vertices $v$ that we must consider:
\begin{itemize}
    \item If $v$ is an internal vertex, then the deformations of $\str(v)$ are identified with the integral affine space $Nv=T_v\RR^n=\RR^n$. 
    \item If $v_\infty$ is a boundary vertex incident to edge $e$, then the deformations of $\str(v_\infty)$ are identified with the integral affine space $\RR^{n-1}$ perpendicular to the semi-infinite edge attached to $v_\infty$.
\end{itemize}

Over each edge $e$, the deformations of the tropical curve are given by the normal bundle to $e$.
In summary, let $\Def_{V}$ be the sheaf of deformations of the tropical embedding of $V$, and let $\Def_{\mathcal L}$ be the deformations of a fixed line bundle $\mathcal L$ over $V$. We have:
\begin{align*}
    \Def_V(\str(v))= \RR^n&& \Def_V(\str(v_\infty))=e_{v_\infty}^\bot&& \Def_V(\str(e))=e^\bot
\end{align*}

For compact Lagrangian $L$, the infinitesimal deformations of $L$ up to Hamiltonian isotopy are described by classes in $H^1(L, \RR)$. Since $L_V$ is non-compact, we only consider the \emph{admissible} deformations of non-compact $L_V$ which preserve the condition in \cref{def:admissiblitycondition}.
Let $\Omega^1_{admis}(L_V, \RR)$ be the 1-forms on $L_V$ with the property that 
\begin{itemize}
    \item For each monomial region $U_\alpha$, the 1-form  $\eta|_{L_V\cap U_\alpha}$  is invariant under the flow in the $\alpha$-direction.
    \item Furthermore, $\eta(\alpha)=0$.
\end{itemize}
We let $\Omega^0_{admis}(L_V, \RR)$ be those functions which, outside of a compact set, are invariant under the flow in the $\alpha$ direction of the corresponding monomial region from \cref{def:admissiblitycondition}.

We can similarly decompose $L_V$ into sets $L_{\str(v)}$, which we will take to be:
\begin{itemize}
    \item The standard Lagrangian pair of pants when $v$ in an interior vertex so that $L_{\str(v)}\cap L_{\str(w)}=L_{\underline V_{vw}}$. In this case $\Omega^i_{admis}(L_V, \RR)=\Omega^i(L_V, \RR)$. 
    \item A non-compact cylinder extending to the boundary whenever $w$ is a vertex at a non-compact edge. 
\end{itemize}
We then compute $H^1(\Omega^\bullet_{admis}(L_V))$. The cohomology is the same as the first cohomology of the total complex; the first page in the spectral sequence is
\[\begin{tikzcd}
        \bigoplus_{v\in V} H^0(\Omega^\bullet(L_v))  \arrow{d} & \bigoplus_{v\in V} H^1(\Omega^\bullet(L_v))\arrow{d} &\cdots\\
        \bigoplus_{e\in V} H^0(\Omega^\bullet(L_e))  \arrow{d}& \bigoplus_{e\in V}H^1(\Omega^\bullet(L_e))\arrow{d} &\cdots\\
        0 & 0 
    \end{tikzcd}\]
We now start to identify these with deformations of tropical curves. 

\begin{align*}
    \Def_V(\str(v))= H^1(\Omega^\bullet(L_v))&& \Def_V(\str(e))=H^1(\Omega^\bullet(L_e))\\
    \RR=H^0(\Omega^\bullet(L_v)) &&  \RR=H^0(\Omega^\bullet(L_e))
\end{align*}
turning the first page of the spectral sequence into
\[\begin{tikzcd}
    \bigoplus_{v\in V} \RR \arrow{d}  & \bigoplus_{v\in V}\Def_V(e) \arrow{d}&\cdots\\
    \bigoplus_{e\in V}\RR \arrow{d}& \bigoplus_{e\in V}\Def_V \arrow{d}&\cdots\\
    0 & 0 
\end{tikzcd}
\]
The spectral sequence for $H^1(\Omega^\bullet(L))$ converges at the second page for this covering, so
\[H^1(\Omega^\bullet(L_{V}))= H^0(V, \Def_V))\oplus H^1(V,\RR)=H^0(V, \Def_V)\oplus H^0(V, \Def_{\mathcal L}).\]

In general, understanding the moduli space of Lagrangian submanifolds isotopic to $L_V$ modulo Hamiltonian isotopy is a difficult question. In the setting of Lagrangian torus fibrations, there is a smaller class of isotopies that we can hope to understand. We say that a Lagrangian isotopy $i_t: L_V\to X_A$ is a fiberwise isotopy if $\syza(i_t(q))$ is constant for all $q\in L_V$. 

\begin{conjecture}
    Let $L_V$ be a homologically minimal Lagrangian lift of a tropical curve $V$. Then the subspace of $H^1(L_V, \RR)$ arising from the flux classes of fiberwise Lagrangian isotopies is identified with $H^0(V,\Def_{\mathcal L})$. 
    Additionally, 
    \[\{\text{Fiberwise isotopies}\}/\{\text{Fiberwise Hamiltonian isotopies}\}\simeq H^1(V, \Aff^*_{V})_0,\]
    where $\Aff^*_V(U)$ is the sheaf of invertible locally integral affine functions from $U$ to $\RR$, and $H^1(V, \Aff^*_{V})_0$ is the connected component of the group which contains the identity.
\end{conjecture}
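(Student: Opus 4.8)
The plan is to compute the group of fiberwise Lagrangian isotopies modulo fiberwise Hamiltonian isotopies by the same \v{C}ech--Mayer--Vietoris argument used above to compute $H^1(\Omega^\bullet_{admis}(L_V))$, using the cover of $L_V$ by the pieces $L_{\str(v)}$, and then to match the resulting $E_1$-page with the \v{C}ech complex of the sheaf $\Aff^*_V$. First I would fix definitions: a \emph{fiberwise} Lagrangian isotopy is a path $i_t$ of admissible Lagrangian embeddings with $i_0=i_{L_V}$ and $\syza\circ i_t$ independent of $t$; its generating vector field $X_t$ along $L_t=i_t(L_V)$ is then tangent to the fibres of $\syza$, and since the vertical distribution is $\omega$-Lagrangian the one-form $\iota_{X_t}\omega|_{L_t}$ is closed, with class the flux. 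A \emph{fiberwise Hamiltonian} isotopy is one generated by $H_t=\syza^{*}g_t$ for functions $g_t\colon Q\to\RR$; since $\iota_{X_{\syza^{*}g}}\omega=d(\syza^{*}g)$ such isotopies have vanishing flux, and the first thing to prove is the converse --- a zero-flux fiberwise isotopy is, up to reparametrisation, fiberwise Hamiltonian --- by the standard Weinstein-neighbourhood argument, with the extra point that the correcting Hamiltonian may be taken to be a base function (carried out inside a vertical Weinstein neighbourhood of $L_t$). The homological minimality hypothesis enters here: it forces $H^1(L_V,\RR)$ to split into the ``base'' classes dual to $H_1(V)$ and the ``fibre'' classes dual to the conormal tori, which is exactly the splitting that makes the flux of a fiberwise isotopy computable stratum by stratum.

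Next I would carry out the local computations over the pieces of the cover. Over a bounded edge the admissible fiberwise isotopies of $L_{\underline e}$ modulo fiberwise Hamiltonian ones should be identified with the $1$-jets of integral affine functions transverse to $\underline e$, i.e.\ with $\Aff^*_V(\underline e)$ modulo locally constant functions: concretely these are the ``shear'' isotopies $p\mapsto p+t\,(a\,q)$ of the periodized conormal bundle, and along a \emph{semi-infinite} edge admissibility (boundedness of the argument, \cref{def:admissiblitycondition}) forces the shear slope to vanish, cutting the naive $H^1(L_{\underline e})\cong\RR^{n-1}$ down to the line-bundle directions. Over a vertex star one must show that the only admissible fiberwise deformations of $L_{\str(v)}$ modulo fiberwise Hamiltonian ones are those induced from integral affine functions on $\str(v)$, so that this local group is $\Aff^*_V(\str(v))$ modulo constants; equivalently, the Lagrangian pair of pants (times a torus) has no ``hidden'' fiberwise deformation beyond the ones already visible on its three legs, and the compatibility of the three leg-slopes is precisely the balancing/cocycle condition. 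I expect this rigidity statement for the Lagrangian pair of pants to be the main obstacle; I would attack it using the explicit symmetric model of $L_{\pants}$ together with the base-case computation inside \cref{lem:tropcurvetopology} (that $H^1(L_{\pants})\to H^1(S^1_{e_i})$ surjects and $H^1(L_{\pants})\to H^1(S^1_{e_1}\cup S^1_{e_2})$ injects), which reduces control of the fiberwise deformations of $L_{\str(v)}$ to a computation on the legs.

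Finally I would assemble. For a tropical curve the cover $\{L_{\str(v)}\}$ has no triple overlaps --- distinct stars meet pairwise in edges --- so the associated Mayer--Vietoris spectral sequence degenerates at $E_2$; by the local identifications above its relevant $E_1$-term is the \v{C}ech complex $\bigoplus_v \Aff^*_V(\str(v))/\RR \to \bigoplus_e \Aff^*_V(\underline e)/\RR$ of the sheaf $\Aff^*_V$ with constants removed, whose cohomology is $H^1(V,\Aff^*_V)$; tracking those fiberwise isotopies that wind a conormal torus integrally around a cycle of $V$ shows that the isotopies which become trivial are exactly the lattice cutting $H^1(V,\Aff^*_V)$ down to its identity component, which gives the second displayed isomorphism of the conjecture. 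Running the same computation with the subsheaf $\underline\RR\subset\Aff^*_V$ of locally constant functions gives $H^1(V,\RR)=H^0(V,\Def_{\mathcal L})$, and comparing with the already-established decomposition $H^1(\Omega^\bullet_{admis}(L_V))=H^0(V,\Def_V)\oplus H^0(V,\Def_{\mathcal L})$ yields the first assertion: the flux of a fiberwise isotopy lies in the $H^0(V,\Def_{\mathcal L})$ summand, since the composite of the flux with the projection to $H^0(V,\Def_V)$ records the induced motion of $\syza(L_V)$ and is therefore zero; and it exhausts that summand because every \v{C}ech $1$-cocycle for $\underline\RR$ on $V$ is realised by patching the shear isotopies of the previous step. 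Besides the pair-of-pants rigidity, the other delicate point is the integer bookkeeping distinguishing the vector space of flux classes (Part~1) from the real torus $H^1(V,\Aff^*_V)_0$ of isotopies modulo Hamiltonian isotopies (Part~2), i.e.\ identifying exactly which fiberwise isotopies with integral flux are fiberwise Hamiltonian.
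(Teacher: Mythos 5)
The statement you are proving is stated in the paper as a \emph{conjecture}; the paper offers no proof of it, so there is nothing to compare your argument against. Judged on its own terms, your proposal is a reasonable strategy and is consistent with the computation the paper does carry out in this section (the Mayer--Vietoris computation of $H^1(\Omega^\bullet_{admis}(L_V))$ over the cover $\{L_{\str(v)}\}$ and the identification of its $E_1$-page with tropical deformation data). But it is a plan, not a proof: the two points you yourself flag as ``delicate'' are precisely where the mathematical content of the conjecture lives, and neither is resolved. First, the local rigidity statement --- that every admissible fiberwise deformation of $L_{\str(v)}$ is, modulo fiberwise Hamiltonian isotopy, induced by an integral affine function on $\str(v)$ --- is asserted as the main obstacle and then ``attacked'' only by appeal to the cohomological surjectivity/injectivity in \cref{lem:tropcurvetopology}. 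That lemma controls where a flux class can live once you know it is determined by its restriction to the legs; it does not show that a fiberwise isotopy of the interpolating region near the vertex (where $L_V$ is not a conormal) can be normalized to a standard shear, nor that a zero-flux fiberwise isotopy is fiberwise Hamiltonian (the Weinstein-neighbourhood argument produces a Hamiltonian, but you must still show it can be taken to be the pullback of a base function without leaving the admissible class). Everything downstream --- in particular your claim that the flux of a fiberwise isotopy restricts to zero on each $L_{\str(v)}$ and hence lands in the $H^1(V,\RR)$ sub of the spectral-sequence filtration --- depends on this unproven local statement.

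Second, the passage from the vector space of flux classes to the identity component $H^1(V,\Aff^*_V)_0$ requires identifying the period lattice: which fiberwise isotopies with integral flux are genuinely fiberwise Hamiltonian, and why the resulting quotient is exactly the lattice of integral Čech cocycles rather than some finer or coarser subgroup. You name this as ``integer bookkeeping'' but give no mechanism for computing it. A smaller issue: the splitting $H^1(L_V)=H^0(V,\Def_V)\oplus H^0(V,\Def_{\mathcal L})$ comes from the associated graded of a spectral-sequence filtration, so ``projection to the $H^0(V,\Def_V)$ summand'' is only canonical as the restriction map to the pieces of the cover; your argument should be phrased through that restriction map rather than through a chosen splitting. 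In short, the outline is credible and I see no step that is wrong in principle, but the proposal defers exactly the parts that make this a conjecture rather than a proposition.
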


Previous work of Mikhalkin \cite{mikhalkin2009tropical} identifies the tropical Picard group of $V$ with $H^1(V, \Aff^*_V)$. 
Mirror symmetry, therefore, identifies fiberwise isotopies of a tropical Lagrangian $L_V$ with modifications of the line bundle on the mirror curve $Y_V$.
\subsubsection{Not-Wide and Superabundance}
A tropical curve is called \emph{superabundant} if the space of deformations $\Def_V$ has a higher dimension than the expected dimension of deformations of the $B$-realization. Superabundance is a computable criterion that indicates that a curve may not be realizable. For example, the tropical curve examined in \cref{sec:speyer} is a superabundant curve. It is known in certain cases \cite{cheung2016faithful} that non-superabundant implies realizable. 

In symplectic geometry, there are two ways to make sense of deformations of Lagrangian submanifolds. 
The first kind of deformation is the deformation of geometric Lagrangian submanifold up to Hamiltonian isotopy. The infinitesimal deformations of Lagrangian submanifolds modulo Hamiltonian isotopy are given by $H^1(L, \RR)$.  
The second deformation space which we can consider is the component of the moduli space of objects at $L$ \cite{toen2007moduli}. The tangent space to this moduli space is $HF^1(L)$. We note that as $\CF(L)$ is a deformation of $C^\bullet(L)$, we have that $\dim HF^1(L)\leq HF^1(L)$. If $\dim HF(L)=\dim H(L)$, then the Lagrangian $L$ is called \emph{wide}.

As the previous section identifies infinitesimal deformations of the pair $(V, \mathcal L)$ with $H^1(L_V)$, we are led to conjecture:
\begin{conjecture}
    Let $V$ be a smooth tropical curve, and let $L_V$ be its Lagrangian lift. Then $V$ is superabundant if and only if $L_V$ is not wide.
\end{conjecture}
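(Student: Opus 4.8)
The plan is to route both sides through homological mirror symmetry and the deformation theory of the mirror curve, so that "wide'' and "non-superabundant'' become two incarnations of the same cohomology comparison. Granting \cref{ass:hmscn}, \cref{cor:realizability} produces a $B$-realization $\YB_V=\YB(L_V,b)\subset X_B$ together with a sheaf $\mathcal F(L_V,b)$, and the family Floer functor gives $\HF((L_V,b),(L_V,b))\cong \Ext^\bullet_{X_B}(\mathcal F(L_V,b),\mathcal F(L_V,b))$. The first step is to show that for a \emph{smooth} tropical curve $V$ the analytic set $\YB_V$ is a smooth $1$-dimensional analytic subspace and $\mathcal F(L_V,b)\simeq i_*\mathcal L_V$ is the pushforward of a line bundle along the regular embedding $i\colon \YB_V\hookrightarrow X_B$ — smoothness should follow from the pair-of-pants local model together with the local structure of the support established via \cref{cor:atropicalization}. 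Then $\mathcal Ext^q_{X_B}(i_*\mathcal L_V,i_*\mathcal L_V)\cong i_*\wedge^q N_{\YB_V/X_B}$, and since $\YB_V$ is a curve the local-to-global spectral sequence degenerates, so that
\[
    \HF((L_V,b),(L_V,b))^k\;\cong\;\bigoplus_{p+q=k} H^p\!\left(\YB_V,\ \wedge^q N_{\YB_V/X_B}\right).
\]
Here the $q=0$ rows record the genus and the Jacobian directions, the $(p,q)=(0,1)$ piece $H^0(\YB_V,N)$ is the actual deformation space of the curve, and the \emph{superabundance} of $V$ is by definition the excess $h^1(\YB_V,N_{\YB_V/X_B})=h^0(N)-\chi(N)$.

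The second step is to compute $H^\bullet(L_V)$ in exactly the same shape. Using the cover $\{L_{\str(v)}\}$ of \cref{app:jacobian} and its Mayer--Vietoris spectral sequence — the same device used there to compute $H^1(\Omega^\bullet_{admis}(L_V))$ — one should identify $H^\bullet(L_V)$ with $\bigoplus_{p+q}H^p(V,\wedge^q\mathcal N_V)$, where $\mathcal N_V$ is the tropical normal sheaf of $V$; the pair-of-pants base case of \cref{lem:tropcurvetopology} is precisely what pins down the local contributions, and in low degrees this recovers $H^0(V,\mathcal N_V)=\Def_V$ and $H^1(V,\mathbb R)=\Def_{\mathcal L}$, matching the identification $H^1(L_V,\mathbb R)=H^0(V,\Def_V)\oplus H^0(V,\Def_{\mathcal L})$ from \cref{app:jacobian}.

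The third step is the comparison. One places $\YB_V\subset X_B$ in a one-parameter degeneration whose central fibre reproduces the tropical picture of $V$, so that upper semicontinuity gives $h^p(\YB_V,\wedge^q N)\le h^p(V,\wedge^q\mathcal N_V)$ term by term, while constancy of Euler characteristics forces the jumps of $h^0$ and $h^1$ to be equal for each sheaf in the family. Consequently $\dim H^\bullet(L_V)-\dim \HF(L_V,b)=2\sum_q\big(h^1(V,\wedge^q\mathcal N_V)-h^1(\YB_V,\wedge^q N)\big)$, which vanishes (i.e.\ $L_V$ is wide) exactly when every one of these $h^1$-terms already vanishes for the algebraic curve. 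The $q=1$ contribution is precisely the superabundance, so one direction is immediate: superabundant $\Rightarrow$ not wide. For the converse one must rule out a drop coming from $q\ge 2$, i.e.\ show that $H^1(\YB_V,\wedge^qN)=0$ for a $B$-realizable smooth tropical curve whenever $q\ge 2$ (or reinterpret superabundance to include these); this is the natural place where the statement might have to be restricted to $B$-realizable $V$.

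The main obstacle is genuinely controlling $\HF(L_V,b)$: without the full non-compact, unobstructed family Floer package of \cref{ass:hmscn} one has no closed-form handle on its rank, and even with it one needs the regularity/smoothness of $\YB_V$ and the identification of $\mathcal F(L_V,b)$ with a line bundle on it. The more hands-on alternative — directly analysing which pseudoholomorphic disks with boundary on $L_V$ contribute to $m^0$ and the induced differential, in the spirit of \cref{example:obstructedline} and \cref{sec:speyer} — trades this for the problem of pinning down disk counts, where the "unique minimal distance'' phenomenon is exactly what should force a non-removable obstruction (hence a rank drop). Either way, the crux is showing that the \emph{first} Floer cohomology $\HF^1(L_V,b)$ sees only the expected-dimensional deformation theory of the mirror curve, so that every surplus tropical deformation is detected by a cancellation in $\HF^\bullet(L_V,b)$; secondary difficulties are the non-compactness of both $L_V$ and $\YB_V$ in the cohomology comparisons and the independence of the conclusion from the choices of bounding cochain and geometric lift.
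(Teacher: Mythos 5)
The first thing to say is that this statement is a \emph{conjecture} in the paper: the paper offers no proof of it, and the only supporting material is the degree-one computation in \cref{app:jacobian} identifying $H^1(\Omega^\bullet_{admis}(L_V))$ with $H^0(V,\Def_V)\oplus H^0(V,\Def_{\mathcal L})$, together with the heuristic that $H^1(L_V)$ and $HF^1(L_V,b)$ are the two relevant deformation spaces. So there is no proof to compare yours against; what you have written is a research programme, and it should be judged as such.

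As a programme it has one structural problem and several acknowledged but serious gaps. The structural problem is that your entire first step presupposes that $L_V$ is unobstructed and $V$ is $B$-realizable, so that $\YB_V$ exists, is smooth, and carries $\mathcal F(L_V,b)\simeq i_*\mathcal L_V$. But the forward direction of the conjecture is most interesting precisely for superabundant curves, and the paper's own evidence (\cref{sec:speyer}, \cref{example:obstructedline}) is that for such $V$ the lift $L_V$ can be \emph{obstructed}, in which case there is no $\YB_V$ and no $\Ext$ computation to run; the rank drop has to be detected directly from disk counts on $L_V$, which is the hard analytic content you defer to the "hands-on alternative.'' Relatedly, you are quietly redefining superabundance as $h^1(\YB_V,N_{\YB_V/X_B})\neq 0$ for an actual analytic realization, whereas the paper (and the literature) defines it tropically, as $\dim\Def_V$ exceeding the \emph{expected} dimension; these agree only after you have already proved a faithful tropical-to-analytic comparison of normal sheaf cohomology, which is essentially the content of the conjecture. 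Beyond this: the identification $\HF((L_V,b),(L_V,b))\cong\Ext^\bullet(\mathcal F(L_V,b),\mathcal F(L_V,b))$ requires full faithfulness of a non-compact, unobstructed family Floer functor, which is strictly stronger than \cref{ass:hmscn} (that assumption only controls morphisms to skyscrapers $\mathcal O_z$); the Mayer--Vietoris identification of the full $H^\bullet(L_V)$ with $\bigoplus_{p,q}H^p(V,\wedge^q\mathcal N_V)$ is only established in the paper in degree one; and the semicontinuity-plus-constant-Euler-characteristic step in your third paragraph does not by itself force the $h^1$ jumps to match term by term across $q$. Your closing paragraph correctly locates the crux — controlling $\HF^1(L_V,b)$ via actual holomorphic disks — but none of the proposed machinery closes that gap.
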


 \appendix
\section{Pearly model in symplectic fibrations}
\label{app:pearlymodel}
Given a compact, spin, and graded Lagrangian $L$ inside of a rational compact symplectic manifold $X$, \cite{charest2019floer} constructs a filtered $A_\infty$ algebra $\CF(L, h, \mathcal P, D)$.
In \cite{charest2019floer} it is assumed that the space $X$ is compact. In this appendix, we outline how to extend \cite{charest2019floer} to the setting where $X$ is non-compact and is equipped with a potential function $W: X\to \CC$; and $L$ is a Lagrangian submanifold which is admissible with respect to $W$.
\begin{definition}
    Let $X$ be a symplectic manifold, and let $W: X\to \CC$ be a function. We say that $W$ is a potential if there exists a compact subset $U\subset \CC$ so that 
    \begin{itemize}
        \item $W^{-1}(U)$ is compact, and
        \item the restriction $W: X\setminus W^{-1}(U)\to \CC\setminus U$ is a symplectic fibration with compact fibers.
    \end{itemize}
    We say that a Lagrangian $L$ is \emph{$W$-admissible} if there exists $R\in \RR$ so that 
    $W(L)\cap \{z\st |z|>R\} \subset \RR_{>R}$. 

    Given a $W$-admissible $L$, we say that a Morse function $h:L\to \RR$ is admissible if there exists $R'>R$ so that 
    \[W(\Crit(h))\cap \{z\st |z|>R\}\subset \{R'\}.\]
    and $\grad h$ points outwards from $R'$ under the projection $W$.
    \label{def:bottleneck}
\end{definition}
Let $Y=W^{-1}(R')$. Given a $W$-admissible Lagrangian submanifold $L$, the restriction to the fiber $M:=L\cap Y$ is a Lagrangian submanifold of $Y$.
Because $h$ points outwards along the collar $M\times \RR_{>R'}\subset L$, the Morse complex $\CM(L, h)$ is well defined. 
The compatibility of the Morse function with the potential function means that $h^+:= h|_M$ is a Morse function for $M$ and that we have a map of $A_\infty$ algebras
\[
    \begin{tikzcd}
    \underline \pi:\CM(L, h)\arrow{r} &\CM(M, h^+ ).
    \end{tikzcd}
\]
This should be interpreted as the pullback map of the inclusion of the boundary.

We show that \cite{charest2019floer} extends to the setting of $W$-admissible Lagrangian submanifolds.
\begin{theorem}
    Let $W: X\to \CC$ be a potential function. Let $L$ be a $W$-admissible Lagrangian submanifold, whose restriction to a large fiber is $ M \subset Y= W^{-1}(t)$. Let $h: L\to \RR$ and $h^+:=h|_M: M\to \RR$ be admissible Morse functions.
    There exist:
    \begin{itemize}
        \item  stabilizing symplectic divisors $D_X\subset X, D_Y\subset Y$; and
        \item regular choices for perturbation systems $\mathcal P_L, \mathcal P_M$ for $L$ and $M$;
    \end{itemize}
     so that the construction of \cite{charest2019floer} can be applied to give a well defined $A_\infty$ algebra $\CF(L,h, \mathcal P_L, D_X)$. Furthermore, the choices of perturbations and divisors can be taken so that the projection on chains    
    \[\pi:\CF(L, h, \mathcal P_L, D_X)\to \CF(M, h^+, \mathcal P_M, D_Y)\]
    is a $\Lambda$-filtered $A_\infty$ algebra homomorphism.
    \label{thm:sympfibrationexactsequence}
\end{theorem}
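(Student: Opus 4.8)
The plan is to run the Charest--Woodward construction \emph{fiberwise near infinity} and to show that the only new phenomenon introduced by non-compactness — escape of treed pseudoholomorphic disks into the region where $W$ is a fibration — can be excluded by a maximum principle for $W$, after which all the compactness, transversality, and gluing arguments of \cite{charest2019floer} apply verbatim. First I would set up the geometry: fix $R' > R$ as in \cref{def:bottleneck}, let $Y = W^{-1}(R')$, and choose the almost complex structure $J$ on $X$ so that outside $W^{-1}(U)$ it is compatible with the symplectic fibration $W$ (i.e.\ $W$ is $(J, j_{\CC})$-holomorphic there, with $j_\CC$ the standard structure on $\CC$). With such a $J$, any $J$-holomorphic treed disk $u$ with boundary on the $W$-admissible Lagrangian $L$ has $W \circ u$ holomorphic on the disk components and harmonic along the treed edges; since $W(L) \cap \{|z| > R\} \subset \RR_{>R}$ and the boundary marked points (inputs/outputs) are critical points of $h$, which lie over $\{W = R'\} \cup W^{-1}(U)$, the open mapping theorem / maximum principle forces $W \circ u$ to take values in a compact subset of $\CC$. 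Hence the image of every such $u$ lies in a fixed compact subset $K \subset X$ depending only on the areas involved, and Gromov compactness is recovered exactly as in the compact case. This is the one genuinely new estimate; everything downstream is bookkeeping.

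Next I would construct the stabilizing divisors compatibly. Apply the Cieliebak--Mohnke / Charest--Woodward existence result for stabilizing divisors first to the compact symplectic fiber $(Y,\omega|_Y)$ to get $D_Y \subset Y \setminus M$, then extend $D_Y$ to a stabilizing divisor $D_X \subset X \setminus L$ on a neighborhood of $W^{-1}([R', \infty)) \cap (\text{compact part})$ — more precisely, take $D_X$ to agree with $\bigcup_{s \ge R'} (\text{parallel transport of } D_Y)$ in the fibration region and to be any stabilizing divisor over the compact core $W^{-1}(U)$, chosen so the two pieces glue (this uses that the space of stabilizing divisors is connected and that the fibration structure identifies nearby fibers). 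The point of the compatibility is that restriction of a $\mathcal P_L$-perturbed treed disk in $X$ to the fiber $Y$ (when the disk happens to lie in a fiber) meets $D_X$ in exactly the points where it meets $D_Y$, so the stabilization data and the counting weights $\sigma(u)!$ match up under the projection map.

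Then I would choose the perturbation data $\mathcal P_M$ on $M \subset Y$ first, making the moduli spaces $\mathcal M_{\mathcal P_M}(Y, M, D_Y, \underline x, \beta)$ regular, and build $\mathcal P_L$ on $L$ by an inductive (energy- and tree-complexity-ordered) extension that \emph{restricts to $\mathcal P_M$ along the collar} $M \times \RR_{>R'} \subset L$ and over the fiber $Y$. Since the relevant moduli spaces for $L$ only involve curves in the compact set $K$ (by the maximum-principle step), the standard Charest--Woodward inductive transversality scheme goes through, and one can arrange $\mathcal P_L$ regular while keeping the collar/fiber constraint. The map $\pi : \CF(L, h, \mathcal P_L, D_X) \to \CF(M, h^+, \mathcal P_M, D_Y)$ is then defined on generators by sending a critical point of $h$ lying in (the collar over) $M$ to the corresponding critical point of $h^+$ and sending all other critical points to $0$; it is $\Lambda$-filtered because $\omega$-areas are preserved under $W$-fiberwise restriction and the generators over $M$ are those of minimal "distance to infinity". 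That $\pi$ is an $A_\infty$ homomorphism — i.e.\ $\pi \circ m^k_L = m^k_M \circ \pi^{\otimes k}$ plus the usual homotopy terms, or more precisely that $\pi$ is a strict $A_\infty$ map — follows because a rigid treed disk contributing to $m^k_L$ with all inputs and output over $M$ must, by the maximum principle applied with boundary entirely over $\{W = R'\}$, lie entirely in the fiber $Y$, hence is counted by $m^k_M$ with the matching sign and weight; disks with at least one input over $M$ but output not over $M$ contribute $0$ on the target side and disks with some input not over $M$ are killed by $\pi^{\otimes k}$.

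The main obstacle I expect is \textbf{the compatibility of perturbation data along the collar together with regularity} — one needs $\mathcal P_L$ to simultaneously (i) restrict to $\mathcal P_M$ on the fiber so that $\pi$ is strictly multiplicative, (ii) be domain-dependent and coherent in the Charest--Woodward sense across all tree types, and (iii) be regular — and reconciling the product/collar constraint with genericity requires checking that the constrained perturbations are still "large enough" to achieve transversality, i.e.\ that no treed-disk configuration is forced to be non-regular by lying in the fiber. This is handled by the observation that a treed disk constrained to the fiber is regular for $\mathcal P_M$ by construction, while a treed disk not contained in the fiber is cut out by an equation transverse to the collar-compatibility locus, so generic extension works; making this precise is essentially the content of \cite{charest2019floer}'s relative/functorial setup and is where the bulk of the technical writing would go. A secondary, milder issue is verifying that the $W$-maximum principle survives the perturbation: one must use perturbation data supported away from the fibration region, or at least compatible with $W$ there, so that $W \circ u$ remains (sub)harmonic for $\mathcal P_L$-perturbed curves — this is arranged by declaring $\mathcal P_L$ to equal the unperturbed (fibered) data outside a compact neighborhood of $W^{-1}(U) \cup Y$.
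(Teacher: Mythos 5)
Your proposal follows essentially the same route as the paper's proof: confinement of treed disks via the open mapping principle for a $W$-fibered almost complex structure, a stabilizing divisor built from an asymptotically holomorphic section on the fiber extended by parallel transport over the fibration region and glued to a divisor over the compact core, and perturbation data chosen fiberwise first and then extended regularly (with the outward-pointing Morse data handling the tree components). The only place you are lighter than the paper is the monodromy of parallel transport around the circle $|z|=R$, which the paper resolves by interpolating a loop of asymptotically holomorphic sections; your appeal to connectedness of the space of stabilizing divisors is the same mechanism.
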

    The theorem consists of two statements: the construction of a pearly model of stabilized treed disks in the setting of Lagrangians which are admissible for a potential function, and the compatibility between the pearly model of total space of the fibration and the pearly model of the fiber. These are analogous to     \cite[Corollary C.4.2 and Theorem C.5.1]{hicks2019wallcrossing} which handle the setting where $X=Y\times \CC$ and $W: X\to \CC$ is projection to the second factor.
    In this appendix, we prove that $\CF(L,h, \mathcal P_L, D_X)$ is well defined; the existence of the projection $\pi:\CF(L, h, \mathcal P_L, D_X)\to \CF(M, h^+, \mathcal P_M, D_Y)$ is the same as the proof of \cite[Theorem C.5.1]{hicks2019wallcrossing}.

    To construct $\CF(L,h, \mathcal P_L, D_X)$ one needs to 
    \begin{enumerate}
        \item Construct a stabilizing divisor for $X$ which is suitably compatible with the potential $W: X\to \CC$ \label{item:constructstabilizing};
        \item Show that we can pick perturbations for the almost complex structure so that the map $W: X\to \CC$ is holomorphic outside of a compact set; and \label{item:pickperturbations}
                \item Prove that for such choices of perturbations the moduli spaces have appropriate Gromov compactifications.  \label{item:compactifications}
    \end{enumerate}

    \subsection*{\Cref{item:constructstabilizing}: Constructing a stabilizing divisor} Pick $R$ sufficiently large so that outside of $U=B_{R}(0)\subset \CC$ the Lagrangian submanifold $L$ fibers over the positive real ray, and the map $X\setminus W^{-1}(U)\to \CC\setminus U$ is a symplectic fibration. 
        For $\theta\in [0, 2\pi],r\geq R$ we take a path 
    \[
        \gamma_{\theta, r}(t)=\left\{\begin{array}{cc} Re^{i\theta(2t)} & t\in [0, 1/2)\\
        (R+(2t-1)(r-R))e^{i\theta} & t\in [1/2, 1)
        \end{array}\right.
    \] which travels first in the angular, then radial direction from $R$ to $re^{i\theta}$.
     To every path $\gamma(t):I\to \CC_{|z|>R}$ we have a symplectic parallel transport map $P_{\gamma}: Y_{\gamma(0)}\to Y_{\gamma(1)}$. 
    Consider the monodromy $P_{\gamma_{2\pi,R}}: Y_{R}\to Y_{R}$ given by parallel transport around the loop $Re^{i\theta}$ in the positive direction. 
    Pick a path of $\omega_Y$-tamed almost complex structures $J_{Y_R,\theta}: [0, 2\pi]\to  J_\tau(Y_R, \omega_Y)$ such that $P_{\gamma_{2\pi,R}}^* J_{2\pi}=J_0$. This gives us endomorphism of the sub-bundle of the tangent spaces to the fibers
    \begin{align*}
        J_{re^{i\theta}}: TY_{re^{i\theta}}\to TY_{re^{i\theta}}\\
        J_{re^{i\theta}}=P_{\gamma_{\theta, r}}^*J_{Y_R,\theta}
    \end{align*}
    Since over every point with $|z|>R$ we have a splitting $T_{(y,z)}X=T_yY\oplus T_z\CC$, we can give $T(X\setminus W^{-1}(U))$ the tame almost complex structure locally defined by $J_{re^{i\theta}}\oplus \jmath_\CC$. 
    \begin{definition}
        We say that a $\omega$-tame almost complex structure on $X$ is $W$-admissible if, when restricted to $W^{-1}(U)$ it can be written as $J_{re^{i\theta}}\oplus \jmath_\CC$ for some path of almost complex structures  $J_{Y_R, \theta}\in \mathcal J_\tau(Y_R, \omega)$. We denote the space of such almost complex structures $\mathcal J_{\tau, W, R}(X, \omega_X)$. 
    \end{definition}
    The goal will to be to construct a stabilizing divisor $D_X\subset X$ in such a way that $D_X$ is transverse to all $Y_{re^{i\theta}}$ with $r\geq R$, and subsequently show that there exists an open dense set of almost complex structures belonging to $\mathcal J_{\tau, W, R}(X, \omega_X)$ which are $E$-stabilized by $D_X$  (\cite[Definition 4.24]{charest2019floer}). In the setting of Lagrangian cobordisms, the comparable statements are proven in \cite[Appendix C.3 and Lemma C.1.3]{hicks2019wallcrossing}.

    We first construct the divisor $D_X$. Take $E_X\to X$ a vector bundle whose first Chern class is $1/2\pi[\omega_X]$, so that the pullback $E_Y\to Y$ a vector bundle whose first Chern class is $1/2\pi[\omega_Y]$. Pick a family of Hermitian structures on $E_{Y,\theta}\to Y_{Re^{i\theta}}$ depending on $\theta$ so that the curvature is $-i\omega_Y$ and so that $P_{\gamma_{2\pi, R}}^* E_{Y, 2\pi}=E_{Y, 0}$ as Hermitian line bundles. Let $i_{\theta_0}:Y_{Re^{i\theta_0}} \to X$ be the inclusion of the fiber over $Re^{i\theta_0}$. Take a Hermitian structure on $E_X\to X$ with curvature $-i\omega_X$ and the property that $i^*{\theta_0}P_{\gamma_{\theta_0, r}|_{[1/2,1]}}^*E_X= E_{Y, \theta_0}$ as Hermitian line bundles.

    We will construct the stabilizing divisor $D_X$ as the zero locus of an asymptotically holomorphic section $s_{k, X}: X\to E^k_X$. First, using \cite{auroux2001symplectic} we can pick asymptotically holomorphic sections $s_{k,Y}: Y\to E^k_Y$ with the property that $s_{k, Y}^{-1}(0)$ is disjoint from $M$. We obtain a second asymptotically holomorphic section by pullback $P_{\gamma_{2\pi, R}}^* s_{k, Y}$. By \cite{auroux1997asymptotically} we can find a family $s_{k, Y, \theta}$ of such sections so that $s_{k, Y , 0}=s_{k, Y}$ and $s_{k, Y, 2\pi}=P_{\gamma_{2\pi, R}}^* s_{k, Y}$.

    Using this family of sections, we create an asymptotically holomorphic section $s_{k, X, out}: X\to E_X^k$ which is given by 
    \[s_{k, X, out}:= \rho_{k, R+1}(|z|)\cdot P_{\gamma_{\theta, r}}^* s_{k, Y, \theta}\]
    where $\rho_{k, R+1}(|z|): \CC\to \RR$ is a function which is concentrated (in the sense of \cite[Definition 2]{auroux2001symplectic}) at the circle of radius $R+1$. The zero set $s_{k, X, out}^{-1}(0)$ enjoys the properties that 
    \begin{itemize}
        \item $s_{k, X, out}^{-1}(0)$ is disjoint from  $L$;
        \item for $k$ sufficiently large, $s_{k, X, out}^{-1}(0)$ is a symplectic divisor in $W^{-1}(\{z\st |z|>R\})$; and 
        \item $s_{k, X, out}^{-1}(0)$ intersects $Y_{re^{i\theta}}$ transversely for all $r>R$.
    \end{itemize}
    This constructs the sections taking the place of $s_{k, X\times \CC, out}$ in \cite[Appendix C.3.2]{hicks2019wallcrossing}. The remainder of the construction of $D_X$ involves subsequently perturbing this section over the region $W^{-1}(U)$ which exactly follows \cite[Appendix C.3.2]{hicks2019wallcrossing}.

\subsection*{\Cref{item:pickperturbations}: Finding perturbations}The construction of an open dense set of $E$-stabilized almost complex structures proceeds in the same fashion as \cite[Section C.3.3]{hicks2019wallcrossing} (itself based on the argument of \cite[Section 4.5]{charest2019floer}). The main tool needed for the argument to run is to show that the space of almost complex structures regularizing holomorphic disks of energy up to $E$ is dense in $\mathcal J_{\tau, W, R}(X, \omega_X)$. By application of the open mapping principle to $W$, every pseudoholomorphic disk in consideration must either:
    \begin{itemize}
         \item pass through $W^{-1}(U)$, where they can be made regular through  perturbations confined to the region $W^{-1}(U)$ by application of \cite[Lemma 5.6]{cieliebak2007symplectic}; or
         \item be confined to a fiber $W^{-1}(t)$ with $t\in U$, in which case they can be made regular through perturbations constrained in the fiberwise direction. Since the fiber is compact, the set of such perturbations is open and dense.
    \end{itemize}

\subsection*{\Cref{item:compactifications}: Compactness of moduli spaces}The proof that the moduli spaces of pseudoholomorphic treed disks considered are compact uses that we may apply open-mapping principle type arguments for perturbations chosen from $\mathcal J_{\tau, W, R}(X, \omega_X)$, and that the Morse flow line components of treed disks point outwards at the boundary. (\cite[Proposition C.4.1]{hicks2019wallcrossing}) 
\begin{remark}
    In the examples we consider (potentials coming from tropicalized superpotentials associated to a monomial admissibility data) the fibers of the potential will in general not be compact.
    However, the monomially admissible condition ensures that the restriction of monomially admissible $L\subset X$ to $M\subset Y$ will be compact. As a result, all pseudoholomorphic disks contributing to treed disks will have boundary contained within a compact subset of $X$; we conclude that the moduli space of treed disks has compactification given by broken treed disks. 
\end{remark}  \section{Auxiliary results for filtered \texorpdfstring{$A_\infty$}{A-infinity} algebras and modules}
\label{app:ainfinity}
In this section, we give some background for filtered $A_\infty$ algebras and bimodules, as well as provide some methods for constructing bounding cochains using the filtration on the $A_\infty$ algebra.
\subsection{A short review of bounding cochains}
\label{subsub:boundingcochains}
The \emph{Novikov ring} with $\CC$-coefficients is the ring of formal power series
\[\Lambda_{\geq 0}:=  \left\{\sum_{i=0}^\infty  a_i T^{\lambda_i}\st a_i\in \CC, \lambda \in \RR_{\geq 0}, \lim_{i\to\infty} \lambda_i = \infty\right\}.
\]
The field of fractions is the Novikov field $\Lambda$.
A \emph{filtered $A_\infty$} algebra $(A, m^k_A)$ is a free graded $\Lambda_{\geq 0}$-module $A^\bullet$ equipped with $\Lambda_{\geq 0}$-linear products 
\[
    m^k:(A^\bullet)^{\tensor k}\to (A^{\bullet+2-k})
\]
for each $k\geq 0$. These are required to satisfy the axioms of \cite[Definition 3.2.20]{fukaya2010lagrangian}. Among these axioms are: 
\begin{itemize}
    \item The quadratic filtered $A_\infty$ relationship
\[
    0=\sum_{k_1+k'+k_2=k} (-1)^{\clubsuit(\underline x, k_1)} (m^{k_1+1+k_2})\circ (\id^{\tensor k_1}\tensor m^{k'}\tensor id^{\tensor k_2}) (x_1, \cdots ,x_k)
\]
The sign is determined by $\clubsuit(\underline x,k_1):= k_1+\sum_{j=1}^{k_1} \deg(x_j)$.
    \item Each $A^i$ have a filtration $F^{\lambda}A^i$ respecting the filtration on $\Lambda_{\geq 0}$, and a basis belonging to $F^0(A^i)\setminus \bigcup_{\lambda >0} F^{\lambda}A^i$.
\end{itemize}
Given a filtered-$A_\infty$ algebra, we can also consider the $\Lambda$-linear products on $A\tensor_{\Lambda_{\geq 0}} \Lambda$. We call $A\tensor_{\Lambda_{\geq 0}} \Lambda$ a $\Lambda$-filtered $A_\infty$ algebra.

Let $(A, m_A^k)$ and $(B, m_B^k)$ be $A_\infty$ algebras. A \emph{filtered $A_\infty$ homomorphism} from $A$ to $B$ is a sequence of filtered graded maps
\[
    f^k:A^{\tensor k}\to B
\]
satisfying the \emph{quadratic $A_\infty$ homomorphism relations:}
\begin{align*}
    \sum_{k_1+k'+k_2=k}(-1)^{\clubsuit(\underline x, k_1)}&f^{k_1+1+k_2}\circ
    (\id^{\tensor k_1}\tensor m^{k'}_A \tensor \id^{k_2})\\
    =&\sum_{i_1+\cdots i_j=k}m^j_{B} \circ (f^{i_1}\tensor \cdots \tensor f^{i_j}).
\end{align*}
There similarly exists a notion of a homotopy between filtered $A_\infty$ homomorphisms.

The main difficulty with filtered $A_\infty$ algebras is that they do not have cohomology groups, as 
\[(m^1_A)^2= m^2_A(m^0_A\tensor \id)\pm m^2_A(\id\tensor m^0_A).\]
When $m^0_A=0$, the right-hand side of the relation is zero and we say that $A$ is a \emph{tautologically unobstructed}  $A_\infty$ algebra. 

It is desirable to work with tautologically unobstructed $A_\infty$ algebras as they can be studied with the standard tools employed for cochain complexes. 
Therefore, one might restrict one's study to tautologically unobstructed filtered $A_\infty$-algebras.
Problematically, tautologically unobstructed filtered $A_\infty$ algebras are not closed under the relation of filtered $A_\infty$ homotopy equivalence.
This can be remedied by considering filtered $A_\infty$ algebras equipped with bounding cochains.

Let $A$ be a filtered $A_\infty$ algebra. A deforming cochain is an element $d\in A^1$ with $\val(d)>0$. The $d$-deformation of $A$ is the filtered $A_\infty$ algebra $(A, d)$ whose 
    \begin{itemize}
        \item underlying chains groups agree with $A$ and,
        \item composition maps are given by the $d$-deformed $A_\infty$ products,
\begin{equation}
    m^{k}_{(A, d)} = \sum_{l=0}^\infty \sum_{j_0+\cdots + j_k=l} m^{k+l}_A(d^{\tensor j_0}\tensor\id\tensor d^{\tensor j_1}\tensor \cdots \tensor \id \tensor d^{\tensor j_k}).
    \label{eq:deformingcochain}
\end{equation}
    \end{itemize}
\begin{definition}
    When $m^0_{(A, b)}=0$, we say that $b$ is a \emph{bounding cochain}, and we say that the algebra $A$ is \emph{unobstructed}.\label{def:boundingcochain}
\end{definition}
Given $f: A\to B$ a filtered $A_\infty$ homomorphism and $b\in A$ a bounding cochain, there is a pushforward bounding cochain $f_*(b)\in B$ so that $(B, f_*(b))$ is unobstructed. 
When $f^{k}=0$ for $k\neq 1$ then $f_*(b)=f(b)$. 
The existence of a pushforward bounding cochain shows that unobstructedness is a property of filtered $A_\infty$ algebras which is preserved under the equivalence relation of filtered $A_\infty$ homotopy equivalence. 

In applications, we use $\Lambda$-filtered $A_\infty$ algebras as opposed to filtered $A_\infty$ algebras\footnote{This is because the continuation maps in Lagrangian intersection Floer cohomology are usually only weakly-filtered.}. However, the homological algebra of filtered $A_\infty$ algebras is notationally easier to describe (as there exist elements living in a minimal filtration level).
A computation allows us to understand deformations and bounding cochains for the former (defined using the \cref{eq:deformingcochain}) in terms of the latter.
\begin{claim}
    Suppose that $A$ is a filtered $A_\infty$ algebra and $b$ a bounding cochain for $A$. Then $b\tensor 1\in A\tensor_{\Lambda_{\geq 0}}\Lambda$ is a bounding cochain for the $\Lambda$-filtered $A_\infty$ algebra $A\tensor_{\Lambda_{\geq 0}}\Lambda$.
\end{claim}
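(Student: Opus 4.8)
The plan is to unwind the definitions and observe that the only thing to check is that the defining equation $m^0_{(A,b)} = 0$ is insensitive to base change from $\Lambda_{\geq 0}$ to $\Lambda$. First I would recall that by \cref{eq:deformingcochain} the curvature term of the $b$-deformed algebra is
\[
m^0_{(A,b)} = \sum_{l=0}^\infty m^l_A(b^{\tensor l}),
\]
and that, since $\val(b) > 0$ and $\lim_{i\to\infty}\lambda_i = \infty$ for any element of $\Lambda_{\geq 0}$, this sum converges in the filtration topology; this is exactly what makes $b$ a legitimate deforming cochain in the sense of \cref{def:boundingcochain}. The hypothesis that $b$ is a bounding cochain for $A$ says precisely that this sum vanishes in $A^2$.

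Next I would note that the products $m^l_{A\tensor_{\Lambda_{\geq 0}}\Lambda}$ on the base-changed algebra are by construction the $\Lambda$-linear extensions of the $m^l_A$, i.e. $m^l_{A\tensor\Lambda} = m^l_A \tensor_{\Lambda_{\geq 0}}\id_\Lambda$. Consequently the curvature of $(A\tensor_{\Lambda_{\geq 0}}\Lambda,\, b\tensor 1)$ is
\[
m^0_{(A\tensor\Lambda,\, b\tensor 1)} = \sum_{l=0}^\infty m^l_{A\tensor\Lambda}\big((b\tensor 1)^{\tensor l}\big) = \left(\sum_{l=0}^\infty m^l_A(b^{\tensor l})\right)\tensor 1 = m^0_{(A,b)}\tensor 1 = 0.
\]
The one subtlety worth spelling out is convergence after base change: a priori an infinite sum in $A\tensor_{\Lambda_{\geq 0}}\Lambda$ need not converge, but here the partial sums $\sum_{l\le N} m^l_A(b^{\tensor l})$ all lie in the image of $A$ and form a Cauchy sequence there (since $\val(b)>0$), so their common limit is the image of $\sum_l m^l_A(b^{\tensor l})$ under the base-change map, which commutes with the (complete) filtration. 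Since $A\to A\tensor_{\Lambda_{\geq 0}}\Lambda$ is $\Lambda_{\geq 0}$-linear and filtration-preserving, the element $0\in A^2$ maps to $0$, giving the claim.

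There is no real obstacle here: the statement is essentially a compatibility check, and the only place one must be slightly careful is in justifying that the infinite sum defining the curvature is respected by the base-change functor, which follows from completeness of the filtrations on both $A$ and $A\tensor_{\Lambda_{\geq 0}}\Lambda$ together with $\val(b)>0$. I would also remark, as a sanity check of the surrounding narrative, that this is exactly what allows one to transport the $\Lambda_{\geq 0}$-level constructions of bounding cochains (for instance the one produced in \cref{thm:unobstructed}) to honest cohomology computations over $\Lambda$, as is used implicitly throughout \cref{sec:faithfulness}.
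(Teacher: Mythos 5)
Your proposal is correct and is essentially the argument the paper intends: the paper states this claim without proof, remarking only that ``a computation'' relates deformations before and after base change, and your direct unwinding of \cref{eq:deformingcochain} for $k=0$, together with the observation that the $\Lambda$-linear extension of the products commutes with the (convergent, since $\val(b)>0$) infinite sum defining the curvature, is exactly that computation.
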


\subsection{Extending an unobstructed ideal}
Following ideas from \cite{fukaya2010lagrangian}, we will provide a method for constructing bounding cochains by inducting on the valuation. In order to do this, we need a slight refinement of a filtered $A_\infty$ algebra which states that the valuation of the structure coefficients is ordered by a monoid.
A gapped $A_\infty$ algebra is a filtered $A_\infty$ algebra for which there exists a finitely generated monoid $G$ and a monoid homomorphism $\omega: G\to \RR_{\geq 0}$ so that $\omega(\beta)=0$ implies that $\beta=0$, and so that we have the decomposition 
    \[m^k=\sum_{\beta\in G}T^{\omega(\beta)} m^{k, \beta}\]
where $m^{k, \beta}$ are graded with respect to the filtration. We say that it satisfies the gapped $A_\infty$ relations if for all $\beta\in G$
\[\sum_{\beta_1+\beta_2=\beta} \sum_{j_1+j+j_2=k} (-1)^\clubsuit m^{j_1+1+j_2, \beta_1}(\id^{\tensor j_1} \tensor m^{j, \beta_2} \tensor \id^{\tensor j_2})=0\]
Given $b=\sum_{\beta\in G\setminus \{0\}} b_\beta$, we can deform the product structure by 
    \[m^{k, \beta}_{(B, b)}=\sum_{\substack{ \beta_0+\cdots + \beta_k=\beta\\ \beta_i=\sum_{j=1}^{l_i} \beta_{i,j}}} m^{k+l}_B (\beta_{0,0}\tensor\cdots \tensor \beta_{0,l_0}\tensor \id \tensor \cdots \tensor \id \tensor \beta_{k,0}\tensor \cdots \tensor \beta_{k, l_k}).\]
    so that $m^k_{(B, b)}:= \sum_{\beta\in G}  T^{\omega(\beta)} m^{k, \beta}_B$ gives a $G$-gapped $A_\infty$ algebra satisfying the gapped $A_\infty$ relations.
There similarly exists $G$-gapped filtered $A_\infty$ homomorphisms, which also contain the data of a morphism of monoids $\phi: G_A\to G_B$.

We will also need some basic statements about ideals in filtered $A_\infty$ algebras.
\begin{definition}
    A subspace $A\subset B$ is a weak $A_\infty$ ideal if for all $k=k_1+1+k_2>0$, the map 
    \[m^k: B^{\tensor k_1}\tensor A \tensor B^{\tensor k_2}\to B\]
    has image contained in $A$.

    Notably, we \emph{do not} require that the curvature term $m^0_A$ be an element of $A$. As a result, it is not necessarily the case that $A$ is itself a filtered $A_\infty$ algebra. We say that $A$ is a \emph{strong $A_\infty$ ideal} if additionally $m^0_B\in A$.
\end{definition}
\begin{claim}
    Let $A\subset B$ be an $A_\infty$ ideal. The quotient $C=A/B$ inherits a filtered $A_\infty$ structure. $A$ is a strong $A_\infty$ ideal if and only if $C$ is tautologically unobstructed.
\end{claim}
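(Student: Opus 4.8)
The plan is to unwind the definitions and check the $A_\infty$ relations degree by degree. First I would establish that $C = B/A$ inherits a filtered $A_\infty$ structure: since $A$ is a weak $A_\infty$ ideal, for every $k = k_1+1+k_2 > 0$ the product $m^k_B$ sends a tensor factor containing $A$ into $A$, so the induced maps $\bar m^k_C : C^{\otimes k} \to C$ are well-defined for $k \geq 1$ by passing to the quotient. For $k = 0$ we set $\bar m^0_C := [m^0_B] \in C = B/A$. The filtration and grading on $C$ are the ones induced from $B$, and the basis condition is inherited because $A$ is a free direct summand as a filtered module (one chooses a complementary basis). The quadratic filtered $A_\infty$ relations for $\bar m^k_C$ follow by applying the quotient projection $\pi : B \to C$ to the $A_\infty$ relations for $B$: $\pi$ is a strict map of filtered modules intertwining $m^\bullet_B$ with $\bar m^\bullet_C$ on the nose (for $k \geq 1$; and by definition for $k=0$), so the relations descend. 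This is the routine bookkeeping part.

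The substantive claim is the equivalence: $A$ is a strong $A_\infty$ ideal $\iff$ $C$ is tautologically unobstructed. Recall $A$ strong means $m^0_B \in A$. If $m^0_B \in A$, then $\bar m^0_C = [m^0_B] = 0$ in $C$, so $C$ has vanishing curvature term, i.e. $C$ is tautologically unobstructed. Conversely, if $C$ is tautologically unobstructed then $\bar m^0_C = 0$, which unwinds precisely to $[m^0_B] = 0$ in $B/A$, i.e. $m^0_B \in A$. Both directions are immediate once one notes that $\bar m^0_C$ is \emph{by construction} the image of $m^0_B$ under $\pi$, and that $\pi(x) = 0$ iff $x \in A$. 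So really the only content is making sure the definition of the quotient $A_\infty$ structure puts $\pi(m^0_B)$ in the $m^0$ slot, which is forced: the $A_\infty$ axioms on $C$ require an $\bar m^0_C \in C^1$, and functoriality of the quotient construction (the relations descend via $\pi$) forces it to be $\pi(m^0_B)$.

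I expect essentially no obstacle here — the statement is almost a tautology given the definitions of "strong $A_\infty$ ideal" and "tautologically unobstructed." The one point requiring mild care is verifying that the quotient of a filtered $A_\infty$ algebra by a weak ideal really is again a \emph{filtered} $A_\infty$ algebra in the sense of \cite[Definition 3.2.20]{fukaya2010lagrangian}, specifically that the basis/filtration axiom (a basis in $F^0 \setminus \bigcup_{\lambda>0} F^\lambda$) survives passage to the quotient. This holds because $A$ is a graded $\Lambda_{\geq 0}$-submodule that is a direct summand, so a basis of $B$ adapted to $A$ projects to a basis of $C$ with the required filtration property. Everything else is a direct check that diagrams commute, which I would state but not belabor.
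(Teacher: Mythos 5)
Your proposal is correct and follows essentially the same route as the paper: define $m^k_C([x_1]\otimes\cdots\otimes[x_k]):=[m^k_B(x_1\otimes\cdots\otimes x_k)]$, use the ideal property to see this is well-defined for $k\geq 1$, and observe that $m^0_C=[m^0_B]$ vanishes exactly when $m^0_B\in A$. Your additional remarks on the filtration/basis axiom are a minor elaboration the paper omits, but the substance of the argument is identical.
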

\begin{proof}
    The filtered $A_\infty$ structure is the natural one,
    \[m^k_C([x_1]\tensor \cdots \tensor  [x_k]):=[m^k_B(x_1\tensor\cdots  \tensor x_k)]\]
    Because the $m^k_B$ are multilinear, we see that if $[x_i]=[x_i']$, that $m^k_C([x_1]\tensor \cdots\tensor  [x_i] \tensor\cdots \tensor  [x_k])=m^k_C([x_1]\tensor \cdots\tensor  [x_i'] \tensor\cdots \tensor [x_k])$.
    $A$ is a strong $A_\infty$ ideal if and only if $m^0_C=[m^0_B]=[0]$.
\end{proof}
\begin{example}
    Given a formal filtered $A_\infty$ morphism $f: B \to C$ (so that $f^k=0$ for all $k\neq 1$) the kernel of $ f$ is a weak $A_\infty$ ideal. 
\end{example}
\begin{example}
     Given a filtered $A_\infty$ algebra $A$, the set $A_{>0}$ of positively filtered elements is an example of a strong $A_\infty$ ideal.
    The quotient $\underline{A}:= A/A_{>0}$ is an example of a tautologically unobstructed $A_\infty$ algebra. 
    A relevant example comes from Lagrangian Floer cohomology, where $\underline{\CF(L)}=\CM(L)$.
\end{example}
\begin{claim}
    Suppose that $A\subset B$ is an $A_\infty$ ideal, and $d\in B$ is a deforming cochain. Then $A$ is an $A_\infty$ ideal of $(B, d)$. If $A\subset B$ is a strong $A_\infty$ ideal, and $m^0_d\in A$, then $A$ is a strong $A_\infty$ ideal of $B$. In particular, if $d\in A$ then $A$ is a strong $A_\infty$ ideal of $(B, d)$.
    \label{claim:idealafterdeformation}
\end{claim}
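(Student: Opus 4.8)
\textbf{Proof plan for \Cref{claim:idealafterdeformation}.} The plan is to unwind the definition of the deformed products \eqref{eq:deformingcochain} and observe directly that the ideal property is preserved term by term. Recall that
\[
m^k_{(B,d)} = \sum_{l=0}^\infty \sum_{j_0+\cdots+j_k=l} m^{k+l}_B(d^{\tensor j_0}\tensor \id \tensor d^{\tensor j_1}\tensor \cdots \tensor \id \tensor d^{\tensor j_k}),
\]
so that every structure map of $(B,d)$ is a (convergent) sum of honest $B$-operations $m^{k+l}_B$ with some inputs filled in by copies of $d$ and the remaining $k$ slots left open.

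\textbf{Step 1: weak ideal is preserved.} Fix $k = k_1 + 1 + k_2 > 0$ and suppose one of the $k$ open slots of $m^k_{(B,d)}$ receives an element $a\in A$. In each summand of \eqref{eq:deformingcochain}, this element $a$ still sits in one of the (now $k+l$ many) input slots of $m^{k+l}_B$, and since $k+l \geq k > 0$, the weak ideal hypothesis on $A\subset B$ forces $m^{k+l}_B(\cdots \tensor a \tensor \cdots)\in A$. Summing over $l$ and over the distributions $j_0+\cdots+j_k=l$ (a sum which converges in the filtration topology because $\val(d)>0$ and $A$ is closed under such limits — it is a $\Lambda_{\geq 0}$-submodule that is a filtration summand), we conclude $m^k_{(B,d)}(\cdots\tensor a\tensor\cdots)\in A$. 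Hence $A$ is a weak $A_\infty$ ideal of $(B,d)$.

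\textbf{Step 2: strong ideal is preserved when $m^0_{(B,d)}\in A$.} Suppose now that $A\subset B$ is a strong $A_\infty$ ideal, i.e.\ $m^0_B\in A$, and that $m^0_{(B,d)}\in A$. Combined with Step 1, the latter is exactly the defining condition for $A$ to be a strong $A_\infty$ ideal of $(B,d)$; there is nothing further to prove. For the final assertion, assume $d\in A$. Then I claim $m^0_{(B,d)}\in A$: by \eqref{eq:deformingcochain} with $k=0$,
\[
m^0_{(B,d)} = \sum_{l=0}^\infty m^l_B(d^{\tensor l}) = m^0_B + \sum_{l=1}^\infty m^l_B(d^{\tensor l}),
\]
where $m^0_B\in A$ by the strong ideal hypothesis, and for each $l\geq 1$ the term $m^l_B(d\tensor\cdots\tensor d)$ has an input in $A$ (namely $d$ itself) in a slot of a positive-arity operation, hence lies in $A$ by the weak ideal property of $A\subset B$; again the sum converges in $A$ since $\val(d)>0$. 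So $m^0_{(B,d)}\in A$, and we are reduced to the case just treated.

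\textbf{Main obstacle.} The only genuinely non-formal point is convergence: one must check that the infinite sums defining $m^k_{(B,d)}$ and $m^0_{(B,d)}$ actually converge \emph{inside} $A$, not merely inside $B$. This follows because $A$ is a $\Lambda_{\geq 0}$-submodule of $B$ which is closed in the filtration topology — indeed, by the standing assumption that each $F^0(A^i)$ admits a basis outside $\bigcup_{\lambda>0}F^\lambda A^i$, $A$ is a filtration summand of $B$ and hence complete — and every partial sum lies in $A$ by the term-by-term argument above, so the limit does too. Everything else is a direct bookkeeping of which input slot the distinguished $A$-element occupies, exactly as in the proof that $A\subset B$ being a weak ideal passes to quotients.
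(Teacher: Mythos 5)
Your proposal is correct and follows essentially the same route as the paper's proof: unwind the definition of $m^k_{(B,d)}$ term by term and note that the distinguished element of $A$ (or, in the $m^0$ case, a copy of $d\in A$) occupies an input slot of a positive-arity operation $m^{k+l}_B$, so each summand lands in $A$. You are in fact more careful than the paper, which only writes out the weak-ideal computation and leaves the strong-ideal case and the convergence of the infinite sum inside $A$ implicit; your Step 2 and the convergence remark fill those in correctly.
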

\begin{proof}
    Suppose that $a\in A$ is some element. Then 
    \begin{align*}
        m^k_{(B, d)}(x_1\tensor &\cdots \tensor a \tensor \cdots \tensor x_k)\\=&  \sum_{l=0}^\infty \sum_{j_0+\cdots + j_k=l} m^{k+l}_A(d^{\tensor j_1}\tensor\id\tensor d^{\tensor j_1}\tensor \cdots \tensor a \tensor \cdots \tensor \id \tensor d^{\tensor j_k})\in A.
    \end{align*}
    proving that $A$ is a $A_\infty$ ideal of $(B, d)$. 
\end{proof}
The vector space $H^1(A)$ is a lowest order approximation to the space of bounding cochains. When $\overline C$ is an anti-commutative differential graded algebra elements of $H^1(\overline C)$ are bounding cochains.
\begin{claim}
    Suppose that $C$ is tautologically unobstructed. Suppose that $f: C\to \overline  C$ is an $A_\infty$ map with gapped $A_\infty$ homotopy inverse $g: \overline C\to C$. Assume that $\overline C$ is an anti-commutative differential graded algebra. 
    Then for every class $[c]\in H^1(C)$ with $\val(c)>0$, there exists a bounding cochain $c'\in C$ and $\lambda>\val(c')$ with $[c']= [c]\in H^1(C/T^{\lambda}C)$.
    \label{claim:gradedanticommutativechoice}
\end{claim}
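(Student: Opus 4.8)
The plan is to construct the bounding cochain $c'$ by a term-by-term induction on the valuation, using the gapped structure and the hypothesis that the target $\overline C$ is an anti-commutative dg algebra to kill obstructions at each stage. The point is that over $\overline C$, a degree-one cocycle is automatically a bounding cochain: since $\overline C$ is an \emph{anti}-commutative dg algebra, for $\bar c \in \overline C^1$ we have $m^2_{\overline C}(\bar c, \bar c) = 0$ (graded-commutativity forces $\bar c \cdot \bar c = -\bar c\cdot \bar c$), and $m^k_{\overline C} = 0$ for $k \geq 3$, so the only term in $m^0_{(\overline C, \bar c)}$ beyond $m^1_{\overline C}(\bar c)$ vanishes. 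Thus if $\bar c$ is $m^1$-closed it is a bounding cochain for $\overline C$. The strategy is then to transport a representative of $[c]$ to $\overline C$ via $f$, correct it within its cohomology class to an honest cocycle $\bar c$ (not just a cocycle mod higher filtration), and then pull the resulting bounding cochain back to $C$ via the homotopy inverse $g$, using the pushforward-of-bounding-cochains construction recalled in the excerpt.

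\textbf{Key steps.} First I would set $\bar c_1 := f^1(c) \in \overline C^1$; since $f^1$ is a chain map and $c$ is closed, $\bar c_1$ is $m^1_{\overline C}$-closed, hence already a bounding cochain for $\overline C$ by the anti-commutativity observation above. (One should check $\val(\bar c_1) > 0$, which holds because $f$ is filtered and $\val(c) > 0$; strictly one works with the curved correction $\hat c = \sum_k f^k(c^{\otimes k})$, the Maurer--Cartan pushforward of $c$ along $f$, but since $c$ need not \emph{a priori} be a bounding cochain for $C$ we instead argue as follows.) Second, I would push $\bar c_1$ back along the homotopy inverse $g$: the pushforward $c' := g_*(\bar c_1) = \sum_k g^k(\bar c_1^{\otimes k})$ is a bounding cochain for $C$, by the general statement in \cref{subsub:boundingcochains} that filtered $A_\infty$ homomorphisms carry bounding cochains to bounding cochains. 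It has positive valuation since $g$ is filtered. Third, I would verify the cohomological normalization $[c'] = [c] \in H^1(C/T^\lambda C)$ for a suitable $\lambda > \val(c')$: because $g\circ f$ is gapped $A_\infty$-homotopic to $\id_C$, and $g^1\circ f^1$ agrees with the identity on $H^1(C)$, the leading-order (lowest-valuation) term of $c' = g_*(f_*(c))$ equals the leading-order term of $c$ modulo higher-valuation corrections; one then picks $\lambda$ larger than $\val(c')$ but small enough to separate the leading term, so that $[c'] = [c]$ in $H^1(C/T^\lambda C)$. The gapped hypothesis is exactly what guarantees that ``leading-order term'' is well-defined: the monoid $G$ discretizes the valuations appearing, so there is a genuine minimal positive element and the induction terminates/stabilizes at each filtration level.

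\textbf{Main obstacle.} The delicate point — and the one I expect to require the most care — is the normalization claim $[c'] = [c] \in H^1(C/T^\lambda C)$ together with choosing $\lambda$. The composition $g_* \circ f_*$ differs from the identity not just by a chain homotopy but by the full $A_\infty$-homotopy data, which mixes filtration levels; showing that the \emph{cohomology class of the leading term} is preserved requires unwinding the gapped $A_\infty$-homotopy relations and checking that all the correction terms have valuation strictly greater than $\val(c)$. This is where the finitely-generated monoid $G$ and the homomorphism $\omega: G \to \RR_{\geq 0}$ with $\omega^{-1}(0) = \{0\}$ are essential: they ensure the set of valuations of correction terms is bounded away from $\val(c)$ from above, so a valid $\lambda$ exists. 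A secondary subtlety is that $c$ is only assumed to be a cocycle in $C$, not a bounding cochain, so one cannot directly take its Maurer--Cartan pushforward along $f$; but because $C$ is tautologically unobstructed ($m^0_C = 0$) and $\val(c) > 0$, the element $f^1(c)$ already satisfies the (curved) Maurer--Cartan equation in $\overline C$ modulo terms that vanish by anti-commutativity, so this is not a genuine obstruction once the observation about anti-commutative dg algebras is in hand.
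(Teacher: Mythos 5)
Your proposal is correct and follows essentially the same route as the paper: transport $c$ to $\overline C$ via the linear part of $f$, observe that a closed degree-one element of an anti-commutative DGA is automatically a bounding cochain since $m^2(\bar c,\bar c)=0$ and higher products vanish, push forward along $g$, and use gappedness to choose $\lambda$ so that the higher-order corrections in $g_*(f^1(c))$ do not affect the class mod $T^\lambda$. Your explicit handling of why one uses $f^1(c)$ rather than a Maurer--Cartan pushforward (since $c$ is only a cocycle, not a priori a bounding cochain for $C$) is a point the paper glosses over, but the argument is the same.
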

\begin{proof}
    Since $C, \overline C$ are gapped, we can select $\lambda>\val(c)$ so that $\omega(\beta)<\lambda$ implies $\omega(\beta)\leq \val(c)$.
    We observe that $f(c)\in  \overline C$ is closed, and therefore provides a bounding cochain for $\overline C$, as 
    \[m^0_{(\overline C, f(c))}= m^1_{\bar C}(f(c))+m^2_{\bar C}(f(c),f(c))=0.\] 
    We then take $c'$ to be the pushforward bounding cochain
    \begin{align*}
        g_*(f(c))=\sum_{k=1}^\infty g^k((f(c))^{\tensor k})
    \end{align*}
    Since $c'= (g\circ f )(c)\mod T^{\lambda}$, we obtain that $[c]=[c']\in H^1(C/T^{\lambda } C)$.
\end{proof}
\begin{claim}[Claim A.4.8 \cite{hicks2019wallcrossing}]
    Suppose that $A'=(A, a)$. Given a deforming cochain $a'\in A'$, the chain $a''=a+a'\in A$ is a deforming cochain so that  $(A', a')=(A, a'')$.
    \label{claim:deformationofdeformation}
\end{claim}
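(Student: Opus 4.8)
The statement to prove is Claim~\ref{claim:deformationofdeformation}: if $A' = (A, a)$ is the $a$-deformation of a gapped filtered $A_\infty$ algebra, and $a' \in A'$ is a deforming cochain for $A'$, then $a'' := a + a'$ is a deforming cochain for $A$ and $(A', a') = (A, a'')$. Since this is cited as ``Claim A.4.8'' from \cite{hicks2019wallcrossing}, the proof should be a short bookkeeping argument; I would reconstruct it directly rather than appeal to the reference.

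\textbf{Key steps.} First I would observe that $a''$ has positive valuation: $\val(a) > 0$ since $a$ is a deforming cochain for $A$, and $\val(a') > 0$ since $a'$ is a deforming cochain for $A'$, so $\val(a'') = \val(a + a') > 0$ (or is $\infty$ if they cancel, which is harmless). Hence $a''$ is at least a legitimate candidate deforming cochain for $A$. Second, and this is the heart of the matter, I would show the identity of deformed products $m^k_{(A', a')} = m^k_{(A, a'')}$ for all $k \geq 0$. Unwinding the definition in \cref{eq:deformingcochain}, the left side is obtained by first inserting arbitrarily many copies of $a$ into $m^\bullet_A$ (to form $m^\bullet_{A'}$), and then inserting arbitrarily many copies of $a'$ into the resulting $m^\bullet_{A'}$. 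A single application of $m^{k+l}_{A'}$ with $l$ inputs equal to $a'$ expands, by the definition of $A' = (A,a)$, into a sum over all ways of interleaving copies of $a$ among the $k$ genuine inputs and the $l$ copies of $a'$. Collecting terms, the total expression is precisely a sum over all words in the alphabet $\{a, a'\}$ interleaved among the $k$ genuine inputs, fed into $m^\bullet_A$. Because $m^\bullet_A$ is multilinear in each slot, summing over the choice of whether each inserted letter is $a$ or $a'$ is the same as inserting the single element $a + a'' = a''$ wait---more carefully, inserting $a+a' = a''$ into each available slot; so the combined sum equals $m^k_{(A, a'')}$. The combinatorial identity here is exactly the associativity/compatibility of iterated deformation, which in the gapped setting converges term-by-term in each $G$-degree (for fixed $\beta \in G$ only finitely many terms contribute, since $\omega$ is proper on the finitely generated monoid $G$), so there are no convergence subtleties. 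Third, as an immediate corollary, $m^0_{(A, a'')} = m^0_{(A', a')} = 0$ since $a'$ is a bounding cochain for $A'$; hence $a''$ is a deforming cochain (indeed bounding cochain) for $A$, and $(A, a'') = (A', a')$ as filtered $A_\infty$ algebras.

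\textbf{Main obstacle.} The only real work is the bookkeeping in the second step: verifying that the nested sum ``insert copies of $a$, then insert copies of $a'$'' collapses to the single sum ``insert copies of $a'' = a + a'$''. This is a manipulation of formal sums of tensor expressions together with the multilinearity of $m^k_A$; the cleanest way to present it is to write out $m^k_{(A',a')}(x_1 \otimes \cdots \otimes x_k)$, substitute the definition of $m^{k+l}_{A'}$, and reindex by the combined multiset of inserted elements, noting that each inserted position independently ranges over $\{a\} \cup \{a'\}$, which by linearity is the same as inserting $a + a'$. I expect this to be a few lines of careful but entirely routine index juggling, with the gapped hypothesis guaranteeing everything is well-defined $G$-degree by $G$-degree. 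No genuine difficulty is anticipated.
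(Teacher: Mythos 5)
Your argument is correct and is exactly the expected one: the paper itself states this claim without proof (deferring to the cited reference), and the reindexing you describe — identifying the nested insertion of $a$'s and then $a'$'s with a single sum over insertion patterns of letters from $\{a,a'\}$, then using multilinearity to collapse each slot to $a+a'$ — is the standard bookkeeping argument, with gappedness/positive valuation handling convergence as you say. One small caveat: the hypothesis only makes $a'$ a \emph{deforming} cochain, so your concluding remark that $m^0_{(A,a'')}=0$ holds only under the stronger assumption that $a'$ is bounding; this does not affect the claim as stated, since the deforming-cochain property of $a''$ already follows from $\val(a''')>0$ established in your first step.
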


We now come to the main lemma of this appendix. Suppose that we have an exact sequence (on the chain level) $A\to B\to C$. If $A$ is a strong $A_\infty$ ideal containing the curvature of $B$, then we prove that there is no obstruction to finding a bounding cochain for $B$. The argument is in the style of \cite[Theorem 3.6.18]{fukaya2010lagrangian}.

\begin{lemma}
    Consider a $G$-gapped $A_\infty$ algebra $B$ satisfying the gapped $A_\infty$ relations. Suppose that:
    \begin{enumerate}[label=(\roman*)]
    \item $A$ is a strong $A_\infty$ ideal of $B$, and $C=B/A$ giving us an exact sequence $A\xrightarrow{i} B\xrightarrow{\pi}  C$  of gapped  $A_\infty$ algebras, \label{item:ses}
    \item There exists $\overline C$ which is $A_\infty$ homotopic to $C$ and is a anti-commutative DGA\label{item:anticommutes}
    \item Additionally, suppose that the connecting map $\delta: H^1(\underline C)\to H^2(\underline A)$ surjects. \label{item:surjects}
    \end{enumerate}
    Then for every $\lambda>0$ there exists a deforming cochain $b=\sum_{\beta\in G\setminus\{0\} }b_\beta$ for $B$ so that for all $\beta$ with $\omega(\beta)\leq \lambda$, $m^{0,\beta}_{(B, b)}=0$.
    \label{lemma:unobstructing}
\end{lemma}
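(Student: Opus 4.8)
The plan is to construct the deforming cochain $b$ by induction on the valuation, adding one homogeneous piece $b_\beta$ at a time as we sweep through the monoid $G$ in order of increasing $\omega$-value. Fix $\lambda>0$. Since $B$ is $G$-gapped with $G$ finitely generated and $\omega$ injective on $G\setminus\{0\}$, the set $\{\beta\in G \st 0<\omega(\beta)\le \lambda\}$ is finite; enumerate its elements as $\beta_1,\beta_2,\dots,\beta_N$ with $\omega(\beta_1)\le \omega(\beta_2)\le\cdots$. At the $j$-th stage I will have produced $b^{(j-1)}=\sum_{i<j}b_{\beta_i}$ such that the deformed algebra $(B,b^{(j-1)})$ has $m^{0,\beta}_{(B,b^{(j-1)})}=0$ for all $\beta$ with $\omega(\beta)<\omega(\beta_j)$ (and for all $\beta_i$ with $i<j$). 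By \cref{claim:deformationofdeformation}, deforming $(B,b^{(j-1)})$ further by a cochain $b_{\beta_j}$ is the same as deforming $B$ by $b^{(j-1)}+b_{\beta_j}$, so it suffices to analyze one inductive step for the algebra $B':=(B,b^{(j-1)})$ and find $b_{\beta_j}$ killing the obstruction in degree $\beta_j$.

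The key point of the inductive step is that the obstruction class lives in the right place. Because $A$ is a strong $A_\infty$ ideal of $B$ (condition \cref{item:ses}), $m^0_B\in A$; by \cref{claim:idealafterdeformation}, since $b^{(j-1)}\in A$ (each $b_{\beta_i}$ will be chosen in $A$), $A$ remains a strong $A_\infty$ ideal of $B'$, so the curvature $m^{0}_{B'}$ still lies in $A$, and in particular its $\beta_j$-component $o_j:=m^{0,\beta_j}_{B'}\in A^2$. The gapped $A_\infty$ relations for $B'$ in total degree $\beta_j$, combined with the vanishing of $m^{0,\beta}_{B'}$ for $\omega(\beta)<\omega(\beta_j)$, force $m^{1,0}_{B'}(o_j)=0$, i.e. $o_j$ is a cocycle in $\underline A=A/A_{>0}$ representing a class $[o_j]\in H^2(\underline A)$. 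The surjectivity hypothesis \cref{item:surjects} on the connecting map $\delta:H^1(\underline C)\to H^2(\underline A)$ then gives a class $[\gamma]\in H^1(\underline C)$ with $\delta[\gamma]=[o_j]$ (or $-[o_j]$, depending on sign conventions). Using \cref{item:anticommutes} and \cref{claim:gradedanticommutativechoice}, lift $[\gamma]$ to a genuine bounding cochain $c'\in C$ of strictly positive valuation agreeing with $\gamma$ modulo higher filtration, choose a $\pi$-preimage $\tilde c'\in B'$, and set $b_{\beta_j}$ to be the $\beta_j$-homogeneous part of $\tilde c'$ (rescaled by $T^{\omega(\beta_j)}$ appropriately so it sits in the correct filtration level). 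The definition of the connecting homomorphism then guarantees that deforming $B'$ by $b_{\beta_j}$ changes $m^{0,\beta_j}$ exactly by $-o_j$ plus terms of strictly higher $\omega$-value, so $m^{0,\beta_j}_{(B',b_{\beta_j})}=0$, while leaving all lower-valuation curvature components unchanged (they were already zero and $b_{\beta_j}$ has valuation $\omega(\beta_j)$). Ensuring $b_{\beta_j}\in A$: we may arrange this because $\tilde c'$ can be taken in $A$ up to adjusting by something in the image of $\pi$ with vanishing obstruction contribution — more precisely, $\tilde c'$ maps to $c'$ which lifts $[\gamma]\in H^1(\underline C)$, and the relevant computation only uses $\pi(\tilde c')$, so we are free to replace $\tilde c'$ by any element of $B'$ with the same image modulo $A$; the contribution to $m^{0,\beta_j}$ of the $A$-part is what we want, and choosing the representative so that $b_{\beta_j}\in A$ keeps $A$ a strong ideal at the next stage.

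Iterating through $\beta_1,\dots,\beta_N$ produces $b=\sum_{i=1}^N b_{\beta_i}=\sum_{\beta\in G\setminus\{0\}}b_\beta$ (with $b_\beta=0$ for $\omega(\beta)>\lambda$) satisfying $m^{0,\beta}_{(B,b)}=0$ for all $\beta$ with $\omega(\beta)\le\lambda$, which is the assertion. I expect the main obstacle to be bookkeeping the signs and the precise filtration shifts in the inductive step — specifically, verifying that the first-order effect of adding $b_{\beta_j}$ on $m^{0,\beta_j}$ is precisely $m^{1,0}_{B'}(b_{\beta_j})$ together with the connecting-map identification $\delta[\pi(b_{\beta_j})]=[m^{1,0}_{B'}(b_{\beta_j})]$ in $H^2(\underline A)$, and that all the cross terms $m^{k,\beta'}(\cdots b_{\beta_j}\cdots)$ with $k\ge 2$ or $\beta'\ne 0$ contribute only at $\omega$-value $>\omega(\beta_j)$. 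This is exactly the style of argument in \cite[Theorem 3.6.18]{fukaya2010lagrangian}, and the gapped hypothesis is what makes the induction terminate and the higher-order corrections harmless; the only genuinely new input here beyond that template is that \emph{strongness} of the ideal $A$ (which is preserved under the deformations by \cref{claim:idealafterdeformation}) is what keeps the obstruction inside $A^2$ at every stage, so that the surjectivity of $\delta$ can be invoked repeatedly.
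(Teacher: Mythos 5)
Your induction-on-valuation skeleton is the same as the paper's, and the identification of the obstruction class $[o_j]\in H^2(\underline A)$ via strongness of the ideal and the lowest-order $A_\infty$ relation is correct. But there is a genuine gap in the inductive step: you require $b_{\beta_j}\in A$ (so that \cref{claim:idealafterdeformation} keeps $A$ a strong ideal of the deformed algebra) while simultaneously asking that $\pi(b_{\beta_j})$ represent a class $[\gamma]\in H^1(\underline C)$ with $\delta[\gamma]=[o_j]$. These two demands are incompatible whenever $[o_j]\neq 0$ in $H^2(\underline A)$: if $b_{\beta_j}\in A$ then $\pi(b_{\beta_j})=0$, and the first-order change $\underline m^1(b_{\beta_j})$ in the curvature is then exact \emph{in} $\underline A$, so it can never cancel a nonzero class in $H^2(\underline A)$. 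The hypothesis is only that $\delta$ surjects, not that $H^2(\underline A)=0$, so $[o_j]\neq 0$ is exactly the case the lemma must handle. Your closing sentence about the $A$-part of the representative "being what we want" does not resolve this; it is where the argument breaks.

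The paper's route avoids this by never putting the deforming cochain in $A$. Surjectivity of $\delta$ is used through the long exact sequence to conclude that $[o_j]$ dies in $H^2(\underline B)$, giving a primitive $\underline{\hat b}$ in $\underline B^1$ with $\underline m^1_{B'}(\underline{\hat b})=o_j$. The replacement for your \cref{claim:idealafterdeformation} step — i.e., the mechanism that keeps the \emph{new} curvature inside $A$ so the induction can continue — is hypothesis \cref{item:anticommutes}: one upgrades the closed element $\pi(\underline{\hat b})\in \underline C^1$ to an honest bounding cochain $c'$ for $C'$ via \cref{claim:gradedanticommutativechoice}, and chooses $b'$ to be any $\pi$-preimage of $-c'$. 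Then $\pi\circ m^0_{(B',b')}=m^0_{(C',c')}=0$, which forces $m^0_{(B',b')}\in A$ to \emph{all} orders, not just at $\beta_j$. You invoke \cref{claim:gradedanticommutativechoice} but then discard its output by trying to push the lift back into $A$; restoring the paper's use of it is the missing ingredient.
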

\begin{proof}
    Because $A, B, C$ are gapped $A_\infty$ algebras, there exists $\{\lambda_i\}_{i=1}^n$ an ordering of the image $\omega(G)\in[0, \lambda]$.

    We prove the statement by induction on $\lambda_i$.
    Suppose that $A'=(A, a_{i-1}), B'=(B, b_{i-1}), C'=(C,c_{i-1})$ are $G$-gapped $A_\infty$ algebras satisfying \cref{item:ses,item:anticommutes,item:surjects} and additionally 
    \begin{enumerate}[label=(\roman*),start=4]
        \item  The curvature has large valuation, $\val(m^0_{B'})>\lambda_{i-1}$\label{item:deformedvaluation}
    \end{enumerate}
    The inductive step will construct deforming cochains $a', b', c'$ so that the algebras $(A', a'), (B', b'), (C', c')$ satisfy   \cref{item:ses,item:anticommutes,item:surjects,item:deformedvaluation} where $\lambda_{i-1}$ is replaced with $\lambda_{i}$.  By \cref{claim:deformationofdeformation}, we can then construct the $A_\infty$ algebras $(A, a_i), (B, b_i)$ and $(C, c_i)$.

    Write $m^0_{B'}=\sum_{j=i}^{\infty} \sum_{\omega(\beta)=\lambda_j}\underline{b}_{j, \beta} T^{\lambda_j}$, where the $\underline{b}_{j,\beta }$ are elements of $\underline B'=\underline{B}$ of degree 2.
    Because $A$ is a strong $A_\infty$ ideal, we can find  $\underline{a}_{i,\beta}\in \underline A$ with $i(\underline{a}_{i,\beta})=\underline{b}_{i,\beta}$. 

    We examine the lowest order terms of the $A_\infty$ relation $m^1_{A'}\circ m^0_{A'}=0$, and obtain
    \[
        \underline m^1_{A'}(\underline{a}_{i,\beta})=0
    \]
    Since $[\underline{a}_{i,\beta}]\in H^2(\underline A)$, by \cref{item:surjects} $[\underline{b}_{i,\beta}]=0$. Therefore there exists $\underline{\hat b}_{i, \beta}$ so that  $\underline m^1_{B'}(\underline {\hat b}_{i, \beta} )=\underline b_{i, \beta}$. 
    The class $\underline c_{i, \beta}:=\pi(\underline {\hat b}_{i, \beta})$ is closed.
    Using \cref{claim:gradedanticommutativechoice}, we can find $\underline{c}'_{j, \beta}$ with $j\geq i$ so that $c'=\sum_{j=i}^\infty \sum_{\beta \st \omega(\beta)=\lambda_i} \underline{c'}_{j, \beta} T^{\lambda_j}$ is a bounding cochain for $C'$ with the property that $[c'_{i, \beta}]= [c_{i, \beta}]\in H^1(C'/T^{\lambda_{i+1}}C').$
    
    Because $\pi : B\to C$ surjects, we can find for all $j\geq i$ cochains $\underline{b'}_{\beta,j}\in \underline B$ with $\pi(\underline{b'}_{j, \beta})=\underline{c''}_{j, \beta}$. 
    Let 
    \[b'=-\sum_{j=i}^\infty \sum_{\beta\st \omega(\beta)=\lambda_i} \underline{b}'_{i,\beta} T^{\lambda_i}.\]

    This constructed $b'$ satisfies the property
    \[m^1_{B'}(b')\equiv -m^0_{B'} \mod T^{\lambda_{i+1}}.\]
    Since $\pi$ is a filtered $A_\infty$ homomorphism without higher terms, the pushforward $\pi_*(b')=c'$ and 
    \[\pi\circ m^0_{(B,b')}=m^0_{(C,c')}=0\]
    Therefore $m^0_{(B', b')}$ is contained in $A'$, and we write $a'$ for the corresponding element in $A'$.
    \Cref{claim:idealafterdeformation} states that $(A', a')$ is a strong $A_\infty$ ideal of $(B', b')$, whose quotient is $(C', c')$.

    This gives us the $G$-gapped $A_\infty$ algebras $(A', b'), (B',b')$ and $(C', c')$, which we've shown satisfy \cref{item:ses}.    
    We now show these algebras satisfy \cref{item:anticommutes,item:surjects,item:deformedvaluation}.
    \Cref{item:anticommutes} follows from observing that deformations by Maurer-Cartan classes preserves having an anti-commutative model. Since the deformation occurs at valuation greater than 0, the map $H^1(\underline C)\to H^2(\underline A)$ continues to surject (\cref{item:surjects}).
     
    To check \cref{item:deformedvaluation},
    \begin{align*}
        \val(m^0_{(B', b')})=&\val\left( \sum_{k=0}^\infty m^k_{B'}((b')^{\tensor k}))\right)\\
        \geq&\min\left(\val(m^0_{B'}+m^1_{B'}(b')),  \sum_{k=2}^\infty m^k_{B'}((b')^{\tensor k})\right)
        \intertext{Given that $m^0_{B'}\equiv m^1_{B'}(b') \mod T^{\lambda_i}$}
        \geq&\lambda_{i+1}.
    \end{align*}
\end{proof}
\begin{corollary}
    Let $A, B, C$ be $A_\infty$ algebras as in \cref{lemma:unobstructing}. Then there exists a bounding cochain for $B$.
\end{corollary}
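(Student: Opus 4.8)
The plan is to produce the bounding cochain as a limit of the deforming cochains supplied by \cref{lemma:unobstructing}. Since $B$ is $G$-gapped for a finitely generated monoid $G$ with $\omega\colon G\to\RR_{\geq 0}$, the image $\omega(G)$ is a discrete submonoid of $\RR_{\geq 0}$, so I may enumerate it as $0=\lambda_0<\lambda_1<\lambda_2<\cdots$, with $\{\beta\in G\st \omega(\beta)\leq\lambda\}$ finite for every $\lambda$. This is the only structural input beyond \cref{lemma:unobstructing} that I need.

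First I would reread the proof of \cref{lemma:unobstructing} to extract not merely its statement but the compatibility of the cochains it builds for successive thresholds. That proof proceeds by induction on $\lambda_i$: at stage $i$ it takes the already-constructed deforming cochain $b_{i-1}$, forms a deforming cochain $b'$ for $(B,b_{i-1})$ all of whose terms have $\omega$-valuation $\geq\lambda_i$, and then invokes \cref{claim:deformationofdeformation} to re-express $\big((B,b_{i-1}),b'\big)$ as $(B,b_i)$ with $b_i=b_{i-1}+b'$. In particular $b_i\equiv b_{i-1}$ modulo terms of valuation $\geq\lambda_i$, and $m^{0,\beta}_{(B,b_i)}=0$ for every $\beta$ with $\omega(\beta)\leq\lambda_i$. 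So the lemma actually furnishes a single \emph{nested} sequence $(b_i)_{i\geq 0}$ rather than unrelated cochains, and running it over all $i$ is all I need.

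Next I would pass to the limit. The sequence $(b_i)$ is Cauchy for the valuation filtration on $B^1$, since $\val(b_j-b_i)\geq\lambda_{i+1}\to\infty$ as $i\to\infty$; as $B$ is a free (hence complete) $\Lambda_{\geq 0}$-module, $b:=\lim_i b_i$ exists in $B^1$ with $\val(b)\geq\lambda_1>0$, a legitimate deforming cochain. I would then check $m^0_{(B,b)}=0$ componentwise: fix $\beta\in G\setminus\{0\}$ and pick $i$ with $\lambda_i\geq\omega(\beta)$; the coefficient $m^{0,\beta}_{(B,b)}$ is assembled from the $m^{k,\beta'}_B$ applied to tensor words in the components $b_\gamma$ with $\beta'+\sum\gamma_j=\beta$, so it only involves those $b_\gamma$ with $\omega(\gamma)\leq\omega(\beta)\leq\lambda_i$, on which $b$ and $b_i$ agree; hence $m^{0,\beta}_{(B,b)}=m^{0,\beta}_{(B,b_i)}=0$. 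Summing over $\beta$ gives $m^0_{(B,b)}=0$, so $b$ is a bounding cochain for $B$.

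The only real obstacle is the first step: one must look inside the proof of \cref{lemma:unobstructing} rather than treating it as a black box, in order to see that the deforming cochains at different thresholds form a coherent tower. Once that compatibility is in hand, the rest — completeness of the valuation filtration and continuity of the gapped products $m^{k}=\sum_\beta T^{\omega(\beta)}m^{k,\beta}$ — is routine bookkeeping. (Alternatively, one could phrase the passage to the limit as an inverse limit over the truncations $B/T^{\lambda_i}B$, on which the $\lambda_i$-approximate bounding cochains form an inverse system with surjective transition maps, but the direct Cauchy-sequence argument above is shorter.)
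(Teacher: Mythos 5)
Your proposal is correct and follows essentially the same route as the paper: the paper's proof likewise observes that the deforming cochains $b_i$ produced by the inductive procedure in \cref{lemma:unobstructing} satisfy $b_i\equiv b_{i+1}\bmod T^{\lambda_i}$, and takes the limit $\lim_{i\to\infty}b_i$ as the bounding cochain. Your version merely spells out the Cauchy/completeness and componentwise-vanishing bookkeeping that the paper leaves implicit.
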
 
\begin{proof}
    The deforming cochains constructed in the above proof satisfy the condition that 
    \[b_i\equiv b_{i+1} \mod T^{\lambda_i}.\]
    It follows that if we use the inductive procedure to build a sequence of deforming cochains $\{b_i\}_{i=0}^\infty$ so that $\val(m^0_{(B, b_i)})>\lambda_i$, the limit  $\lim_{i\to\infty} b_i$  is a bounding cochain.
\end{proof}

\subsection{\texorpdfstring{$A_\infty$}{A infinity}-bimodules and bounding cochains}
    Let $A, B$ be  $A_\infty$ algebras. An \emph{$(A, B)$- bimodule} is a filtered graded $\Lambda_{\geq 0}$-module $M$, along with a set of maps for all $k_1, k_2\geq 0$.
    \[
        m^{k_1|1|k_2}_{A|M|B}: A^{\tensor k_1}\tensor M \tensor B^{\tensor k_2}\to M
    \]
    satisfying filtered  quadratic $A_\infty$ module relations for each triple $(k_1|1|k_2)$
    \begin{align*}
        0=&\sum_{\substack{ j_1+j+j_2=k_1+1+k_2\\ j_1+j\leq k_1 }} m_{A|M|B}^{k_1-j+1|1|k_2}\circ ( \id_A^{\tensor j_1}\tensor m^{j}_A \tensor \id^{\tensor k_1-j_1-j}\tensor \id_M\tensor \id_B^{k_2})\\
        &+\sum_{\substack{ j_1+j+j_2=k_1+1+k_2\\j_1\leq k_1\leq j_1+j-1}} m_{A|M|B}^{j_1|1|j_2} \circ (\id_A^{\tensor j_1}\tensor m_{A|M|B}^{k_1-j_1|1|k_2-j_2}\tensor \id_B^{\tensor{j_2}})\\
        &+\sum_{\substack{ j_1+j+j_2=k_1+1+k_2\\k_1+1<j_1}} m_{A|M|B}^{k_1|1|k_2-j+1} \circ (\id_A^{\tensor k_1}\tensor \id_M\tensor \id_B^{k_2-j_2-j}\tensor m^j_B\tensor \id_B^{\tensor j_2}).
    \end{align*}
There is a $G$-gapped version of an $A_\infty$ bimodule, where we have the data of map of monoids $\omega: G_M\to \RR_{\geq 0}$ and our $A_\infty$ bimodule products can be decomposed as $m^{k_1|1|k_2, \beta}_{A|M|B}$; we also have morphisms $\phi_{A/B}: G_{A/B}\to G_M$ which intertwine with $\omega$.

If $M$ is a filtered $(A, B)$ bimodule, and $a\in A, b\in B$ are deforming cochains, then the filtered $A_\infty$ bimodule products on $M$ can be deformed to give it the structure of an $((A, a),(B, b))$-bimodule. 
As in the setting of $\Lambda$-filtered $A_\infty$ algebras, we can define $\Lambda$-filtered $A_\infty$ bimodules.
\begin{lemma}
    Let $M$ be a $G$-gapped $(A, B)$-bimodule. Suppose that $A, B$ are tautologically unobstructed and that $A$ has an anti-commutative DGA model $\overline A$ as in \cref{claim:gradedanticommutativechoice}.
    Suppose that there exists $\lambda_0<\lambda_1\in \RR$ with the following properties:
    \begin{enumerate}[label=(\roman*)]
        \item The maps $m^{k|1|0}_{A|M|B}:A^{\tensor k}\tensor M\to M$ all have image contained within $T^{\lambda_0}M$ and \label{item:initialvaluation}
        \item There exists $[\underline e]\in H^1(\underline M)$ an element so that the map \label{item:productsurjects}
    \begin{align*}
        H^1(A)\to& H^1(T^{\lambda_0}M/T^{\lambda_1}M)\\
        [a] \mapsto&  [m^{1|1|0}_{A|M|B}(a\tensor \underline e)]
    \end{align*}
    is surjective. 
\end{enumerate}
 Then there exists a choice of bounding cochain $a\in A^1$ and element $e\in M^0$ so that $m^{0|1|0}_{(A,a)|M|B}(e)=0$.
    \label{lem:submoduledeformation}
\end{lemma}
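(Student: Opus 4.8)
The argument is the bimodule analogue of \cref{lemma:unobstructing}: one builds the bounding cochain $a\in A^1$ and the element $e\in M^0$ together, by inducting on the valuations occurring in the gapped structure and cancelling the leading obstruction at each step, then passes to the $T$-adic limit.

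First I would set up the quantity to be killed. Since $A$ is tautologically unobstructed and $a$ is to be a bounding cochain, for any deforming cochain $a$ the operator $m^{0|1|0}_{(A,a)|M|B}\colon M\to M$ satisfies $(m^{0|1|0}_{(A,a)})^2=0$ ($B$ is also unobstructed, so there is no curvature term to interfere). What must be annihilated is the \emph{module curvature}
\[
\mathrm{ob}(a,e):=m^{0|1|0}_{(A,a)|M|B}(e)=\sum_{k\ge0}m^{k|1|0}_{A|M|B}(a^{\tensor k}\tensor e)\in M^1 .
\]
Take $e_0\in M^0$ to be a lift of $\underline e$; since $\underline e$ represents a cocycle of the leading-order module complex $T^{\lambda_0}M/T^{\lambda_1}M$, hypothesis \cref{item:initialvaluation} forces $\mathrm{ob}(0,e_0)=m^{0|1|0}(e_0)$ to lie in filtration strictly above $\lambda_0$.

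Next I would run the induction. Enumerate the values of $\omega$ on $G_M$ as $\lambda_0<\lambda_1<\cdots$ (finitely many below any bound). At stage $i$, suppose $a_{i-1}\in A^1$ is a bounding cochain for $A$ (already constructed) and $e_{i-1}\in M^0$ reduces to $\underline e$, with $\val\big(\mathrm{ob}(a_{i-1},e_{i-1})\big)>\lambda_{i-1}$. Isolate the leading coefficient: $\mathrm{ob}(a_{i-1},e_{i-1})=T^{\lambda_i}\underline c_i+(\text{valuation}>\lambda_i)$, with $\underline c_i$ of degree one. Because $a_{i-1}$ is a bounding cochain, $m^{0|1|0}_{(A,a_{i-1})}$ is a differential; applying it to $\mathrm{ob}(a_{i-1},e_{i-1})$ and reading off leading order shows $\underline c_i$ is a cocycle of the leading-order complex, hence (after rescaling by $T^{\lambda_0-\lambda_i}$) determines a class in $H^1\!\big(T^{\lambda_0}M/T^{\lambda_1}M\big)$. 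By \cref{item:productsurjects} there is $[\alpha_i]\in H^1(A)$ with $\big[m^{1|1|0}_{A|M|B}(\alpha_i\tensor\underline e)\big]=-[\underline c_i]$ in that group. Now apply \cref{claim:gradedanticommutativechoice} --- this is the one place where the anti-commutative DGA model $\overline A$ of $A$ is used --- to promote $\alpha_i$ to a genuine bounding cochain $\alpha_i'$ for $A$ with $[\alpha_i']=[\alpha_i]$ modulo higher valuation. Set $a_i:=a_{i-1}+T^{\lambda_i-\lambda_0}\alpha_i'$, which by \cref{claim:deformationofdeformation} is again a bounding cochain for $A$ (note $\lambda_i-\lambda_0>0$, since the first obstruction already sits above $\lambda_0$), and correct $e$ by a term of valuation $\lambda_i$ absorbing the residual coboundary $-\underline c_i-m^{1|1|0}(\alpha_i\tensor\underline e)$, producing $e_i$ still reducing to $\underline e$. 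Hypothesis \cref{item:initialvaluation} guarantees that all new contributions to $\mathrm{ob}$ coming from the added piece of $a$ land in filtration $\ge\lambda_i$, so they do not disturb the cancellations below $\lambda_i$; one then checks $\val\big(\mathrm{ob}(a_i,e_i)\big)>\lambda_i$.

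Finally I would pass to the limit: the $a_i$ (resp.\ $e_i$) agree modulo increasing powers of $T$, hence converge $T$-adically to $a=\lim a_i\in A^1$, a bounding cochain for $A$, and to $e=\lim e_i\in M^0$, with $m^{0|1|0}_{(A,a)|M|B}(e)=0$. The main obstacle is the bookkeeping in the inductive step: running the deformation of $A$ and the correction of $e$ simultaneously while keeping $a$ a bounding cochain throughout (which is exactly why the DGA-model hypothesis is needed, via \cref{claim:gradedanticommutativechoice}), and checking at every valuation level that the leading obstruction really does land in the target $H^1(T^{\lambda_0}M/T^{\lambda_1}M)$ of the surjection \cref{item:productsurjects}; hypothesis \cref{item:initialvaluation} is what pins every correction term to filtration $\ge\lambda_0$ and makes this work.
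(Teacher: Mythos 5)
Your proposal is correct and follows essentially the same route as the paper's proof: induction on the valuations of the gapped structure, identifying the leading term of the module curvature as a cocycle, using the surjectivity hypothesis to cancel it by a correction to $a$ promoted to a genuine bounding cochain via \cref{claim:gradedanticommutativechoice}, absorbing the residual coboundary into $e$, and passing to the $T$-adic limit. The only differences are cosmetic (sign conventions and an explicit rescaling by $T^{\lambda_i-\lambda_0}$ where the paper simply chooses the correction at the correct valuation); note also that hypothesis \cref{item:initialvaluation} only pins the initial obstruction to filtration $\geq\lambda_0$ rather than strictly above it, which is all the argument actually needs.
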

\begin{proof}
    We again use the gapped structure and induct on valuations. For simplicity of exposition, we will assume that the monoid $G$ is $\NN$, so that $\omega(G)=\{n \lambda\st n\in \NN \}\subset \RR$.
    We will construct a sequence of bounding cochains $a_i$ and elements $e_i\in M^0$ with the property that
    \begin{enumerate}[label=(\roman*)]
        \setcounter{enumi}{2}
        \item $a_i$ are bounding cochains; \label{item:closed}
        \item $m^{0|1|0}_{(A, a_i)|M|B}(e_i)\in T^{\lambda_0+i\lambda_1}M$; and\label{item:approachingzero}
        \item For $i>1$, $a_i-a_{i-1}\in T^{\lambda_0+ i\lambda_1}A$ and $e_i-e_{i-1}\in T^{\lambda_0+i\lambda_1}M$.\label{item:largevaluation}
    \end{enumerate}

    \emph{Base case:}
    Let $a_0=0$, and $e_0=e$. \Cref{item:initialvaluation,item:productsurjects} are given by the hypothesis. \Cref{item:closed} is trivial, and \cref{item:approachingzero} follows from the gapped structure. \Cref{item:largevaluation} has no content.
    
    \emph{Inductive Step:}
    Suppose we have constructed $a_{i}, e_i$ satisfying the induction hypothesis. 
    By \cref{item:approachingzero}, we can write  $m^{0|1|0}_{(A, a_i)|M|B}(e_i)\equiv  c_i  \mod T^{\lambda_0+(i+1)\lambda_1}$, where $c_i\in T^{\lambda_0+i\lambda_1}M$. 
    At  order $T^{\lambda_0+(i+1)\lambda_1}$,
    \[\underline m^{0|1|0}_{A|M|B}(c_i)\equiv  m^{0|1|0}_{(A, a_i)|M|B}(c_i)\equiv m^{0|1|0}_{(A, a_i)|M|B}\circ m^{0|1|0}_{(A, a_i)|M|B}(e_i)= 0 \mod T^{\lambda_0+(i+1)\lambda_1}\]
    We therefore obtain a class $[c_i]\in T^{\lambda_0+i\lambda_1}H^1(\underline M)$.
    Using \cref{item:productsurjects}, we have a homology class $\underline a\in T^{\lambda_0+i\lambda_1}A$ with 
    \[[m^{1|1|0}_{A|M|B}(\underline a\tensor \underline e)]\equiv [c_i] \mod T^{\lambda_0+(i+1)\lambda_1}.\]
    By \cref{claim:gradedanticommutativechoice}, there exists a bounding cochain  $a'\in T^{i\lambda_1}A^1$ for the product structures $m^k_{(A, a_i)}$ satisfying
    \begin{align*}
        a'\equiv& \underline a \mod T^{\lambda_0+(i+1)\lambda_1}\\
        [m^{1|1|0}_{A|M|B}(a'\tensor \underline e)]=& [c_i] \text{ in $H^1(T^{\lambda_0+i\lambda_1}M/T^{\lambda_0+(i+1)\lambda_1}M)$}.
    \end{align*}
    Write $m^{1|1|0}_{A|M|B}(a'\tensor \underline e)=c_i+ m^{0|1|0}_{A|M|B} e'$, where $e'\in T^{\lambda_0+i\lambda_1}$.
    Then let $e_{i+1}= e_i+e'$ and let $a_{i+1}=a_i-a'$. By construction, we satisfy \cref{item:largevaluation}.  By \cref{claim:deformationofdeformation}, $a_{i+1}$ is a bounding cochain for $A$, and we therefore obtain \cref{item:closed}. 
    Conditions \cref{item:initialvaluation,item:productsurjects}  are unchanged by deformations. It remains to prove \cref{item:approachingzero}:
    \begin{align*}
        m^{1|1|0}_{(A, a_{i+1})|M|B}(e_{i+1})\equiv &m^{0|1|0}_{(A, a_i)|M|B}(e_i)\\
        &-m^{1|1|0}_{A|M|B}(a',\underline e)+ m^{0|1|0}_{A|M|B}(e') - m^{1|1|0}_{A|M|B}(a',e') \mod T^{\lambda_0+(i+1)\lambda_1}\\
        \equiv & 0 \mod T^{\lambda_0+(i+1)\lambda_1}.
    \end{align*}
    To complete the proof of the lemma, we can take the bounding cochain $a$ and element $e$ to be 
    \begin{align*} a=\lim_{i\to\infty}a_i && e=\lim_{i\to\infty} e_i. \end{align*}
\end{proof}

\printbibliography
\end{document}